%%%%%%%%%%%%%%%%%%%%%%%%%%%%%%%%%%%%%%%
%
%		Inverse scattering and global well-posedness in one and
%		two space dimensions
%
%		Peter A. Perry
%		6 May 2019
%
%		Revision II
%
%%%%%%%%%%%%%%%%%%%%%%%%%%%%%%%%%%%%%%%

\documentclass{amsart}
\usepackage{amsfonts,amssymb,mathtools,mathabx,mathrsfs}

%%%%%%%%%%%%%%%%%%%%
%
%		Proofreading tools
%
%		When using showkeys, comment  
%		out the last three lines
%
%		When using refcheck, uncomment 
%		the last three lines and comment 
%		out the first line!
%
%%%%%%%%%%%%%%%%%%%%

%\usepackage[notref,notcite]{showkeys}
%\usepackage{refcheck}									%% disable hyperref when in use
%\newcommand{\hypersetup}[1]{ }					%% this removes \hypersetup problem
%\newcommand{\texorpdfstring}[2]{#1}			%% fake command when using refcheck
%\newcommand{\href}[2]{#2}
%\newcommand{\url}[1]{}

%%%%%%%%%%%%%%%%%%%%
%
%		For hyperlinked PDF
%
%%%%%%%%%%%%%%%%%%%%
\usepackage[usenames,dvipsnames]{xcolor}

%%%%%%%%%%%%%%%%%%%%
%
%		Controls for hyperref package
%
%		uncomment \usepackage if
%		not using refcheck
%
%		comment \usepackage if
%		using refcheck
%
%%%%%%%%%%%%%%%%%%%%

\usepackage{hyperref}
\hypersetup{
    unicode=false,          		% non-Latin characters in Acrobat’s bookmarks
    pdftoolbar=true,        		% show Acrobat’s toolbar?
    pdfmenubar=true,        	% show Acrobat’s menu?
    pdffitwindow=false,     		% window fit to page when opened
    pdfstartview={FitH},    		% fits the width of the page to the window
    pdftitle={Fields Lectures},    		% title
    pdfauthor={Peter Perry},  % author
     linktocpage=true,			% link to page not section title in TOC
     pdfnewwindow=true,      	% links in new PDF window
    colorlinks=true,       			% false: boxed links; true: colored links
    linkcolor=blue,          		% color of internal links (change box color with linkbordercolor)
    citecolor=PineGreen,    	% color of links to bibliography
    filecolor=magenta,      		% color of file links
    urlcolor=cyan           		% color of external links
}

%%%%%%%%%%%%%%%%%%%%
%
%		Theorem environments
%
%%%%%%%%%%%%%%%%%%%%

\theoremstyle{plain}
\newtheorem{theorem}{Theorem}[section]

\newtheorem{lemma}[theorem]{Lemma}
\newtheorem{proposition}[theorem]{Proposition}

\theoremstyle{definition}

\newtheorem{RHP}[theorem]{Riemann-Hilbert Problem}

\theoremstyle{remark}
\newtheorem{remark}[theorem]{Remark}

\newtheorem{exercise}{Exercise}[section]

%%%%%%%%%%%%%%%%%%%%%%%
%
%		Font choice
%
%%%%%%%%%%%%%%%%%%%%%%%

\usepackage{palatino}

%%%%%%%%%%%%%%%%%%%%%%%
%
%		Numbering convention
%
%%%%%%%%%%%%%%%%%%%%%%%

\numberwithin{figure}{section}
\numberwithin{equation}{section}

%%%%%%%%%%%%%%%%%%%%%%%
%
%		Allow display breaks
%
%%%%%%%%%%%%%%%%%%%%%%%

\allowdisplaybreaks

%%%%%%%%%%%%%%%%%%%%%%%
%
%		Math Operators
%
%%%%%%%%%%%%%%%%%%%%%%%

\DeclareMathOperator{\ad}{ad}
\DeclareMathOperator{\adj}{adj}
\DeclareMathOperator{\real}{Re}
\DeclareMathOperator{\imag}{Im}

\DeclareMathOperator{\Ran}{Ran}
\DeclareMathOperator{\tr}{tr}
\DeclareMathOperator{\Tr}{Tr}

%%%%%%%%%%%%%%%%%%%
%
%		\widecheck hack from 
%		tex.stackexchange.com
%
%%%%%%%%%%%%%%%%%%%

%%%%%%%%%%%%%%%%%%%%%%%%%%%%%%%%%%%%%%%%%
%
%		Full reference:
%
%		https://tex.stackexchange.com/questions/44235/is-there-a-way-to-do-an-upside-down-widehat/44251
%
%%%%%%%%%%%%%%%%%%%%%%%%%%%%%%%%%%%%%%%%%

%% code from mathabx.sty and mathabx.dcl
\DeclareFontFamily{U}{mathx}{\hyphenchar\font45}
\DeclareFontShape{U}{mathx}{m}{n}{
      <5> <6> <7> <8> <9> <10>
      <10.95> <12> <14.4> <17.28> <20.74> <24.88>
      mathx10
      }{}
\DeclareSymbolFont{mathx}{U}{mathx}{m}{n}
\DeclareFontSubstitution{U}{mathx}{m}{n}
\DeclareMathAccent{\widecheck}{0}{mathx}{"71}
\DeclareMathAccent{\wideparen}{0}{mathx}{"75}

%%%%%%%%%%%%%%%%%%%%%%%
%
%		Document begins here...
%
%%%%%%%%%%%%%%%%%%%%%%%

\begin{document}

\title[Inverse Scattering and Global Well-Posedness]{Inverse Scattering and Global Well-Posedness in One and Two Space Dimensions}
\author{Peter A. Perry}
\date{\today}
\maketitle
\tableofcontents

%%%%%%%%%%%%%%%%%%%%%%%
%
%		Hyphenation rules for foreign words
%
%%%%%%%%%%%%%%%%%%%%%%%

\hyphenation{Kol-mo-gor-ov}

%%%%%%%%%%%%%%%%%%%%%%%
%
%		For marginal notes while MS is 
%		being developed
%
%%%%%%%%%%%%%%%%%%%%%%%

\newcommand{\sidenote}[1]{\marginpar{\scriptsize{\color{purple} #1 }}}

%%%%%%%%%%%%%%%%%%%%%%%%%
%
%		macros.tex		
%		10.12.17
%
%%%%%%%%%%%%%%%%%%%%%%%%%

%% 	boldface letters - mostly for Lecture 3

\newcommand{\bfs}{\mathbf{s}}				%%	Scattering transform
\newcommand{\bfS}{\mathbf{S}}				%%	Beurling transform
\newcommand{\barS}{\overline{\bfS}}		%%	Conjugate Beurling transform

%%	script letters

\newcommand{\scrS}{\mathscr{S}}

%%  caligraphic letters

\newcommand{\calB}{\mathcal{B}}
\newcommand{\calC}{\mathcal{C}}
\newcommand{\calF}{\mathcal{F}}
\newcommand{\calI}{\mathcal{I}}
\newcommand{\calL}{\mathcal{L}}
\newcommand{\calM}{\mathcal{M}}
\newcommand{\calR}{\mathcal{R}}
\newcommand{\calS}{\mathcal{S}}

%% 	blackboard bold

\newcommand{\C}{\mathbb{C}}
\newcommand{\I}{\mathbb{I}}
\newcommand{\R}{\mathbb{R}}
\newcommand{\Z}{\mathbb{Z}} 

%% 	boldface - lower case

\newcommand{\bfn}{\mathbf{n}}

%%	boldface

\newcommand{\bfM}{\mathbf{M}}
\newcommand{\bfN}{\mathbf{N}}
\newcommand{\bfQ}{\mathbf{Q}}
\newcommand{\bfV}{\mathbf{V}}

%%	breve	-	lower case

\newcommand{\ba}{\breve{a}}
\newcommand{\bb}{\breve{b}}
\newcommand{\bm}{\breve{m}}
\newcommand{\bq}{\breve{q}}
\newcommand{\br}{\breve{r}}

%%	breve 	-	capital

\newcommand{\bQ}{\breve{Q}}

%%	breve	-	Greek

\newcommand{\bmu}{\breve{\mu}}

%%	shorthand	-	Greek letters and symbols

\newcommand{\dbar}{\overline{\partial}}
\newcommand{\dee}{\partial}
\newcommand{\eps}{\varepsilon}
\newcommand{\lam}{\lambda}
\newcommand{\sig}{\sigma}

%%	shorthand	-	arrows

\newcommand{\rarr}{\rightarrow}
\newcommand{\uarr}{\uparrow}
\newcommand{\darr}{\downarrow}

%%	barred letters (representing complex conjugates)

\newcommand{\fbar}{\overline{f}}
\newcommand{\gbar}{\overline{g}}
\newcommand{\kbar}{\overline{k}}
\newcommand{\qbar}{\overline{q}}
\newcommand{\rbar}{\overline{r}}
\newcommand{\ubar}{\overline{u}}
\newcommand{\vbar}{\overline{v}}
\newcommand{\wbar}{\overline{w}}
\newcommand{\zbar}{\overline{z}}

\newcommand{\Pbar}{\overline{P}}

\newcommand{\mubar}{\overline{\mu}}
\newcommand{\nubar}{\overline{\nu}}
\newcommand{\zetabar}{\overline{\zeta}}

%%	miscellaneous

\newcommand{\dotarg}{\, \cdot \,}

%%	shorthand - analysis

\newcommand{\dint}{\displaystyle{\int}}
\newcommand{\norm}[2][ ]{\left\Vert #2 \right\Vert_{#1}}
\newcommand{\bigO}[2][ ]{\mathcal{O}_{#1}\left( #2 \right)}
\newcommand{\littleo}[2][ ]{o_{{#1}}\left( #2 \right)}

%% 	shorthand - misc

\newcommand{\lin}{\mathrm{lin}}

%% modified overline command from 
%% tex.stackexchange.com
%% http://tex.stackexchange.com/questions/87609/setting-the-vertical-distance-in-overline

\newcommand{\ovmath}[1]
	{\overline{\mbox{#1}\raisebox{3mm}{}}}

%%%%%%%%%%%%%%%%%%%%%%%%%
%
%		Matrices and Vectors
%
%%%%%%%%%%%%%%%%%%%%%%%%%

\newcommand{\twomat}[4]
{
\begin{pmatrix}
#1		&		#2	\\
#3		&		#4
\end{pmatrix}
}

\newcommand{\Twomat}[4]
{
        \begin{pmatrix}
                #1      &       #2      \\[5pt]
                #3      &       #4
        \end{pmatrix}
}

\newcommand{\offmat}[2]{\twomat{0}{#1}{#2}{0}}
\newcommand{\diagmat}[2]{\twomat{#1}{0}{0}{#2}}

\newcommand{\Offmat}[2]{\Twomat{0}{#1}{#2}{0}}

\newcommand{\idmat}{\diagmat{1}{1}}

\newcommand{\twovec}[2]
{
	\begin{pmatrix}
	#1		\\	#2	
	\end{pmatrix}
}

%%%%%%%%%%%%%%%%%%%%%%%%%
%
%		Heading for problems that won't show up
%		in TOC
%
%%%%%%%%%%%%%%%%%%%%%%%%%

\newcommand{\Problems}
{
\medskip
\noindent
\textbf{Exercises.}
\medskip }

					%%	\newcommands etc.

%%%%%%%%%%%%%%%%%%%
%
%		More macros to be added
%
%%%%%%%%%%%%%%%%%%%

%%

\newcommand{\inv}{\mathrm{inv}}

%%%%%%%%%%%%%%%%%%%

\section*{Preface}

These notes are a considerably revised and expanded version of lectures given at the Fields Institute workshop on 
``Nonlinear Dispersive Partial Differential Equations and Inverse Scattering'' in August 2017. These lectures, together with lectures of Walter Craig, Patrick Gerard, Peter Miller, and Jean-Claude Saut, constituted a week-long introduction to recent developments in inverse scattering and dispersive PDE intended for students and postdoctoral researchers working in these two areas. 

The goal of my lectures was to give a complete and mathematically rigorous exposition of the inverse scattering method for two dispersive equations: the defocussing cubic nonlinear Schr\"{o}dinger equation (NLS) in one space dimension, and the defocussing Davey-Stewartson II (DS II) equation in two space dimensions.  Each is arguably the simplest example in its class since neither admits solitons; moreover, the long-time behavior of solutions to each equation has been rigorously deduced from inverse scattering \cite{DZ:2003,NRT:2017,Perry:2016}. Inverse scattering for the  defocussing NLS provides an introduction to the Riemann-Hilbert method further discussed in the contribution of Dieng, McLaughlin and Miller in this volume \cite{DMM:2018}. Inverse scattering for the defocussing  Davey-Stewartson II equation provides an introduction to the $\dbar$-methods used extensively in two-dimensional inverse scattering (see, for example, the surveys \cite{BC:1989,FA:1983b,Grinevich:2000}, the monograph \cite{Ablowitz:2011}, and references therein).

In Lecture 1, I give an overview of the inverse scattering method for these two equations, focusing on the formal (i.e., algebraic) aspects of the theory.  I motivate the solution formulas given by inverse scattering, seen as a composition of nonlinear maps and a linear time-evolution of scattering data. 

In Lecture 2, I analyze inverse scattering for the defocussing cubic NLS in one dimension in depth, based on  the seminal paper of Deift and Zhou \cite{DZ:2003}.  We study the direct map via Volterra integral equations for the Jost solutions for the operator \eqref{ZS-AKNS}, and the inverse map via the Riemann-Hilbert Problem \ref{NLS:RHP} using the approach of Beals and Coifman \cite{BC:1984}.

In Lecture 3, I discuss the inverse scattering method for the Davey-Stewartson II equation in depth. 
This lecture has been completely rewritten in light of the recent work of Nachman, Regev, and Tataru \cite{NRT:2017} which introduced a number of new ideas and techniques from harmonic analysis and pseudodifferential operators to the study of scattering maps. Using these techniques, the authors proved that the defocussing DS II equation is globally well-posed in $L^2(\R)$ and that all solutions scatter to solutions of the associated linear problem. Their results significantly improve earlier work of mine \cite{Perry:2016}, which proved global well-posedness of the defocussing DS II equation in the weighted Sobolev space $H^{1,1}(\R)$ and obtained large-time (dispersive) asymptotics of solutions in $L^\infty$-norm.  In the revised lecture 3, I give a pedagogical proof of the results in \cite{Perry:2016} using some of the ideas of \cite{NRT:2017} to streamline proofs  significantly.

At the end of each lecture, I've added exercises which supplement the text and develop key ideas. 

 I hope that these lectures will appeal to a wide audience interested in recent progress in this rapidly developing field.

\section*{Acknowledgments}

I am grateful to the University of Kentucky for sabbatical support during part of the time these lectures were prepared, and to Adrian Nachman and Idan Regev for many helpful discussions about their work. I thank the participants in a Fall 2017 working seminar on \cite{NRT:2017}--Russell Brown, Joel Klipfel, George Lytle, and Mihai Tohaneanu--for helping me understand this paper in greater depth. I have benefited from conversations with Deniz Bilman about Lax representations and Riemann-Hilbert problems. I have also benefited from course notes for Percy Deift's 2008 course at the Courant Institute on the defocussing NLS equation,\footnote{I am grateful to Percy Deift for kindly sharing his handwritten notes for this course.}  his 2015 course there on integrable systems,\footnote{Handwritten notes may be found at \url{https://www.math.nyu.edu/faculty/deift/RHP/} } and Peter Miller's Winter 2018 course at the University of Michigan on integrable systems and Riemann-Hilbert problems.\footnote{Please see the course website \url{http://www.math.lsa.umich.edu/~millerpd/CurrentCourses/651_Winter18.html}. }
For supplementary material, I have drawn on several excellent sources for material on dispersive equations, Riemann-Hilbert problems, and $\dbar$-problems, namely 
the monograph of Astala, Iwaniec, and Martin \cite{AIM:2009},
the textbook of Ponce and Linares \cite{LP:2015}, and 
the monograph of Trogdon and Olver \cite{TO:2016}.

\newpage

\section*{Notation Index}

%%%%%%%%%%%%%%%%%%%%%%%%%%%%%%
%
%		Use itemize with math items and give
%		page and line reference
%
%%%%%%%%%%%%%%%%%%%%%%%%%%%%%%

\newcommand{\mathitem}[1]{\item[${#1}$]}
\newcommand{\fullref}[1]{(see \eqref{#1}, page \pageref{#1})}

\begin{itemize}

%%%%		Lower case roman

%% a
\mathitem{\ad \sig_3}		Operator $A \mapsto [\sig_3,A]$ on $2 \times 2$ matrices

%% e
\mathitem{e_k(z)} Unimodular phase function \fullref{DSII:ek} where $k,z \in \C$

\mathitem{e^{it\Delta}} Solution operator for linear Schr\"{o}dinger equation \eqref{LSE}

%% m
\mathitem{m^1,\, m^2} Solutions to \eqref{DSII:m.x1}--\eqref{DSII:m.x2} obeying the asymptotic condition \eqref{DSII:m.x.asy}

%%n
\mathitem{\bfn(x,z)}	Null vector for a Riemann-Hilbert problem (see Proposition \ref{NLS:prop.null})

%% p
\mathitem{p^*} Sobolev conjugate exponent $(2-p)/2p$ 

%% q

\mathitem{q_\inv} Solution of DS II by inverse scattering \fullref{DSII:qinv}
\mathitem{q_\lin} Solution of linearized DS II equation \fullref{DSII:lin.bis}

%% r

\mathitem{r}			``Right'' reflection coefficient  for NLS $r(\lam) = -b(\lam)/\overline{a(\lam)}$ \fullref{BC.Jump.left}
\mathitem{\br}		``Left'' reflection coefficient for NLS $\br(\lam)= -b(\lam)/a(\lam)$ \fullref{BC.Jump.right}

%% s
\mathitem{\bfs} Scattering transform  $\bfs = \calS q$ for DSII equation \fullref{DSII:scatt}

%%%% Upper case roman
\bigskip

%% B
\mathitem{\calB(Y)}  The Banach algebra of bounded operators on a Banach space $Y$

%% C
% line below is apparently unused in current version
%\mathitem{\calC }		Cauchy transform acting on $L^2(\R)$
\mathitem{C_\pm}		Cauchy projectors for $L^2(\R)$ \fullref{Cpm}
\mathitem{\calC_w}	  Beals-Coifman integral operator  \fullref{NLS:BC.op} with weights $w=(w^+,w^-)$

%% F
\mathitem{\calF}		Fourier transform \eqref{Fourier} (lectures 1 and 2)
\mathitem{\calF_a}		Antilinear Fourier transform \fullref{DSII:scatt.lin} 

%% H		
\mathitem{H^1(\R)}  Sobolev space \fullref{NLS:H1}
\mathitem{H^{1,1}(\R)}  Weighted Sobolev space \fullref{NLS:H11}
\mathitem{H^{1,1}_1(\R)} Open subset of $H^{1,1}(\R)$ consisting of those $r \in H^{1,1}(\R)$ with
									$\norm[\infty]{r} < 1$. 
\mathitem{H^{1,1}(\R^2)}		Weighted Sobolev space \fullref{DSII:H11}
%% I
\mathitem{\I}		The $2 \times 2$ identity matrix

\mathitem{\calI}		Inverse scattering map for NLS (see RHP \ref{NLS:RHP.I} and \eqref{NLS:M.q})

%% L

\mathitem{\calL}	Linear operator for NLS spectral problem  \fullref{ZS-AKNS}  or DS II spectral problem \fullref{DSII:LS}
%% M

\mathitem{\bfM}			Normalized solution $\bfM(x,z) = \psi(x,z) e^{-ixz\sig_3}$ \fullref{NLS:M}
\mathitem{\bfM^\pm}	Normalized Jost solution defined by $\Psi^\pm = \bfM^\pm e^{-ix\lam \sig_3}$
\mathitem{\bfM^\ell, \, \bfM^r}  Left and right Beals-Coifman solutions (see Theorems \ref{thm:NLS.BC.left} and \ref{thm:NLS.BC.right})
\mathitem{\bfM_\pm} 	Boundary values of a Beals-Coifman solution

\mathitem{\calM f}		Hardy-Littlewood maximal function of $f \in L^p(\R^n)$
%%  N

\mathitem{\bfN}			Normalized Jost solution \fullref{NLS:N}

%%  Q

\mathitem{\bfQ}			Potential matrix in $\calL$ \fullref{ZS-AKNS} (NLS) or (\eqref{DSII:LS}, page \pageref{DSII:LS}) (DS II)
\mathitem{\bfQ_1, \bfQ_2} 		Matrices in Lax representation for NLS \fullref{NLS:Lax}

%% R

\mathitem{\calR}		Direct scattering map for NLS \fullref{NLS:R}

%% S

\mathitem{\scrS(\R), \scrS(\R^2)}	Schwartz class of $C^\infty$ functions of rapid decrease on $\R$, $\R^2$
\mathitem{\scrS_1(\R)} Functions $r \in \scrS(\R)$ with $\norm[\infty]{r} < 1$
\mathitem{S} Model compact operator \fullref{DBAR:op}
\mathitem{\calS}		Scattering map for DS II equation \fullref{DSII:S}
\mathitem{\bfS} Beurling transform \fullref{Beurling}
\mathitem{\overline{\bfS}} Conjugate Beurling transform \fullref{Conjugate-Beurling}

%% T
\mathitem{T(\lam)} Transition matrix \fullref{NLS:T.sym}

%% U
\mathitem{V(t)}  Solution operator for linearized DS II equation \eqref{DSII:lin.bis}

%% V

\mathitem{ \bfV^\ell, \bfV^r}  Jump matrices for Riemann-Hilbert problems satisfied by left and right Beals-Coifman solutions (see respectively \eqref{NLS:Jump.left} and \eqref{NLS:Jump.right})

%%%% Greek
\bigskip

%% theta

\mathitem{\theta} Phase function for the RHP that solves NLS \fullref{NLS:RHP.phase}

%% phi
\mathitem{\varphi} Phase function for the $\dbar$-problem that solves DS II \fullref{DSII:phase}

%% psi
\mathitem{\Psi^\pm}	Jost solutions to $\calL \psi = \lam \psi$ \fullref{NLS:Jost.asym}
\mathitem{\psi}			Generic solution to $\calL \psi = z \psi$

%% mu

\mathitem{\mu}		Solution to Beals-Coifman integral equation \fullref{NLS:BC}
\mathitem{\mu^\sharp} $\mu-\I$

%% sigma
\mathitem{\sigma_1, \sigma_2, \sigma_3}  Pauli matrices 
								$\begin{pmatrix}
								0 & 1 \\ 1 & 0
								\end{pmatrix}$,
								$\begin{pmatrix}
								0	&	-i	\\	i	&	0
								\end{pmatrix}$,
								$\begin{pmatrix}
								1	&	0	\\	0	&	-1
								\end{pmatrix}$

%%%%		Misc mathematics
\bigskip

\mathitem{\dee_{\zbar}{-1}} Solid Cauchy transform \fullref{DSII:solid-Cauchy} 
\mathitem{\dee_z^{-1}} Conjugate solid Cauchy transform \fullref{DSII:conjugate-solid-Cauchy}
\mathitem{\dee C(L^2)} The linear space of pairs $(h_+,h_-)$ with $h_\pm = C_\pm h$ for some $h \in L^2(\R)$
\mathitem{\dee_z,\, \dee_{\zbar}} Wirtinger ($\zbar$ and $z$) derivatives \fullref{DSII:dee-dbar}

\end{itemize}

\newpage

%%%%%%%%%%%%%%%%%%%

\section{Introduction to Inverse Scattering}

Among dispersive PDE's that describe wave propagation are the \emph{completely integrable} PDE's. These equations--which include the Korteweg-de Vries and cubic NLS equations in one space dimension, and the Davey-Stewartson and Kadomtsev-Petviashvilli equation in two space dimensions--are equivalent to simple linear flows by conjugation with an invertible, nonlinear map adapted to the PDE and very strongly dependent on its special structure. This nonlinear map is called a \emph{scattering transform} and serves the same function for  these equations that the Fourier transform does for the linear Schr\"{o}dinger equation. 

To understand what this means, let's consider the Cauchy problem for  linear Schr\"{o}dinger equation in one dimension.
\begin{equation}
\label{LSE}
\left\{
\begin{aligned}
i \frac{\dee q}{\dee t} + \frac{\dee^2 q}{\dee x^2} &= 0	,	\\
q(x,0) 		&=	q_0(x).
\end{aligned}
\right.
%}
\end{equation}
assuming for simplicity that $q_0 \in \scrS(\R)$. 
The Fourier transform
$$ \left(\calF q\right)(\xi) = \widehat{q}(\xi) = \int e^{-ix\xi} f(x) \, dx $$
reduces the Cauchy problem \eqref{LSE} to the trivial flow
$$ i \frac{\dee}{\dee t} \widehat{q}(\xi,t)  =  |\xi|^2 \widehat{q}(\xi,t) $$
leading to the solution formula
\begin{equation}
\label{LSE:Fourier-sol}
q(x,t) = \frac{1}{2\pi} \int_\R e^{it \theta} \widehat{q_0}(\xi) \, d\xi, \quad \theta_0(\xi;x,t) = \xi x/t - \xi^2.
\end{equation}
We can also write the solution as $q(t) = e^{it\Delta}q_0$ where $e^{it\Delta}$ is 
the solution operator
\begin{equation}
\label{LSE:SO}
\left( e^{it\Delta} f \right)(x) = \calF^{-1} \left( e^{-it(\dotarg)^2} \left( \calF f \right) \right)(x). 
\end{equation}

Since the phase function $\theta_0$ in \eqref{LSE:Fourier-sol} has a single, nondegenerate critical point at $\xi_0 = x/2t$, the solution has large-time asymptotics
$$ 
q(x,t) \sim  \frac{1}{\sqrt{4 \pi i t}}e^{ix^2/(4t)} \widehat{q_0}\left( \frac{x}{2t} \right) + \bigO{t^{-3/4}}.$$

The map $\calF$ is linear, has a well-behaved and explicit inverse, and yields a well-behaved solution formula that extends to initial data in Sobolev spaces. The representation formula, combined with stationary phase methods, leads to a complete description of long-time asymptotic behavior.

Similar results may be obtained for integrable systems provided that the scattering transforms are well-controlled and have well-behaved inverses. In these lectures we will discuss two examples in depth: the defocussing, cubic nonlinear Schr\"{o}dinger equation in one space dimension, and the defocussing Davey-{Stew\-art\-son} II equation in two space dimensions. Neither of these equations admits solitons, so that the dynamics are purely dispersive. 

In each case, the scattering transform is the ``next best thing to linear'': it is  a  diffeomorphism when restricted to appropriate function spaces (and, in each case, its Fr\'{e}ch\'{e}t derivative at zero is a Fourier-like transform--see Remarks \ref{NLS:Direct.lin} and \ref{NLS:Inverse.lin} for the NLS scattering maps, and see \eqref{DSII:scatt.lin} and the accompanying discussion for the DS II scattering map). It also has global Fourier-like properties which facilitate the analysis of large-time asymptotics of the solution.

\subsection{The Defocussing Cubic Nonlinear Schr\"{o}dinger Equation}

Zakharov and Shabat \cite{ZS:1972} showed that the Cauchy problem for the cubic nonlinear Schr\"{o}dinger equation
\begin{equation}
\label{NLS}
\left\{
\begin{aligned}
i \frac{\dee q}{\dee t} + \frac{\dee^2 q }{\dee x^2} - 2 |q|^2 q &= 0	\\	
q(x,0)	&=	q_0(x)
\end{aligned}
\right.
%}
\end{equation}
is integrable by inverse scattering. To describe the direct and inverse scattering maps, we will follow the conventions of \cite{DMM:2018}. These conventions differ slightly but inessentially from those of Deift \cite{Deift:2018} and  Deift-Zhou \cite{DZ:2003}.\footnote{Deift and Zhou write the ZS-AKNS equation as $\psi_x = i\lam \sig \psi + \bfQ_1 \psi$ where $\sig=(1/2)\sig_3$. This results in various sign changes and changes in factors of $2$ throughout. The conventions of \cite{DMM:2018} also make the scattering maps linearize to antilinear Fourier-type transforms, whereas those of \cite{DZ:2003} linearize to the usual Fourier transform.}

Equation \eqref{NLS} is the consistency condition for the overdetermined system
\begin{equation}
\label{NLS:Lax}
\left\{
\begin{aligned}
\Psi_x		&=	-i\lam \sig_3 \Psi + \bfQ_1 \Psi,	\\
\Psi_t		&=	\left(-2i\lam^2 \sig_3 +2\lam \bfQ_1 + \bfQ_2 \right) \Psi 
\end{aligned}
\right.
%}
\end{equation}
where
$$ 
\bfQ_1 = 
\begin{pmatrix}
0					&	q	\\[5pt]
\overline{q}	&	0	
\end{pmatrix},
\quad
\bfQ_2	=
\begin{pmatrix}
-i|q|^2				&	iq_x	\\[5pt]
-i\overline{q_x}	&	i|q|^2
\end{pmatrix}
$$
and $\Psi$ is an unknown $2 \times 2$ matrix-valued function of $(x,t)$. 
Note that the first of  equations \eqref{NLS} is an eigenvalue problem for the self-adjoint operator
\begin{equation}
\label{ZS-AKNS}
\calL \coloneqq i\sigma_3 \frac{d}{dx} + \bfQ(x), \quad
\sigma_3 \coloneqq \begin{pmatrix} 1 & 0 \\ 0 & -1 \end{pmatrix}, \quad
\bfQ(x) = \begin{pmatrix} 0 & -iq(x) \\ i\overline{q(x)} & 0 \end{pmatrix} 
\end{equation}
acting on  $L^2(\R,M_2(\C))$, the square-integrable, $2 \times 2$-matrix valued functions (see Exercise \ref{ex:ZS-AKNS.sa}). This equation is sometimes called the ZS-AKNS equation after the fundamental papers of Ablowitz, Kaup, Newell, and Segur \cite{AKNS:1974}  and 
Zakharaov-Shabat \cite{ZS:1972}. Note that, if $q=0$, the operator $\calL$ has continuous spectrum on the real line and bounded, matrix-valued eigenfunctions $\Psi_0(x,\lam) = e^{-i\lam x \sig_3}$.

The scattering transform of $q \in L^1(\R)$ is defined as follows. There exist unique $2 \times 2$ matrix-valued solutions $\Psi^\pm(x,\lam)$ of $\calL \psi = \lam \psi $ with
\begin{equation}
\label{NLS:Jost.asym}
\lim_{x \to \pm \infty} \Psi^\pm(x,\lam) e^{ix\lam \sig_3}= \I,
\end{equation}
where $\I$ denotes the identity matrix. Matrix-valued solutions of $\calL \psi  = \lam \psi$ have the properties that (i) $\det \psi(x)$ is independent of $x$ and (ii) any two nonsingular solutions $\psi_1$ and $\psi_2$ are related by $\psi_1 = \psi_2 A$
for a constant matrix $A$ (see Exercises \ref{ex:NLS.det1} and \ref{ex:NLS.matmult}). For this reason, there is a matrix $T(\lam)$ with
\begin{equation}
\label{NLS:T}	
\Psi^+(x,\lam)	=	\Psi^-(x,\lam) T(\lam). 
\end{equation}
Clearly $\det T(\lam) = 1$ and, by a symmetry argument (see Exercises \ref{ex:NLS.sigma1} and \ref{ex:NLS.mu.bc}),
\begin{equation}
\label{NLS:T.sym}
T(\lam) = 
\begin{pmatrix}
a(\lam)	&	\overline{b(\lam)}	\\
b(\lam)	&	\overline{a(\lam)}
\end{pmatrix},
\quad
|a(\lam)|^2 - |b(\lam)|^2 = 1
\end{equation}
The \emph{direct scattering map} is the map $\calR : q \to r$ where \begin{equation}
\label{NLS:R}
r(\lam) = -b(\lam)/\overline{a(\lam)}.
\end{equation}
The direct scattering map linearizes the flow \eqref{NLS} in the sense that, if 
$q(x,t)$ solves the initial value problem \eqref{NLS} with initial data $q_0$ and $r_0 = \calR(q_0)$, then
$$
\calR \left( q(\dotarg,t) \right)(\lam)  =  e^{4it\lam^2} r_0(\lam).
$$
We give a heuristic proof of this law of evolution, based on the Lax representation \eqref{NLS:Lax}, at the beginning of section \ref{NLS:sec.sol}.

The inverse of $\calR$ is determined as follows. Denote by $\psi(x,z)$ a solution to $\calL \psi = z \psi$, written as
$$
\frac{d}{dx} \psi = -iz \sigma_3 \psi + \bfQ_1(x) \psi, 
\quad 
\bfQ_1(x) = 
\begin{pmatrix} 
0 & q(x) \\ 
\overline{q(x)} & 0 
\end{pmatrix},
$$
where $z \in C$, and factor $\psi(x,z) = \bfM(x,z) e^{-ixz \sig_3}$. Then $\bfM$ obeys the differential equation
\begin{equation}
\label{NLS:M}
\frac{d}{dx} \bfM(x,z)  = -iz \ad \sig_3 \left( \bfM(x,z) \right) + \bfQ_1(x) \bfM(x,z)
\end{equation}
where, for a $2 \times 2$ matrix $A$,
$$
\ad \sig_3 (A)  = \left[ \sig_3, A \right].
$$
One can show that \eqref{NLS:M} admits special solutions, the \emph{Beals-Coifman solutions}, which are piecewise analytic in $\C \setminus \R$,  have distinct boundary values $\bfM_\pm(x,\lam)$ on $\R$, and for each $z$ obey the asymptotic conditions
$$ \bfM(x,z) \to \I \,\, \text{ as } x \to +\infty, \quad \bfM(x,z) \text{ is bounded as } x \to -\infty.$$ 
The Beals-Coifman solutions are unique and, moreover, $q(x)$ can be recovered from their asymptotic behavior:
\begin{equation}
\label{NLS:M.q}
q(x) = \lim_{z \to \infty} 2iz \bfM_{12}(x,z).
\end{equation}
The boundary values  satisfy a jump relation 
\begin{equation}
\label{NLS.M.Jump}
\left\{
\begin{aligned}
\bfM_+(x,\lam) &= \bfM_-(x,\lam) \bfV(\lam;x),	\\[5pt]
\bfV(\lam;x) &= 
\begin{pmatrix}
1-|r(\lam)|^2			&	-\overline{r(\lam)}e^{-2i\lam x}	\\[5pt]
r(\lam) e^{2i\lam x}	&	1 		
\end{pmatrix}.
\end{aligned}
\right.
%}
\end{equation}
The asymptotics of $\bfM(x,z)$ together with the jump relation \eqref{NLS.M.Jump} define a Riemann-Hilbert problem for $\bfM(x,z)$, which we write as $\bfM(z;x)$ to emphasize that $x$ plays the role of a parameter in the RHP.

\begin{RHP}
\label{NLS:RHP.I}
For given $r$, and $x \in \R$, find $\bfM(z;x)$ so that
\begin{enumerate}
\item[(i)]		$\bfM(z;x)$ is analytic in $\C \setminus \R$ for each $x$,
\item[(ii)]	$\lim_{z \to \infty} \bfM(z;x) = \I$,
\item[(iii)]	$\bfM(z;x)$ has continuous boundary values $\bfM_\pm(\lam;x)$ on $\R$
\item[(iv)]	The jump relation
		$$ \bfM_+(\lam;x) = \bfM_-(\lam;x) V(\lam;x), \quad 
			\bfV(\lam;x) = 
			\begin{pmatrix}
			1-|r(\lam)|^2 			&	-\overline{r(\lam)} e^{-2i\lam x}\\[5pt]
			r(\lam) e^{2i\lam x}	&	 1		
			\end{pmatrix}
		$$
		holds.
\end{enumerate}
\end{RHP}

\begin{remark}
\label{NLS:remark.lim.z}
In condition (ii) above, the limit is meant to be uniform in proper subsectors of the upper and lower half planes. That is, for any $\eps>0$,
$$ \lim_{R \to \infty} \sup_{\substack{|z| \geq R \\  \arg(z) \in (\alpha,\beta)}} \left( \bfM(z;x) - \I \right) = 0 $$
if $(\alpha,\beta)$ is a proper subinterval of $(0,\pi)$ or $(\pi,2\pi)$. 
\end{remark}

The \emph{inverse scattering map}  $\calI: r \to q$ determined by Riemann-Hilbert Problem \ref{NLS:RHP.I} and the reconstruction formula \eqref{NLS:M.q}.

Thus, to implement the solution formula
\begin{equation}
\label{NLS:IST-sol}
q(x,t) = \calI \left( e^{4i(\dotarg)^2 t} \calR(q_0)(\dotarg)\right)(x), 
\end{equation}
we compute the scattering transform $r_0 = \calR(q_0)$ and solve the following Riemann-Hilbert problem (RHP).

\begin{RHP}
\label{NLS:RHP}
For given $r_0$ and parameters $x$, $t$, find $\bfM(z;x,t)$ so that
\begin{enumerate}
\item[(i)]		$\bfM(z;x,t)$ is analytic in $\C \setminus \R$ for each $x,t$,
\item[(ii)]	$\lim_{z \to \infty} \bfM(z;x,t) = \I$,
\item[(iii)]	$\bfM(z;x,t)$ has continuous boundary values $\bfM_\pm(\lam;x,t)$ on $\R$
\item[(iv)]	The jump relation
		\begin{equation*} 
		\begin{aligned}
			\bfM_+(\lam;x,t) &= \bfM_-(\lam;x,t) \bfV(\lam;x,t), \\
			\bfV(\lam;x,t) &= 
			\begin{pmatrix}
			1-|r_0(\lam)|^2	&	-\overline{r_0(\lam)} e^{-2it\theta}\\[5pt]
			r_0(\lam) e^{2it\theta}	&	1
			\end{pmatrix}
		\end{aligned}
		\end{equation*}
		holds, where 
		\begin{equation}
		\label{NLS:RHP.phase}
		\theta(\lam;x,t) = 2\lam^2 + x\lam/t. 
		\end{equation}
\end{enumerate}
\end{RHP}

\medskip
Given the solution of RHP \ref{NLS:RHP}, we can then compute
$$
q(x,t)	=	\lim_{z \to \infty} 2iz \bfM_{12}(z;x,t)
$$
where the limit is meant in the sense of Remark \ref{NLS:remark.lim.z}.

Denote by $\scrS(\R)$ the Schwartz class functions on $\R$ and let
$$ \scrS_1(\R) = \left\{ r \in \scrS(\R): \norm[\infty]{r} < 1 \right\}. $$
Beals and Coifman \cite{BC:1984} proved:
\begin{theorem}
\label{thm:NLS.BC}
Suppose that $q_0 \in \scrS(\R)$. Then $r_0 \in \scrS_1(\R)$, RHP \ref{NLS:RHP} has a unique solution for each $x,t \in \R$, and \eqref{NLS:IST-sol} defines a classical solution of the Cauchy problem \eqref{NLS}.
\end{theorem}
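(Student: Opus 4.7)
The plan is to split Theorem \ref{thm:NLS.BC} into three assertions and attack each with a distinct technique: (a) $\calR$ maps $\scrS(\R)$ into $\scrS_1(\R)$, via Volterra analysis of Jost solutions; (b) RHP \ref{NLS:RHP} has a unique solution for each fixed $(x,t)$, via conversion to a Beals-Coifman singular integral equation together with a vanishing lemma; (c) the reconstructed function $q(x,t) = \lim_{z \to \infty} 2iz \bfM_{12}(z;x,t)$ solves \eqref{NLS} classically, via the Lax pair and smoothness in parameters.

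For (a), I would set up Volterra integral equations for the normalized Jost solutions $\bfM^\pm(x,\lam)$ defined by $\Psi^\pm = \bfM^\pm e^{-ix\lam \sig_3}$. Since $q_0 \in L^1(\R)$, Neumann iteration converges and produces columns analytic in appropriate half-planes with continuous boundary values on $\R$. Differentiating the integral equation in $x$ and $\lam$ and exploiting Schwartz decay of $q_0$ shows $\bfM^\pm(x,\cdot) - \I \in \scrS(\R)$ with control uniform in $x$. Extracting $a(\lam)$, $b(\lam)$ from the large-$|x|$ asymptotics via \eqref{NLS:T}--\eqref{NLS:T.sym} yields $a,b \in \scrS(\R)$. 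The identity $|a|^2 - |b|^2 = 1$ forces $|a(\lam)| \geq 1$ on $\R$, so $r_0 = -b/\overline{a}$ is Schwartz with $\norm[\infty]{r_0} < 1$, hence $r_0 \in \scrS_1(\R)$.

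For (b), I would convert RHP \ref{NLS:RHP} to a singular integral equation on $L^2(\R, M_2(\C))$ via the Beals-Coifman factorization of $\bfV$ into triangular factors and the Cauchy projectors $C_\pm$. The RHP becomes equivalent to $(I - \calC_w)\mu = \I$, with $\calC_w$ bounded because $\norm[\infty]{r_0} < 1$ and $C_\pm$ are $L^2$-bounded; crucially, these bounds are uniform in $(x,t)$ since the phase $e^{\pm 2it\theta}$ has modulus one. Uniqueness is the heart of the matter: a \emph{vanishing lemma} asserts that any $\bfM$ solving the homogeneous problem vanishes identically. This is proved by pairing $\bfM_+(\lam)$ with the Schwarz-reflected $\bfM_-(\lam)^*$, exploiting the positive-definite structure of $\bfV(\lam)\bfV(\lam)^*$ that reflects self-adjointness of $\calL$ in the defocussing case, and invoking Cauchy's theorem on a large semicircle. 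Existence then follows from the Fredholm alternative once $I - \calC_w$ is shown to be Fredholm of index zero, e.g.\ by continuous deformation of $r_0$ to $0$.

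For (c), I set $r(\lam,t) = e^{4i\lam^2 t} r_0(\lam)$; since $|e^{4i\lam^2 t}| = 1$, this stays in $\scrS_1(\R)$ for every $t$, so (b) gives a unique $\bfM(z;x,t)$ for each $(x,t)$. Smoothness of $\bfM$ in $(x,t)$ follows from smooth dependence of the jump matrix on the parameters combined with uniform invertibility of $I - \calC_w$. Expanding $\bfM(z;x,t) = \I + \bfM^{(1)}(x,t)z^{-1} + \bfM^{(2)}(x,t)z^{-2} + \cdots$ as $z \to \infty$ and matching residues in the $x$- and $t$-derivatives of the jump relation recovers the Lax pair \eqref{NLS:Lax}, whose compatibility condition is precisely \eqref{NLS}. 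The initial condition $q(x,0) = q_0(x)$ holds because at $t = 0$ the RHP is the inverse scattering problem for $q_0$, and the composition $\calI \circ \calR = \mathrm{id}$ on $\scrS(\R)$ by the uniqueness established in (a) and (b).

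The main obstacle will be the vanishing lemma in step (b): the argument works only because the defocussing sign produces $|a|^2 - |b|^2 = +1$, hence $\norm[\infty]{r_0} < 1$ and a positive-definite jump factorization, precisely what fails in the focussing setting. The regularity bookkeeping in (a) and the Lax-pair calculation in (c) are technical but essentially mechanical by comparison.
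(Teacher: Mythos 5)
Your decomposition into (a) the mapping property $\calR:\scrS(\R)\to\scrS_1(\R)$, (b) unique solvability of RHP \ref{NLS:RHP}, and (c) the Lax-pair argument matches the paper's architecture, and steps (a) and (c) proceed essentially as in the text: the direct map is handled by Volterra series for the normalized Jost solutions, and (c) is proved by differentiating the jump relation in $x$ and $t$, identifying $\bfQ_1,\bfQ_2$ from the large-$z$ moments of $\bfM$, and killing the discrepancy with a vanishing lemma (see Theorem \ref{NLS:thm.Lax} and Lemmas \ref{lemma:NLS.RHP.t1}--\ref{lemma:NLS.RHP.t2}). The one genuine divergence is in (b). You invoke the Zhou-style machinery: a vanishing lemma proved by pairing $\bfM_+$ with $\bfM_-^*$, Cauchy's theorem on a large semicircle, and positivity of $\bfV+\bfV^*$, followed by a Fredholm-alternative/index-zero deformation to obtain existence. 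That route is correct here (indeed $\bfV+\bfV^*=\mathrm{diag}\left(2(1-|r|^2),\,2\right)$ is positive definite precisely because $\norm[\infty]{r}<1$), but the paper observes that this heavier apparatus is unnecessary in the defocussing case: since $\norm[L^2 \to L^2]{C_\pm}=1$ and the Beals--Coifman weights satisfy $\norm[L^\infty]{w^\pm_x}=\norm[L^\infty]{r}<1$, the operator $\calC_w$ is a strict contraction on $L^2$, so $(I-\calC_w)^{-1}$ exists by Neumann series (Proposition \ref{NLS:prop.BC}) and the vanishing lemma reduces to the one-line observation that $\nu=\calC_w\nu$ forces $\nu=0$ (Proposition \ref{NLS:prop.null}). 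Your approach buys generality --- it survives in settings where only a symmetry-based positivity of the jump is available rather than a norm bound --- while the paper's buys brevity and the explicit resolvent bound $\norm[L^2\to L^2]{(I-\calC_w)^{-1}}\leq (1-\norm[\infty]{r})^{-1}$, which it reuses quantitatively throughout the inverse-map estimates. Both hinge on the defocussing fact $\norm[\infty]{r_0}<1$, which you correctly isolate as the crux.
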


We will give a complete proof of Theorem \ref{thm:NLS.BC} in section \ref{NLS:sec.sol}.

The solution formula \eqref{NLS:IST-sol} defines a continuous solution map provided that its component maps are continuous. To describe the mapping properties of $\calR$ and $\calI$, we define
\begin{equation}
\label{NLS:H11}
H^{1,1}(\R) \coloneqq \left\{ f \in L^2(\R): f', \, xf \in L^2(\R) \right\} 
\end{equation}
and
$$
H^{1,1}_1(\R) \coloneqq \left\{ f \in H^{1,1}(\R): \norm[\infty]{f} < 1 \right\}.
$$
Note that $H^{1,1}_1(\R)$ is an open subset of $H^{1,1}(\R)$ since $\norm[\infty]{f} \leq c \norm[H^{1,1}]{f}$ (see Exercise \ref{ex:H10}). The following result is proved by Deift and Zhou in \cite[\S 3]{DZ:2003} and also follows from Zhou's analysis \cite{Zhou:1998} of Sobolev mapping properties of the scattering transform.

\begin{theorem}\cite{DZ:2003}
\label{thm:NLS.maps} 
The maps $\calR : H^{1,1}(\R) \to H^{1,1}_1(\R)$ and $\calI: H^{1,1}_1(\R) \to H^{1,1}(\R)$ are Lipschitz continuous maps with $\calR \circ \calI = \calI \circ \calR = I$.
\end{theorem}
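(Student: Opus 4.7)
The plan is to establish the two mapping properties separately and then verify the inverse relations by exhibiting both maps as genuine algebraic inverses on the dense subspace $\scrS_1(\R)$, extending by continuity. For $\calR$, I would build on the Volterra integral equations for the normalized Jost solutions $\bfM^\pm$ referenced in the preface. First, for $q \in L^1(\R)$, standard iteration yields unique $\bfM^\pm(x,\lam)$ bounded in $\lam$ with $\bfM^+ - \I$ analytic in the upper half plane (columnwise). From the relation $\Psi^+ = \Psi^- T(\lam)$ one extracts integral representations
\begin{equation*}
a(\lam) = 1 + \int_{\R} q(x)\, \bfM^+_{21}(x,\lam)\, dx, \qquad b(\lam) = \int_{\R} \overline{q(x)}\, e^{2i\lam x}\, \bfM^+_{11}(x,\lam)\, dx,
\end{equation*}
so that $b$ is essentially an antilinear Fourier transform of $q$ plus a quadratically small correction. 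To land in $H^{1,1}_1(\R)$ I would control $\|r\|_{L^2}$ via Plancherel applied to the leading Fourier piece plus a fixed-point bilinear estimate for the correction on $L^2\cap L^1$; $\|\lam r\|_{L^2}$ follows from an $x$-integration by parts that trades $\lam$ for $q'$; and $\|r'\|_{L^2}$ uses $\dee_\lam \bfM^+$ which obeys a Volterra equation with forcing $xq$, thereby using $xq \in L^2$. The strict inequality $\|r\|_\infty < 1$ is immediate from $|a|^2 - |b|^2 = 1$ once $a$ has no real zeros, which follows from the self-adjointness of $\calL$ (Exercise \ref{ex:ZS-AKNS.sa}) since $a(\lam) \neq 0$ for $\lam\in\R$. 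Lipschitz dependence is obtained by differencing two copies of the Volterra system and estimating the resulting inhomogeneous equation with the same contractive kernel.

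For $\calI$, I would invoke the Beals–Coifman framework set up later in the notes: RHP \ref{NLS:RHP.I} is equivalent to a singular integral equation $(I - \calC_w)\mu = \I$ with weights $w^\pm$ built from $r$ and the phase $e^{2i\lam x}$. The central analytic step is uniform invertibility of $I - \calC_w$ on $L^2(\R)$, with operator norm bounds depending only on $\|r\|_\infty$ and $\|r\|_2$; this follows from the Beals–Coifman/Zhou vanishing lemma applied to the Schwarz-reflection symmetric jump (its signature ensures triviality of the kernel), combined with the standard $L^2$ boundedness of $C_\pm$. From $\mu$ one recovers $\bfM$ by the Cauchy integral, and
\begin{equation*}
q(x) = \lim_{z\to\infty} 2iz\, \bfM_{12}(z;x) = \frac{1}{\pi}\int_{\R} \left( \bigl(I - \calC_w\bigr)^{-1}\I \right)_{11}(\lam)\, r(\lam)\, e^{2i\lam x}\, d\lam
\end{equation*}
(up to precise Beals–Coifman bookkeeping). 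The leading term is an antilinear Fourier transform of $r$, which gives $q \in L^2$ with Lipschitz dependence via Plancherel; the remainder is a cubic-in-$r$ Beals–Coifman correction controlled by the resolvent bound.

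The main obstacle, as usual in this setup, is proving $xq \in L^2$ and $q' \in L^2$, i.e., that $\calI$ actually lands in $H^{1,1}(\R)$. The decay $xq \in L^2$ requires commuting an $x$-derivative through the RHP: I would differentiate the integral equation in the spectral variable $\lam$ after integration by parts in $x$, producing forcing in $r'$ and another resolvent application, so that $\|xq\|_{L^2}$ is controlled by $\|r'\|_{L^2}$ and $\|r\|_{H^{1,1}}$. Dually, $q' \in L^2$ is obtained by differentiating the reconstruction formula in $x$, which brings down a factor of $\lam$ and is absorbed using $\lam r \in L^2$. Both require careful justification that the resolvent $(I-\calC_w)^{-1}$ depends differentiably on parameters with quantitative bounds, which is the technical heart of Zhou's Sobolev bijectivity argument.

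Finally, the identities $\calR\circ\calI = \calI\circ\calR = I$ I would first verify on $\scrS_1(\R)$, where uniqueness of the Beals–Coifman solution forces the $\bfM$ reconstructed from $r$ to coincide with the one built from the Jost solutions of the potential $q = \calI(r)$; then Lipschitz continuity of both maps on the respective spaces extends the identities to all of $H^{1,1}_1(\R)$ and $H^{1,1}(\R)$ by density (Theorem \ref{thm:NLS.BC} supplies the classical identification on the Schwartz class).
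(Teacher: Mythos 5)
Your treatment of the direct map follows essentially the paper's route (Volterra/iteration representations for $a$ and $b$, integration by parts in $x$ to trade a factor of $\lam$ for $q'$, differentiation in $\lam$ bringing down factors of $x$ controlled by $xq \in L^2$), and your plan for $\calR\circ\calI=\calI\circ\calR=I$ via uniqueness of the Beals--Coifman solutions plus density is consistent with the paper. Two remarks before the main issue. First, for the invertibility of $I-\calC_w$ you invoke a vanishing lemma together with Fredholm theory; in the defocussing case this is both unnecessary and, as stated, incomplete (the Fredholm property of $I-\calC_w$ is not free). The paper simply observes $\norm[L^2\to L^2]{\calC_w}=\norm[\infty]{r}<1$, so $(I-\calC_w)^{-1}$ exists by Neumann series, and the vanishing theorem is proved by the same one-line contraction argument. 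Second, $a(\lam)\neq 0$ on $\R$ does not need self-adjointness: $|a|^2=1+|b|^2\geq 1$ is automatic; self-adjointness is used only to exclude zeros of the analytic continuation $a(z)$ off the real axis.

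The genuine gap is in your argument that $\calI$ lands in $H^{1,1}(\R)$. Your proposed mechanism for $xq\in L^2$ --- differentiate the Beals--Coifman equation in $\lam$ after integrating by parts in $x$ --- will, when executed, produce decay of $q(x)$ on a half-line only. The reason is Lemma \ref{NLS:lemma.phase-weight}: the estimate $\norm[L^2]{C_+ e^{-2ix(\dotarg)}\overline{r(\dotarg)}}\lesssim \norm[H^{1,0}]{r}(1+x^2)^{-1/2}$ holds only for $x>0$, because the Fourier support restriction imposed by $C_+$ and the translation by $x$ cooperate for one sign of $x$ and fail for the other. Consequently the right-normalized problem RHP \ref{NLS:RHP.right} yields only $q\in H^{1,1}(\R^+)$ (Proposition \ref{NLS:prop.inverse.left}). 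To control $x<0$ the paper must pass to the left-normalized problem RHP \ref{NLS:RHP.left} with reflection coefficient $\br=-b/a$, obtain $\bq\in H^{1,1}(\R^-)$ by the mirror argument, and then prove $q=\bq$; this last step requires showing that $r$ alone determines $a$ (via the scalar Riemann--Hilbert problem for $F(z)$ in \eqref{NLS:F} and Liouville's theorem) and that the two matrix problems are conjugate by $\operatorname{diag}(F^{-1},F)$ with $F=1+\bigO{1/z}$, so the two reconstruction formulas agree. None of this two-sided structure appears in your proposal, and without it the argument does not reach $H^{1,1}(\R)$.
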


A consequence of Theorems \ref{thm:NLS.BC},  \ref{thm:NLS.maps}, and local well-posedness theory for the NLS is:

\begin{theorem}
\label{thm:NLS.GWP}
The Cauchy problem \ref{NLS} is globally well-posed in $H^{1,1}(\R)$ with solution \eqref{NLS:IST-sol}.
\end{theorem}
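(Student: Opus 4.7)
\medskip
\noindent
\textbf{Proof proposal.} The strategy is to combine Theorem \ref{thm:NLS.BC} (which handles Schwartz data), Theorem \ref{thm:NLS.maps} (which provides Lipschitz continuity of the direct and inverse scattering maps on $H^{1,1}$), and local/global well-posedness of cubic NLS in $H^1(\R)$ (which provides uniqueness and identifies the inverse-scattering solution with the PDE solution). The plan is to first check that the linear flow on scattering data is continuous in $t$ on $H^{1,1}_1(\R)$, then to define the candidate solution $q(x,t) = \calI\bigl(e^{4it(\dotarg)^2}\,\calR(q_0)\bigr)(x)$ for $q_0\in H^{1,1}(\R)$, and finally to use a density argument to show that this candidate is the unique global $H^1$-solution of \eqref{NLS} and that the solution map is continuous from $H^{1,1}(\R)$ to $C(\R;H^{1,1}(\R))$.

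First I would verify that for any $r_0\in H^{1,1}_1(\R)$ the one-parameter family $r(t)\coloneqq e^{4it(\dotarg)^2}r_0$ lies in $H^{1,1}_1(\R)$ for every $t\in\R$ and depends continuously on $t$ in the $H^{1,1}$ topology. The $L^\infty$ bound $\norm[\infty]{r(t)}=\norm[\infty]{r_0}<1$ is immediate, as are the bounds $\norm{\lam\, r(t)}_{L^2}=\norm{\lam r_0}_{L^2}$ and $\norm{\partial_\lam r(t)}_{L^2}\leq 8|t|\,\norm{\lam r_0}_{L^2}+\norm{r_0'}_{L^2}$. Continuity in $t$ follows from dominated convergence applied to $\partial_\lam r(t)-\partial_\lam r(s)$. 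Composing with the Lipschitz map $\calI$ from Theorem \ref{thm:NLS.maps} shows that
\begin{equation*}
q(\dotarg,t)\coloneqq \calI\!\left(e^{4it(\dotarg)^2}\calR(q_0)\right)
\end{equation*}
defines a continuous curve in $H^{1,1}(\R)$ for every $q_0\in H^{1,1}(\R)$.

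Next, for $q_0\in\scrS(\R)$, Theorem \ref{thm:NLS.BC} asserts that this same $q(x,t)$ is a classical solution of \eqref{NLS}; in particular it is also the unique $H^1$-solution produced by the standard local/global well-posedness theory for the defocussing cubic NLS in one dimension (which gives a unique global solution $u\in C(\R;H^1(\R))$ with mass and energy conservation). For general $q_0\in H^{1,1}(\R)$ I would pick a sequence $q_0^{(n)}\in\scrS(\R)$ with $q_0^{(n)}\to q_0$ in $H^{1,1}(\R)$. Applying the Lipschitz bounds of Theorem \ref{thm:NLS.maps} twice, together with the $t$-uniform bound on $\|r(t)\|_{H^{1,1}}$ on compact time intervals established above, yields that $q^{(n)}(\dotarg,t)\to q(\dotarg,t)$ in $H^{1,1}(\R)$ uniformly for $t$ in compact subsets of $\R$. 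On the other hand, the $H^1$-well-posedness theory gives the corresponding convergence of the PDE solutions $u^{(n)}\to u$ in $C([-T,T];H^1(\R))$. Since $q^{(n)}=u^{(n)}$ for Schwartz data, passing to the limit identifies $q$ with the global $H^1$-solution $u$ of \eqref{NLS} and shows that $u(\dotarg,t)\in H^{1,1}(\R)$ for all $t$.

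Uniqueness in $H^{1,1}(\R)$ then follows from uniqueness in $H^1(\R)$, and continuous dependence in $H^{1,1}(\R)$ follows in the same way: if $q_0^{(k)}\to q_0$ in $H^{1,1}(\R)$, then $\calR(q_0^{(k)})\to\calR(q_0)$ in $H^{1,1}_1(\R)$ by Theorem \ref{thm:NLS.maps}, the linear time evolution is Lipschitz on compact $t$-intervals with constant depending on $T$ through the factor $1+|t|$, and a second application of Theorem \ref{thm:NLS.maps} yields $q^{(k)}\to q$ in $C([-T,T];H^{1,1}(\R))$. The main technical obstacle I would expect is step three, namely justifying the identification of the inverse-scattering candidate with the PDE solution for merely $H^{1,1}$ data; this requires reconciling the ``classical solution'' statement of Theorem \ref{thm:NLS.BC} with the rough-data theory, and hinges on the uniform continuity of the composed map $q_0\mapsto q(\dotarg,t)$ established via Theorem \ref{thm:NLS.maps}.
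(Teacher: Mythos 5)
Your proposal is correct and follows essentially the same route the paper takes: the paper derives Theorem \ref{thm:NLS.GWP} from Theorem \ref{thm:NLS.BC} (classical solutions for Schwartz data), Theorem \ref{thm:NLS.maps} (Lipschitz continuity of $\calR$ and $\calI$ on $H^{1,1}$), and the $H^1$ local well-posedness theory, with the density/identification step carried out exactly as you describe via Lemma \ref{lemma:NLS.approx}. Your explicit verification that the linear evolution $r_0\mapsto e^{4it(\dotarg)^2}r_0$ preserves $H^{1,1}_1(\R)$ and is continuous in $t$ fills in a step the paper leaves implicit, but it is the same argument.
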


Theorem \ref{thm:NLS.GWP} is of interest not because of the global well-posedness result: far superior results are available through PDE methods--see, for example, \cite{LP:2015} and \cite{Tao:2006} and references therein--and most recently through a very different approach to complete integrability pioneered by Koch-Tataru \cite{KT:2018},  Killip-Visan-Zhang \cite{KVZ:2018}, and Killip-Visan   \cite{KV:2018} which give conserved quantities and well-posedness results in the presence of very rough initial data.  Rather, Theorem \ref{thm:NLS.GWP} is of interest  because the solution map so constructed can be used to study large-time asymptotics of solutions with initial data in $H^{1,1}(\R)$. Deift and Zhou \cite{DZ:2003} gave a rigorous proof of long-time dispersive behavior for the solution of \eqref{NLS}, motivated by formal results of Zakharov and Manakov \cite{ZM:1976}. Their proof is an application of the Deift-Zhou steepest descent method \cite{DZ:1993}. Dieng and McLaughlin \cite{DM:2008} gave a different proof using the `$\dbar$-steepest descent method' and obtained a sharp remainder estimates. This result is discussed in a companion paper by Dieng, McLaughlin and Miller in this volume \cite{DMM:2018}. 

\begin{theorem} \cite{DMM:2018}
The unique solution to \eqref{NLS}  with initial data $q_0 \in H^{1,1}(\R)$ has the asymptotic behavior
$$
q(x,t) \sim t^{-1/2}\alpha(z_0) e^{ix^2/(4t) - i\nu(z_0) \log (2t)} + \bigO{t^{-3/4}}
$$
where
$z_0 = -x/(4t)$, $\nu(z) = -\dfrac{1}{2\pi} \log\left(1-|r(z)|^2 \right)$, $|\alpha(z)|^2 = \nu(z)/2$, and 
$$
\arg \alpha(z) = %\\
\frac{1}{\pi}\int_{-\infty}^z \log(z-s) \, d\left( \log\left(1-|r(s)|^2 \right)\right) +
\frac{\pi}{4} + \arg(i\nu(z)) + \arg r(z).
$$
The remainder term is uniform in $x \in \R$. 
\end{theorem}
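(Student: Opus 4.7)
The plan is to apply the $\dbar$-steepest descent method of Dieng-McLaughlin to the Riemann-Hilbert Problem \ref{NLS:RHP}, extracting the asymptotics of $q(x,t) = \lim_{z\to\infty} 2iz \bfM_{12}(z;x,t)$ by deforming the problem toward the stationary phase point of $\theta(\lam;x,t) = 2\lam^2 + x\lam/t$. A direct computation gives $\theta'(\lam) = 4\lam + x/t$, so $\theta$ has a unique nondegenerate critical point at $\lam_0 = -x/(4t) = z_0$; the oscillatory factors $e^{\pm 2it\theta}$ in the jump matrix concentrate all contributions near $\lam_0$, and the $t^{-1/2}$ decay with the $\log(2t)$ phase correction will arise from a local parabolic cylinder model.

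First I would perform the standard algebraic preparation. The jump matrix admits the two factorizations
\begin{equation*}
\bfV = \begin{pmatrix} 1 & -\overline{r_0}e^{-2it\theta} \\ 0 & 1 \end{pmatrix}\begin{pmatrix} 1 & 0 \\ r_0 e^{2it\theta} & 1 \end{pmatrix} = \begin{pmatrix} 1 & 0 \\ \frac{r_0}{1-|r_0|^2}e^{2it\theta} & 1\end{pmatrix}(1-|r_0|^2)^{\sig_3}\begin{pmatrix} 1 & -\frac{\overline{r_0}}{1-|r_0|^2}e^{-2it\theta} \\ 0 & 1 \end{pmatrix},
\end{equation*}
which are useful respectively to the right and left of $\lam_0$ (there the upper/lower triangular factors contain exponentials that decay off $\R$ in the half-plane where the contour will be deformed). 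To absorb the diagonal middle factor, introduce the scalar RHP for $\delta(z;z_0)$ with jump $1-|r_0|^2$ on $(-\infty,z_0)$ and trivial jump on $(z_0,\infty)$, normalized to $\delta(\infty)=1$; this has the closed form $\delta(z;z_0)=\exp\left(\tfrac{1}{2\pi i}\int_{-\infty}^{z_0}\tfrac{\log(1-|r_0(s)|^2)}{s-z}\,ds\right)$. Conjugating by $\delta^{\sig_3}$ converts RHP \ref{NLS:RHP} into an equivalent one whose jump is purely off-diagonal on each of the two half-lines, at the price of replacing $r_0$ by $r_0\delta^{-2}$ (right of $z_0$) and $r_0/(1-|r_0|^2)\cdot \delta^2$ (left of $z_0$).

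Next, because $r_0$ only lies in $H^{1,1}(\R)$ and is not analytic, I cannot deform the contour off $\R$ directly. Instead, following Dieng-McLaughlin, I construct $\dbar$-extensions $R_j(z)$ of the triangular factors that (i) agree with the factors on $\R$, (ii) decay along rays emanating from $\lam_0$ into the correct half-plane, and (iii) satisfy the weighted estimate $|\dbar R_j(z)| \lesssim |r_0'(\real z)| + |\real z - \lam_0|^{-1/2}$ appropriate to the $H^{1,1}$ setting. Dressing $\bfM$ by these extensions converts the RHP on $\R$ into a mixed RHP/$\dbar$-problem supported on a cross $\Sigma$ through $\lam_0$ plus a $\dbar$-component with small source. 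A localization argument replaces the jump on $\Sigma$ by the frozen-coefficient model jump matrix built from the constants $r_0(z_0)$ and $\nu(z_0)$; this model problem is the standard parabolic cylinder RHP and is solved exactly in terms of parabolic cylinder functions $D_{i\nu}$, yielding the leading coefficient $\alpha(z_0)$ with $|\alpha(z_0)|^2 = \nu(z_0)/2$ and the stated argument formula (the integral expression for $\arg\alpha$ comes from the explicit form of $\delta$ at $z=z_0$).

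The main obstacle is controlling the remainder uniformly in $x \in \R$. The remainder splits into three pieces: (a) the error from the $\dbar$-component, estimated via the solid Cauchy transform $\dee_\zbar^{-1}$ acting on the source $\dbar R_j \cdot e^{2it\theta}$, which by the method of steepest descent on the $R^2$-integral gives $\bigO{t^{-3/4}}$ with constants depending only on $\norm[H^{1,1}]{r_0}$; (b) the error from replacing the true jump by the model jump on $\Sigma$, handled by Lipschitz continuity of $r_0$ (i.e., $r_0\in H^{1/2+}$) with an $x$-uniform bound; (c) the $\bigO{t^{-1}}$ error from matching the local parabolic cylinder model to the identity at the edges of the cross. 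The uniformity in $x$ is the delicate point because $\lam_0=z_0$ traverses all of $\R$; the key is that each estimate above involves only translation-invariant norms of $r_0$ and the uniformly bounded function $1-|r_0(z_0)|^2 \geq c>0$ guaranteed by $r_0 \in H^{1,1}_1(\R) \subset \scrS_1$-type spaces (using $\norm[\infty]{r_0} < 1$). Combining these three estimates yields the total error $\bigO{t^{-3/4}}$ uniformly in $x$, completing the proof.
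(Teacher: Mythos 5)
The paper does not actually prove this theorem: it is quoted from the companion article of Dieng, McLaughlin, and Miller \cite{DMM:2018} (the text immediately above the statement says so explicitly), so there is no in-paper argument to compare against. Your outline is precisely the $\dbar$-steepest descent strategy of that reference: the two factorizations of $\bfV$ (both of which you state correctly), the scalar function $\delta$ with jump $1-|r_0|^2$ on $(-\infty,z_0)$, the non-analytic extensions $R_j$ with $|\dbar R_j|\lesssim |r_0'(\real z)|+|\real z-\lam_0|^{-1/2}$, the parabolic cylinder model at $\lam_0=z_0=-x/(4t)$, and the $t^{-3/4}$ remainder; the identifications $|\alpha|^2=\nu/2$ and the formula for $\arg\alpha$ via $\delta$ evaluated at $z_0$ are likewise the right provenance.

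Two caveats. First, what you have written is a plan rather than a proof: the two places where essentially all of the work lives --- the region-by-region estimate showing that $\left|\dee_{\zbar}^{-1}\bigl(\dbar R_j\, e^{\pm 2it\theta}\bigr)\right|=\bigO{t^{-3/4}}$ uniformly in $x$ (this is exactly where the $H^{1,1}$ regularity and the uniform bound $1-\norm[\infty]{r_0}^2>0$ enter quantitatively), and the explicit solution of the local model in terms of parabolic cylinder functions $D_{i\nu}$ --- are asserted, not carried out. Second, in the $\dbar$ formulation your error source (b) is not really a separate term: the extensions $R_j$ are built so that the jump on the cross is \emph{exactly} the frozen-coefficient model jump, and the discrepancy between $r_0(\lam)$ and $r_0(\lam_0)$ is absorbed into $\dbar R_j$, i.e., into your error (a). Keeping it as a separate ``Lipschitz'' error is the bookkeeping of the classical Deift--Zhou method, not of the $\dbar$ method, and mixing the two would double-count.
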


\subsection{The Defocussing Davey-Stewartson II Equation}
The Cauchy problem for the defocussing Davey-Stewartson II (DSII) equation is
\begin{equation}
\label{DSII}
\left\{
\begin{aligned}
iq_t + 2(\dee_z^2 + \dee_{\zbar}^2) q + (g + \gbar) q &=0	,\\
\dee_{\zbar} g	&=	-4 \dee_z \left( |q|^2 \right),\\
q(z,0)	&=	q_0(z).
\end{aligned}
\right.
%}
\end{equation}
Here $z=x_1+ix_2$ and
$$ \dee_{\zbar} = \frac{1}{2} \left( \frac{\dee}{\dee x_1} + i \frac{\dee }{\dee x_2} \right),  \dee_z = \frac{1}{2} \left( \frac{\dee}{\dee x_1} - i \frac{\dee }{\dee x_2} \right). $$
Here and in what follows, the notation $f(z)$ for a function of $z=x_1+ix_2$ does \emph{not} imply that $f$ is an analytic function of $z$.

Ablowitz and Segur \cite[Chapter 2, \S 2.1.d]{AS:1981} showed that the Davey-Stewartson II equation is completely integrable. The solution of DS II by inverse scattering was developed by Beals-Coifman \cite{BC:1998,BC:1985a,BC:1986} and Fokas-Ablowitz \cite{FA:1983c,FA:1984a,FA:1983b}. A rigorous analysis of the scattering maps, including the case $q_0 \in \scrS(\R^2)$ was carried out by Sung in a series of three papers \cite{Sung:1994a,Sung:1994b,Sung:1994c}. 

The DSII flow is linearized by a zero-energy spectral problem for the operator
\begin{equation}
\label{DSII:LS}
\calL = 
\begin{pmatrix}
\dee_{\zbar}		&	0 	\\
0			&	\dee_z
\end{pmatrix} - \bfQ(z), \quad
\bfQ(z) =
\begin{pmatrix}
0						&	q(z)	\\
\overline{q(z)}	&	0
\end{pmatrix}
.
\end{equation}
To define the scattering transform for $q \in \scrS(\R^2)$, we look for solutions 
$$ 
\begin{pmatrix}
\psi_1(z,k)	\\[5pt]
\psi_2(z,k)
\end{pmatrix}
=
\begin{pmatrix}
m^1 (z,k) e^{ikz}	\\[5pt]
m^2 (k,z) e^{ikz}
\end{pmatrix}
$$
of $\calL \psi = \mathbf{0}$, where $kz$ denotes complex multiplication of $k=k_1+ik_2$ by $z=x_1+ix_2$.   We assume that
$m^1 (z,k) \to 1$ and $m^2 (z,k) \to 0$ for each $k \in \C$ as $|z| \to \infty$. Such unbounded solutions $\psi_1, \psi_2$  are sometimes called \emph{complex geometric optics} (CGO) solutions and were introduced in scattering theory by Faddeev \cite{Faddeev:1965}.  An easy computation shows that
\begin{equation}
\label{DSII:M}
\left\{
\begin{aligned}
\dee_{\zbar} m^1 (z,k)										&=	q(z) m^2 (z,k),	\\
\left(\dee_z + ik \right) m^2 (z,k)					&=	\overline{q(z)} m^1 (z,k), \\
\lim_{|z| \to \infty}		(m^1 (z,k),m^2 (z,k))	&=	(1,0).
\end{aligned}
\right.
%}
\end{equation}
The system \eqref{DSII:M} is formally equivalent\footnote
{
Convolution with $z^{-1}$ 
(resp.\ $\zbar^{-1}$) is a formal inverse to 
$\dee_{\zbar}$ 
(resp.\ $\dee_z$). See section \ref{DSII:subsec.prelim}, equations \eqref{DSII:solid-Cauchy} and \eqref{DSII:conjugate-solid-Cauchy} and the accompanying discussion and references.
} 
to a system of integral equations:
\begin{equation}
\label{DSII:M.int}
\left\{
\begin{aligned}
m^1 (z,k)	&=	1	+	 \frac{1}{\pi} \int_{\C} \frac{1}{z-w} q(w) m^2 (w,k) \, dw\\
m^2 (z,k)	&=			\frac{1}{\pi} \int_{\C} \frac{e_k(z-w)}{\zbar - \wbar} \overline{q(w)} m^1 (z,k) \, dw
\end{aligned}
\right.
%}
\end{equation}
where
\begin{equation}
\label{DSII:ek}
e_k(z) = e^{i(kz + \kbar \zbar)}. 
\end{equation}
For $q \in \scrS(\R^2)$, $m^1 $ and $m^2 $ admit large-$z$ expansions of the form
\begin{align*}
m^1 (z,k)	&\sim	1 + 	\sum_{j\geq 1}	\frac{a_j(k)}{z^j},\\
m^2 (z,k)	&\sim			e_{-k}(z) \sum_{j \geq 1} 	\frac{b_j(k)}{\zbar^j}.
\end{align*}
The scattering transform $\calS q$ of $q \in \scrS(\R^2)$ is $-ib_1(k)$. From the integral equations \eqref{DSII:M.int} we can see that
\begin{equation}
\label{DSII:S}
\left( \calS q \right)(k)	=	-\frac{i}{\pi}	\int_{\C} e_k(z) \overline{q(z)} m^1 (z,k) \, dz.
\end{equation}
This map is a perturbation of the antilinear `Fourier transform'
$$ \left( \calF_a q \right) = -\frac{i}{\pi} \int_{\C} e_k(z) \overline{q(z)} \, dz $$
which satisfies $\calF_a \circ \calF_a = I$. We will see that, remarkably, the same holds for $\calS$. The scattering transform $\calS$ linearizes the DSII equation \eqref{DSII} in the following sense: if $q(z,t)$ solves \eqref{DSII} and $q(\dotarg,t) \in \calS(\R^2)$ for each $t$, then
$$ \calS(q(\dotarg,t))(k) = e^{2i(k^2 + \kbar^2) t} \calS(q_0))(k).$$
Thus, a putative solution by inverse scattering is given by
\begin{equation}
\label{DSII:IST-sol}
q(z,t) 	=	\calS^{-1} \left( e^{2i((\dotarg)^2 + (\overline{\dotarg})^2)} \left(\calS q_0 \right)(k) \right)(z).
\end{equation}

To implement the solution formula \eqref{DSII:IST-sol}, we compute the scattering transform $\bfs(k)=\calS(q_0)(k)$ and, for each $t$, solve the system
\begin{equation}
\label{DSII:M.dual}
\left\{
\begin{aligned}
\dee_{\kbar} n^1 (z,k,t)				&=	\bfs(k) e^{2i(k^2+\kbar^2)t} n^2 (z,k,t)	\\
\left( \dee_k + iz \right) n^2 (z,k,t)	&=	\overline{\bfs(k)} e^{-2i(k^2+\kbar^2)t} n^1 (z,k,t).
\end{aligned}
\right.
%}
\end{equation}
The solution $q(z,t)$ is given by
\begin{equation}
\label{DSII.recon}
q(z,t) = -\frac{i}{\pi} \int_\C e^{it\varphi} \overline{\bfs(k)} n^1 (z,k,t) \, dk.
\end{equation}
where
$$ \varphi(k;z,t) = 2\left(k^2 + \kbar^2\right) - \frac{kz+\kbar \zbar}{t}. $$

The results of Beals-Coifman, Fokas-Ablowitz, and Sung imply the following analogue of Theorem \ref{thm:NLS.BC}.

\begin{theorem}
\label{DSII:Sung}
Suppose that $q_0 \in \calS(\R^2)$. Then $\calS q_0 \in \calS(\R^2)$. Moreover, the system \eqref{DSII:M.dual} has a unique solution for each $(z,t)$, and \eqref{DSII.recon} defines a classical solution of the Cauchy problem \ref{DSII}.
\end{theorem}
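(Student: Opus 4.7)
The plan is to decompose the statement into three parts: (a) Schwartz--class preservation and unique solvability of the forward CGO system \eqref{DSII:M.int}, (b) unique solvability of the dual system \eqref{DSII:M.dual} for each $(z,t)\in\C\times\R$, and (c) verification that \eqref{DSII.recon} yields a classical solution of \eqref{DSII}. All three parts rest on mapping properties of the solid Cauchy transforms $\dee_{\zbar}^{-1}$ and $\dee_z^{-1}$ on appropriate $L^p$ spaces, together with the involutive symmetry $\calS\circ\calS=I$.

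First I would rewrite \eqref{DSII:M.int} as a fixed-point equation $\mathbf{m} = \mathbf{m}_0 + T_q\mathbf{m}$, where $\mathbf{m}_0=(1,0)$ and $T_q$ is the composition of $\dee_{\zbar}^{-1}$ and $\dee_z^{-1}$ with multiplication by $q$ and $\qbar$, the latter carrying the oscillatory factor $e_k$. For $q_0\in\scrS(\R^2)$, iteration shows that $T_q^2$ is a compact operator on, say, $L^p\cap L^\infty(\C)$ for some $p>2$, so the Fredholm alternative reduces existence and uniqueness to a vanishing lemma: any bounded homogeneous solution of \eqref{DSII:M.int} must vanish. This is the decisive point where the defocussing sign enters; combining the asymptotic condition in \eqref{DSII:M} with a Liouville--type argument applied to a scalar $\dbar$-equation satisfied by an appropriate bilinear combination of the homogeneous solutions (Sung's approach) produces the conclusion. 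Regularity and decay of $m^1-1$ and $m^2$ in both $z$ and $k$ then follow by differentiating the integral equations, bootstrapping the Cauchy-transform bounds, and exploiting $e_k$ to trade decay in $z$ for decay in $k$. The formula \eqref{DSII:S} can be split as $\calF_a(\qbar) + (\text{correction})$, and Schwartz regularity of $\calS q_0$ follows since $\calF_a$ preserves $\scrS(\R^2)$ and the correction inherits rapid decay from the estimates on $m^1-1$.

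Second, I would solve \eqref{DSII:M.dual} by noting that it has the same analytic form as \eqref{DSII:M} with the roles of $k$ and $z$ interchanged and potential $\bfs(k)\,e^{2i(k^2+\kbar^2)t}$, smoothly parametrised by $t$. Multiplication by a unimodular phase preserves $\scrS(\R^2)$ and the scattering output $\bfs$ lies in $\scrS(\R^2)$ by part (a), so the argument of part (a) applies verbatim and delivers a unique $(n^1,n^2)$ depending smoothly on $(z,t)$, with rapid decay in $k$ that makes \eqref{DSII.recon} absolutely convergent.

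Finally, to verify that \eqref{DSII.recon} satisfies \eqref{DSII}, I would differentiate in $z$, $\zbar$, and $t$ under the integral sign---justified by the decay established in the preceding step---and use \eqref{DSII:M.dual} to re-express $k$- and $\kbar$-derivatives of $n^1,n^2$ in terms of $n^1,n^2$ themselves. The auxiliary field $g$ is read off from the next order term in the large--$z$ expansion of $n^1$ (equivalently, from the $\dee_{\zbar}^{-1}(|q|^2)$ structure), and the two PDEs in \eqref{DSII} are the compatibility conditions of the Lax representation associated with \eqref{DSII:LS}. The principal technical obstacle is the vanishing lemma in part (a): it is exactly here that the defocussing hypothesis is indispensable, while the remaining steps reduce to careful application of Cauchy-transform estimates and integration by parts on oscillatory integrals involving $e_k$.
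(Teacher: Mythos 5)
Your three-part decomposition mirrors the paper's treatment, and the machinery you invoke for existence and uniqueness---compactness of the relevant Cauchy-transform operator, the Fredholm alternative, and a Liouville-type vanishing theorem for $\dee_{\zbar}u=a\ubar$ applied to the combinations $m^1\pm e_{-k}\overline{m^2}$ (note these are linear/antilinear, not bilinear, combinations)---is exactly what appears in Lemma \ref{lemma:S.compact}, Theorem \ref{thm:Liouville}, and Proposition \ref{prop:DBAR.mpm}. Your reduction of part (b) to part (a) by interchanging $z$ and $k$ and taking potential $e^{2i(k^2+\kbar^2)t}\bfs(k)$ is also the paper's route, and you correctly locate where the defocussing sign enters: only for $\eps=+1$ does the system decouple into scalar equations of the form covered by the vanishing theorem.

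There are, however, two substantive problems. First, the claim $\calS q_0\in\scrS(\R^2)$ is the genuinely hard part of the theorem: it is the content of Sung's three papers, and the present text explicitly takes it as given rather than proving it. Your one-sentence plan (differentiate the integral equations, bootstrap, trade $z$-decay for $k$-decay via $e_k$) names the right ingredients but is not a proof; rapid decay of $\bfs$ in $k$ requires repeated integration by parts in the oscillatory integral \eqref{DSII:S} together with uniform-in-$k$ control of all $z$-derivatives of $m^1$, and none of that bookkeeping is indicated.

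Second, and more seriously, part (c) has a gap. Differentiating \eqref{DSII.recon} under the integral sign in $z$, $\zbar$, $t$ produces derivatives of $n^1$ in those variables, about which the dual system \eqref{DSII:M.dual} says nothing: that system only controls $\dee_{\kbar}$-derivatives. The missing step is the one carried out in Propositions \ref{prop:DSII.Lax1} and \ref{prop:DSII.Lax2}: one differentiates the $\dbar_k$-system with respect to the parameters, observes that the resulting quantities (e.g.\ $(\dee_z+ik)m^2$, or the combinations $v_1,v_2$ built from $\dee_t$ and two spatial derivatives) satisfy the same $\dbar_k$-system but fail to decay in $k$, subtracts the offending leading terms read off from the large-$k$ asymptotic expansions, and only then applies the vanishing lemma (Lemma \ref{lemma:DBAR.vanish}) to conclude that the CGO solutions satisfy the spatial and temporal halves of the Lax pair. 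The PDE then follows from the compatibility computation. Without this subtraction-plus-vanishing argument, ``re-expressing $k$-derivatives via \eqref{DSII:M.dual}'' does not yield the equations in $z$ and $t$ that the cross-differentiation argument requires.
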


Nachman, Regev, and Tataru \cite{NRT:2017} proved the following remarkable result on the scattering transform $\calS$. Recall that the Hardy-Littlewood maximal function of $f \in L^p(\R^n)$, $1 \leq p \leq \infty$, is given by
$$ \left( \calM f \right)(x) = \sup_{r>0} \frac{1}{|B(x,r)|} \int_{B(x,r)} |f(y)| \, dy, $$
where $B(x,r)$ denotes the ball of radius $r$ about $x \in \R^n$ and $|A|$ denotes the Lebesgue measure of the measurable set $A \subset \R^n$. For $q \in L^2(\R^2)$, set
$$ \widehat{q}(k) = \frac{1}{\pi} \int e_k(z) q(z) \, dz. $$

\begin{theorem}\cite{NRT:2017}
\label{DSII:NRT} The scattering transform $\calS$ extends to a diffeomorphism from $L^2(\R^2)$ onto itself with $\calS \circ \calS = I$. Moreover, the following estimates hold:
\begin{enumerate}
\item[(i)]		$\norm[L^2(\R^2)]{\calS q} = \norm[L^2(\R^2)]{q}$
\smallskip
\item[(ii)]	$\left| \left( \calS(q) \right) (k) \right| \leq C\left(\norm[L^2]{q} \right) \left| \calM \widehat{q}(k) \right|$
\end{enumerate}
\end{theorem}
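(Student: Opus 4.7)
The plan is to establish the three pieces in sequence on the Schwartz class, where Theorem \ref{DSII:Sung} provides smooth rapidly-decreasing CGO solutions, and then extend by density. Starting from \eqref{DSII:S}, write $m^1 = 1 + (m^1 - 1)$ to split
\[
(\calS q)(k) = -\frac{i}{\pi}\int_{\C} e_k(z)\,\overline{q(z)}\, dz \;-\; \frac{i}{\pi}\int_{\C} e_k(z)\,\overline{q(z)}\,[m^1(z,k) - 1]\, dz,
\]
so that (ii) reduces to controlling the second term pointwise in $k$ by $\calM \widehat{q}(k)$. Eliminating $m^2$ via the second equation of \eqref{DSII:M.int} produces a Fredholm integral equation for $m^1 - 1$ whose kernel is a product of the Beurling transform $\bfS$, the potentials $q$ and $\bar q$, and the oscillating factor $e_k$. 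The main analytic task is to obtain an operator bound on this kernel in an appropriate mixed-norm space in $z$, uniform in $k$, with norm controlled by $\norm[L^2]{q}$ alone (so that a Neumann series converges), and then a pointwise-in-$k$ estimate showing that the result of applying the operator to the source term is dominated by $\calM \widehat{q}(k)$. The $L^p$-boundedness of $\bfS$ and the Hardy-Littlewood maximal theorem are the main inputs, together with the oscillatory cancellation in $e_k$; this step is the true crux of the argument and the main obstacle.

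With (ii) in hand on $\scrS(\R^2)$, Plancherel for the antilinear Fourier transform $\calF_a$ and the maximal theorem give the preliminary a priori bound $\norm[L^2]{\calS q} \leq C(\norm[L^2]{q})\norm[L^2]{q}$. To upgrade this to the sharp isometry (i), I would exploit the Plancherel-type identity for the $\dbar$-scattering transform, derived by pairing the equations \eqref{DSII:M} against $\overline{m^1}, \overline{m^2}$, integrating in $z$, and manipulating the resulting $\dee_{\kbar}$ identity using the large-$z$ expansion of $m^1, m^2$. The conservation law that emerges, combined with the reconstruction formula $(\calS q)(k) = -ib_1(k)$, yields $\norm[L^2]{\calS q} = \norm[L^2]{q}$. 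Once (i) and (ii) hold on Schwartz, density and analogous Lipschitz estimates on differences $\calS q_1 - \calS q_2$ (obtained by linearizing the CGO equation and applying the same bounds to the resulting system) let us extend $\calS$ uniquely to a Lipschitz-continuous map on bounded subsets of $L^2(\R^2)$, with both identities carrying over to the extension.

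To conclude, I would verify the involution $\calS \circ \calS = I$ first on Schwartz data, by observing that the scattering data $\bfs = \calS q$ satisfies exactly the CGO system \eqref{DSII:M.dual} at $t=0$ in the $k$-variable with potential $\bfs$, so that the reconstruction formula \eqref{DSII.recon} at $t=0$ reads $(\calS \bfs)(z) = q(z)$; this is the symmetry first observed by Beals-Coifman and Fokas-Ablowitz and rigorously justified by Sung for Schwartz data. Continuity extends $\calS^2 = I$ to $L^2(\R^2)$, so $\calS$ is a bijective isometric involution. Fr\'echet-differentiability, following from an implicit-function-theorem argument applied to the fixed-point equation for $m^1 - 1$ around any base point $q \in L^2$ (with uniform invertibility supplied by the operator bounds established in the proof of (ii)), then promotes the bijection to a diffeomorphism. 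As indicated, the real difficulty lies in the pointwise estimate (ii); once that harmonic-analysis bound is established, the remainder of the argument proceeds by combining standard tools of complex analysis with the classical inverse-scattering symmetry between the $z$- and $k$-variables.
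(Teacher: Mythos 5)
There is a genuine gap, and it sits exactly where you locate the ``true crux'': the claim that the integral equation for $m^1-1$ can be solved by a Neumann series whose convergence is ``controlled by $\norm[L^2]{q}$ alone.'' The relevant operator is $T f = \dee_{\zbar}^{-1}\left(e_{-k}\, q\, \fbar\right)$, and the best one gets from the Hardy--Littlewood--Sobolev inequality is $\norm[L^p\to L^p]{T}\lesssim_p \norm[L^2]{q}$ for $p>2$ (this is \eqref{S.est} in the text). A Neumann series therefore converges only when $\norm[L^2]{q}$ is small --- which recovers Brown's small-data $L^2$ result, not the theorem you are trying to prove. For arbitrary $L^2$ data the operator norm of $T$ exceeds $1$ and no amount of Beurling-transform boundedness or maximal-function technology rescues the series. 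What is actually needed is the uniform invertibility of $I-T$ (equivalently, of $L_q u=\dee_{\zbar}u+q\ubar$ from $\dot H^{1/2}$ to $\dot H^{-1/2}$) with a bound $C(\norm[L^2]{q})$ depending only on the $L^2$ norm; this is Theorem 1.1 of \cite{NRT:2017} (restated here as Theorem \ref{thm:DS.l2.res}), and its proof requires concentration-compactness / profile-decomposition arguments, not a perturbative expansion. The same missing resolvent bound also undermines your extension step: the ``analogous Lipschitz estimates on differences'' needed to pass from $\scrS(\R^2)$ to $L^2(\R^2)$ by density are themselves consequences of the uniform resolvent estimate, so they cannot be invoked before it is established. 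Note that bounded subsets of $L^2$ are not precompact in $L^2$, so the soft compactness route used in these notes for $H^{1,1}$ data (Lemma \ref{lemma:resT.est2}, via Kolmogorov--Riesz) is also unavailable.

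For calibration: the present paper does not prove Theorem \ref{DSII:NRT}; it cites \cite{NRT:2017} and proves only the $H^{1,1}$ analogue (Theorem \ref{thm:DS.lip}), precisely because the $L^2$ resolvent estimate is the deep new ingredient of that work. The rest of your outline is broadly in the right spirit --- the splitting \eqref{S.split}, the involution on Schwartz data via the $z\leftrightarrow k$ symmetry of the CGO system (this is Proposition \ref{prop:DSII.Sinv} here), and the use of oscillation in $e_k$ via the pointwise bound $\left|\dee_{\zbar}^{-1}(e_k f)(x)\right|\lesssim \left(\calM f(x)\right)^{1/2}\left(\calM \widehat{f}(k)\right)^{1/2}$ of Theorem \ref{thm:DBAR.est} --- but every one of these steps is downstream of the uniform resolvent bound, which your proposal assumes rather than proves.
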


Theorem \ref{DSII:NRT} considerably extends earlier work of Brown \cite{Brown:2001} and Perry \cite{Perry:2016}, who considered the scattering map respectively for small data in $L^2(\R^2)$ and data in a weighted space $H^{1,1}(\R^2)$ analogous to the space $H^{1,1}(\R)$ for the NLS. It also illuminates  other work of Astala-Faraco-Rogers \cite{AFR:2015} and Brown-Ott-Perry \cite{BOP:2016} on the Fourier-like mapping properties of $\calS$.
The maximal function estimate is particularly important for the analysis of scattering since it implies that the solution of DSII by inverse scattering is bounded \emph{pointwise} by a maximal function for the solution of the linear problem. This means, for example, that Strichartz-type estimates for the linear problem imply Strichartz-type estimates for the nonlinear problem. 

As a consequence of Theorems \ref{DSII:Sung} and \ref{DSII:NRT}, Nachman, Regev, and Tataru obtain a complete characterization of the dynamics for DSII.  Denote by $U(t)$ the (nonlinear) solution operator for \eqref{DSII}, and by $V(t)$ the solution operator for the linearization of \eqref{DSII} at $q=0$, i.e.,
\begin{equation}
\label{DSII:lin}
\left\{
\begin{aligned}
v_t + 2\left( \dee_z^2 + \dee_{\zbar}^2 \right) v &= 0,	\\
v(z,0)	&=	v_0(z).
\end{aligned}
\right.
%}
\end{equation}

\begin{theorem} \cite{NRT:2017}
The Cauchy problem for \eqref{DSII} is globally well-posed in $L^2(\R^2)$ with $$\norm[L^2(\R^2)]{q(t)} = \norm[L^2(\R^2)]{q(0)}$$ for all $t$. Moreover, all solutions scatter in the sense that, for any $q_0 \in L^2(\R^2)$, there is a function 
$v_0 \in L^2(\R^2)$ so that
$$ \lim_{t \to \pm \infty} \norm[L^2]{U(t)q_0 - V(t) v_0} = 0. $$
The function $v_0$ is given by
$$ v_0 = \calF_a \calS q_0. $$
\end{theorem}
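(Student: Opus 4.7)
The plan is to derive both global well-posedness and scattering directly from the inverse scattering solution formula \eqref{DSII:IST-sol}, leveraging Theorem \ref{DSII:NRT} (the $L^2$ isometry and the involution $\calS \circ \calS = I$) and Theorem \ref{DSII:Sung} (classical solutions for Schwartz data). To set up well-posedness, define
\[
U(t) q_0 := \calS\bigl[\,e^{2it(k^2+\kbar^2)}\,(\calS q_0)(k)\,\bigr],
\]
using $\calS^{-1} = \calS$. Since $2(k^2+\kbar^2) = 4(k_1^2 - k_2^2)$ is real, the multiplier $E(t): h \mapsto e^{2it(k^2+\kbar^2)} h$ is a strongly continuous unitary group on $L^2(\R^2)$. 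Combined with the $L^2$-isometry of $\calS$, this yields $\norm[L^2]{U(t) q_0} = \norm[L^2]{q_0}$ and strong continuity of $t \mapsto U(t) q_0$, while $\calS^2 = I$ gives $U(0) q_0 = q_0$. For $q_0 \in \scrS(\R^2)$, Theorem \ref{DSII:Sung} identifies $U(t) q_0$ with the classical solution; the Lipschitz continuity of $\calS$ on $L^2$ from the diffeomorphism property extends $U(t)$ by density to all $q_0 \in L^2$, yielding a distributional solution of \eqref{DSII} and the conservation law.

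For the scattering part, one first verifies that the linear flow $V(t)$ is conjugated by $\calF_a$ to the same multiplier $E(t)$. Integrating by parts in the defining integral gives the Fourier-type identities $\calF_a(\dee_z f) = -i\kbar \calF_a f$ and $\calF_a(\dee_{\zbar} f) = -ik \calF_a f$, and combining these with the antilinearity relation $\calF_a(iv_t) = -i \dee_t (\calF_a v)$ and $\calF_a \circ \calF_a = I$ produces $V(t) v_0 = \calF_a[E(t)\, \calF_a v_0]$. Substituting $v_0 = \calF_a \calS q_0$ and using $\calF_a^2 = I$ simplifies this to $V(t) v_0 = \calF_a[E(t)\, \calS q_0]$, whence
\[
U(t) q_0 - V(t) v_0 = (\calS - \calF_a)\bigl[E(t)\, \calS q_0\bigr].
\]
Since $\calF_a$ is the Fr\'echet derivative of $\calS$ at $0$ (a direct consequence of \eqref{DSII:M.int}, which gives $m^1 = 1 + \bigO{q^2}$), the operator $\calS - \calF_a$ is a genuine nonlinear remainder, and the scattering claim reduces to showing $\norm[L^2]{(\calS - \calF_a)\phi_t} \to 0$ as $t \to \pm\infty$, where $\phi_t := E(t)\, \calS q_0$ stays on the $L^2$-sphere of radius $\norm[L^2]{q_0}$ but oscillates rapidly in $k$.

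Since both $\calS$ and $\calF_a$ are $L^2$-isometries, $\norm[L^2]{(\calS - \calF_a) f} \leq 2 \norm[L^2]{f}$ uniformly, so by the density of $\scrS(\R^2) \subset L^2$ it suffices to prove the limit for Schwartz initial data. For Schwartz $h$ one writes
\[
[(\calS - \calF_a)(E(t)h)](w) = -\frac{i}{\pi}\int_\C e_w(k)\,\overline{E(t)h(k)}\,\bigl(m^1[E(t)h](k,w) - 1\bigr)\, dk,
\]
and the task becomes showing that the CGO correction $m^1[E(t)h] - 1$ is negligible as $|t| \to \infty$ compatibly with an $L^2(dw)$ estimate on the outer integral. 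The main obstacle is precisely this last step: the phase $2(k^2 + \kbar^2)$ has a unique, non-degenerate critical point at $k = 0$, so a stationary-phase/Riemann-Lebesgue analysis of \eqref{DSII:M.int} gives pointwise decay of $m^1[E(t)h] - 1$ and of the outer integral, but upgrading pointwise decay to $L^2$-norm convergence is delicate. This is precisely where the maximal function estimate (ii) of Theorem \ref{DSII:NRT} becomes indispensable, as it transfers dispersive decay bounds from the linear propagator $V(t)$ to the nonlinear remainder.
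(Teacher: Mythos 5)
Your reduction of the scattering statement to the identity
$$ U(t)q_0 - V(t)v_0 = (\calS - \calF_a)\left[E(t)\,\calS q_0\right] $$
is correct and is a natural reformulation, but the proposal stops exactly where the proof has to begin. The entire content of the scattering assertion is the claim that $\norm[L^2]{(\calS-\calF_a)\phi_t} \to 0$ for $\phi_t = E(t)\calS q_0$, and you do not prove it: you describe it as ``the main obstacle,'' gesture at stationary phase for the critical point of $2(k^2+\kbar^2)$ at $k=0$, and say the maximal function estimate ``becomes indispensable'' without showing how it closes the argument. Note that $\phi_t$ does not become small in any norm --- it stays on a sphere in $L^2$ --- so one needs genuine quantitative decay of the CGO remainder $m^1[\phi_t]-1$ in a form that survives the outer $L^2(dk)$ integration; this is not a routine Riemann--Lebesgue argument, and as written the theorem has been reduced to an unproven claim of essentially the same depth. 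There is also a smaller flaw in the density reduction: the uniform bound $\norm[L^2]{(\calS-\calF_a)f}\leq 2\norm[L^2]{f}$ does not let you pass from Schwartz data to general $L^2$ data, because you need to control the \emph{difference} $(\calS-\calF_a)\phi_t - (\calS-\calF_a)\phi_t^{(n)}$ uniformly in $t$; for that you must invoke the locally Lipschitz character of $\calS$ on $L^2$ (available from Theorem \ref{DSII:NRT}), not merely the isometry property.

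The route taken in the paper (following \cite{NRT:2017}) avoids this difficulty entirely by using Cook's method: one shows that $\frac{d}{dt}\,V(-t)U(t)q_0 = V(-t)N(q(t))$ with $N(q)=q(g+\gbar)$, and that this is integrable as an $L^2$-valued function of $t$. The two inputs are (i) the pointwise maximal function bound \eqref{DSII:S.max}, which transfers the Strichartz estimate \eqref{DSII:S1} from the linear flow to the nonlinear solution and yields $q\in L^4(\R^2\times\R)$ globally, hence $N(q)\in L^{4/3}_{z,t}$ by boundedness of the Beurling transform and H\"{o}lder; and (ii) the dual Strichartz estimate \eqref{DSII:S3}, which converts $N(q)\in L^{4/3}_{z,t}$ into convergence of $\int V(-t)N(q(t))\,dt$ in $L^2$ and hence existence of the limits $v_\pm$. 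If you want to salvage your direct approach, the honest path is to first establish the global $L^4_{z,t}$ bound from the maximal function estimate and then run exactly this Duhamel/Strichartz argument; the ``nonlinear remainder decays'' statement you need is a consequence of that computation, not an independent input.
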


Note that the scattering is \emph{trivial} because the $t \to -\infty$ (past) and $t \to +\infty$ (future) asymptotes are the same.

Perry \cite{Perry:2016} obtained pointwise asymptotics under somewhat more restrictive conditions on the initial data.  Let
\begin{equation}
\label{DSII:H11}
 H^{1,1}(\R^2) = \left\{ q \in L^2(\R^2): \nabla q, |\dotarg| q(\dotarg) \in L^2(\R^2) \right\}. 
\end{equation}

\begin{theorem}
Suppose that $q_0 \in L^1(\R^2) \cap H^{1,1}(\R^2)$ and that $\left(\calS q_0\right)(0) \neq 0$. Then
$$ q(x,t) \sim v(z,t) + \littleo{t^{-1}} $$
where $v(z,t)$ solves the linearized equation \eqref{DSII:lin} with initial data $v_0 = \calF_a \calS q_0$. 
\end{theorem}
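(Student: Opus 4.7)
The plan is to start from the reconstruction formula \eqref{DSII.recon},
$$ q(z,t) = -\frac{i}{\pi}\int_{\C} e^{it\varphi(k;z,t)}\,\overline{\bfs(k)}\,n^{1}(z,k,t)\,dk, $$
and isolate the linear contribution corresponding to the choice $n^{1}\equiv 1$. At the linearized level the CGO is trivial and the scattering data evolves by the phase $e^{2it(k^{2}+\kbar^{2})}$, so this free part coincides (up to straightforward manipulations of the antilinear transform $\calF_{a}$) with the solution $v(z,t)$ of \eqref{DSII:lin} with initial data $v_{0} = \calF_{a}\calS q_{0}$. The claim $q(z,t) = v(z,t) + o(t^{-1})$ therefore reduces to the bound
$$ R(z,t) := -\frac{i}{\pi}\int_{\C} e^{it\varphi}\,\overline{\bfs(k)}\,\bigl[n^{1}(z,k,t)-1\bigr]\,dk = o(t^{-1}). $$

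The first step is a stationary-phase analysis of $\varphi(k;z,t) = 2(k^{2}+\kbar^{2}) - (kz+\kbar\zbar)/t$. Solving $\dee_{k}\varphi = \dee_{\kbar}\varphi = 0$ yields a single nondegenerate critical point at $k_{0} = z/(4t)$, which tends to $0$ as $t\to\infty$ with $z$ fixed. This is precisely why the hypothesis $(\calS q_{0})(0)\neq 0$ is imposed: it guarantees that the amplitude $\overline{\bfs(k)}$ does not vanish at the emergent critical point, so that $v(z,t)$ decays like $t^{-1}$ with a nondegenerate leading coefficient and ``$o(t^{-1})$'' is a genuine improvement over the individual pointwise decay rates. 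Under the stated regularity $q_{0}\in L^{1}(\R^{2})\cap H^{1,1}(\R^{2})$, the mapping properties of $\calS$ established in \cite{Perry:2016} place $\bfs$ in a function space on which pointwise evaluation at $k=0$ is well defined and on which $\bfs$ and its first derivatives enjoy sufficient integrability.

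To estimate $R(z,t)$ I would exploit the $\dbar$-system \eqref{DSII:M.dual}, which furnishes the integral representation
$$ n^{1}(z,k,t) - 1 = \dee_{\kbar}^{-1}\bigl[\bfs(k)\,e^{2it(k^{2}+\kbar^{2})}\,n^{2}(z,k,t)\bigr] $$
together with the analogous equation for $n^{2}$. Substituting this identity into $R(z,t)$ and iterating produces a Neumann-type expansion whose successive terms carry additional rapidly oscillating factors $e^{\pm 2it(k^{2}+\kbar^{2})}$ in auxiliary $k$-variables. Combined with the outer phase $e^{it\varphi}$, each such factor introduces a non-stationary phase in at least one variable, which on integration by parts produces an extra power of $t^{-1}$. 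Partitioning the outer $k$-integral into a shrinking neighborhood of $k_{0}$, handled by the Morse lemma and classical stationary phase (which reproduces the leading asymptotic of $v$), and its complement, handled by the identity $(it\,\dee_{k}\varphi)^{-1}\dee_{k}e^{it\varphi} = e^{it\varphi}$ with $\dee_{k}$ falling on $\bfs$ or converted via the $\dbar$-equation on $n^{1}$, yields the required $o(t^{-1})$ estimate.

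The principal technical difficulty is the ``$\dbar$-steepest descent'' itself: unlike the Riemann-Hilbert steepest descent used for NLS, there is no contour and no analyticity of $n^{1},n^{2}$ in $k$ to exploit. One must work directly with solid Cauchy transforms and must carefully balance the decay of $\bfs$ (from the $H^{1,1}$-mapping properties of $\calS$) against the loss of regularity incurred when derivatives fall on the non-analytic amplitudes. Near $k_{0}$ the integration-by-parts argument breaks down, and the stationary phase contribution must be matched to $v$ exactly, with error terms whose $H^{1,1}$-Sobolev structure gives the strictly-better-than-$t^{-1}$ decay. The hypothesis $\bfs(0)\neq 0$ is what permits the shrinking neighborhood of $k_{0}$ and its complement to be glued with a uniform $o(t^{-1})$ remainder independent of the geometry of the critical point as $t\to\infty$.
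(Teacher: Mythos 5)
First, a point of orientation: these notes state this theorem in Lecture 1 only as a quotation of the main asymptotic result of \cite{Perry:2016}; no proof of it appears anywhere in the paper. Lecture \ref{sec:lec3} constructs the scattering maps on $H^{1,1}(\R^2)$ and proves that \eqref{DSII:qinv} solves the Cauchy problem, but deliberately stops short of the large-time analysis. So there is no in-paper argument to measure your proposal against; what can be said is that your opening moves do coincide with the strategy of \cite{Perry:2016}: write $q-v=R$ with $R=-\tfrac{i}{\pi}\int e^{it\varphi}\,\overline{\bfs(k)}\,(n^1-1)\,dk$, observe that the term with $n^1\equiv 1$ is exactly $v=V(t)\,\calF_a\calS q_0$, and attack $R$ through the $\dbar_k$-system \eqref{DSII:M.dual}. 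Your reading of the hypotheses (that $q_0\in L^1$ makes $\bfs$ continuous so the stationary-phase amplitude and the value $\bfs(0)$ are defined, and that $\bfs(0)\neq 0$ keeps $v$ itself from being $\littleo{t^{-1}}$) is reasonable.

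The step that would fail as written is the assertion that, after substituting $n^1-1=\dee_{\kbar}^{-1}\bigl[\bfs\,e^{2it(k^2+\kbar^2)}n^2\bigr]$ and iterating, ``each such factor introduces a non-stationary phase in at least one variable.'' The iterated terms are multilinear oscillatory integrals whose combined phases (signed sums of $\varphi$ evaluated at several $k$-variables) \emph{do} have joint stationary points --- for instance all variables equal to $k_0=z/(4t)$ --- so no variable is uniformly non-stationary and a single integration by parts does not yield a clean extra factor of $t^{-1}$. Moreover, when you integrate by parts in the outer variable, $\dee_k$ falls on $n^1-1$, and this derivative is \emph{not} ``converted via the $\dbar$-equation'': the system controls $\dee_{\kbar}n^1$ (which is $e^{it\varphi}\bfs\,n^2$, of size $O(1)$, not small), while $\dee_k n^1$ is controlled only by differentiating and re-solving the whole system, and the factor $(\dee_k\varphi)^{-1}=(4k-z/t)^{-1}$ is singular precisely at the stationary point you are trying to isolate. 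Closing these gaps is the actual content of the proof in \cite{Perry:2016}: sharp multilinear stationary-phase estimates on the low-order terms of the expansion, together with uniform-in-$t$ resolvent bounds to sum the Neumann tail with a remainder that is itself $\littleo{t^{-1}}$, none of which your sketch supplies. Note also that the machinery actually developed in Lecture \ref{sec:lec3} --- the maximal-function bound \eqref{DBAR.est.fourier} and Lemma \ref{lemma:mxk.est} --- yields the pointwise domination $|q(z,t)|\lesssim \calM q_{\lin}(z,t)$ of \cite{NRT:2017}, which is an $\bigO{t^{-1}}$-type statement; upgrading it to the difference estimate $q-v=\littleo{t^{-1}}$ is strictly harder and does not follow from anything proved in these notes.
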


\subsection*{Exercises for Lecture 1}

In the following exercises, the Fourier transforms $\calF$ and $\calF^{-1}$ are defined by
\begin{align}
\label{Fourier}
\left(\calF f \right)(\xi) 			&= \int e^{-ix\xi} f(x) \, dx	\\
\label{Fourier.inverse}
\left(\calF^{-1} g \right)(x) 	&=	\frac{1}{2\pi} \int e^{ix\xi} g(\xi) \, d\xi.
\end{align} 

\medskip

\begin{exercise}
\label{ex:Fourier.conv}
Show that, with the conventions \eqref{Fourier} and \eqref{Fourier.inverse}, 
$$ \calF (f*g) (\xi) = \left( \calF f \right)(\xi) \left(\calF g \right)(\xi). $$
\end{exercise}

\begin{exercise}
\label{ex:Fourier.gaussian}
Suppose that $f(x) = e^{-zx^2}$ for some $z$ with $\real z >0$. Show that
$$
\left( \calF f \right)(\xi) =
        \sqrt{\frac{\pi}{z}} e^{-\xi^2/4z}.
$$
Use the formula
$$ \int_{-\infty}^\infty e^{-zx^2} \, dx = \sqrt{\frac{\pi}{z}} $$
and made a contour shift in the integration.
\end{exercise}

\begin{exercise}
\label{ex:Schrodinger.prop1}
The distribution inverse Fourier transform of $e^{\pm it \xi^2}$ may be computed as
$$ 
\calF^{-1} \left( e^{\pm it \xi^2} \right) = \lim_{\eps \to 0^+} \calF^{-1} \left( e^{\pm it \xi^2} e^{-\eps \xi^2} \right). 
$$
Using the result of Exercise \ref{ex:Fourier.gaussian}, show that
$$
\calF^{-1} \left( e^{\pm it \xi^2} \right) = \frac{1}{\sqrt{\mp 4\pi i t}}e^{\mp ix^2/(4t)} 
$$
where we take the principal branch of the square root function.
\end{exercise}

\begin{exercise}
Use the result of Exercise \ref{ex:Schrodinger.prop1} and the convolution theorem from Exercise \ref{ex:Fourier.conv} to show from the solution formula \eqref{LSE:Fourier-sol} that
$$ q(x,t) = \frac{1}{\sqrt{4\pi i t}} \int_{-\infty}^\infty e^{i(x-y)^2/(4t)} q_0(y) \, dy $$
for $q_0 \in \scrS(\R)$.
\end{exercise}

\begin{exercise}
\label{ex:NLS.Lax.pre}
Suppose that $\psi$ is a twice continuously differentiable, $N \times N$ matrix-valued solution to the system
\begin{align*}
\psi_x	&=	A(x,t) \psi\\
\psi_t		&=	B(x,t) \psi
\end{align*}
where $A(x,t)$ and $B(x,t)$ are continuously differentiable $N \times N$ matrix-valued functions of $x,t$. Suppose further that $\det \psi(x,t) \neq 0$ for all $(x,t)$. 
Show that 
$$ A_t  - B_x + [A,B] = 0. $$
\emph{Hint}: cross-differentiate the equations and use the equality $\psi_{tx} = \psi_{tx}$ (Clairaut's Theorem).
\end{exercise}

\begin{exercise}
A \emph{fundamental} solution of \eqref{NLS:Lax} is a twice-differentiable $2\times 2$ matrix-valued solution $\psi(x,t)$ with $\det \psi(x,t) \neq 0$ for all $(x,t)$. 
Using the result of Exercise \ref{ex:NLS.Lax.pre}, show that if \eqref{NLS:Lax} admits a fundamental solution for a given smooth function $q(x,t)$, then $q(x,t)$ solves \eqref{NLS}.
\end{exercise}

\begin{exercise}
\label{ex:ZS-AKNS.sa}
Let $\calL$ be the operator \eqref{ZS-AKNS}. Show that, for any smooth, compactly supported, $2 \times 2$ matrix-valued functions $\psi(x)$ and $\varphi(x)$, the identity $(\psi, \calL \varphi) = (\calL \psi, \varphi)$ holds, where the inner product is defined by
$$ (\psi,\phi) = \int_{\R} \Tr \left( \psi^*(x) \phi(x) \right) \, dx . $$
\end{exercise}

\begin{exercise}
\label{ex:NLS.Gerard}
Consider the alternative Lax representation (from the original paper of Zakharov and Shabat \cite{ZS:1972})
$$
\begin{aligned}
L	&= \twomat{i\dee_x}{q}{\qbar}{-i\dee_x}	
\\[5pt]
B	&=	\twomat	{2i \dee_x^2 - i|q|^2}{q_x +2q \dee_x}
						{\overline{q_x} +2\qbar \dee_x}{-2i\dee_x^2 + i|q|^2}
\end{aligned}
$$
Show that \eqref{NLS} is equivalent to the operator identity
$$ \dot{L} = [B,L]. $$
\emph{Remark}: The operator $L$ is formally self-adjoint and $B$ is formally skew-adjoint. This structure corresponds to the Lax representation for KdV.
\end{exercise}

\begin{exercise}
\label{ex:DSII.crossdiff}
Suppose given a family of smooth solutions $\psi_1(z,k,t), \psi_2(z,k,t)$ of \eqref{DSII:Lax.z}--\eqref{DSII:Lax.t}, indexed by $k \in \C$,
so that\footnote{These conditions are motivated by what one can actually prove about the solutions $m^1(z,k,t)=e^{-ikz+ik^2 t} \psi_1(z,k,t)$ and $m^2(z,k,t)=e^{-ikz+ik^2 t} m^2(z,k,t)$!}
\begin{itemize}
\item[(i)] $\lim_{k \to \infty} e^{-ikz + ik^2t} \psi_1(z,t,k) = 1$, 
\item[(ii)] $\lim_{k \to \infty} e^{-ikz+ik^2 t} \psi_2(z,t,k) = 0$, 
\item[(iii)] for each $(t,z)$, $\psi_2(z,k,t) \neq 0$ for at least one $k$.
\end{itemize}
Cross-differentiate the first equations of \eqref{DSII:Lax.z} and \eqref{DSII:Lax.t} and equate mixed partials to show that
\begin{multline}
\label{DSII:crossdiff}
-2i \eps  \left( q \dee_z \qbar\right) \psi_1 
	+  \left(\dot{q}-i\gbar q \right) \psi_2 = \\ 
			2i \left(2\eps \qbar \dee_z q + i\eps  q \dee_z \qbar+ \frac{1}{2}\dee_{\zbar} g \right) \psi_1 
	 		+	2i \left(\dee_z^2 q + \dee_{\zbar}^2 q + \frac{1}{2}gq \right) \psi_2
\end{multline}
Conclude  that the compatibility condition \eqref{DSII} holds. To be really thorough (!), you should check that cross-differentiating the second equations of \eqref{DSII:Lax.z}--\eqref{DSII:Lax.t} gives the same relation.

\emph{Hint}: use \emph{both} equations \eqref{DSII:Lax.z} to eliminate $\zbar$-derivatives of $\psi_1$ and $z$-derivatives of $\psi_2$. Expressions involving `irreducible' derivatives such as $\dee_z \psi_1$ and $\dee_{\zbar} \psi_2$ should cancel, leading to \eqref{DSII:crossdiff}. Then use the asymptotic conditions to argue that the coefficients of $\psi_1$ and $\psi_2$ must both be zero.
\end{exercise}

\newpage
\section{The Defocussing Cubic Nonlinear Schr\"{o}dinger Equation}

This lecture largely follows the analysis of Deift-Zhou \cite[esp.\ \S 3]{DZ:2003} with a few inessential changes. We will analyze the direct and inverse scattering maps for NLS and, for completeness, give a proof of Beals-Coifman's result that the solution formula via inverse scattering generates a classical solution of the defocussing NLS equation \eqref{NLS} if $q_0 \in \scrS(\R)$. 

We will solve the NLS equation in the sense that we find a solution of the integral equation
\begin{equation}
\label{NLS:int}
q(t) = e^{it\Delta}q_0 - i \int_0^t e^{i(t-s)\Delta}\left( 2 |q(s)|^2 q(s) \right) \, ds
\end{equation}
on $H^1(\R)$, where $e^{it\Delta}$ is the solution operator \eqref{LSE:SO} for the linear Schr\"{o}dinger equation. Here
\begin{equation}
\label{NLS:H1}
H^1(\R) = \left\{ u \in L^2(\R):  u' \in L^2(\R) \right\}.
\end{equation}
Although \eqref{NLS:int} can be solved in much weaker spaces (see, for example \cite{LP:2015} or \cite[Chapter 3]{Tao:2006}), the space  $H^1(\R)$ will serve our purpose of showing that the inverse scattering method produces a continuous solution map on $H^{1,1}(\R)$. The following lemma shows that, to show that \eqref{NLS:IST-sol} solves \eqref{NLS:int}, it suffices to show that \eqref{NLS:IST-sol} produces a classical solution of \eqref{NLS} for initial data in $\scrS(\R)$.

\begin{lemma}
\label{lemma:NLS.approx}
Let $q_0 \in H^{1}(\R)$ and suppose that $\{ q_n \}$ is a sequence from $\scrS(\R)$ with $\norm[H^1]{q_n - q_0} \to 0 $ as $n \to \infty$. Suppose that $q_n(z,t)$ solves \eqref{NLS:int} with initial data $q_n$ and that $q_n(z,t) \to q(z,t)$ in the sense that $\sup_{t \in (0,T)} \norm[H^1]{q_n(\dotarg,t)-q(\dotarg,t)} \to 0$ as $n \to \infty$. Then $q(z,t)$ solves \eqref{NLS:int} with initial data $q_0$. 
\end{lemma}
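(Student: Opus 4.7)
The plan is to pass to the limit $n \to \infty$ in the Duhamel identity
\[ q_n(z,t) = e^{it\Delta} q_n - i \int_0^t e^{i(t-s)\Delta}\bigl( 2|q_n(s)|^2 q_n(s) \bigr)\, ds, \]
comparing the two sides in $H^1(\R)$ for each fixed $t \in [0,T]$. The hypothesis gives $q_n \to q$ in $C([0,T]; H^1(\R))$, so in particular the family $\{q_n\}$ is uniformly bounded in that space by some constant $M$.

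\textbf{The linear term.} The solution operator $e^{it\Delta}$ defined by \eqref{LSE:SO} is a Fourier multiplier, hence commutes with $\dee_x$ and is an isometry on $L^2(\R)$; therefore it is also an isometry on $H^1(\R)$. Consequently
\[ \sup_{t \in [0,T]} \norm[H^1]{e^{it\Delta} q_n - e^{it\Delta} q_0} = \norm[H^1]{q_n - q_0} \longrightarrow 0. \]

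\textbf{The nonlinear (Duhamel) term.} In one space dimension, the Sobolev embedding $H^1(\R) \hookrightarrow L^\infty(\R)$ makes $H^1(\R)$ a Banach algebra, which yields a cubic Lipschitz estimate
\[ \norm[H^1]{|u|^2 u - |v|^2 v} \leq C\bigl(\norm[H^1]{u}+\norm[H^1]{v}\bigr)^2 \norm[H^1]{u-v}. \]
Applied with $u = q_n(s)$ and $v = q(s)$, and combined with the uniform $H^1$ bound $M$, this gives
\[ \sup_{s \in [0,T]} \norm[H^1]{|q_n(s)|^2 q_n(s) - |q(s)|^2 q(s)} \leq 4CM^2 \sup_{s \in [0,T]} \norm[H^1]{q_n(s) - q(s)} \longrightarrow 0. \]
Using again that $e^{i(t-s)\Delta}$ is an $H^1$-isometry, the integrand converges in $H^1$ uniformly in $s \in [0,T]$, so the Duhamel integral converges in $H^1$ uniformly in $t \in [0,T]$ (either by dominated convergence for the $H^1$-valued Bochner integral or just by bounding the norm under the integral sign and multiplying by the length $T$). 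Passing to the limit on both sides of the Duhamel identity gives \eqref{NLS:int} for $q$ with data $q_0$.

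\textbf{Main obstacle.} There is essentially no obstacle: the entire argument reduces to the algebra property of $H^1(\R)$ and the $H^1$-boundedness of the linear propagator. The only mild subtlety worth flagging is that the hypothesis is convergence in $C([0,T]; H^1)$, \emph{not} merely in $C([0,T]; L^2)$; the weaker hypothesis would not suffice, since derivative control is essential to estimate the cubic nonlinearity in $H^1$.
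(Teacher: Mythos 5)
Your argument is correct and is exactly the route the paper intends: the paper relegates this lemma to Exercise \ref{ex:NLS.approx}, whose hints are precisely your two ingredients (that $e^{it\Delta}$ is an isometry of $H^1(\R)$ and that the algebra property of $H^1(\R)$ gives convergence of the cubic nonlinearity uniformly in $s$), followed by passing to the limit in \eqref{NLS:int}. Nothing further is needed.
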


We leave the proof as Exercise \ref{ex:NLS.approx}.

\subsection{The Direct Scattering Map}
\label{NLS:sec.direct}

In this subsection we'll construct the direct scattering map by studying solutions $\Psi^\pm$ of the problem
$\calL \psi = \lam \psi$.  Here $\calL$ is the ZS-AKNS operator \eqref{ZS-AKNS}, $\psi$ is $2 \times 2$ matrix-valued, 
$\lam \in \R$, and $\Psi^\pm$ satisfy the asymptotic conditions
$$ \lim_{x \to \pm \infty} \Psi^\pm(x,\lam)e^{i\lam x \sigma_3} = \I.$$
It is well-known that the Jost solutions exist and are unique for $q \in L^1(\R)$, and that $\det\Psi^\pm(x,\lam) = 1$. 

We begin with some reductions. A straightforward computation shows 
that for any $z \in \C$, the solution space of $\calL\psi = z \psi$ is invariant under the mapping
\begin{equation}
\label{NLS:AKNS-sym}
 \psi(x,z) \mapsto \sigma_1 \overline{\psi(x,\zbar)} \sigma_1^{-1}, \quad 
\sig_1 = 
\begin{pmatrix}
0	&	1	\\
1	&	0
\end{pmatrix}
\end{equation}
(Exercise \ref{ex:NLS.sigma1}).
From this symmetry and  the uniqueness of Jost solutions,  it follows that the matrix-valued Jost solutions take the form
\begin{equation}
\label{Jost.sym}
\Psi(x,\lam) = 
\begin{pmatrix}
\Psi_{11}(x,\lam)	&	\overline{\Psi_{21}(x,\lam)}	\\
\Psi_{21}(x,\lam)	&	\overline{\Psi_{11}(x,\lam)}
\end{pmatrix}
\end{equation}
and that the matrix $T(\lam)$ defined in \eqref{NLS:T} takes the form \eqref{NLS:T.sym}.
From the relation $|a(\lam)|^2 - |b(\lam)|^2 = 1$ it follows that $|a(\lam)| \geq 1$, and that
$$ r(\lam)  = -b(\lam)/\overline{a(\lam)} $$
is a well-defined function with $|r(\lam)| < 1$. We will prove:

\begin{theorem}
\label{thm:NLS.direct}
The map $q \mapsto r$ is locally Lipschitz continuous from $H^{1,1}(\R)$ to $H^{1,1}_1(\R)$. 
\end{theorem}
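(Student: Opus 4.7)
The strategy is to route all estimates through the normalized Jost matrices $\bfM^\pm(x,\lam) = \Psi^\pm(x,\lam) e^{i\lam x \sig_3}$, which satisfy
\[
\dee_x \bfM^\pm(x,\lam) = -i\lam \, \ad \sig_3 \bigl( \bfM^\pm(x,\lam) \bigr) + \bfQ_1(x) \bfM^\pm(x,\lam), \qquad \lim_{x\to\pm\infty}\bfM^\pm(x,\lam) = \I,
\]
or, columnwise, Volterra integral equations with oscillatory kernels $e^{\pm 2i\lam(x-y)}$. Since $H^{1,1}(\R) \hookrightarrow L^1(\R)$, first I would solve these Volterras by Neumann iteration to obtain $\bfM^\pm$ uniformly bounded in $(x,\lam)$, with operator-norm bound depending only on $\norm[1]{q}$, and depending continuously on $q \in L^1$.

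With $\bfM^\pm$ in hand, reading off the asymptotics as $x \to -\infty$ of the first column of $\bfM^+$ via $\Psi^+ = \Psi^- T(\lam)$ gives integral representations
\[
a(\lam) - 1 = -\int_\R q(y)\, \bfM^+_{21}(y,\lam)\, dy, \qquad b(\lam) = -\int_\R e^{-2i\lam y}\, \overline{q(y)}\, \bfM^+_{11}(y,\lam)\, dy.
\]
The Volterra equation shows that $\bfM^+_{11}(y,\lam) = 1 + R(y,\lam)$ with $R$ at least quadratic in $q$, and $\bfM^+_{21}$ is at least linear in $q$. Consequently $b$ is, up to a cubic correction, the Fourier transform of $\overline{q}$ at $2\lam$, and $a-1$ is at least quadratic in $q$. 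In particular, once the desired $H^{1,1}$-bound on $b$ is proven, $a$ is bounded (as $|a|^2 = 1 + |b|^2$), is bounded below by $1$, and is an analytic function on $\{\imag \lam > 0 \}$; thus $1/\overline{a}$ is a bounded multiplier on $H^{1,1}(\R)$, and $\norm[\infty]{r} = \sup_\lam |b(\lam)|/\sqrt{1+|b(\lam)|^2} < 1$, placing $r$ in $H^{1,1}_1(\R)$ as required.

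The heart of the matter is the $H^{1,1}$-bound on $b$. The leading (linear) term $-\widehat{\overline{q}}(2\lam)$ maps $H^{1,1}(\R) \to H^{1,1}(\R)$ isometrically by Plancherel: differentiation in $\lam$ under the integral brings down a factor $y$ (using $y\, q \in L^2$), while multiplication of $b$ by $\lam$ is handled by integration by parts, transferring the derivative onto $\overline{q}$ (using $q' \in L^2$). For the higher-order correction terms, I would apply exactly the same weight/integration-by-parts manipulations, using in addition that $\dee_\lam \bfM^\pm$ satisfies a Volterra equation obtained by differentiating in $\lam$, whose inhomogeneity contains a factor of $y$ and is therefore controlled by $xq \in L^2$. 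The result is a bound $\norm[H^{1,1}]{b} \leq F(\norm[H^{1,1}]{q})$ for a polynomial (or continuous) function $F$.

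The principal obstacle is upgrading these bounds to \emph{Lipschitz} dependence on $q$ in the $H^{1,1}$ norm, not merely continuity, since the Neumann series produces $\bfM^\pm$ as a nonlinear function of $q$. The route I would take is to subtract the Volterra equations for two potentials $q_1,q_2 \in H^{1,1}(\R)$ to produce a Volterra equation for $\bfM^\pm[q_1] - \bfM^\pm[q_2]$ whose forcing is linear in $q_1 - q_2$ with coefficients involving $\bfM^\pm[q_2]$, and rerun the weighted/integration-by-parts estimates on the difference; because each factor is either a uniform $\bfM^\pm[q_j]$ bound (depending only on $\norm[H^{1,1}]{q_j}$) or one linear copy of $q_1 - q_2$, every estimate factors as a $\norm[H^{1,1}]{q_1-q_2}$-term times a local constant, giving the required local Lipschitz bound for $q \mapsto b$ and hence for $q \mapsto r$.
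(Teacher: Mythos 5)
Your plan follows essentially the same route as the paper's proof (Propositions \ref{prop:NLS.direct.1}--\ref{prop:NLS.direct.4}): solve the Volterra equations for the normalized Jost solutions by iteration, represent $a-1$ and $b$ as a Fourier-transform leading term plus multilinear corrections, integrate by parts to trade the weight $\lam$ for $q'$, differentiate the Volterra series in $\lam$ to bring down a factor of $y$ controlled by $xq\in L^2$, and obtain the Lipschitz property by telescoping each multilinear term in $q_1-q_2$. The one step you should tighten is the passage from $b$ to $r=-b/\overline{a}$: the facts that $|a|\geq 1$, $a$ is bounded, and $a$ extends analytically do \emph{not} by themselves make $1/\overline{a}$ a multiplier on $H^{1,1}(\R)$ --- you also need $a'\in L^2$ with local Lipschitz dependence on $q$ (the paper proves exactly this for the pair $(a',b')$ in Proposition \ref{prop:NLS.direct.4} and then applies the quotient rule). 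Your representation $a(\lam)-1=-\int q(y)\,\bfM^+_{21}(y,\lam)\,dy$ together with the same $\lam$-differentiation argument you describe for $b$ supplies this estimate, but it must be carried out for $a$ as well, not inferred from $|a|^2=1+|b|^2$.
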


The approach we'll take here is inspired by the analysis of the scattering transform by Muscalu, Thiele, and Tao in \cite{MTT:2003}, which also contains an interesting discussion of the Fourier-like mapping properties of the scattering transform. In order to obtain effective formulas for the scattering data $a(\lam)$ and $b(\lam)$, we make the change of variables
\begin{equation}
\label{NLS:N}
\Psi^+(x,\lam) = e^{-i\lam x \sig_3} \bfN(x,\lam).
\end{equation}
It follows from the equation $\calL \Psi^+ = \lam \Psi^+$ that 
\begin{equation}
\label{NLS:N.de}
\left\{
\begin{aligned}
\frac{d}{dx} \bfN(x,\lam) &= 
\begin{pmatrix}
0	&	e^{2i\lam x} q(x) \\
e^{-2i\lam x} \overline{q(x)}	&	0
\end{pmatrix}
\bfN(x,\lam)\\[5pt]
\lim_{x \to +\infty} \bfN(x,\lam) = \I
\end{aligned}
\right.
%}
\end{equation}
while, by \eqref{NLS:T},
\begin{equation}
\label{NLS:ab}
\lim_{x \to -\infty} \bfN(x,\lam) = T(\lam).
\end{equation}
By the symmetry \eqref{Jost.sym}, we have 
$$ 
\bfN (x,\lam) = 
\begin{pmatrix}
N_{11}(x,\lam)	&	\overline{N_{21}(x,\lam)}\\[5pt]
N_{21}(x,\lam)	&	\overline{N_{11}(x,\lam)}
\end{pmatrix}
$$
so it suffices to construct $N_{11}$ and $N_{21}$. 
Equation \eqref{NLS:N.de} is equivalent to the integral equation
$$
\bfN(x,\lam)	=	\I - 
		\int_x^\infty 
				\begin{pmatrix}
					0	&	e^{2i\lam y} q(y) \\
					e^{-2i\lam y} \overline{q(y)}	&	0
				\end{pmatrix}
				\bfN(y,\lam) \, dy
$$
which has a convergent Volterra series solution for $q \in L^1(\R)$. Indeed, setting
$$ N_{11}(x,\lam) = a(x,\lam), \quad N_{21}(x,\lam) = b(x,\lam), $$
we have
\begin{align}
\label{NLS:asol}
a(x,\lam)	&=	1 + \sum_{n=1}^\infty A_{2n}(x,\lam), \\
\label{NLS:bsol}
b(x,\lam)	&=	-\sum_{n=0}^\infty A_{2n+1}(x,\lam).
\end{align}
Here
$$
A_n(x,\lam) = 
	\int_{x < y_1 < y_2 < \ldots < y_n} 
			Q_n(y_1,\ldots,y_n) 
			e^{2i\lam \phi_n(y)} 
		\, dy_n \, \ldots dy_1 
$$
where
$$
Q_n(y_1,\ldots,y_n) = 
\begin{cases}
\prod_{j=1}^m q(y_{2j-1}) \overline{q(y_{2j})}, &	n=2m,\\
\\
\overline{q(y_1)} \prod_{j=1}^{m} q(y_{2j}) \overline{q(y_{2j+1})}, 	&	n=2m+1
\end{cases}
$$
and (with the convention that $\phi_0 = 0$)
\begin{align*}
\phi_{2m}(y_1,\ldots,y_{2m})			&=	\sum_{j=1}^m \left(y_{2j-1}-y_{2j}\right),\\
\phi_{2m+1}(y_1,\ldots,y_{2m+1})	&=	-y_1+\phi_{2m}(y_2,\ldots,y_{2m+1})\\
												&=	-\phi_{2m}(y_1,\dots,y_{2m})- y_{2m+1}
\end{align*}
The bound
$$
\int_{y_1 < y_2 \ldots < y_n} \left| Q_n(y) \right| \, dy_n \, dy_{n-1} \, \ldots \, dy_1
\leq \frac{\norm[1]{q}^n}{n!}
$$
shows that the Volterra series converge uniformly in $x \in \R$ and $q$ in bounded subsets of $L^1(\R)$. By \eqref{NLS:ab} and dominated convergence, we obtain the following  representations of the maps $q \mapsto a$ and $q \mapsto b$:
\begin{align}
\label{NLS.a.ser}
a(\lam)	&=	1+\sum_{n=1}^\infty A_{2n}(\lam)\\
\label{NLS.b.ser}
b(\lam)	&=	-\sum_{n=0}^\infty  A_{2n+1}(\lam)
\end{align}
where
\begin{equation}
\label{NLS:An}
A_n(\lam) =\int_{ y_1 < y_2 < \ldots < y_n} Q_n(y_1,\ldots,y_n) e^{2i\lam \phi_n(y)} \, dy_n \, \ldots dy_1.
\end{equation}

From this representation we obtain an  $L^1 \to L^\infty$ mapping property of the scattering transform.

\begin{proposition}
\label{prop:NLS.direct.1}
The map $q \mapsto r$ is locally Lipschitz continuous from $L^1(\R)$ to $L^\infty(\R)$. 
\end{proposition}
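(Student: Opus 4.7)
The plan is to leverage the Volterra series representations \eqref{NLS.a.ser}--\eqref{NLS.b.ser} for $a(\lambda)$ and $b(\lambda)$, together with the lower bound $|a(\lambda)|\geq 1$ coming from $|a|^2-|b|^2=1$, and reduce everything to a multilinear Lipschitz estimate on each $A_n(\lambda)$.

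First I would record the uniform $L^\infty$ bound. Since $|e^{2i\lambda\phi_n(y)}|=1$, the estimate displayed just above \eqref{NLS.a.ser} gives $\|A_n\|_\infty \leq \|q\|_1^n/n!$ uniformly in $\lambda\in\R$. Summing yields $\|a\|_\infty,\|b\|_\infty \leq e^{\|q\|_1}$, so on the ball $B_R=\{q\in L^1(\R):\|q\|_1\leq R\}$ the scattering coefficients are bounded by $e^R$.

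Second, I would prove local Lipschitz dependence of $q\mapsto a$ and $q\mapsto b$ from $L^1\to L^\infty$. For fixed $\lambda$, the map $q \mapsto A_n(\lambda)$ extends to an $n$-multilinear form $T_n$ on $(L^1)^n$ acting on either $q$ or $\overline{q}$ in each slot; the pointwise bound on the integrand shows $|T_n(f_1,\ldots,f_n)|\leq \prod_{j=1}^n \|f_j\|_1/n!$. The standard telescoping identity
\[
T_n(q_1,\ldots,q_1)-T_n(q_2,\ldots,q_2)=\sum_{j=1}^n T_n(q_1,\ldots,q_1,q_1-q_2,q_2,\ldots,q_2)
\]
then gives, for $q_1,q_2\in B_R$,
\[
|A_n(q_1;\lambda)-A_n(q_2;\lambda)| \leq \frac{R^{n-1}}{(n-1)!}\,\|q_1-q_2\|_1,
\]
uniformly in $\lambda$. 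Summing over $n$ yields $\|a(q_1)-a(q_2)\|_\infty,\|b(q_1)-b(q_2)\|_\infty \leq e^{R}\,\|q_1-q_2\|_1$, which is the desired local Lipschitz continuity of $q\mapsto(a,b)$.

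Third, I would pass from $(a,b)$ to $r=-b/\overline{a}$. Writing
\[
r_1-r_2=\frac{b_2\,\overline{a_1}-b_1\,\overline{a_2}}{\overline{a_1}\,\overline{a_2}}
=\frac{(b_2-b_1)\overline{a_1}+b_1(\overline{a_1-a_2})}{\overline{a_1}\,\overline{a_2}},
\]
and using $|a_i|\geq 1$ (hence $|\overline{a_1}\,\overline{a_2}|\geq 1$) together with the uniform bounds $\|a_i\|_\infty,\|b_i\|_\infty\leq e^R$ and the Lipschitz estimates from the previous step, I obtain
\[
\|r_1-r_2\|_\infty \leq C(R)\,\|q_1-q_2\|_1
\]
with $C(R)=2e^{2R}$, completing the proof.

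The only mild technical point is keeping track of which factors in $Q_n$ are $q$ versus $\overline{q}$ when writing the multilinear form $T_n$, but since all estimates are in terms of $|q|$ this causes no real difficulty; the main work is the telescoping bound, which is elementary once the multilinear structure of each $A_n$ is isolated.
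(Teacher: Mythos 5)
Your proposal is correct and follows essentially the same route as the paper's proof: a multilinear telescoping estimate on each term $A_n$ of the Volterra series, summation to obtain Lipschitz continuity of $q\mapsto(a,b)$ from $L^1$ to $L^\infty$, and then the lower bound $|a(\lam)|\geq 1$ to pass to $r=-b/\overline{a}$. The only difference is that you write out the quotient estimate for $r_1-r_2$ explicitly, whereas the paper dispatches that step in one sentence; this is a harmless elaboration.
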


\begin{proof}
It suffices to show that  $q \mapsto a$ and $q \mapsto b$ are locally Lipschitz continuous.  If so, this continuity and the  lower bound $|a(\lam)| \geq 1$ imply local Lipschitz continuity of $q \mapsto r$. 
If $M_n: (L^1(\R))^n \to L^\infty(\R)$ is a multilinear map and $$F_n(q) = M_n(q, \ldots q, \qbar, \ldots \qbar)$$ with $m$ entries of $q$ and $n-m$ entries of $\qbar$, then
\begin{align*}
F_n(q_1) - F_n(q_2) 
	&= \sum_{j=1}^m M_n(q_2, \dots, q_2, \underbrace{q_1 - q_2}_{\text{$j$th entry}}, , q_1, \ldots q_1, \overline{q_1},\dots \overline{q_1}) \\
	&\quad + \sum_{j=m+1}^n M_n(q_2,\dots, q_2, \overline{q_2}, \ldots, \overline{q_2}, \underbrace{\overline{q_1-q_2}}_{\text{$j$th entry}}, \overline{q_1}, \ldots, \overline{q_1} )
\end{align*}
so that, setting 
$$\gamma = \max	\left(
									\norm[L^1]{q_1}, \norm[L^1]{q_2}
							\right),
$$ 
we have
$$ \norm[L^\infty]{F_n(q_1) - F_n(q_2)}
	\leq \norm[(L^1)^n \to L^\infty]{M_n} 
		n \gamma^{n-1}  \norm[L^1]{q_1-q_2}
$$
Thus, referring to \eqref{NLS:An}, we have
$$ 
\norm[L^\infty]{A_n(\lam;q_1) - A_n(\lam;q_2)} 
\leq 
\frac{1}{(n-1)!} 
	\gamma^{n-1}  \norm[L^1]{q_1-q_2}.
$$
We conclude that
\begin{align*}
\norm[L^\infty]{a(\dotarg; q_1)  -a(\dotarg,q_2)} & \leq e^\gamma \norm[L^1]{q_1-q_2}\\
\norm[L^\infty]{b(\dotarg;q_1) - b(\dotarg,q_2)} & \leq e^\gamma \norm[L^1]{q_1-q_2}.
\end{align*}
\end{proof}

\begin{remark}
\label{NLS:Direct.lin}
From the above analysis, it is easy to see that the Fr\'{e}ch\'{e}t derivative of $\calR$ at $q=0$ is the ``antilinear Fourier transform''
$$ \left( \calF_a q \right)(\lam) = -\int e^{-2ix\lam} \overline{q(x)} \, dx. $$
\end{remark}

With a bit more work, we can prove:

\begin{proposition}
\label{prop:NLS.direct.2}
The map $q \mapsto r$ is locally Lipschitz continuous from $L^1(\R) \cap L^2(\R)$ into $L^2(\R)$. 
\end{proposition}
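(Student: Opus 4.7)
The plan is to apply Plancherel's theorem in a carefully chosen variable of each term $A_{2m+1}$ in the Volterra series \eqref{NLS.b.ser}, and then propagate Lipschitz continuity through the representation $r = -b/\overline{a}$ using Proposition \ref{prop:NLS.direct.1}. The key observation is that the phase $\phi_{2m+1}(y) = -y_1 + \phi_{2m}(y_2,\ldots,y_{2m+1})$ depends on $y_1$ only through the linear term $-y_1$, so that
\[
A_{2m+1}(\lambda) = \int_{y_2 < \cdots < y_{2m+1}} B_{y_2}(\lambda)\, h(y_2,\ldots,y_{2m+1})\, e^{2i\lambda\,\phi_{2m}(y_2,\ldots,y_{2m+1})}\, dy_2\cdots dy_{2m+1},
\]
where $B_{y_2}(\lambda) = \int_{-\infty}^{y_2} e^{-2i\lambda y_1}\,\overline{q(y_1)}\,dy_1$ is the Fourier transform of $\overline{q}\cdot\mathbf{1}_{(-\infty,y_2)}$ at $2\lambda$ and $|h(y_2,\ldots,y_{2m+1})| \leq \prod_{j=2}^{2m+1}|q(y_j)|$. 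Plancherel gives $\norm[L^2(d\lambda)]{B_{y_2}} \leq \sqrt{\pi}\,\norm[L^2]{q}$ uniformly in $y_2$; combining with Minkowski's integral inequality in $\lambda$ and the simplex bound $\int_{y_2 < \cdots < y_{2m+1}} \prod_{j\geq 2}|q(y_j)|\,dy = \norm[L^1]{q}^{2m}/(2m)!$ yields
\[
\norm[L^2(\R)]{A_{2m+1}} \leq \sqrt{\pi}\,\norm[L^2]{q}\, \frac{\norm[L^1]{q}^{2m}}{(2m)!},
\]
and summing in $m$ gives $\norm[L^2]{b} \leq \sqrt{\pi}\,\norm[L^2]{q}\cosh\norm[L^1]{q}$.

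Local Lipschitz continuity of $q \mapsto b$ from $L^1 \cap L^2$ into $L^2$ then follows from the multilinear telescoping argument already used in the proof of Proposition \ref{prop:NLS.direct.1}: I decompose $A_{2m+1}(q_1) - A_{2m+1}(q_2)$ into $2m+1$ terms, each with exactly one factor replaced by $q_1 - q_2$, and apply the Plancherel move at $y_1$ in each. If $q_1 - q_2$ sits in position $y_1$, this directly produces the factor $\norm[L^2]{q_1-q_2}$; otherwise the Plancherel move uses $q_1$ or $q_2$ and one of the remaining $2m$ positions contributes $\norm[L^1]{q_1 - q_2}$. Summing in $m$ gives
\[
\norm[L^2]{b(q_1) - b(q_2)} \leq C(\gamma_1)\bigl(\norm[L^2]{q_1 - q_2} + \gamma_2\,\norm[L^1]{q_1 - q_2}\bigr)
\]
with $\gamma_j = \max(\norm[L^j]{q_1}, \norm[L^j]{q_2})$ for $j = 1,2$.

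To conclude, the algebraic identity
\[
r_1 - r_2 = -\frac{b_1 - b_2}{\overline{a_1}} + \frac{b_2\,\overline{(a_1 - a_2)}}{\overline{a_1}\,\overline{a_2}},
\]
combined with $|a|\geq 1$, gives $\norm[L^2]{r_1 - r_2} \leq \norm[L^2]{b_1 - b_2} + \norm[L^2]{b_2}\norm[L^\infty]{a_1 - a_2}$, and the $L^\infty$ bound on $a_1 - a_2$ is supplied by Proposition \ref{prop:NLS.direct.1}. The main technical point lies in the first step: identifying $y_1$ (or, by symmetry, $y_{2m+1}$) as the unique endpoint coordinate whose semi-infinite integration range converts $e^{-2i\lambda y_1}$ into a genuine Fourier transform of an $L^2(\R)$ function, enabling Plancherel to produce an $\norm[L^2]{q}$ factor while the remaining $2m$ coordinates preserve the factorial decay from the ordered simplex.
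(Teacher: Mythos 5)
Your argument is correct and is essentially the paper's proof: both isolate the single endpoint coordinate of the ordered simplex on which the phase depends linearly, extract an $L^2$ Fourier-type bound there (you via Plancherel and Minkowski in $y_1$; the paper, dually, by testing $A_n$ against $\varphi \in C_0^\infty(\R)$ and applying Cauchy--Schwarz in the last variable), and let the remaining ordered variables supply the $\norm[L^1]{q}^{n-1}/(n-1)!$ decay. Your explicit telescoping Lipschitz estimate and the treatment of the quotient $r = -b/\overline{a}$ simply fill in steps the paper leaves to the reader.
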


\begin{proof}
In what follows we use the fact that
$$ \norm[L^2(\R)]{f} = \sup _{\varphi \in C_0^\infty(\R)} \left| \int \varphi(\lam) f(\lam) \, d\lam \right|.$$
It follows from \eqref{NLS:An} and the trivial inequalities
\begin{align*}
\int 	\left| 
				\widehat{\varphi}(\phi_{2n-1}(y)) q(y_{2n-1}) 
		\right| \, dy_{2n-1} 
		&\leq 
			\norm[2]{\varphi} \norm[2]{q},
\\
\int  	\left| 
				\widehat{\varphi}(\phi_{2n}(y)) q(y_{2n}) 
		\right| \, dy_{2n} 
		&\leq 
			\norm[2]{\varphi} \norm[2]{q},
\end{align*}
that
$$ \norm[L^2(\R)]{A_n} \leq \frac{\norm[L^1]{q}^{n-1}}{(n-1)!}  \norm[L^2]{q}. $$
Thus the power series representations for $a-1$ and $b$ converge for $X= L^1(\R) \cap L^2(\R)$, $Y=L^2(\R)$, showing that $q \mapsto a$ and $q \mapsto b$ are locally Lipschitz continuous as maps from $L^1(\R) \cap L^2(\R)$ into $L^2(\R)$. It now follows that $q \mapsto r$ has the same continuity.
\end{proof}

As in the theory of the Fourier transform, additional smoothness of $q$ implies additional decay of $r$.

\begin{proposition}
\label{prop:NLS.direct.3}
The map $q \mapsto r$ is locally Lipschitz continuous from $H^{1,1}(\R)$ to $H^{0,1}(\R)$. 
\end{proposition}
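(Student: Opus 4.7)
The statement $H^{1,1}(\R) \to H^{0,1}(\R)$ means that, in addition to the $L^2$ estimate of Proposition~\ref{prop:NLS.direct.2} (which applies since $H^{1,1}(\R)$ embeds continuously in $L^1(\R) \cap L^2(\R)$ by Cauchy--Schwarz against $\langle x\rangle^{-1}$), one must show that $\lam r(\lam) \in L^2(\R)$ with local Lipschitz dependence on $q \in H^{1,1}(\R)$. Using $|a(\lam)| \geq 1$, the identity
$$
\lam r_1 - \lam r_2 = -\frac{\lam(b_1-b_2)}{\overline{a_1}} + \frac{\lam b_2}{\overline{a_1}\,\overline{a_2}}\bigl(\overline{a_2}-\overline{a_1}\bigr),
$$
combined with Proposition~\ref{prop:NLS.direct.1}, reduces everything to the claim that $q \mapsto \lam b$ is locally Lipschitz from $H^{1,1}(\R)$ to $L^2(\R)$.

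The plan is to integrate by parts in the multilinear series $b(\lam) = -\sum_{n\geq 0} A_{2n+1}(\lam)$ from \eqref{NLS.b.ser}--\eqref{NLS:An} to trade the factor of $\lam$ for a derivative of $q$. For the leading term $A_1(\lam) = \int \overline{q(y)}\, e^{-2i\lam y}\, dy$, one integration by parts (the boundary terms vanish by the Sobolev embedding $H^1(\R) \hookrightarrow C_0(\R)$) followed by Plancherel gives $\|\lam A_1\|_{L^2} \leq C\|q'\|_{L^2}$. For $n \geq 1$, observe that $\partial_{y_1}\phi_{2n+1} = -1$, so $\lam e^{2i\lam \phi_{2n+1}} = \tfrac{i}{2}\partial_{y_1} e^{2i\lam\phi_{2n+1}}$; integrating by parts in $y_1 \in (-\infty, y_2)$ yields a bulk term in which $\overline{q(y_1)}$ is replaced by $\overline{q'(y_1)}$, together with a boundary contribution at $y_1 = y_2$. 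The key algebraic observation is that
$$
\phi_{2n+1}(y_2, y_2, y_3, \ldots, y_{2n+1}) = \phi_{2n-1}(y_3, \ldots, y_{2n+1}),
$$
so that the boundary integrand $|q(y_2)|^2 \overline{q(y_3)}q(y_4)\cdots \overline{q(y_{2n+1})}\, e^{2i\lam\phi_{2n-1}(y_3,\ldots,y_{2n+1})}$ does not involve $y_2$ in its phase. Integrating $y_2$ freely over $(-\infty, y_3)$ produces $F(y_3) = \int_{-\infty}^{y_3}|q(s)|^2\, ds$, and after relabeling $z_k = y_{k+2}$ the boundary term is an $A_{2n-1}$-type oscillatory integral with $\overline{q(z_1)}$ replaced by $F(z_1)\overline{q(z_1)}$.

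Both resulting integrals have the same structure as the $A_n$ of Proposition~\ref{prop:NLS.direct.2}, but with one factor lifted from $L^1$ to $L^2$: in the bulk term this factor is $\overline{q'}$, and in the boundary term it is $F\overline{q}$ with $\|F\overline{q}\|_{L^2} \leq \|F\|_\infty \|q\|_{L^2} \leq \|q\|_{L^2}^3$. Repeating the duality argument of Proposition~\ref{prop:NLS.direct.2} but carrying out the Cauchy--Schwarz/Plancherel step on the variable $y_1$ (respectively $z_1$) appearing linearly in the phase, rather than on the last variable, yields
$$
\|\lam A_{2n+1}\|_{L^2(\R)} \leq C\left( \frac{\|q\|_{L^1}^{2n}}{(2n)!} \|q'\|_{L^2} + \frac{\|q\|_{L^1}^{2n-2}}{(2n-2)!} \|q\|_{L^2}^{3}\right),
$$
with the convention that the second term is absent when $n=0$. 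Summing in $n$ gives $\|\lam b\|_{L^2} \leq C_0(\|q\|_{L^1}, \|q\|_{L^2})\left(\|q'\|_{L^2} + \|q\|_{L^2}^3\right)$, all quantities controlled by $\|q\|_{H^{1,1}}$.

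For the local Lipschitz estimate, note that each $\lam A_{2n+1}$ is polylinear in $(q, \overline{q}, q', \overline{q'}, F)$, and the map $q \mapsto F$ is itself locally Lipschitz from $L^2$ to $L^\infty$ via $\|F_1 - F_2\|_\infty \leq (\|q_1\|_{L^2} + \|q_2\|_{L^2})\|q_1 - q_2\|_{L^2}$; the telescoping argument of Proposition~\ref{prop:NLS.direct.1} then transfers these estimates to differences. The main obstacle to be navigated is the boundary term from integration by parts: a naive analysis would produce a non-decaying $\lambda$-independent constant proportional to $\|q\|_{L^2}^2$ incompatible with an $L^2(\R)$-bound, and it is only because the reduced phase $\phi_{2n-1}$ does not involve $y_2$ that this apparent obstruction resolves into a genuine $A_{2n-1}$-type oscillatory integral to which Proposition~\ref{prop:NLS.direct.2} applies. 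This reflects the Fourier-like mechanism (smoothness of $q \to $ decay of $r$) already visible in Remark~\ref{NLS:Direct.lin}.
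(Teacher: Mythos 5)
Your proof is correct and takes essentially the same route as the paper's: integrate by parts in each term of the multilinear series for $b$ to trade the factor of $\lam$ for a derivative of $q$, observe that the boundary term collapses onto a lower-order $A_n$-type oscillatory integral with a harmless extra factor, and then run the duality/Plancherel estimate of Proposition \ref{prop:NLS.direct.2} term by term before summing the factorially convergent series. The only (immaterial) differences are that the paper integrates by parts in the last variable $y_{2n-1}$ rather than in $y_1$, and bounds the resulting repeated factor of $|q|$ by $\norm[L^\infty]{q}\,\norm[L^2]{q}$ where you instead integrate $|q(y_2)|^2$ into $F(y_3)\leq \norm[L^2]{q}^2$.
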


\begin{proof}
It suffices to exhibit an $L^2$-convergent power series for $\lam b(\lam)$. We will assume for the moment that $q \in \scrS(\R)$ and begin with the formula
$$\lam b(\lam) = \sum_{n=1}^\infty \lam A_{2n-1}(\lam). $$
Using the integration by parts identity
\begin{multline*}
 \int_{y_{2n-2}}^\infty \overline{q(z)} (2i\lam) e^{2i\lam (-\phi_{2n-2}(y)+z)} \, dz
	=	\\
	\left. \left[ \overline{q(z)} e^{2i\lam(-\phi_{2n-2}(y)+z)} \right] \right|_{y_{2n-2}}^\infty
		-	\int_{y_{2n-2}}^\infty \overline{q'(z)}  e^{2i\lam( -\phi_{2n-2}(y)+z)} \, dz
\end{multline*}
we conclude that
$$ \int \lam b(\lam)  \varphi(\lam) \, d\lam 
	= 	\int \varphi(\lam) {\widehat{\overline{q}} \,}'(\lam) \, d\lam +
		\sum_{n=2}^\infty \left( I_{1,2n-1} + I_{2,2n} \right),
$$
where, for $n \geq 2$, 
\begin{multline*}
\left| I_{1,2n-1} \right|	
		\leq	\\
	\int_{y_1 < \ldots < y_{2n-2}}
						\left( \prod_{k=1}^{2n-2} |q(y_j)| \right) \, 
						\left| 
							\widehat{\varphi}(\phi_{2n-3}(y) ) 
						\right| 
						|q(y_{2n-2})|
					\, dy_{2n-2} \ldots dy_{1}
\end{multline*}
and
\begin{multline*}
\left| I_{2,2n-1} \right| 
	\leq \\
	\int_{y_1 < \ldots < y_{2n-1}}
						\left( \prod_{k=1}^{2n-2} |q(y_j)| \right) \, 
						\left| \widehat{\varphi}(\phi_{2n-1}(y)) \right|
						|q'(y_{2n-1})| \,
					\, 	dy_{2n-1} \ldots dy_1.
\end{multline*}
It follows that 
\begin{align*}
\left|I_{1,2n-1}\right|
		&\leq	\frac{1}{(2n-3)!} 
				\norm[L^1]{q}^{2n-3} 
				\norm[L^\infty]{q} 
				\norm[L^2]{q} 
				\norm[L^2]{\varphi} 	\\
\left|I_{2,2n-1}\right|
		&\leq\frac{1}{(2n-2)!} \norm[L^1]{q}^{2n-2} \norm[L^2]{q'}\norm[L^2]{\varphi}
\end{align*}
Taking suprema over $\varphi$ with $\norm[L^2]{\varphi}=1$ and recalling that
$\norm[L^\infty]{q} \lesssim \norm[H^1]{q}$ (Exercise \ref{ex:H10}), we recover the estimate
$$ \norm[L^2]{(\dotarg)b(\dotarg)}
\leq
\norm[H^{1,0}]{q} + 
	\sum_{n=2}^\infty 
			\frac{1}{(2n-3)!} 
				\norm[L^1]{q}^{2n-3} 
				\left( 
					\norm[L^1]{q}+ 
					\norm[L^2]{q} 
				\right) 
				\norm[H^{1,0}]{q}
$$
which is finite because $\norm[L^1]{q} \lesssim \norm[H^{1,1}]{q}$ and the series
$$ \sum_{n=2}^\infty \frac{x^{2n-3}}{(2n-3)!} = \sinh(x)$$
converges for all $x$.  The local Lipschitz continuity is proven by continuity of multilinear functionals as in Proposition \ref{prop:NLS.direct.1}.
\end{proof}

Finally:

\begin{proposition}
\label{prop:NLS.direct.4}
The map $q \mapsto r'$ is locally Lipschitz continuous from $H^{0,1}(\R)$ to $L^2(\R)$. 
\end{proposition}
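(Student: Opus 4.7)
The plan is to reduce the question to $L^2$ bounds on $a'$ and $b'$: indeed, since $r = -b/\bar a$ with $|a|\geq 1$ and $a,b$ uniformly bounded (by Proposition \ref{prop:NLS.direct.1}), the identity $r' = -b'/\bar a + b\bar a'/\bar a^2$ will immediately convert $L^2$ control of $a', b'$ into an $L^2$ bound on $r'$. The local Lipschitz continuity in $q$ is then handled, exactly as in Proposition \ref{prop:NLS.direct.1}, by the multilinear-difference expansion; so the substantive step is the $L^2$ estimate on $b'$ (the argument for $a'$ is identical in structure, indexed by even $n$ rather than odd).

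Differentiating the series \eqref{NLS.b.ser} term by term gives
$$b'(\lam) = -2i\sum_{n=0}^\infty \int_{y_1<\cdots<y_{2n+1}} \phi_{2n+1}(y)\,Q_{2n+1}(y)\,e^{2i\lam\phi_{2n+1}(y)}\,dy.$$
To bound the $L^2$-norm of the $n$-th term I would pair with $\varphi\in L^2(\R)$, invert the Fourier factor, apply the crude estimate $|\phi_n(y)|\leq \sum_{k=1}^n |y_k|$, and reduce matters to controlling, for each $k$, an integral of the form
$$\int_{y_1<\cdots<y_n} |y_k q(y_k)|\Bigl(\prod_{j\neq k}|q(y_j)|\Bigr)\bigl|\widetilde\varphi(\phi_n(y))\bigr|\,dy,$$
where $\widetilde\varphi$ denotes the inverse Fourier transform of $\varphi$ (up to constants). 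Since $\phi_n$ depends on $y_k$ linearly with nonzero slope $\pm 1$, Cauchy--Schwarz applied in the single variable $y_k$ gives (after translation) a bound $\|yq\|_{L^2}\|\varphi\|_{L^2}$ uniform in the remaining variables. The remaining ordered $(n-1)$-dimensional simplex then contributes the standard factor $\|q\|_{L^1}^{n-1}/(n-1)!$ via symmetrization. Summing over the $n$ choices of $k$ and then over the series yields
$$\|b'\|_{L^2}\leq C\|yq\|_{L^2}\sum_{m=0}^\infty \frac{(2m+1)\|q\|_{L^1}^{2m}}{(2m)!},$$
which converges; both $\|yq\|_{L^2}$ and $\|q\|_{L^1}$ are controlled by $\|q\|_{H^{0,1}}$ (the latter via Cauchy--Schwarz against $\langle y\rangle^{-1}$).

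The main obstacle I anticipate is the routing of the weight $|y_k|$. A more naive bookkeeping would absorb $|y_k|$ into a ``middle'' $q$-factor and then integrate that factor in $L^1$, but $q\in H^{0,1}(\R)$ does \emph{not} imply $yq\in L^1(\R)$, so that strategy is doomed. Applying Cauchy--Schwarz precisely in the variable $y_k$ that carries the weight---rather than in the ``last'' variable $y_n$ as in the proof of Proposition \ref{prop:NLS.direct.3}---is the device that makes the estimate close, since it simultaneously exploits $yq\in L^2$ and the Plancherel-type gain from $\widetilde\varphi$. Once the $L^2$ bounds on $a'$ and $b'$ are in hand, the formula for $r'$ delivers the claimed estimate, and local Lipschitz continuity follows from the multilinear-difference expansion used in the earlier propositions.
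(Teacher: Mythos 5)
Your proposal is correct and follows essentially the same route as the paper's proof: reduce to $L^2$ bounds on $a'$ and $b'$ via the quotient rule and $|a|\geq 1$, differentiate the Volterra series term by term, bound $|\phi_n(y)|$ by $\sum_j|y_j|$, pair against $\varphi\in L^2$, and place the Cauchy--Schwarz precisely in the weighted variable $y_k$ (exploiting the unit slope of $\phi_n$ in each variable) so that only $\|yq\|_{L^2}$, not $\|yq\|_{L^1}$, is needed -- which is exactly the point the paper's argument turns on as well. The only difference is bookkeeping: you integrate the remaining unweighted variables directly over the ordered simplex to get $\|q\|_{L^1}^{n-1}/(n-1)!$, whereas the paper symmetrizes and runs an iterated Young's inequality on the variables following $y_j$; both close the estimate and yield the same convergent series.
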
  

\begin{proof}
By the quotient rule and the lower bound on $|a|$, it suffices to show that the map $q \mapsto (a',b')$ is locally Lipschitz continuous from $H^{0,1}(\R)$ to $L^2(\R) \times L^2(\R)$
From \eqref{NLS.a.ser}--\eqref{NLS.b.ser} we have
\begin{align*}
-i \frac{\dee a}{\dee \lam} (\lam)
	&=	\sum_{n=1}^\infty \int_{y_1 < \ldots < y_{2n}} 
					2Q_{2n}(y) \phi_{2n}(y) e^{2i\lam \phi_{2n}(y)} \, dy\\
-i \frac{\dee b}{\dee \lam} (\lam)
	&=	\sum_{n=1}^\infty \int_{y_1 < \ldots < y_{2n-1}}
					2Q_{2n-1}(y) \phi_{2n-1}(y) e^{2i\lam \phi_{2n-1}(y)} \, dy
\end{align*}
so integrating against $\varphi \in C_0^\infty(\R)$ we obtain 
\begin{align}
\label{aprime}
\left|	\int \varphi \, \frac{\dee a}{\dee \lam} \, d\lam	\right|
	&\leq	\sum_{n=1}^\infty 
			\int_{y_1 < \ldots < y_{2n}}
					\left|Q_{2n}(y) \right| \, 
					\left| \phi_{2n}(y) \right| \,
					\left| \widehat{\varphi}(\phi_{2n}(y)) \right| 
			\, dy\\
\nonumber
	&\leq		
			\sum_{n=1}^\infty 
			\int_{y_1 < \ldots < y_{2n}}
					\sum_{j=1}^{2n} 
							|y_j| \, 
							\left|Q_{2n}(y) \right| \, 
							\left| \widehat{\varphi}(\phi_{2n}(y)) \right| 
			\, dy\\[10pt]
\label{bprime}
\left|	\int \varphi \, \frac{\dee b}{\dee \lam} \, d\lam	\right|	
	&\leq		
			\sum_{n=1}^\infty
			\int_{y_1 < \ldots < y_{2n-1}}
					\left| Q_{2n-1}(y) \right| \, \left| \phi_{2n-1}(y) \right| \, 
					\left| \widehat{\varphi} (\phi_{2n-1}(y)) \right| \,
			dy\\
\nonumber
	&\leq	\sum_{n=1}^\infty
			\int_{y_1 < \ldots < y_{2n-1}}
					\sum_{j=1}^{2n-1} |y_j|
					\left| Q_{2n-1}(y) \right| \,
					\left| \widehat{\varphi} (\phi_{2n-1}(y)) \right| \,
			dy
\end{align}
As before, we will bound the left-hand integrals in \eqref{aprime} and \eqref{bprime}
by norms of $q$ times $\norm[L^2]{\varphi}$ and take the supremum over $\varphi$ with $\norm[L^2]{\varphi}=1$.

To bound the right-hand side of \eqref{aprime}, first note that the integrand is symmetric under interchange of $(n-1)$ pairs of indices (the pair containing $y_j$ is excluded), so we may write
\begin{multline*}
\left| \int \varphi \frac{\dee a}{\dee \lam} \, d\lam \right|
\leq	\\
\sum_{n=1}^\infty \frac{1}{(n-1)!}  
	\sum_{j=1}^{2n}
		\int_{\R^{2n}}
				\left(\prod_{k=1}^{j-1} |q(y_k)|\right) \, 
				\left( |y_j q(y_j)| \right) \, 
				\left(\prod_{k=j+1}^{2n} |q(y_k)|\right)
			|\widehat{\varphi}(\phi_{2n}(y))| \, dy
\end{multline*}
Using Young's inequality $\norm[2]{f*g} \leq \norm[1]{f} \norm[2]{g}$ repeatedly beginning with the $y_{2n}$ integration, we have
$$ \norm[L^2(dy_j)]
{
\int_{\R^{2n-j}} 
	|\widehat{\varphi}(\phi_{2n}(y))| \, 
	\left( \prod_{k=j+1}^{2n} |q(y_k)| \right) \, dy_{2n} \, \ldots \, dy_{j+1}
} 
\leq 
\norm[L^1]{q}^{2n-j} \norm[L^2]{\varphi}
$$
so that
$$
\norm[L^2]{\frac{\dee a}{\dee \lam}}
	\leq	
	\sum_{j=1}^\infty 
			\frac{2n}{(n-1)!} 
				\norm[H^{0,1}]{q} 
				\norm[L^1]{q}^{2n-1}
$$
which is uniformly bounded for $q$ in bounded subsets of $H^{0,1}$ since
$$ \sum_{j=1}^\infty \frac{2n}{(n-1)!} x^{2n-1} $$
converges for all $x$.

To bound the right hand side of \eqref{bprime}, we first note that the $n=1$ term is trivially bounded 
by $\norm[H^{0,1}]{q} \norm[L^2]{\varphi}$. For $n \geq 2$, the integrand is symmetric under $(n-2)!$ interchanges of pairs so we may estimate the remaining terms on the right-hand side of \eqref{bprime} by
$$
\sum_{n=2}^\infty 
	\frac{1}{(n-2)!} 
		\sum_{j=1}^{2n-1} 
			\int_{\R^{2n-1}} 
				|y_j| \, |Q_{2n-1}(y)| \, |\widehat{\varphi}(\phi_{2n-1}(y))|
			\, dy
$$
Writing
\begin{multline*}
\int_{\R^{2n-1}} |y_j| \, |Q_{2n-1}(y)| \, |\widehat{\varphi}(\phi_{2n-1}(y))| \, dy=\\
\int_{\R^{2n-1}} 
	\left(\prod_{k=1}^{j-1} |q(y_k)| \right)
	\left( |y_j| |q(y_j)| \right)
	\left(\prod_{k=j+1}^{2n-1} |q(y_k)| \right)
	|\widehat{\varphi}(\phi_{2n-1}(y))|
	\, dy
\end{multline*}
we may use the estimate
$$
\norm[L^2(dy_j)]
{
\int_{\R^{2n-j-1}}
	\left(\prod_{k=j+1}^{2n-1} |q(y_k)| \right)
	|\widehat{\varphi}(\phi_{2n-1}(y))|
	\, dy
}
\leq
\norm[L^1]{q}^{2n-j-1} \norm[L^2]{\varphi}
$$
(which again follows by repeated applications of Young's inequality)
to conclude that
$$
\norm[L^2]{\frac{\dee b}{\dee \lam}}
\leq
\norm[H^{1,0}]{q} + \sum_{n=2}^\infty \frac{2n-1}{(n-2)!} \norm[L^1]{q}^{2n-2} \norm[H^{0,1}]{q}.
$$
The right-hand side is again bounded uniformly for $q$ in a bounded subset of $H^{0,1}(\R)$ since
the series
$$\sum_{n=2}^\infty \frac{2n-1}{(n-2)!} x^{2n-2}$$ 
converges for all $x$. 

We have shown that $\dee a/\dee \lam$ and $\dee b /\dee \lam$ have convergent  series representations. We can use multilinearity of the terms as in the proof of Proposition \ref{prop:NLS.direct.1} to obtain local Lipschitz continuity.
\end{proof}

\subsection{Beals-Coifman Solutions}
\label{NLS:sec.bc}

Beals and Coifman \cite{BC:1984} identified solutions of \eqref{NLS:M} which have piecewise analytic continuations to $\C \setminus \R$ and solve a Riemann-Hilbert problem determined completely by the scattering data. It follows from the definition of $\bfM$ that two nonsingular solutions $\bfM_1$ and $\bfM_2$ of \eqref{NLS:M} are related by 
\begin{equation}
\label{M1M2A}
{\bfM}_1(x,\lam) = {\bfM}_2(x,\lam) e^{-ix\lam \ad \sig_3} A 
\end{equation}
where $A$ is a constant matrix and
$$ e^{t\ad \sig_3} 
\begin{pmatrix}
a	&	b	\\
c	&	d
\end{pmatrix}
=
\begin{pmatrix}
a					&	e^{2t}b	\\
e^{-2t} c	&	d
\end{pmatrix}.
$$
(see Exercises \ref{ex:NLS.adsig} and \ref{ex:NLS.M1M2}).
We will use this fact repeatedly in what follows.

We now construct the Beals-Coifman solutions from solutions $\bfM^\pm$ of \eqref{NLS:M} corresponding to the Jost solutions $\Psi^\pm$. 
By the factorization $\psi(x,\lam) = \bfM(x,\lam) e^{-i\lam x \sigma_3}$ and the symmetry \eqref{Jost.sym},  solutions of \eqref{NLS:M} normalized by either of the two conditions $\lim_{x \to \pm \infty} \bfM(x,z) = \I$  take the form
$$\bfM(x,\lam) = 
\begin{pmatrix}
m_{11}(x,\lam) 	&	\overline{m_{21}(x,\lam)}	\\[5pt]
m_{21}(x,\lam)	&	\overline{m_{11}(x,\lam)}
\end{pmatrix}
$$
so we need only study $m_1 \coloneqq m_{11}$ and $m_2 \coloneqq m_{21}$. 
Morever, it is clear from \eqref{NLS:M} and \eqref{NLS:N} that
\begin{equation}
\label{NtoM}
\bfM(x,\lam)  =e^{-i\lam x \sig_3} \bfN(x,\lam) e^{i\lam x \sig_3}.
\end{equation} 

Let $\Psi^+(x,\lam) = \bfM^+(x,\lam) e^{-i\lam x \sig_3}$. It follows from \eqref{NtoM} that
\begin{align*}
m^+_1(x,\lam) \coloneqq m^+_{11}(x,\lam)	&=	 a(x,\lam)\\
m^+_2(x,\lam) \coloneqq m^+_{21}(x,\lam)	&=	e^{2i\lam x} \overline{b(x,\lam)}
\end{align*}
It follows from \eqref{NLS:asol}--\eqref{NLS:bsol} that
\begin{align}
\label{m1sol}
m^+_1(x,\lam)	
	&=	1	+ \sum_{n=1}^\infty \int_{x < y_1 < \ldots  < y_{2n}} Q_{2n}(y) e^{2i\lam \phi_{2n}(y_1,\ldots,y_{2n})} \, dy\\
\label{m2sol}
m^+_2(x,\lam)
	&=	- \sum_{n=1}^\infty \int_{x < y_1 < \ldots < y_{2n-1}} \overline{Q_{2n+1}(y)} e^{2i\lam(\phi_{2n}(x,y_1,\ldots,y_{2n-1})} \, dy
\end{align}
Since the phase functions
\begin{align*}
\phi_{2n}(y) 	&\coloneqq \phi_{2n}(y_1,\ldots, y_{2n})
\intertext{and} 
\phi_{2n}(x,y) 		&\coloneqq\phi_{2n}(x,y_1,\ldots,y_{2n-1})
\end{align*}
are nonpositive over their respective domains of integration, $m^+_1$ and $m^+_2$ continue to analytic functions $m^+_1(x,z)$ and $m^+_2(y,z)$
for $\imag z < 0$ obeying the bounds
$$
|m^+_1(x,z)-1| 	+ |m^+_2(x,z)| \leq	\gamma_+(x) e^{\gamma_+(x)} 
$$
where $$\gamma_+(x) = \int_x^\infty |q(y)| \, dy.$$ 
It follows that $a(\lam)$ also has a bounded analytic continuation to the lower half-plane which we denote by $a(z)$. Using \eqref{m1sol} and \eqref{m2sol} with $\lam$ replaced by $z$ we can deduce the large-$x$ asymptotics

$$
\lim_{x \to -\infty} 	\begin{pmatrix} m^+_1(x,z) \\[5pt] m^+_2(x,z) \end{pmatrix} =
							\begin{pmatrix} a(z) \\[5pt] 0 \end{pmatrix},
\quad
\lim_{x \to \infty} 		\begin{pmatrix} m^+_1(x,z) \\[5pt] m^+_2(x,z) \end{pmatrix} =
							\begin{pmatrix} 1 \\[5pt] 0 \end{pmatrix}
$$
for each fixed $z$ with $\imag z < 0$. 

We can also use the Volterra series to analyze the large-$z$ behavior of the extensions $m_1^+(x,z)$ and $m_2^+(x,z)$ for fixed $x$. Observe that
$$\left|e^{2iz\phi_{2n}(y)}\right| 
\leq e^{-2\imag(z) 
\phi_{2n}(y)}$$
and 
$$\left| e^{2iz(\phi_{2n}(x,y)} \right| \leq e^{-2\imag(z) \phi_{2n}(x,y)}.$$ 
An argument using the dominated convergence theorem together with the absolute and uniform convergence of the Volterra series for $m_1$ and $m_2$ shows that 
\begin{equation}
\label{NLS:m1m2.z.infty}
\lim_{z \to \infty}
\begin{pmatrix}
m_1^+(x,z)	\\[5pt] 	m_2^+(x,z)
\end{pmatrix}
=
\begin{pmatrix}
~1~	\\[5pt]	~0~
\end{pmatrix}
\end{equation}
and
$$
\lim_{z \to \infty} a(z) = 1 
$$
where the limit is taken as $|z| \to \infty$  in any proper subsector of the lower half-plane. 

In a similar way, if $\Psi^-(x,\lam) = \bfM^-(x,\lam)e^{-i\lam x \sig_3}$, we can use the Volterra series
\begin{align*}
m^-_1(x,\lam)	
	&=	1	+ \sum_{n=1}^\infty \int_{y_{2n}<  \ldots < y_1 < x} Q_{2n}(y) e^{2i\lam \phi_{2n}(y)} \, dy\\
m^-_2(x,\lam)
	&=	-\sum_{n=1}^\infty  \int_{y_{2n-1} < \ldots < y_1 < x} \overline{Q_{2n+1}(y)} e^{2i\lam \phi_{2n}(x,y)} \, dy
\end{align*}
and the fact that $\phi_{2n}(y)$ and $\phi_{2n}(x,y)$ are nonnegative on their respective domains of integration to show that 
$m_1^-(x,\lam)$ and $m_2^-(x,\lam)$ continue to analytic functions $m_1^-(x,z)$ and 
$m_2^-(x,z)$ for $\imag z > 0$ with
$$
|m^-_1(x,z)-1| 	+ |m^-_2(x,z)| \leq	\gamma_-(x) e^{\gamma_-(x)}
$$
where 
$$\gamma_-(x) = \int_{-\infty}^x |q(y)| \, dy. $$
We can deduce the asymptotics
\begin{equation}
\label{NLS:m1-m2-.z}
\lim_{x \to +\infty} 	\begin{pmatrix} m^-_1(x,z) \\[5pt] m^-_2(x,z) \end{pmatrix} =
							\begin{pmatrix} \overline{a(\zbar)} \\[5pt] 0 \end{pmatrix},
\quad
\lim_{x \to -\infty} 	\begin{pmatrix} m^-_1(x,z) \\[5pt] m^-_2(x,z) \end{pmatrix} =
							\begin{pmatrix} 1 \\[5pt] 0 \end{pmatrix}.
\end{equation}

It will be important to know that $a(z)$ has no zeros in $\imag z<0$. It follows from \eqref{NLS:T} that
$$ 
a(\lam) = 
\begin{vmatrix}
\Psi_{11}^+(x,\lam)	&	\Psi_{12}^-(x,\lam)\\[5pt]
\Psi_{21}^+(x,\lam)	&	\Psi_{22}^-(x,\lam)
\end{vmatrix}
$$
so the same holds true for $\lam$ replaced by $z$ with $\imag z<0$ by analytic continuation. Thus 
$a(z)=0$ if and only if the columns are linearly dependent. Since $(\Psi_{11}^+,\Psi^+_{21})$ decay exponentially as $x \to +\infty$ and $(\Psi_{12}^-,\Psi_{22}^-)$ decay exponentially as $x \to -\infty$, it is easy to show that this condition leads to a square-integrable solution of $\calL \psi = z \psi$ with imaginary eigenvalue $z$, which is forbidden by the self-adjointness of the operator $\calL$ in \eqref{ZS-AKNS} (see Exercise \ref{ex:ZS-AKNS.sa}). Hence $a(z)$ has no zeros in $\imag z<0$.

\bigskip

We can now construct piecewise analytic solutions of \eqref{NLS:M}, normalized so
that $\bfM^r(x,z) \to \I$ as $x \to +\infty$ (the ``$r$'' is for ``right-normalized''), by the formulas
\begin{align}
\label{BC.right.form}
\bfM^{r}(x,z) &= 
\begin{pmatrix}
	m_1^-(x,z)	&	\overline{m_2^+(x,\zbar)}\\[5pt]
	m_2^-(x,z)	&	\overline{m_1^+(x,\zbar)}
\end{pmatrix}
\begin{pmatrix}
1/\overline{a(\zbar)}	&	0	\\[5pt]	
0								& 	1
\end{pmatrix},
&	\imag z > 0
\intertext{
(recall the first asymptotic relation of \eqref{NLS:m1-m2-.z})
and}
\label{BC.right.extend}
\bfM^r(x,z) &= \sig_1 \overline{\bfM^r(x,\zbar)} \sig_1, 	&	\imag z < 0
\end{align}
(recall \eqref{NLS:AKNS-sym}).
The piecewise analytic function $\bfM^r(x,z)$ admits boundary values $\bfM^r_\pm(x,\lam)$ 
as $\pm \imag z \to 0$ and $\real z \to \lam$.  

Since $\bfM^r_\pm(x,\lam)$ solve \eqref{NLS:N.de}, it follows from \eqref{M1M2A} that there is a jump matrix $\bfV_r(\lam)$ with 
$$
\bfM^r_+(x,\lam) = \bfM^r_-(x,\lam) e^{-i\lam x \ad \sig_3} \bfV_r(\lam).
$$
To compute the jump matrix, first note that, from \eqref{NLS:T} and the definition of $\bfM^\pm$,
\begin{equation}
\label{NLS:T.M}
\bfM^+(x,\lam)
 = 
 \bfM^-(x,\lam) 
 e^{-ix\lam \ad \sig_3}
\begin{pmatrix}
a(\lam)				&		\overline{b(\lam)}		\\[5pt]
b(\lam)				&	\overline{a(\lam)}
\end{pmatrix} 
\end{equation}
Write 
$$f(x) \underset{x \to \pm \infty}{\sim} g(x)$$ if $$\lim_{x \to \pm \infty} |f(x)-g(x)|=0.$$ 
Since $\bfM^\pm(x,\lam) \to \I $ as $x \to \pm \infty$, it follows from \eqref{NLS:T.M} that
\begin{align*}
\begin{pmatrix}
m_1^+(x,\lam)\\[5pt]
m_2^+(x,\lam)
\end{pmatrix}
&	\underset{x\to -\infty}{\sim}
\begin{pmatrix}
a(\lam)	\\[5pt]
e^{2ix\lam}b(\lam)
\end{pmatrix},
&
\begin{pmatrix}
m_1^+(x,\lam)\\[5pt]
m_2^+(x,\lam)
\end{pmatrix}
&	\underset{x\to +\infty}{\sim}
\begin{pmatrix}
~1~	\\[5pt]
~0~
\end{pmatrix},\\
\\
\begin{pmatrix}
m_1^-(x,\lam)\\[5pt]
m_2^-(x,\lam)
\end{pmatrix}
&	\underset{x\to -\infty}{\sim}
\begin{pmatrix}
~1~	\\[5pt]
~0~
\end{pmatrix},
&
\begin{pmatrix}
m_1^-(x,\lam)\\[5pt]
m_2^-(x,\lam)
\end{pmatrix}
&	\underset{x\to +\infty}{\sim}
\begin{pmatrix}
\overline{a(\lam)}	\\[5pt]
-e^{2ix\lam} b(\lam)
\end{pmatrix}.
\end{align*}
From these asymptotic relations, \eqref{BC.right.form} (for $\bfM^r_-$), and \eqref{BC.right.extend} (for $\bfM^r_+$), we conclude that
\begin{align*}
\bfM^r_-(x,\lam) &\underset{x \to \infty}{\sim} 
e^{-i\lam x \ad \sig_3}
\begin{pmatrix}
1		&		-\overline{b(\lam)}/a(\lam)	\\[5pt]
0		&		1
\end{pmatrix}
\\[5pt]
\bfM^r_+(x,\lam) &\underset{x \to -\infty}{\sim}
e^{-i\lam x \ad \sig_3}
\begin{pmatrix}
1										&		0			\\[5pt]
-b(\lam)/\overline{a(\lam)}	&		1
\end{pmatrix},
\end{align*}
so that
$$ 
\bfV^r(\lam)		=	
\begin{pmatrix}
1	-	|r(\lam)|^2	&	-\overline{r(\lam)}	\\[5pt]
r(\lam)				&	1
\end{pmatrix}
$$
where
$$ 
r(\lam) = -b(\lam)/\overline{a(\lam)}.
$$
The Beals-Coifman solutions $\bfM^r(x,z)$ play a fundamental role in the inverse problem. To describe their large-$z$ asymptotic behavior, recall (cf.\ Remark \ref{NLS:remark.lim.z})  that $\lim_{z \to \infty} F(z) = A$ uniformly in proper subsectors of $\C \setminus \R$ if
$$ 
\lim_{R \to \infty} \sup_{\substack{|z| \geq R\\ \arg(z) \in (\alpha,\beta)}} \left| F(z) - A \right| = 0
$$
for any proper subinterval $(\alpha,\beta)$ of $(0,\pi)$ or $(\pi,2\pi)$.

\begin{theorem}[Right-normalized Beals-Coifman Solutions]
\label{thm:NLS.BC.right} 
Suppose that $q \in L^1(\R)$. For each $z \in \C \setminus \R$, there exists a unique solution to the problem 
\begin{equation}
\label{BC.right}
\left\{
\begin{aligned}
\frac{d}{dx} \bfM  =	-iz  \ad \sig_3 (\bfM)  + \bfQ_1(x) \bfM,\\
\lim_{x \to +\infty}	\bfM(x,z)	&=	\I,\\
\bfM(x,z)	&\text{ is bounded as }x \to -\infty.
\end{aligned}
\right.
%}
\end{equation}
The unique solution $\bfM^r(x,z)$ has the asymptotic behavior
$$ \lim_{z \to \infty} \bfM^r(x,z) = \I$$ as $z \to \infty$ in any proper subsector of the upper or lower half-planes. Moreover, $\bfM^r(x,z)$ has continuous boundary values $\bfM^r_\pm$ as $\pm \imag z \downarrow 0$  and $\real z \to \lam \in \R$  that 
satisfy the jump relation
\begin{equation}
\label{BC.Jump.right}
\bfM^r_+(x,\lam)  = \bfM^r_-(x,\lam) e^{-i\lam x \ad \sig_3} 
\bfV^r(\lam)
\end{equation}
where 
\begin{equation}
\label{NLS:Jump.right}
\bfV^r(\lam) = 
\begin{pmatrix}
1-|{r}(\lam)|^2 		&	-\overline{r(\lam)}	\\[5pt]
{r}(\lam)	&	1
\end{pmatrix}
\end{equation}
and
${r}(\lam) = -b(\lam)/\overline{a(\lam)}$. 
If $q \in L^1(\R) \cap L^2(\R)$, then, for each $x$, 
\begin{equation}
\label{BC.asy.right}
\bfM^r_\pm(x,\lam) - \I \in L^2(\R). 
\end{equation}
Finally, if $q \in L^1(\R) \cap C(\R)$, 
\begin{equation}
\label{NLS:right.recon}
q(x) = \lim_{z \to \infty} 2iz \left( \bfM^r \right)_{12}(x,z)
\end{equation}
where the limit is taken as $|z| \to \infty$ in any proper subsector of the upper or lower half-plane.
\end{theorem}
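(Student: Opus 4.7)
The plan is to \emph{define} $\bfM^r$ by the explicit formulas \eqref{BC.right.form}--\eqref{BC.right.extend}, and then read off nearly every claim of the theorem directly from the Volterra-series representations \eqref{m1sol}--\eqref{m2sol} and their $\bfM^-$-analogues already established in the text.

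First I would verify that this $\bfM^r$ solves the ODE in \eqref{BC.right}. Its columns are constant-in-$x$ rescalings of columns of $\bfM^-(x,z)$ and of $\sig_1 \overline{\bfM^+(x,\zbar)} \sig_1$: the latter solves the same ODE in $z$, as a short computation using $\sig_1 \sig_3 \sig_1 = -\sig_3$ and $\overline{\bfQ_1} = \sig_1 \bfQ_1 \sig_1$ shows, and the rescaling factor $1/\overline{a(\zbar)}$ is well-defined because $a$ has no zeros off the real axis (self-adjointness of $\calL$, cf.\ Exercise \ref{ex:ZS-AKNS.sa}). The normalization $\bfM^r \to \I$ as $x \to +\infty$ and boundedness as $x \to -\infty$ are then immediate from the $x$-asymptotics of $m^\pm_j$ collected earlier, combined with $|a|\geq 1$. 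For uniqueness, any two solutions of \eqref{BC.right} differ by a multiplicative factor $e^{-ixz \ad \sig_3} A$ for a constant $A$, by \eqref{M1M2A}; for $\imag z > 0$ the off-diagonal entries of this factor behave as $e^{\mp 2ixz}$, so using one of them blowing up as $x \to +\infty$ together with $\bfM \to \I$, then the other blowing up as $x \to -\infty$ together with boundedness, forces $A = \I$.

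The uniform large-$z$ limit $\bfM^r \to \I$ in proper subsectors of $\C \setminus \R$ follows from \eqref{NLS:m1m2.z.infty} and the $m^-$-analogue (dominated convergence on the respective Volterra series), combined with $a(z) \to 1$. Continuity of the boundary values $\bfM^r_\pm$ on $\R$ is inherited from uniform convergence of the Volterra series on closed half-plane compacta plus continuity of $1/\overline{a(\zbar)}$, and the jump relation \eqref{BC.Jump.right}--\eqref{NLS:Jump.right} has already been derived in the discussion preceding the theorem from \eqref{NLS:T.M}. For the reconstruction \eqref{NLS:right.recon}, I would inspect the leading Volterra term for $m_2^+(x,z)$ with $\imag z < 0$, which is $-\int_x^\infty \overline{q(y)}\,e^{2iz(x-y)}\,dy$; an Abelian Laplace-transform argument, valid for $q \in L^1 \cap C$, gives this quantity equal to $-\overline{q(x)}/(2iz) + o(|z|^{-1})$, while the higher Volterra terms contribute $O(|z|^{-2})$. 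Conjugating at $z \mapsto \zbar$ and multiplying by $2iz$ yields $q(x)$ in the limit.

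The main obstacle is the $L^2$ bound \eqref{BC.asy.right}. Each entry of $\bfM^r_\pm(x,\lam) - \I$ reduces to an $L^\infty$-combination of $m^\pm_j - \delta_{j1}$ and $1/\overline{a(\lam)} - 1$. The estimate $|1/\overline{a} - 1| \leq |a - 1|$, together with the $L^1 \cap L^2 \to L^2$ multilinear bound underlying Proposition \ref{prop:NLS.direct.2} applied term-by-term to the series for $a - 1$ and for $m^\pm_j - \delta_{j1}$, will give the result (the $x$-lower truncation in \eqref{m1sol}--\eqref{m2sol} only restricts the domain of integration, so every multilinear bound from the $\lam$-integrated direct-scattering analysis persists). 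Of all the steps this is the one genuinely requiring new estimates, since the others largely assemble results already in hand.
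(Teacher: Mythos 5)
Your proposal is correct and follows essentially the same route as the paper's proof: you build $\bfM^r$ from the explicit formulas \eqref{BC.right.form}--\eqref{BC.right.extend}, prove uniqueness via the constant-matrix relation \eqref{M1M2A} and the growth of the off-diagonal exponentials, read off the large-$z$ limit and the $L^2$ property \eqref{BC.asy.right} from the Volterra series (the paper likewise defers the latter to the multilinear estimates of the direct-scattering section), and obtain \eqref{NLS:right.recon} from the leading Volterra term with the higher terms killed by dominated convergence. No substantive differences to report.
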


\begin{proof}
We have already computed the jump relation; the claimed large-$z$ asymptotic behavior follows from \eqref{NLS:m1m2.z.infty} and the analogous  statement for $m_1^-$ and $m_2^-$.
iI remains to show that the Beals-Coifman solutions are unique, to show that \eqref{BC.asy.right} holds, and to prove the reconstruction formula \eqref{NLS:right.recon}.

To prove uniqueness, suppose that, for given $z$, $\bfM$ and $\bfM^\sharp$ solve \eqref{BC.right}. We'll assume that $\imag z<0$ since the proof for $\imag z> 0$ is similar. 

Since $\bfM$ and $\bfM^\sharp$ both solve \eqref{NLS:M}, 
there is a constant matrix
$$ 
A = 
\begin{pmatrix}
a_{11}	&	a_{12}	\\[5pt]
a_{21}	&	a_{22}
\end{pmatrix}
$$
so that
$$ \bfM(x,z) = \bfM^\sharp(x,z) 
	\begin{pmatrix}
	a_{11}	&	e^{-2ixz} a_{12}	\\[5pt]
	e^{2ixz} a_{21}	&	a_{22}
	\end{pmatrix}.
$$
Since $\left| e^{-2ixz} \right| \to \infty$  as $x \to +\infty$ while both $\bfM, \, \bfM^\sharp \to \I$ as $x \to +\infty$,  it follows that $a_{12}=0$. Since $\left|e^{2ixz}\right|  \to \infty$ as $x \to -\infty$ while both $\bfM$ and $\bfM^\sharp$ are bounded, we conclude that $a_{21} = 0$. Using the normalization at $+\infty$ again we conclude that $a_{11}=a_{22}=1$ and $\bfM=\bfM^\sharp$.

The property \eqref{BC.asy.right} follows from the series representations for $m_1^\pm$ and $m_2^\pm$ and an argument similar to the proof of Proposition \ref{prop:NLS.direct.4}.

Finally, we consider the reconstruction formula \eqref{NLS:left.recon}. We give the proof for $\imag z > 0$ since the proof for $\imag z<0$ is similar. First, note that
$$
\left( \bfM^r\right)_{12}(x,z) = \overline{m_2^+(x,\zbar)},
$$ 
it suffices to show that
$$ 
q(x) = \lim_{z \to \infty} 2iz \overline{m_2^+(x,\zbar)}. $$
To see this we use
the absolutely and uniformly convergent Volterra series representation \eqref{m2sol}, the fact that 
$$ \lim_{z \to \infty} 2iz \int_x^\infty q(y) e^{2i\zbar(x-y)} \, dy = q(x),$$
and the fact that , for any $n \geq 2$,
$$ \lim_{z \to \infty} 2iz \int_{x < y_1 < \ldots < y_{2n-1}} Q_{2n-1}(y) e^{2i\zbar(x-\phi_{2n-1}(y))} \, dy= 0 $$
by dominated convergence. 
\end{proof}

We can also construct ``left'' Beals-Coifman solutions normalized at $-\infty$ as follows:
\begin{align*}
\bfM^\ell(x,z) &= 
\begin{pmatrix}
m_1^+(x,z)	&	\overline{m_2^-(x,\zbar)}	\\[5pt]
m_2^+(x,z)	&	\overline{m_1^-(x,\zbar)}
\end{pmatrix}
\begin{pmatrix}
1/a(z)	&	0	\\[5pt]
0			&	1	
\end{pmatrix},
& \imag z < 0
\intertext{and}
\bfM^\ell(x,z) &= \sig_1\overline{\bfM^\ell(x,\zbar)}\sig_1, & \imag z > 0.
\end{align*}

\begin{theorem}[Left-normalized Beals-Coifman solutions]
\label{thm:NLS.BC.left}
Suppose that $q \in L^1(\R)$. For each $z \in \C \setminus \R$, there exists a unique solution to the problem 
\begin{equation*}
\left\{
\begin{aligned}
\frac{d}{dx} \bfM  =	-iz  \ad \sig_3 (\bfM)  + \bfQ_1(x) \bfM,\\
\lim_{x \to -\infty}	\bfM(x,z)	&=	\I,\\
\bfM(x,z)	&\text{ is bounded as }x \to +\infty.
\end{aligned}
\right.
%}
\end{equation*}
The unique solution $\bfM^\ell(x,z)$ has the asymptotic behavior
$$ \lim_{z \to \infty} \bfM^\ell(x,z) = \I$$ as $z \to \infty$ in any proper subsector of the upper or lower half-plane. Moreover, $\bfM^\ell(x,z)$ has continuous boundary values $\bfM^\ell_\pm$ on $\R$ as $\pm \imag z \downarrow 0$ and $\real z \to \lam \in \R$ that 
satisfy the jump relation
\begin{equation}
\label{BC.Jump.left}
 \bfM^\ell_+(x,\lam)  = \bfM^\ell_-(x,\lam) e^{-i\lam x \ad \sig_3} 
\bfV^\ell(z)
\end{equation}
where 
\begin{equation}
\label{NLS:Jump.left}
\bfV^\ell(z)=
\begin{pmatrix}
1						&	-\overline{\breve{r}(\lam)}	\\[5pt]
\breve{r}(\lam)	&	 1-|\breve{r}(\lam)|^2 
\end{pmatrix}
\end{equation}
and 
$\breve{r}(\lam) = -b(\lam)/a(\lam)$. If $q \in L^1(\R) \cap L^2(\R)$, then, for each $x$, 
\begin{equation}
\label{BC.asy.left}
\bfM^\ell_\pm(x,\lam) - \I \in L^2(\R). 
\end{equation}
Finally, if $q \in L^1(\R) \cap C(\R)$, 
\begin{equation}
\label{NLS:left.recon}
q(x) = \lim_{z \to \infty} 2iz \left( \bfM^\ell \right)_{12}(x,z)
\end{equation}
where the limit is taken as $|z| \to \infty$ in any proper subsector of the upper or lower half-plane.
\end{theorem}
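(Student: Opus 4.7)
The plan is to mirror the proof of Theorem \ref{thm:NLS.BC.right} using the construction of $\bfM^\ell$ given immediately before the statement of this theorem. The explicit formula expresses $\bfM^\ell$ in terms of the previously studied Volterra-series solutions $m_1^\pm, m_2^\pm$ divided (in the first column) by $a(z)$. The fact that $a(z)$ has no zeros in $\imag z < 0$ (established earlier in the section) makes this division legitimate, and the symmetry $\bfM^\ell(x,z) = \sig_1 \overline{\bfM^\ell(x,\zbar)} \sig_1$ extends the construction to $\imag z > 0$. Direct computation shows that $\bfM^\ell$ satisfies the ODE because $m_1^+, m_2^+$ (resp.\ $m_1^-, m_2^-$) come from solutions $\bfM^\pm$ of \eqref{NLS:M}, and right multiplication by a constant diagonal matrix preserves solutions. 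The asymptotic conditions $\bfM^\ell(x,z) \to \I$ as $x \to -\infty$ and boundedness as $x \to +\infty$ follow directly from the known $x$-asymptotics of $m_1^\pm$ and $m_2^\pm$ and the fact that $m_1^+(x,z) \to a(z)$ as $x \to -\infty$, so the $1/a(z)$ factor produces the correct normalization.

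For uniqueness, I would repeat the argument used in Theorem \ref{thm:NLS.BC.right} almost verbatim: if $\bfM$ and $\bfM^\sharp$ both solve the problem, then $\bfM = \bfM^\sharp e^{-ixz \ad \sig_3} A$ for some constant matrix $A$. With the normalization now placed at $-\infty$ (and boundedness at $+\infty$), the roles of the exponentially growing entries are swapped: for $\imag z < 0$, the factor $e^{-2ixz}$ blows up as $x \to -\infty$ while $e^{2ixz}$ blows up as $x \to +\infty$, forcing the off-diagonal entries of $A$ to vanish; the normalization at $-\infty$ then forces $A = \I$. The large-$z$ asymptotic $\bfM^\ell(x,z) \to \I$ in proper subsectors follows from \eqref{NLS:m1m2.z.infty}, the analogous statement for $m^-$, and $a(z) \to 1$.

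The key computation is the jump relation. From \eqref{NLS:T.M} and the asymptotics of $m_1^\pm, m_2^\pm$ at $\pm\infty$, the boundary values of $\bfM^\ell$ at $\real z = \lam$ satisfy
\begin{align*}
\bfM^\ell_-(x,\lam) &\underset{x \to +\infty}{\sim} e^{-i\lam x \ad \sig_3} \begin{pmatrix} 1 & 0 \\ -b(\lam)/a(\lam) & 1 \end{pmatrix}, \\
\bfM^\ell_+(x,\lam) &\underset{x \to +\infty}{\sim} e^{-i\lam x \ad \sig_3} \begin{pmatrix} 1 & -\overline{b(\lam)}/\overline{a(\lam)} \\ 0 & 1 \end{pmatrix},
\end{align*}
where the crucial difference from the right-normalized case is that the division is by $a(z)$ rather than $\overline{a(\zbar)}$, producing $\br(\lam) = -b(\lam)/a(\lam)$ in place of $r(\lam)$. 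Comparing the two asymptotics yields $\bfV^\ell(\lam)$ as in \eqref{NLS:Jump.left}. Note that $|\br(\lam)| = |r(\lam)|$, so the $1 - |\br(\lam)|^2$ entry in $\bfV^\ell$ appears in the opposite diagonal position as in $\bfV^r$, consistent with the different placement of the $1/a$ factor.

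The $L^2$ property \eqref{BC.asy.left} and reconstruction formula \eqref{NLS:left.recon} transfer directly: the former follows from the multilinear estimates of Proposition \ref{prop:NLS.direct.4} applied to the Volterra series for $m_1^\pm - 1$ and $m_2^\pm$, combined with the boundedness of $1/a(\lam)$; the latter follows because $(\bfM^\ell)_{12}(x,z) = \overline{m_2^-(x,\zbar)}$ for $\imag z < 0$ (no factor of $1/a$ appears in the second column), and the same dominated-convergence argument used in Theorem \ref{thm:NLS.BC.right} extracts $q(x)$ from the leading term of the Volterra series for $m_2^-$. I expect the bookkeeping in the jump-matrix computation to be the main place where sign conventions and which-side-of-the-real-axis issues require care; everything else is essentially a left/right mirror of the preceding theorem.
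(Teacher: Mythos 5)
Your overall plan is the right one: the paper omits this proof precisely because it is the left/right mirror of Theorem \ref{thm:NLS.BC.right}, and your handling of existence via the explicit formula, uniqueness (where you correctly identify that $e^{-2ixz}$ blows up as $x\to-\infty$ and $e^{2ixz}$ as $x\to+\infty$ for $\imag z<0$), the large-$z$ limit, \eqref{BC.asy.left}, and the reconstruction formula all transfer correctly.

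However, the computation you call the key step --- the jump relation --- fails as written. Your two displayed asymptotics for $\bfM^\ell_\pm$ are wrong in two respects. First, they are attributed to $x\to+\infty$; at that end the boundary values tend to triangular matrices with \emph{non-unit} diagonals $(1/a(\lam),\,a(\lam))$ and $(\overline{a(\lam)},\,1/\overline{a(\lam)})$, because the $1/a$ factor sits in the first column, which is built from $m_1^+,m_2^+$ and is normalized only at $-\infty$. The unit-diagonal triangular asymptotics occur at the normalization end $x\to-\infty$. Second, and more seriously, your off-diagonal entries have the wrong sign. Writing the first column of $\bfM^+$ as $a$ times the first column of $\bfM^-$ plus $e^{2i\lam x}b$ times its second column and using $\bfM^\pm\to\I$ as $x\to\pm\infty$, one finds
$$
\bfM^\ell_-(x,\lam)\underset{x\to-\infty}{\sim} e^{-i\lam x\ad\sig_3}\begin{pmatrix}1&0\\[2pt] b(\lam)/a(\lam)&1\end{pmatrix},
\qquad
\bfM^\ell_+(x,\lam)\underset{x\to-\infty}{\sim} e^{-i\lam x\ad\sig_3}\begin{pmatrix}1&\overline{b(\lam)}/\overline{a(\lam)}\\[2pt] 0&1\end{pmatrix},
$$
i.e.\ the off-diagonal entries are $-\br$ and $-\overline{\br}$, not $\br=-b/a$ and $\overline{\br}$ as in your display. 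The quotient of this correct pair is exactly $\bfV^\ell$ in \eqref{NLS:Jump.left}; the quotient of your pair is
$\begin{pmatrix}1&\overline{\br}\\ -\br&1-|\br|^2\end{pmatrix}$,
which is \eqref{NLS:Jump.left} with both off-diagonal signs reversed. So ``comparing the two asymptotics'' as you have stated them does not yield the claimed jump matrix, and since $\bfV^\ell$ is precisely what defines the left Riemann--Hilbert problem used later in the inverse analysis, this is not a harmless bookkeeping slip: you need to redo the asymptotics at $x\to-\infty$ from the relation $\Psi^+=\Psi^-T$ rather than transcribing the right-normalized formulas with $a\leftrightarrow\overline a$ swapped.
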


We omit the proof.

The large-$z$ asymptotics of $\bfM^r(x,z)$ (resp.\ $\bfM^\ell(x,z)$) together with the jump relation \eqref{BC.Jump.right} (resp.\ the jump relation \eqref{BC.Jump.left}) define a \emph{Riemann-Hilbert problem}. We will see that, properly formulated, these Riemann-Hilbert problems have unique solutions given the data $r$ and $\breve{r}$, offering a means of recovering $q$ from $r$ and $\breve{r}$. 

In fact, $r$ uniquely determines $\breve{r}$, $a$, and $b$, so that  the Riemann-Hilbert problem for $\bfM^r$ can be conjugated to the Riemann-Hilbert problem for $\bfM^\ell$. To see this, consider the analytic function $F$ on $\C \setminus \R$ defined by
\begin{equation}
\label{NLS:F}
F(z) = 
\begin{cases}
1/\overline{a(\zbar)}	&	\imag(z) > 0,	\\
\\
a(z)							&	\imag(z) < 0 .
\end{cases}
\end{equation}
From what has already been proved, $F(z)$ is piecewise analytic in $\C \setminus \R$, 
$F(z) \to 1$ as $|z| \to \infty$ in any proper subsector of the upper or lower half-plane, and $F(z)$ has continuous boundary values $F_\pm$ on the real axis with 
$$
F_+(\lam) = F_-(\lam) (1-|r(\lam)|^2) 
$$
as follows from the definitions of $F$ and $r$ together with the relation \eqref{NLS:T.sym}. Taking logarithms we see that
$$ \log F_+ (\lam) - \log F_-(\lam) = \log \left(1-|r(\lam)|^2 \right). $$
Recall that, if $f \in H^1(\R)$, the function
\begin{equation}
\label{RHP:scalar}
W(z) = \frac{1}{2\pi i} \int_{-\infty}^\infty \frac{f(\zeta)}{\zeta-z} \, d\zeta
\end{equation}
is the unique function on $\C \setminus \R$ with $W(z) \to 0$ as $z \to \infty$ and $W_+-W_-=f$, where $W_\pm$ are the boundary values of $W(z)$ as $\pm \imag z \to 0$. 
Motivated by this fact, we set
$$ G(z) = \exp\left(\frac{1}{2\pi i} \int_{-\infty}^\infty \frac{1}{s-z} \log\left(1-|r(s)|^2 \right) \, ds \right). $$
Note that, for $r \in L^\infty \cap L^2$ with $\norm[\infty]{r} < 1$, we have
$$
G(z) =1 + \bigO{\frac{1}{z}}
$$
as $z \to \infty$. 
The function $G$ satisfies the jump and boundary conditions and is analytic in $\C \setminus \R$. Moreover, the function
$H(z) = F(z)/G(z)$ is analytic in the same region, continuous across the real axis, and $\lim_{z \to \infty}  H(z)=1$. It follows from Liouville's theorem that $F(z)=G(z)$, which shows that $a$ is uniquely determined by $r$. Since $\breve{r}(\lam) = r(\lam) \overline{a(\lam)}/a(\lam)$ we see that $r$ determines $\breve{r}$.

In what follows, it will be important to note that the boundary values $F_\pm(\lam)$ satisfy the identity
$$
F_+(\lam) F_-(\lam) =   a(\lam) /\overline{a(\lam)}   
$$
as follows easily from the definition.

One can also conjugate the Riemann-Hilbert problem for $\bfM^r$ to that for $\bfM^\ell$ as follows. Given a function $\bfM^r(x,z)$ solving the ``right'' Riemann-Hilbert problem (i.e., the jump condition \eqref{BC.Jump.right} and the normalization condition \eqref{BC.asy.right}), the function
$$
 \bfM(x,z) = \bfM^r(x,z) 
	\begin{pmatrix}
	F(z)^{-1}	&	0	\\
	0				&	F(z)
	\end{pmatrix}
$$
is easily seen to solve the Riemann-Hilbert problem for $\bfM^\ell$ (i.e., the jump condition \eqref{BC.Jump.left} and the normalization condition \eqref{BC.asy.left})  since the additional factor doesn't change the large-$z$ asymptotic behavior of the solution,  while the jump matrices $\bfV^r$ and $\bfV^\ell$ are related by the identity $\bfV^r = F_-^{-\sig_3} \bfV^\ell F_+^{\sig_3}$.

\subsection{The Inverse Scattering Map}
\label{NLS:sec.inverse}

To reconstruct $q$ we will solve the following Riemann-Hilbert problem (compare RHP \ref{NLS:RHP.I}).

\begin{RHP}
\label{NLS:RHP.right}
Given $r \in H^{1,1}_1(\R)$ and $x \in \R$, find a function $\bfM(x,z): \C \setminus \R \to SL(2,\C)$ so that:
\begin{enumerate}
\item[(i)]		$\bfM(x,z)$ is analytic in $\C \setminus \R$ for each $x$,
\item[(ii)]	$\bfM(x,z)$ has continuous boundary values $\bfM_\pm(x,\lam)$ on $\R$, 
\item[(iii)]	$\bfM^\pm(x,\lam) - \I$ in $L^2(\R)$, and
\item[(iv)]	The jump relation
		$$ \bfM_+(x,\lam) = \bfM_-(x,\lam) e^{-i\lam x \ad \sig_3} \bfV(\lam) $$
		holds, where
		$$
		\bfV(\lam) = 
		\begin{pmatrix}
		1-|r(\lam)|^2	&	-\overline{r(\lam)}	\\[5pt]
		r(\lam)			&	1
		\end{pmatrix}.
		$$
\end{enumerate}
\end{RHP}

We will recover $q(x)$ from 
$$ q(x) = \lim_{z \to \infty} 2iz \left( \bfM \right)_{12}(x,z). $$

Riemann-Hilbert problem \ref{NLS:RHP.right} may usefully be thought of as an elliptic boundary value problem (the analyticity condition means  that $\dbar \bfM = 0$ on $\C \setminus \R$). For this reason one should be able to reformulate RHP \ref{NLS:RHP.right} as a boundary integral equation, much as the Dirichlet problem on bounded domain may be reduced to a boundary integral equation. We now describe such a formulation, due to  Beals and Coifman \cite{BC:1984}.

First, observe that the jump matrix $\bfV(\lam)$ admits a factorization of the form
$$ \bfV(\lam) = \left(I-w^-(\lam) \right)^{-1} \left(I+ w^+(\lam) \right) $$
where
$$ 
w^+(\lam) =
\begin{pmatrix}
0	&	0	\\
r(\lam)	&	0
\end{pmatrix}, 
\quad
w^-(\lam) = 
\begin{pmatrix}
0	&	-\overline{r(\lam)}	\\
0	&	0
\end{pmatrix}.
$$
so that
$$ e^{-i \lam x \ad \sig_3} \bfV(\lam) = \left(I - w_x^-(\lam) \right)^{-1} \left( I + w_x^+(\lam) \right) $$
where
$$ 
w_x^+(\lam) =
\begin{pmatrix}
0	&	0	\\
e^{2i\lam x}r(\lam)	&	0
\end{pmatrix}, 
\quad
w_x^-(\lam) = 
\begin{pmatrix}
0	&	-e^{-2i\lam x} \overline{r(\lam)}	\\
0	&	0
\end{pmatrix}.
$$

Note that, if $r \in H^{1,1}_1(\R)$, then $w_\pm \in L^\infty(\R) \cap L^2(\R)$ with $\norm[\infty]{w^\pm} < 1$.

Next, introduce the unknown matrix-valued function
$$ \mu(x,\lam)= \bfM_+(x,\lam) \left( I + w_x^+(\lam)\right)^{-1} = \bfM_-(x,\lam) \left( I  - w_x^-(\lam) \right)^{-1} $$
and observe that
$$ \bfM_+(x,\lam)  - \bfM_-(x,\lam)  = \mu(x,\lam) \left( w_x^+(\lam) + w_x^-(\lam) \right). $$
Recalling \eqref{RHP:scalar} and the asymptotic condition on $\bfM(x,z)$, we conclude that
\begin{equation}
\label{NLS:BC.pre} \bfM(x,z) = \I + \frac{1}{2\pi i} \int \frac{\mu(x,s)\left(w_x^+(\lam) + w_x^-(\lam) \right) }{s-z} \ ds. 
\end{equation}
Using this representation, we can derive a boundary integral equation for the unknown function $\mu(x,\lam)$ which, if solvable, uniquely determines $\bfM(x,z)$ from the Cauchy integral formula. For $f \in H^1(\R)$, define the Cauchy projectors $C_\pm$ by 
\begin{equation}
\label{Cpm}
 \left( C_\pm f \right) (\lam) =   \lim_{\eps \downarrow 0} \frac{1}{2 \pi i} \int \frac{f(s)}{s-(\lam \pm i\eps)} \, ds. 
\end{equation}
The projectors $C_\pm$ extend to isometries of $L^2(\R)$ with $C_+ - C_- = I$, the identity operator on $L^2(\R)$.
Moreover, $C_\pm$ act in Fourier representation as multiplication by the respective functions $\chi_{(0,\infty)}$ and $-\chi_{(-\infty,0)}$, where $\chi_A$ denotes the characteristic function of the set $A$.  Taking limits in \eqref{NLS:BC.pre} we recover
$$
\bfM_+(x,z) = \I  + C_+\left( \mu w_x^+ + \mu w_x^- \right).
$$
Since $$\bfM_+ = \mu \left(I+ w_x^+\right)$$ and $$C_+ (\mu w_x^+) = \mu w_x^+ + C_-\left(\mu w_x^+\right)$$ we conclude 
that
\begin{equation}
\label{NLS:BC}
\mu	=	\I + \calC_w (\mu)
\end{equation}
where for a matrix-valued function $h$ and $w=(w_+,w_-)$
\begin{equation}
\label{NLS:BC.op}
\calC_w(h) = C_+(h w_x^-) + C_-(h w_x^+).
\end{equation}
The integral operator $\calC_w$ is called the \emph{Beals-Coifman integral operator}, and equation \eqref{NLS:BC} is called the \emph{Beals-Coifman integral equation}. For $r \in L^\infty(\R) \cap L^2(\R)$, the operator $\calC_w$ is a bounded operator on matrix-valued $L^2(\R)$ functions; moreover, since the Beals-Coifman solutions are expected to have boundary values $\bfM_\pm$ with $\bfM_\pm(x,\dotarg)  - \I$ belonging to $L^2(\R)$ (see Theorem \ref{thm:NLS.BC.left}), it is reasonable to impose the condition $\mu(x,\dotarg)- \I \in L^2(\R)$. 

\begin{proposition}
\label{NLS:prop.BC}
Suppose that $x \in \R$ and $r \in H^{1,1}_1(\R)$. There exists a unique solution $\mu$ of the Beals-Coifman integral equation \eqref{NLS:BC} with $\mu(x,\dotarg) - \I \in L^2(\R)$. Moreover, $\mu(x,\dotarg)-\I \in H^1(\R)$ with
\begin{equation}
\label{NLS:BC.lam.apriori}
\norm[L^2]{\frac{\dee \mu}{\dee \lam}(x,\dotarg)} \lesssim \frac{\norm[H^{1,0}]{r}}{1- \norm[L^\infty]{r}} \norm[L^2]{\mu-1}. 
\end{equation}
where the implied constant depends only on $x$. 
\end{proposition}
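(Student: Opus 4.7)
The plan has three stages: (i) show that $\calC_w$ is a strict contraction on matrix-valued $L^2(\R)$; (ii) deduce existence, uniqueness, and the $L^2$ bound on $\mu^\sharp := \mu - \I$ by Neumann inversion; (iii) differentiate in $\lam$ to promote $\mu^\sharp$ to $H^1$ with the stated quantitative bound.

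For (i) I exploit the strictly triangular matrix structure of the weights. The matrix $w_x^+$ has only the $(2,1)$ entry $e^{2i\lam x} r(\lam)$ nonzero, and $w_x^-$ has only the $(1,2)$ entry $-e^{-2i\lam x}\overline{r(\lam)}$ nonzero. Right-multiplication by these permutes the columns of the argument and scales by a scalar of modulus $|r(\lam)|$, so $\calC_w$ decouples along the columns of $h$. Writing $h=(h^{(1)} \mid h^{(2)})$,
\begin{equation*}
\calC_w(h) = \bigl( C_-(e^{2i\lam x} r(\lam)\, h^{(2)}) \,\bigm|\, -C_+(e^{-2i\lam x}\overline{r(\lam)}\, h^{(1)})\bigr),
\end{equation*}
and since $\norm[L^2 \to L^2]{C_\pm} = 1$ one obtains $\norm[L^2]{\calC_w h}^2 \leq \norm[L^\infty]{r}^2 \norm[L^2]{h}^2$. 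Thus $\norm[L^2 \to L^2]{\calC_w} \leq \norm[L^\infty]{r} < 1$, and the Neumann series furnishes $(I - \calC_w)^{-1}$ on matrix-valued $L^2(\R)$ with $\norm[L^2 \to L^2]{(I - \calC_w)^{-1}} \leq (1 - \norm[L^\infty]{r})^{-1}$.

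For (ii), because $r \in H^{1,1}_1 \subset L^2$, the forcing $\calC_w(\I) = C_+(w_x^-) + C_-(w_x^+)$ lies in $L^2$ with norm $\lesssim \norm[L^2]{r}$. The equation $(I - \calC_w)\mu^\sharp = \calC_w(\I)$ thus has the unique $L^2$ solution $\mu^\sharp = (I - \calC_w)^{-1}\calC_w(\I)$. For (iii) I differentiate this equation in $\lam$: since $\dee_\lam$ commutes with the Cauchy projectors $C_\pm$ as distributional operators, the derivative passes onto the weights, producing
\begin{equation*}
(I - \calC_w)(\dee_\lam \mu^\sharp) = (\dee_\lam \calC_w)(\mu), \quad (\dee_\lam \calC_w)(h) := C_+\bigl(h\, \dee_\lam w_x^-\bigr) + C_-\bigl(h\, \dee_\lam w_x^+\bigr).
\end{equation*}
I estimate the right-hand side in $L^2$. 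Each $\dee_\lam w_x^\pm$ splits into a bounded piece with $L^\infty$ norm $\leq 2|x|\norm[L^\infty]{r}$ (coming from differentiating the exponential) and a piece proportional to $r'$ or $\overline{r'}$ with $L^2$ norm $\leq \norm[L^2]{r'}$ (coming from differentiating the reflection coefficient). Multiplication of $\mu = \I + \mu^\sharp$ by the bounded piece gives an $L^2$ contribution controlled by $|x|\norm[L^\infty]{r}\bigl(1 + \norm[L^2]{\mu^\sharp}\bigr)$. When the $r'$-piece is paired with the $\I$-part of $\mu$, it produces $C_\pm$ of an $L^2$ function, bounded by $\norm[L^2]{r'}$; paired with $\mu^\sharp$, the product is a priori only in $L^1$, and the estimate is closed by exploiting the Hardy-space structure of the columns of $\mu^\sharp$ (the first column lies in the range of $C_-$, the second in the range of $C_+$, forced by the very identity $\mu^\sharp = \calC_w(\mu)$) together with an integration by parts inside the Cauchy integral that transfers the derivative off $r$. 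Assembling the pieces yields $\norm[L^2]{(\dee_\lam \calC_w)(\mu)} \lesssim \norm[H^{1,0}]{r}\, \norm[L^2]{\mu - \I}$ with an $x$-dependent implicit constant, and applying $(I - \calC_w)^{-1}$ produces the claimed a priori bound.

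The main technical obstacle is precisely this bilinear estimate on $C_\pm(\mu^\sharp \cdot r')$: a product of two $L^2$ functions is only in $L^1$, and the Cauchy projector does not map $L^1$ into $L^2$. The Hardy-space structure of $\mu^\sharp$ (or equivalently the integration-by-parts identity within the Cauchy integral) is essential for closing the estimate, and is where the $H^1$ regularity of $r$ enters in a nontrivial way.
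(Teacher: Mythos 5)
Your stages (i) and (ii) match the paper's argument (your column-by-column computation of $\norm[L^2\to L^2]{\calC_w}$ is a slightly more explicit version of the paper's one-line operator-norm bound), and your differentiated equation in stage (iii) is the same identity \eqref{NLS:BC.sharp.lam} the paper uses. The problem is that you have correctly isolated the crux of the proof --- bounding $C_\pm\bigl(\mu^\sharp\,\dee_\lam w_x^\pm\bigr)$ in $L^2$ when $\mu^\sharp$ and $r'$ are each only in $L^2$ --- and then not actually resolved it. Neither of the two devices you invoke is shown to close the estimate, and neither obviously does: integrating by parts inside the Cauchy integral merely moves the derivative onto $\mu^\sharp$, producing $C_\pm\bigl((\mu^\sharp)'\,r\bigr)$ together with a Cauchy-type integral of the same $L^1$ density against a second-order kernel, so you are back where you started (or going in a circle, since $C_\pm$ commutes with $\dee_\lam$); and the fact that the columns of $\mu^\sharp$ lie in $\Ran C_\mp$ does not upgrade the product $\mu^\sharp r'$ from $L^1$ to a space on which $C_\pm$ is bounded. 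As written, the central inequality $\norm[L^2]{(\dee_\lam\calC_w)(\mu)}\lesssim\norm[H^{1,0}]{r}\,\norm[L^2]{\mu-\I}$ is asserted rather than proved.

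The paper closes this step by a different and entirely elementary mechanism: the interpolation inequality $\norm[L^\infty]{f}\lesssim\eps\,\norm[L^2]{f'}+\eps^{-1}\norm[L^2]{f}$ puts $\mu^\sharp$ in $L^\infty$, so that $\mu^\sharp\,\dee_\lam w_x^\pm\in L^2$ with norm $\lesssim\norm[L^\infty]{\mu^\sharp}\,\norm[H^{1,0}]{r}$; the resulting term $c\,\eps\,\norm[H^{1,0}]{r}\,\norm[L^2]{(\mu^\sharp)'}$ is then absorbed into the left-hand side by choosing $\eps$ with $c\eps+\norm[L^\infty]{r}<1$. You should adopt this (or supply a genuinely complete alternative) to make stage (iii) rigorous. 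Two smaller points: the paper first justifies the existence of $\dee_\lam\mu^\sharp$ by a difference-quotient argument for $r\in C_0^\infty(\R)$ and then passes to general $r\in H^{1,0}(\R)$ by density and the second resolvent formula, whereas you differentiate the equation directly for general $r$; and your claimed bound $\norm[L^\infty]{\dee_\lam(e^{\pm 2i\lam x})r}\le 2|x|\norm[L^\infty]{r}$ is not how the paper organizes the estimate --- it keeps the whole of $\dee_\lam w_x^\pm$ in $L^2$ with constant depending linearly on $x$, which is what produces the $x$-dependence of the implied constant in \eqref{NLS:BC.lam.apriori}.
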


\begin{proof}
Norm the matrix-valued functions on $L^2(\R)$ by
$$ \norm[L^2]{F}^2 = \int \left(|F_{11}(\lam)|^2 + |F_{12}(\lam)|^2 + |F_{21}(\lam)|^2 + |F_{22}(\lam)|^2  \right) \, d\lam $$
i.e., $\norm[L^2]{F}^2 = \int |F(\lam)|^2 \, d\lam$ where $| A |$ is the Frobenius norm on $2 \times 2$ matrices. Since
$\norm[L^2 \to L^2]{C_\pm} = 1$ and $\norm[L^\infty]{w_x^\pm} = \norm[L^\infty]{r}$, it follows that 
$$\norm[L^2 \to L^2]{\calC_w} = \norm[L^\infty]{r} <  1.$$ 
Hence $(I-\calC_w)^{-1}$ exists as a bounded operator on $L^2(\R)$. Setting $\mu^\sharp = \mu-\I$, \eqref{NLS:BC} becomes 
\begin{equation}
\label{NLS:BC.sharp}
\mu^\sharp = \calC_w(\I) + \calC_w(\mu^\sharp) 
\end{equation}
where  $\calC_w(\I) \in L^2$ since $r \in L^2$. Hence
\begin{equation}
\label{NLS:BC.sharp.sol}
\mu^\sharp = \left(I - \calC_w\right)^{-1} \calC_w \I 
\end{equation}
and $\mu = \I + \mu^\sharp$. Any two solutions $\mu_1$ and $\mu_2$ with $\mu_1  - \I, \,  \mu_2-\I \in L^2$ satisfy $(\mu_1-\mu_2) = \calC_w (\mu_1 - \mu_2)$ so that $\mu_1 = \mu_2$.

Next, we show that, for each $x$, $\mu(x,\dotarg) - \I \in H^1(\R)$, following closely the argument in \cite[\S 3]{DZ:2003}. First suppose that $r \in C_0^\infty(\R)$. In \eqref{NLS:BC.sharp}, the first right-hand term is actually a smooth function since $C_\pm$ preserve Sobolev spaces. An argument with difference quotients shows that the derivative of $\mu^\sharp$ with respect to $\lam$ exists as a vector in $L^2$ and
\begin{equation}
\label{NLS:BC.sharp.lam}
\frac{\dee \mu^\sharp}{\dee \lam} = \calC_{dw/d\lam} \I + \calC_{dw/d\lam} \mu^\sharp + \calC_w \left( \frac{\dee \mu^\sharp}{\dee \lam} \right)
\end{equation}
where
$$ 
\calC_{dw/d\lam} h = C_+\left( h \frac{\dee w_x^+}{\dee \lam} \right) + C_-\left( h \frac{\dee w_x^-}{\dee \lam} \right). 
$$
To obtain an effective bound on $\dee \mu^\sharp / \dee \lam$ we first note that
$$ \norm[L^2]{\frac{\dee w_x^\pm}{\dee \lam}} \leq c \norm[H^{1,0}]{r} $$
where $c$ depends linearly on $x$. Next, we recall that for any $\eps>0$,
$$ \norm[L^\infty]{f} \lesssim \eps \norm[L^2]{f'} + \eps^{-1} \norm[L^2]{f}. $$
It now follows from \eqref{NLS:BC.sharp.lam} that
\begin{align*}
\norm[L^2]{(\mu^\sharp)'}
	&	\leq  
					c \norm[L^\infty]{\mu^\sharp} \norm[H^{1,0}]{r} + 
					\norm[L^\infty]{r} \norm[L^2]{\frac{\dee \mu^\sharp}{\dee \lam}}\\
	&	\leq		c \eps \norm[L^2]{(\mu^\sharp)'} \norm[H^{1,0}]{r} + 
					c \eps^{-1} \norm[L^2]{\mu^\sharp} \norm[H^{1,0}]{r} + 
					\norm[L^\infty]{r} \norm[L^2]{\frac{\dee \mu^\sharp}{\dee \lam}}
\end{align*}
For $\eps$ with $c\eps + \norm[L^\infty]{r} < 1$ we conclude that \eqref{NLS:BC.lam.apriori} holds if 
$r \in C_0^\infty(\R)$. 

To complete the argument, suppose $r \in H^{1,0}(\R)$ and $\{r_n\}$ is a sequence from $C_0^\infty(\R)$ with
$r_n \to r$ in $H^{1,0}(\R)$.  Let $\mu_n^\sharp$ correspond to $r_n$. Using the second resolvent formula, it is easy to see that $(I-\calC_{w_n})^{-1} \to (I-\calC_w)^{-1}$ as operators on $L^2$ so that $\mu^\sharp_n \to \mu^\sharp$ as $n \to \infty$. It is now easy to see that $\mu^\sharp$ has a bounded weak $L^2$ derivative obeying \eqref{NLS:BC.lam.apriori}.
\end{proof}

For subsequent use, we note a simple but very important consequence of the proof of Proposition \ref{NLS:prop.BC}.

\begin{proposition}[Vanishing Theorem for RHP \ref{NLS:RHP.right}]
\label{NLS:prop.null}
Suppose that $r \in H^{1,0}(\R)$ with $\norm[L^\infty]{r} < 1$ and $x \in \R$. Suppose that $\bfn(x,z): \C \setminus \R \to SL(2,\C)$ so that 
\begin{enumerate}
\item[(i)]		$\bfn(x,z)$ is analytic in $\C \setminus \R$,
\item[(ii)]	$\bfn(x,z)$ has continuous boundary values $\bfn_\pm(x,\lam)$ on $\R$,
\item[(iii)]	$\bfn_\pm(x,\dotarg) \in L^2(\R)$,
\item[(iv)]	The jump relation 
				$$ \bfn_+(x,\lam) = \bfn_-(x,\lam) e^{-i\lam x \ad \sig_3} \bfV(\lam) $$
				holds, with $\bfV(\lam)$ as in RHP \ref{NLS:RHP.right}.
\end{enumerate}
Then $\bfn(x,z) \equiv \mathbf{0}$.
\end{proposition}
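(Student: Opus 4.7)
My plan is to reduce the claim to the homogeneous Beals--Coifman integral equation $\mu = \calC_w(\mu)$ and invoke the contraction estimate $\norm[L^2 \to L^2]{\calC_w} \leq \norm[L^\infty]{r} < 1$ already established in the proof of Proposition~\ref{NLS:prop.BC}, using the same factorization $e^{-i\lam x \ad \sig_3}\bfV(\lam) = (I - w_x^-(\lam))^{-1}(I + w_x^+(\lam))$ that underlies that proof.

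I set $\mu(x,\lam) := \bfn_-(x,\lam)(I - w_x^-(\lam))^{-1} = \bfn_+(x,\lam)(I + w_x^+(\lam))^{-1}$, the two expressions agreeing by the jump relation (iv). Since $(w_x^\pm)^2 = 0$, the inverses $(I \pm w_x^\pm)^{-1} = I \mp w_x^\pm$ are bounded, so (iii) forces $\mu(x,\cdot) \in L^2(\R)$; a rearrangement yields $\bfn_+ - \bfn_- = \mu(w_x^+ + w_x^-) \in L^1(\R) \cap L^2(\R)$. I then form the Cauchy integral
$$ W(x,z) = \frac{1}{2\pi i}\int_\R \frac{\mu(x,s)\bigl(w_x^+(s) + w_x^-(s)\bigr)}{s-z}\, ds, $$
which is analytic off $\R$, tends to $\mathbf{0}$ as $|z| \to \infty$ in any proper subsector, has boundary values $W_\pm = C_\pm[\mu(w_x^+ + w_x^-)]$, and exhibits the same jump as $\bfn$. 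Hence $E(x,z) := \bfn(x,z) - W(x,z)$ has no jump across $\R$ and extends to an entire function of $z$.

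Reading hypothesis (iii) in the Hardy-space sense---each column of $\bfn$ belonging to the Hardy space $H^2$ of the appropriate half-plane, as is standard in this framework---gives $\bfn(x,z) \to \mathbf{0}$ in proper subsectors of infinity; combined with the vanishing of $W$, this forces $E$ to be a bounded entire function vanishing at infinity, so $E \equiv \mathbf{0}$ by Liouville and $\bfn_\pm = W_\pm$. Combining $\bfn_\pm = C_\pm[\mu(w_x^+ + w_x^-)]$ with $C_+ - C_- = I$ and $\bfn_\pm = \mu(I \pm w_x^\pm)$, a one-line manipulation produces $\mu = C_+[\mu w_x^-] + C_-[\mu w_x^+] = \calC_w(\mu)$; since $\norm[L^2 \to L^2]{\calC_w} < 1$, this forces $\mu \equiv 0$, whence $\bfn_\pm \equiv 0$. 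With continuous boundary values vanishing on $\R$, Morera's theorem and the identity principle turn $\bfn$ into an entire function zero on $\R$, so $\bfn \equiv \mathbf{0}$.

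I expect the main obstacle to be justifying $E \equiv 0$: conditions (i)--(iii) \emph{by themselves} do not control $\bfn$ at infinity (an entire function can have $L^2$ restriction to $\R$ without vanishing, e.g.\ $\sin z/z$), so some growth input is needed. The cleanest remedy is the Hardy-space reading of (iii) adopted above, which is the standard convention in this literature. A variant that sidesteps entire functions altogether is a Zhou-style symmetrization: define $h(z) := \bfn(x,z)\bfn(x,\bar z)^*$, analytic in each half-plane with boundary values satisfying $h_+(\lam) + h_-(\lam) = \bfn_-(x,\lam)(V_x(\lam) + V_x(\lam)^*)\bfn_-(x,\lam)^*$; a direct computation gives $V_x + V_x^* = \mathrm{diag}(2(1-|r(\lam)|^2), 2)$, which is positive definite pointwise since $\norm[L^\infty]{r} < 1$. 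As a product of two $H^2$ factors, $h$ is componentwise $H^1$ in each half-plane, and therefore integrates to zero on $\R$; pointwise nonnegativity of $\tr(h_+ + h_-)$ then forces $\bfn_- \equiv 0$ directly, after which the jump relation yields $\bfn_+ \equiv 0$ and Morera finishes as before.
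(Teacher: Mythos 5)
Your main line of argument is the same as the paper's: factor the jump matrix, set $\nu = \bfn_\pm(I\pm w_x^\pm)^{-1}$, derive the homogeneous equation $\nu = \calC_w\nu$, and conclude from $\norm[L^2\to L^2]{\calC_w}\leq\norm[L^\infty]{r}<1$ that $\nu\equiv 0$, hence $\bfn\equiv 0$ via the Cauchy representation. The paper compresses all of this into ``mimicking the arguments that lead to the Beals--Coifman integral equation,'' whereas you make explicit the one step that actually requires justification: showing that $\bfn$ equals the Cauchy integral of its jump, i.e.\ that $E=\bfn-W$ vanishes. You are right that hypotheses (i)--(iii) as literally written do not force this --- an entire matrix function like $(\sin z/z)\,\I$ is analytic off $\R$, has continuous $L^2$ boundary values, and satisfies the jump relation when $r=0$, yet is not zero --- so some decay at infinity (equivalently, the Hardy-space reading of (iii), which is how condition \eqref{BC.asy.right} is actually realized for the Beals--Coifman solutions, or the subsector decay of Remark \ref{NLS:remark.lim.z}) must be imported; the paper leaves this implicit. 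Your Zhou-style symmetrization via $h(z)=\bfn(x,z)\bfn(x,\zbar)^*$ is a genuinely different and correct route: the computation $V_x+V_x^*=\mathrm{diag}(2(1-|r|^2),2)>0$ is right, and it trades the contraction estimate for positivity of the real part of the jump matrix, which is the argument that generalizes to settings where $\norm[L^\infty]{r}<1$ is not available as an operator bound; note, though, that it relies on the same $H^2$ reading of (iii) to conclude $\int_\R h_\pm\,d\lam=0$, so it does not evade the growth issue. (One further nit common to your write-up and the paper: the codomain should be $M_2(\C)$ rather than $SL(2,\C)$, since the conclusion is $\bfn\equiv\mathbf{0}$.)
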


\begin{proof}
Given such a function $\bfn(x,z)$, let 
$$\nu(x,\lam) = \bfn_+(x,\lam) \left(I + w_x^+(\lam)\right)^{-1} = \bfn_-(x,\lam) \left(I- w_x^-(\lam) \right)^{-1}.$$
Mimicking the arguments that lead to the Beals-Coifman integral equation we conclude that
$$ \nu = \calC_w \nu $$
which shows that $\nu \equiv 0$ since $\norm[L^2 \to L^2]{\calC_w} < 1$. It now follows from \eqref{NLS:BC.pre} with 
$\mu$ replaced by $\nu$ that $\bfn(x,z) \equiv 0$.
\end{proof}

A piecewise analytic function $\bfn(x,z)$ satisfying (i)--(iv) above is called a \emph{null vector} for RHP \ref{NLS:RHP.right}. Proposition \ref{NLS:prop.null} asserts that RHP \ref{NLS:RHP.right} has no nontrivial null vectors. 

Since the solution of RHP \ref{NLS:RHP.right} is unique, any transformation of $\bfM$ that leaves the solution space invariant is a symmetry of the solution. Since
$$ \sigma_1 \overline{\bfV(\lam)} \sigma_1 = \bfV(\lam)^{-1}, $$
it follows that the map
$$ \bfM(x,z) \mapsto \sig_1 \overline{\bfM(x,\zbar)} \sig_1 $$
preserves the solution space. This symmetry implies that
\begin{equation}
\label{NLS:mu.sym}
\mu(x,\lam) = \sig_1 \overline{\mu(x,\lam)} \sig_1.
\end{equation}

We now define 
$$\bfM_+(x,\lam) = \mu(x,\lam) \left(I+w_x^+(\lam)\right), \quad \bfM_-(x,\lam) = \mu(x,\lam) \left(I - w_x^-(\lam) \right).$$
The next proposition shows that $\bfM(x,z)$ are Beals-Coifman solutions for a potential $q$ determined by the asymptotics of $\bfM(x,z)$. 

\begin{proposition}
\label{NLS:prop.BC.Q}
Suppose that $r \in H^{1,1}_1(\R)$, denote by $\bfM(x,z)$ the unique solution of RHP \ref{NLS:RHP.right}, and by 
$\bfM_\pm(x,\lam)$ the boundary values of $\bfM(x,z)$. Then 
$$ \frac{d}{dx} \bfM(x,z) =- iz \ad \sig_3 \left( \bfM \right) + \bfQ_1 (x) \bfM(x,z) $$
where
\begin{equation}
\label{NLS:Q1.recon} 
\bfQ_1(x) = \frac{1}{2\pi} \ad \sig_3 \left( \int \mu(x,s) \left( w_x^+(s) + w_x^-(s) \right) \right)  \, ds
\end{equation}
takes the form
$$ \bfQ_1(x) = 
\begin{pmatrix}
0	&	q(x)	\\
\overline{q(x)}	&	0
\end{pmatrix}.
$$
\end{proposition}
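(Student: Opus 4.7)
The plan is to remove the $x$-dependence from the jump matrix of $\bfM$ and read the $x$-evolution off from an entire function built from the resulting modified RHP solution. Setting $\Psi(x,z) := \bfM(x,z)\, e^{-ixz\sig_3}$ turns the jump relation $\bfM_+ = \bfM_-\, e^{-i\lam x\, \ad\sig_3}\bfV$ into the $x$-independent jump $\Psi_+(x,\lam) = \Psi_-(x,\lam)\bfV(\lam)$. Consequently $U(x,z) := (\partial_x \Psi)\Psi^{-1}$ has no jump across $\R$ and extends to an entire function of $z$, provided $\Psi$ is invertible. To obtain invertibility, I note that $\det\bfM(x,z) - 1$ solves a scalar Riemann-Hilbert problem with trivial jump (since $\det\bfV \equiv 1$) and zero limit at infinity, so $\det\bfM \equiv 1$ by a Liouville argument.

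I then extract the first-order large-$z$ asymptotics of $\bfM$ from the Cauchy integral representation \eqref{NLS:BC.pre}. Because $r \in H^{1,1}(\R) \subset L^1(\R)$ and $\mu - \I \in H^1(\R) \subset L^\infty(\R)$ by Proposition \ref{NLS:prop.BC}, the density $h(x,s) := \mu(x,s)(w_x^+(s) + w_x^-(s))$ lies in $L^1(\R)$, as does $s\, h(x,s)$. The identity $(s-z)^{-1} = -z^{-1} - s/[z(s-z)]$ together with dominated convergence produces, uniformly in each proper subsector of $\C\setminus\R$,
\[
\bfM(x,z) = \I + \frac{\bfM^{(1)}(x)}{z} + \littleo{z^{-1}},\quad \bfM^{(1)}(x) = -\frac{1}{2\pi i}\int \mu(x,s)(w_x^+(s)+w_x^-(s))\,ds,
\]
with the corresponding expansion $\bfM^{-1}(x,z) = \I - \bfM^{(1)}(x)/z + \littleo{z^{-1}}$.

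Substituting into $U = \bfM_x\bfM^{-1} - iz\,\bfM\sig_3\bfM^{-1}$ and using $\bfM\sig_3\bfM^{-1} = \sig_3 - \ad\sig_3(\bfM^{(1)}(x))/z + \littleo{z^{-1}}$ yields
\[
U(x,z) = -iz\,\sig_3 + i\,\ad\sig_3(\bfM^{(1)}(x)) + \littleo{1}.
\]
Since $U(x,z) + iz\,\sig_3$ is entire and bounded at infinity, Liouville's theorem identifies it with the $z$-independent matrix $\bfQ_1(x) := i\,\ad\sig_3(\bfM^{(1)}(x))$. Unwinding $\Psi = \bfM e^{-ixz\sig_3}$ in the identity $\partial_x\Psi = U\Psi$ then yields the ODE $\bfM_x = -iz\,\ad\sig_3(\bfM) + \bfQ_1(x)\bfM$ claimed by the proposition.

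For the structure of $\bfQ_1$, since $w_x^\pm(\lam)$ are strictly off-diagonal and $\ad\sig_3$ annihilates diagonal matrices, $\bfQ_1$ is automatically off-diagonal. The symmetry $\mu = \sig_1\overline{\mu}\sig_1$ of \eqref{NLS:mu.sym} gives $\mu_{22} = \overline{\mu_{11}}$, and computing the two off-diagonal entries of $\ad\sig_3(\int \mu(w_x^+ + w_x^-))$ explicitly then shows $(\bfQ_1)_{21} = \overline{(\bfQ_1)_{12}}$; setting $q(x) := (\bfQ_1)_{12} = 2i\,\bfM^{(1)}_{12}(x)$ matches the reconstruction formula $q(x) = \lim_{z\to\infty} 2iz\,\bfM_{12}(x,z)$ from Theorem \ref{thm:NLS.BC.right}. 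The delicate step is the Liouville argument: the global boundedness of $U + iz\,\sig_3$ requires upgrading the sectorial $\littleo{1}$ bound to boundedness on all of $\C$, which uses that $U$ extends analytically across $\R$ together with the $H^1$-regularity of $\mu - \I$ from Proposition \ref{NLS:prop.BC} to control $U$ uniformly near the real axis.
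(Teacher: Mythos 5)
Your route is the classical ``dressing''/Liouville argument: remove the $x$-dependence of the jump, observe that $U=(\partial_x\Psi)\Psi^{-1}$ has no jump and is therefore entire, and identify $U+iz\sigma_3$ with a constant matrix. The paper deliberately does \emph{not} argue this way: it differentiates the jump relation, uses the identity \eqref{RHP.Cpm.1} to see that $i\lam\,\ad\sigma_3(\bfM_\pm)+\bfQ_1(x)\in L^2(\R)$, and then concludes that $\bfn=\frac{d}{dx}\bfM+i\lam\,\ad\sigma_3(\bfM)-\bfQ_1\bfM$ is a null vector in the sense of Proposition \ref{NLS:prop.null}, hence zero. The reason for this choice is exactly the point you flag as ``delicate,'' and at the stated regularity $r\in H^{1,1}_1(\R)$ your proposed repair does not close the gap. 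To apply Liouville you need $U+iz\sigma_3$ bounded on all of $\C$, and near the real axis this requires $\sup_{\lam\in\R}\bigl|\lam\bigl(\bfM_\pm(x,\lam)-\I\bigr)\bigr|<\infty$ (the term $iz\,\ad\sigma_3(\bfM)\bfM^{-1}=iz\,\ad\sigma_3(\bfM-\I)\bfM^{-1}$ is the obstruction). Writing $\bfM_\pm-\I=C_\pm(h)$ with $h=\mu(w_x^++w_x^-)$, the identity \eqref{RHP.Cpm.1} gives $\lam\,C_\pm(h)(\lam)=\mathrm{const}+C_\pm\bigl((\cdot)h(\cdot)\bigr)(\lam)$, and for $r\in H^{1,1}$ the function $s\,h(x,s)$ is only in $L^2(\R)$ (it involves $s\,r(s)$), so $C_\pm(sh)$ is an $L^2$ function with no pointwise bound; the $H^1$ regularity of $\mu-\I$ from Proposition \ref{NLS:prop.BC} does not upgrade this to $L^\infty$. (Your claim that $s\,h(x,s)\in L^1$ fails for the same reason: $r\in H^{1,1}$ gives $sr\in L^2$ but not $sr\in L^1$.) This is precisely why the paper trades the Liouville step for the vanishing lemma, which only requires the residual to have $L^2$ boundary values --- and the constant produced by \eqref{RHP.Cpm.1} is absorbed into the definition \eqref{NLS:Q1.recon} of $\bfQ_1$ so that what remains is $C_\pm(sh)\in L^2$.

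Two smaller points. First, you use $\partial_x\Psi$ (equivalently $\partial_x\bfM_\pm$) without justifying that $\bfM$ is differentiable in $x$; the paper establishes this first by differentiating $\mu-\I=(I-\calC_w)^{-1}\calC_w\I$ in $x$ and checking $d\mu/dx\in L^2$. Second, your determinant Liouville argument has the same flavor of difficulty (an entire function known to be bounded on $\R$ and to tend to $1$ in proper subsectors, but with only an $O(|\imag z|^{-1/2})$ bound on the strip $|\imag z|\le 1$), so it too needs either a Phragm\'en--Lindel\"of argument or the $L^2$-RHP machinery rather than a bare appeal to Liouville. Your argument as written would be fine for $r\in\scrS_1(\R)$; to cover $H^{1,1}_1(\R)$ you should either add a density/limiting argument or, more simply, recast the final step as the paper does: define $\bfQ_1$ by \eqref{NLS:Q1.recon}, verify via \eqref{RHP.Cpm.1} that the residual has $L^2$ boundary values, and invoke Proposition \ref{NLS:prop.null}. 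Your treatment of the off-diagonal structure of $\bfQ_1$ via the symmetry \eqref{NLS:mu.sym} agrees with the paper.
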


\begin{proof}
First, by differentiating the solution formula $\mu - \I = (I - \calC_w)^{-1}\mu$ and using the fact that $r \in H^{0,1}(\R)$, 
it is easy to see that $(d\mu/dx)(x,\dotarg) \in L^2(\R)$. It follows from the representation
$$
\bfM_\pm(x,\lam) - \I = C_\pm\left( \mu(x,\dotarg) \left(w_x^+(\dotarg) + w_x^-(\dotarg)\right) \right) (\lam) 
$$
the same is true for $\bfM^\pm(x,\lam)-\I$. 
We will differentiate the jump relation for $\bfM$ and use Proposition \ref{NLS:prop.null}. From the jump relation for 
$\bfM_\pm$, we have
\begin{equation}
\label{NLS:RHP.x}
\left( \frac{d\bfM_+}{dx} + i\lam \ad \sig_3  (\bfM_+)\right) = 
\left( \frac{d\bfM_-}{dx}  +i\lam \ad \sig_3 (\bfM_-)  \right) e^{-ix\lam \ad \sig_3} \bfV(\lam)
\end{equation}
where we used $\bfV = \bfM_-^{-1} \bfM^+$ and the Leibniz rule for the derivation $A \mapsto \ad \sig_3 (A)$. 
Using the identity
\begin{equation}
\label{RHP.Cpm.1}
i\lam (C_\pm f)(\lam) = -\frac{1}{2\pi} \int f(s) \, ds  + C_\pm\left((\dotarg) f(\dotarg)\right)(\lam) 
\end{equation}
and the fact that $r \in H^{1,1}(\R)$, we see that
$$ i\lam \ad \sig_3 (\bfM_\pm) +\bfQ_1(x) \in L^2(\R)$$
where $\bfQ_1$ is the bounded continuous function of $x$ given by \eqref{NLS:Q1.recon}. We conclude that
$$ 
\bfn(x,z) \coloneqq \frac{d}{dx}\bfM(x,z) + i\lam \ad \sig_3 (\bfM(x,z)) - \bfQ_1(x) \bfM(x,z) 
$$
is a null vector for RHP \eqref{NLS:RHP.right} for each $x$, hence identically zero by Proposition \ref{NLS:prop.null}.
Finally, the diagonal components of $\bfQ_1$ are zero since $\bfQ_1$ lies in the range of $\ad \sig_3(\dotarg)$, and 
$\left( \bfQ_1 \right)_{21} = \overline{ \left( \bfQ_1 \right)_{12} }$ owing to the symmetry \eqref{NLS:mu.sym}.
\end{proof}

Tracing through the definitions we obtain the reconstruction formula
\begin{equation}
\label{NLS:q.recon.int}
q(x) = -\frac{1}{\pi} \int \overline{r(s}) e^{-2ixs} \mu_{11}(x,s) \, ds
\end{equation}
which together with RHP \ref{NLS:RHP.right} defines the inverse scattering map $\calI: r \to q$.

\begin{remark}
\label{NLS:Inverse.lin}
The Fr\'{e}ch\'{e}t derivative of the map $\calI$ at $r=0$ is clearly the map
$$ q(x) = - \frac{1}{\pi} \int e^{-2ix\lam} \overline{r(\lam)} \, d\lam .$$
This map is the inverse map for the Fr\'{e}ch\'{e}t derivative of $\calR$ (see Remark 
\ref{NLS:Direct.lin}).
\end{remark}

In the sequel, it will be important to know that
\begin{equation}
\label{NLS:mu.de}
\frac{d}{dx} \mu =- i\lam \ad \sig_3 (\mu) + \bfQ_1(x) \mu.
\end{equation}
This is a simple consequence of Proposition \ref{NLS:prop.BC.Q} and the Leibniz rule for $\ad \sig_3$ (see Exercise \ref{ex:NLS.mu.bc}). 

We'll first show that $q \in H^{1,1}(\R)$, and then show that the map $r \mapsto q$ is a locally Lipschitz continuous  map from $H^{1,1}_1(\R)$ to $H^{1,1}(\R)$.  To aid the analysis, note that \eqref{NLS:BC} has $11$ and $12$ components
\begin{align}
\label{NLS:mu11}
\mu_{11}(x,\lam)	&=	1	+ 	C_-\left( \mu_{12}(x,\dotarg) e^{2i(\dotarg)x} r(\dotarg) \right)(\lam)\\
\label{NLS:mu12}
\mu_{12}(x,\lam)	&=	-C_+\left( \mu_{11}(x,\dotarg) e^{-2i(\dotarg)x} \overline{r(\dotarg)} \right)
\end{align}
so that $\mu_{11}-1 \in \Ran C_-$.  The following lemma \cite[Lemma 3.4]{DZ:2003} will play a critical role.

\begin{lemma}
\label{NLS:lemma.phase-weight}
Suppose that $r \in H^{1,0}(\R)$. For any $x > 0$, the estimates
\begin{equation}
\label{NLS:phase-weight}
\begin{aligned}
\norm[L^2]{C_+ e^{-2ix(\dotarg)} \overline{r(\dotarg)} }
	& \lesssim \norm[H^{1,0}]{r} (1+x^2)^{-1/2}, 
\\
\norm[L^2]{C_- e^{{2ix}(\dotarg)}{r(\dotarg)} } 
	&\lesssim \norm[H^{1,0}]{r} (1+x^2)^{-1/2}
\end{aligned}
\end{equation}
hold.
\end{lemma}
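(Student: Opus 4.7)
The plan is to exploit the Fourier-multiplier characterization of $C_\pm$ recalled just after \eqref{Cpm}: $\widehat{C_+ f} = \chi_{(0,\infty)} \widehat{f}$ and $\widehat{C_- f} = -\chi_{(-\infty,0)} \widehat{f}$. Multiplication by the oscillatory phase $e^{\mp 2ix\lambda}$ translates the Fourier variable by $\pm 2x$, so applying $C_\pm$ after such a modulation effectively restricts the Fourier support of the input to a half-line bounded away from zero by $2x$. The Sobolev weight $(1+\eta^2)$ available in $H^{1,0}$ then converts this support restriction into $O((1+x^2)^{-1/2})$ decay.

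Concretely, for the first inequality, set $g(\lambda) = e^{-2ix\lambda} \overline{r(\lambda)}$. With the convention \eqref{Fourier} one computes
$$ \widehat{g}(\xi) = \int e^{-i\lambda(\xi + 2x)} \overline{r(\lambda)} \, d\lambda = \overline{\widehat{r}(-\xi - 2x)}. $$
Plancherel's theorem together with the Fourier representation of $C_+$ and the change of variable $\eta = -\xi - 2x$ give
$$ \norm[L^2]{C_+ g}^2 = \frac{1}{2\pi} \int_0^\infty \left| \widehat{r}(-\xi - 2x) \right|^2 d\xi = \frac{1}{2\pi} \int_{-\infty}^{-2x} |\widehat{r}(\eta)|^2 \, d\eta. $$
Since $\eta \leq -2x$ forces $1 + \eta^2 \geq 1 + 4x^2 \gtrsim 1 + x^2$ for $x > 0$, we factor out the weight to obtain
$$ \int_{-\infty}^{-2x} |\widehat{r}(\eta)|^2 \, d\eta \leq \frac{1}{1 + 4x^2} \int_{\R} (1 + \eta^2) |\widehat{r}(\eta)|^2 \, d\eta \lesssim \frac{\norm[H^{1,0}]{r}^2}{1+x^2}, $$
which is the first estimate of \eqref{NLS:phase-weight}.

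The second estimate is proved identically. For $h(\lambda) = e^{2ix\lambda} r(\lambda)$ one has $\widehat{h}(\xi) = \widehat{r}(\xi - 2x)$, and applying $C_-$ restricts $\xi$ to $(-\infty,0)$; the substitution $\eta = \xi - 2x$ again yields an integral over $(-\infty, -2x)$, to which the preceding bound applies verbatim. There is no serious obstacle in the argument: it reduces to Plancherel's identity plus the elementary observation that $C_\pm$ composed with the modulation $e^{\mp 2ix\lambda}$ localizes the Fourier variable at distance at least $2x$ from the origin, precisely the region where the Sobolev weight $1+\eta^2$ supplies the claimed decay in $x$.
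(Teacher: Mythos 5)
Your proof is correct and follows essentially the same route as the paper's: Plancherel plus the Fourier-multiplier characterization of $C_\pm$, using the fact that the modulation $e^{\mp 2ix\lambda}$ shifts the Fourier support to a half-line at distance $2x$ from the origin, where the Sobolev weight $1+\eta^2$ supplies the $(1+x^2)^{-1/2}$ decay. If anything, your computation of $\widehat{g}$ and the explicit change of variables is slightly more careful than the paper's one-line version.
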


\begin{proof}
By Plancherel's theorem and  the fact that $C_+$ acts in Fourier transform representation as multiplication by $\chi_+(\xi) \coloneqq \chi_{(0,\infty)}(\xi)$, we may estimate
\begin{align*}
\norm[L^2]{C_+ e^{-2ix(\dotarg)} \overline{r(\dotarg)}}
	&=			\norm[L^2]{\chi_+ \widehat{\overline{r}}(\dotarg+x)}\\
	&\lesssim	(1+|x|^2)^{-1/2}\norm[H^{0,1}]{\widehat{r}}
\end{align*}
where in the last step we used $(1+|x|^2)^{1/2}(1+ |\xi+x|^2)^{-1/2} \leq 1$ for $x>0$ and $\xi>0$. The other proof is similar.
\end{proof}

Using Lemma \ref{NLS:lemma.phase-weight}, we can obtain ``one-sided'' control over the inverse scattering map.

\begin{proposition}
\label{NLS:prop.inverse.left}
Suppose that $r \in H^{1,1}_1(\R)$. Then $q$ as defined by \eqref{NLS:q.recon.int} belongs to $H^{1,1}(\R^+)$, and the map $r \mapsto q$ is locally Lipschitz continuous from $H^{1,1}_1(\R)$ to $H^{1,1}(\R^+)$.
\end{proposition}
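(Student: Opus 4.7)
The plan is to split $q(x) = q_0(x) + R(x)$ with $q_0(x) := -\pi^{-1}\widehat{\overline{r}}(2x)$ and
\[
R(x) := -\pi^{-1}\int \overline{r(s)}\,e^{-2ixs}(\mu_{11}(x,s)-1)\,ds,
\]
and bound the two pieces separately. Since the Fourier transform is an isometry of $H^{1,1}(\R)$ onto itself and dilation preserves that space, $q_0\in H^{1,1}(\R)$ depends Lipschitz-continuously on $r$, so the heart of the proof is showing $R\in H^{1,1}(\R^+)$ with the analogous Lipschitz dependence.

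Using $\mu_{11}-1 = C_-(\mu_{12}\,e^{2i(\cdot)x}r)\in \mathrm{Ran}\,C_-$ from \eqref{NLS:mu11} together with the duality $\int f\,(C_- g)=-\int(C_+f)\,g$, rewrite
\[
R(x) = \pi^{-1}\int g_x(\lambda)\,\mu_{12}(x,\lambda)\,e^{2ix\lambda}r(\lambda)\,d\lambda,\qquad g_x:=C_+\!\left(e^{-2ix(\cdot)}\overline{r}\right).
\]
Lemma \ref{NLS:lemma.phase-weight} bounds $\|g_x\|_{L^2}\lesssim(1+x^2)^{-1/2}$ for $x>0$. The new input is that $\mu_{12}$ inherits exactly the same decay: from $\mu_{12}=-g_x-C_+(\tilde\mu\,e^{-2ix(\cdot)}\overline r)$ and $\tilde\mu:=\mu_{11}-1=C_-(\mu_{12}e^{2ix(\cdot)}r)$, two applications of $\|C_\pm\|_{L^2\to L^2}\leq 1$ give $\|\mu_{12}\|_{L^2}\leq\|g_x\|_{L^2}+\|r\|_\infty^2\|\mu_{12}\|_{L^2}$, and the hypothesis $\|r\|_\infty<1$ yields $\|\mu_{12}(x,\cdot)\|_{L^2}\leq(1-\|r\|_\infty^2)^{-1}\|g_x\|_{L^2}\lesssim(1+x^2)^{-1/2}$. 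Cauchy--Schwarz now delivers $|R(x)|\lesssim(1+x^2)^{-1}$, which places $R$ and $xR$ in $L^2(\R^+)$ since $\int_0^\infty x^2(1+x^2)^{-2}\,dx<\infty$.

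For $R'\in L^2(\R^+)$, differentiate under the integral: $R'(x)=\pi^{-1}\int(\partial_x g_x\,h_x+g_x\,\partial_x h_x)\,d\lambda$ with $h_x:=\mu_{12}e^{2ix(\cdot)}r$. Identity \eqref{RHP.Cpm.1} gives $\partial_x g_x=2\lambda g_x-i\pi^{-1}\widehat{\overline{r}}(2x)$, while the ODE $\partial_x\mu_{12}=-2i\lambda\mu_{12}+q\mu_{22}$ from \eqref{NLS:mu.de} produces the clean cancellation $\partial_x h_x=q(x)\mu_{22}(x,\cdot)e^{2ix(\cdot)}r$. Each resulting piece is bounded pointwise by $(1+x^2)^{-1/2}$: the term $\int\lambda g_x h_x\,d\lambda$ by moving $\lambda$ onto $h_x$ and using $\|\lambda h_x\|_{L^2}\leq\|\mu_{12}\|_{L^\infty}\|\lambda r\|_{L^2}$ (with the $L^\infty$ bound from Sobolev embedding of the uniform $H^1_\lambda$ estimate in Proposition \ref{NLS:prop.BC}); the boundary term by $|\widehat{\overline{r}}(2x)|\cdot\|h_x\|_{L^1}\lesssim(1+x^2)^{-1/2}$; and $q(x)\!\int g_x\mu_{22}e^{2ix(\cdot)}r\,d\lambda$ by $\|q\|_\infty\|g_x\|_{L^2}$. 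Since $(1+x^2)^{-1/2}\in L^2(\R^+)$, this gives $R'\in L^2(\R^+)$.

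Lipschitz continuity follows from the second resolvent formula applied to $(I-\calC_w)^{-1}$, exactly as at the end of the proof of Proposition \ref{NLS:prop.BC}: $\mu$ depends Lipschitz-continuously on $r\in H^{1,1}_1(\R)$ in the relevant Sobolev norms, uniformly in $x$, so re-running every estimate above for differences $r_1-r_2$ transfers Lipschitz continuity to $R$ with constants depending continuously on $\|r_j\|_{H^{1,1}}$ and $(1-\|r_j\|_\infty)^{-1}$. The principal obstacle throughout is obtaining the sharp decay $|R(x)|\lesssim(1+x^2)^{-1}$: a naive Cauchy--Schwarz using only Lemma \ref{NLS:lemma.phase-weight} gives merely $(1+x^2)^{-1/2}$, which is insufficient for $xR\in L^2(\R^+)$. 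The self-improvement bootstrap via the Beals--Coifman fixed point, which uses $\|r\|_\infty<1$ in an essential way, is precisely what closes the argument.
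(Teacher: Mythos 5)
Your proposal is correct and follows essentially the same route as the paper's proof: the same splitting $q=q_0+q_1$, the same exploitation of $\mu_{11}-1\in\Ran C_-$ together with Lemma \ref{NLS:lemma.phase-weight} applied twice (once through $g_x$, once through the Beals--Coifman fixed point, yielding the decisive $(1+x^2)^{-1}$ decay), and the same appeal to \eqref{NLS:mu.de} for the $x$-derivative. The one point needing care is your parenthetical justification of $\|\mu_{12}(x,\cdot)\|_{L^\infty}\lesssim 1$: the $H^1_\lambda$ estimate \eqref{NLS:BC.lam.apriori} is \emph{not} uniform in $x$ (its implied constant grows linearly in $x$), so you must interpolate it against your decaying bound $\|\mu_{12}(x,\cdot)\|_{L^2}\lesssim(1+x^2)^{-1/2}$ via $\|f\|_{L^\infty}^2\lesssim\|f\|_{L^2}\|f\|_{H^1}$ to obtain uniformity on $\R^+$, after which the argument closes.
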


\begin{proof}
We write \eqref{NLS:q.recon.int} as $q(x) = q_0(x) + q_1(x)$ where 
$$q_0(x)= -\frac{1}{\pi} \int \overline{r(s)} e^{-2isx} \, ds$$
and
$$ q_1(x) =- \frac{1}{\pi} \int \overline{r(s)} e^{-2ixs} \left(\mu_{11}(x,s)-1 \right) \, ds. $$
Clearly, $q_0 \in H^{1,1}(\R)$ with the correct continuity so it suffices to study $q_1(x)$. From \eqref{NLS:mu11} we may write
$$ q_1(x) = -\frac{1}{\pi} \int C_+\left( \overline{r(\dotarg)} e^{-2ix(\dotarg)} \right)(s) \left(\mu_{11}(x,s)  -1\right) \, ds $$
for $x< 0$, where we used the facts that $\mu_{11}-1 \in \Ran C_-$, that 
$$\int (C_- f)(s) (C_-f(s)) \, ds = 0$$ and that $C_+ - C_-=I$. From the solution formula \eqref{NLS:BC.sharp.sol} we have the  estimate
$$ \norm[L^2]{m_{11}(x,\dotarg)-1} 
	\leq \frac{\norm[L^2]{\, \calC_w \I \, }}{1-\norm[\infty]{r}} 
	\leq	(1+x^2)^{-1/2} \frac{\norm[H^{1,0}]{r}}{1-\norm[\infty]{r}}
$$
where in the last step we used \eqref{NLS:phase-weight}.
By this estimate, Lemma \ref{NLS:lemma.phase-weight}, and the Schwartz inequality, we conclude that for $x>0$,
$$ \left| q_1(x) \right| \lesssim \frac{1}{(1+x^2)} \frac{\norm[H^{1,0}]{r}}{1-\norm[\infty]{r}} $$
so that in particular $q_1 \in H^{0,1}(\R^+)$. 

To show that $q_1' \in L^2$, we differentiate and use \eqref{NLS:mu.de} to conclude that
\begin{equation}
\label{NLS:q1x}
q_1'(x) = -q(x) \, 
		\left( 
				\frac{1}{\pi} \int \overline{r(s)} e^{-2isx}  \mu_{21}(x,s) \, ds 
		\right).
\end{equation}
Since $r \in L^2$, $\mu_{21}(x,\lam) = \overline{\mu_{12}(x,\lam)}$,  and $\mu_{12} (x,\dotarg) \in L^2$ with bounds uniform in $x$, we can bound the integral uniformly in $x$ by the Schwartz inequality and conclude that $q_1' \in L^2(\R^+)$ as required.

To obtain the local Lipschitz continuity, first note that $r \mapsto q_0$ has the required mapping properties, so it suffices to consider the map $r \mapsto q_1$. To show that $r \mapsto q_1$ is locally Lipschitz continuous into $H^{0,1}(\R^+)$, it suffices, by estimates already given, to show that $r \mapsto (1+|x|^2)^{1/2}\left(\mu_{11}(x,\dotarg) -1\right) $ is locally Lipschitz continuous. It follows from \eqref{NLS:mu11}--\eqref{NLS:mu12} that
$$ \mu_{11} = 
	1 - A_r \mu_{11}
$$
where
$$ 
\left(A_{r} h \right) (\lam) = 
C_-
			\left(
				C_+
					\left( 
						h(\diamond) e^{2i(\diamond)x} \overline{r(\diamond)} 
					\right)(\dotarg) 
				e^{-2i(\dotarg)x} r(\dotarg) 
			\right) (\lam)
$$
Since $r \in H^{1,1}_1(\R)$, $A_r 1 \in L^2$ and the operator $A_r$ is bounded from $L^2$ to itself with norm $\norm[\infty]{r}^2 < 1$ so that 
$\mu_{11}$ is given by the $L^2$ -convergent Neumann series
$$ \mu_{11}- 1 = \sum_{n=1}^\infty A_r^n (1)$$
The map $r \mapsto A_r^n(1)$ takes the form $F_n(r,\ldots,r,\rbar,\ldots,\rbar)$ where
$F_n: \left(H^{1,1}_1(\R)\right)^{2n} \to L^2(\R)$ is a multilinear function obeying the bound
$$ \norm[L^2(\R)]{F_n(r_1,\ldots,r_{2n})} \leq (1+|x|^2)^{-1/2}
\left( \prod_{i=1}^{2n-1} \norm[L^\infty]{r_i}\right) \norm[H^{1,0}]{r_{2n}}.$$
The required local Lipschitz continuity for $\mu_{11} -1$ now follows as in the proof of Proposition \ref{prop:NLS.direct.1}.

To show that $r \mapsto q_1$ is locally Lipschitz from $H^{1,1}_1(\R)$ to $H^{0,1}(\R^+)$, it suffices by \eqref{NLS:q1x}  to show that $r \mapsto \mu_{21}(x,\dotarg)$ is locally Lipschitz from $H^{1,1}_1(\R)$ to $L^2(\R)$ with bounds uniform in $x \in \R^+$.  Since $\mu_{21}  = \overline{\mu_{12}}$, we can use the continuity result for $\mu_{11}$ and \eqref{NLS:mu12} to obtain the necessary result.
\end{proof}

The results obtained so far show that the map $r \mapsto q$ is locally Lipschitz from $H^{1,1}_1(\R)$ to $H^{1,1}(\R^+)$ and so gives ``half'' of the desired result. To obtain the full local Lipschitz continuity result, first note that, by trivial modifications of the proofs, we can show that $r \mapsto q$ is locally Lipschitz continuity from $H^{1,1}_1(\R)$ to $H^{1,1}(c,\infty)$ for any $c \in \R$. To finish the analysis, we consider the Riemann-Hilbert problem satisfied by the ``left'' Beals-Coifman solutions from Theorem \ref{thm:NLS.BC.left}.

\begin{RHP}
\label{NLS:RHP.left}
Given $r \in H^{1,1}_1(\R)$ and $x \in \R$, find a function $\bfM(x,z): \C \setminus \R \to SL(2,\C)$ so that:
\begin{enumerate}
\item[(i)]		$\bfM(x,z)$ is analytic in $\C \setminus \R$ for each $x$,
\item[(ii)]	$\bfM(x,z)$ has continuous boundary values $\bfM_\pm(x,\lam)$ on $\R$, 
\item[(iii)]	$\bfM^\pm(x,\lam) - \I$ in $L^2(\R)$, and
\item[(iv)]	The jump relation
		$$ \bfM_+(x,\lam) = \bfM_-(x,\lam) e^{-i\lam x \ad \sig_3} \bfV(\lam) $$
		holds, where
		$$
		\bfV(\lam)=
		\begin{pmatrix}
		1								&		-\overline{\br(\lam)}\\[5pt]
		\br(\lam)		&		1-|\br(\lam)|^2	
		\end{pmatrix}.
		$$
\end{enumerate}
\end{RHP}

The associated reconstruction formula is:
\begin{equation}
\bq(x) = \lim_{z \to \infty} 2iz \left(\bfM\right)_{12} (x,z).
\end{equation}

We can analyze RHP \ref{NLS:RHP.left} in much the same way as RHP \ref{NLS:RHP.right} and prove:

\begin{proposition}
\label{NLS:prop.inverse.right}
Suppose that $r \in H^{1,1}_1(\R)$. Then the map $\br \mapsto \bq$ is locally Lipschitz continuous from $H^{1,1}_1(\R)$ to $H^{1,1}(\R^-)$.
\end{proposition}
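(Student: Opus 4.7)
The plan is to mirror the proof of Proposition \ref{NLS:prop.inverse.left} with the roles of the Cauchy projectors $C_+$ and $C_-$ interchanged, and with the relevant phase--weight estimates proved for $x<0$ rather than $x>0$. First, I would reformulate RHP \ref{NLS:RHP.left} as a Beals--Coifman integral equation. The jump matrix $\bfV^\ell(\lam)$ factors as $(I-w_\ell^-)^{-1}(I+w_\ell^+)$ with
$$
w_\ell^+(\lam)=\begin{pmatrix}0 & -\overline{\br(\lam)}\\ 0 & 0\end{pmatrix},\qquad w_\ell^-(\lam)=\begin{pmatrix}0 & 0\\ -\br(\lam) & 0\end{pmatrix},
$$
so the Beals--Coifman operator becomes $\calC_{w_\ell}(h)=C_+(hw_{\ell,x}^-)+C_-(hw_{\ell,x}^+)$, and the system for $\mu=\I+\mu^\sharp$ yields $\mu_{11}-1\in \Ran C_+$ (opposite to the right case) together with
$$
\mu_{12}(x,\lam) = -C_-\!\left(\mu_{11}(x,\diamond)\,e^{-2ix\diamond}\overline{\br(\diamond)}\right)(\lam).
$$
Proposition \ref{NLS:prop.BC} and its corollaries transfer verbatim, giving a unique $\mu$ with $\mu(x,\cdot)-\I\in H^1(\R)$ and the analog of the a priori bound on $\|\partial_\lam\mu\|_{L^2}$.

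Next, I would prove the left-handed analog of Lemma \ref{NLS:lemma.phase-weight}: for $x<0$,
$$
\bigl\|C_-\bigl(e^{-2ix(\cdot)}\overline{\br(\cdot)}\bigr)\bigr\|_{L^2}+\bigl\|C_+\bigl(e^{2ix(\cdot)}\br(\cdot)\bigr)\bigr\|_{L^2}\lesssim (1+x^2)^{-1/2}\,\norm[H^{1,0}]{\br}.
$$
This follows from Plancherel exactly as in Lemma \ref{NLS:lemma.phase-weight}, since for $x<0$ a modulation by $e^{\mp 2ix\lam}$ shifts $\widehat{\br}$ into a half-line on which the weight $(1+\eta^2)^{-1/2}$ dominates $(1+x^2)^{-1/2}$.

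The reconstruction formula \eqref{NLS:Q1.recon}, read off the $(1,2)$ entry of $\bfQ_1$ built from $\mu$ and $w^+_{\ell,x}+w^-_{\ell,x}$, produces
$$
\bq(x)=\frac{1}{\pi}\int \overline{\br(s)}\,e^{-2isx}\,\mu_{11}(x,s)\,ds = \bq_0(x)+\bq_1(x),
$$
with $\bq_0$ the antilinear Fourier transform of $\br$ (in $H^{1,1}(\R)$) and $\bq_1$ the correction. Using $\mu_{11}-1\in\Ran C_+$ and the orthogonality $\int (C_+ f)(C_-g)\,ds=0$, for $x<0$ we rewrite
$$
\bq_1(x) = \frac{1}{\pi}\int C_-\!\bigl(\overline{\br(\diamond)}e^{-2ix\diamond}\bigr)(s)\,(\mu_{11}(x,s)-1)\,ds,
$$
and the phase--weight estimate gives $|\bq_1(x)|\lesssim (1+x^2)^{-1}\,\norm[H^{1,0}]{\br}/(1-\norm[\infty]{\br})$. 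Differentiating the analog of \eqref{NLS:mu.de} yields the same bound structure for $\bq_1'\in L^2(\R^-)$, and local Lipschitz continuity is then obtained, as in Proposition \ref{prop:NLS.direct.1} and Proposition \ref{NLS:prop.inverse.left}, via a Neumann series representation $\mu_{11}-1=\sum A_{\br}^n 1$ with each term a multilinear function of $(\br,\overline{\br})$ controlled by the same $(1+x^2)^{-1/2}$ weight.

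The only nontrivial step is the sign/domain bookkeeping: one must verify that the Fourier--support shifts that gave decay for $x>0$ in Lemma \ref{NLS:lemma.phase-weight} now give decay for $x<0$ once the projectors are swapped; this is the one place where care is needed, but it is a direct Plancherel computation. Everything else is symmetric to Proposition \ref{NLS:prop.inverse.left}, so once the phase--weight lemma is in place the argument is essentially automatic.
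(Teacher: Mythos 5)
Your proposal follows exactly the route the paper intends for this result (the paper itself only gestures at the proof by symmetry with Proposition \ref{NLS:prop.inverse.left}): swap the triangular factors and hence the roles of $C_\pm$, so that $\mu_{11}-1\in\Ran C_+$, prove the mirror image of Lemma \ref{NLS:lemma.phase-weight} for $x<0$ by the same Plancherel/Fourier-support-shift computation, and then repeat the decay, derivative, and multilinear Lipschitz arguments verbatim; this is correct and complete in all essentials. The only blemish is a sign slip in the factorization: for $(I-w_\ell^-)^{-1}(I+w_\ell^+)$ to reproduce $\bfV^\ell$ one needs $w_\ell^-$ to have $(2,1)$ entry $+\br(\lam)$ rather than $-\br(\lam)$ (and the reconstruction formula then carries an overall minus sign), but this affects none of the estimates.
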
 

Indeed, the same result holds true of $H^{1,1}(\R^-)$ is replaced by $H^{1,1}((-\infty,c))$. Since the map $r \mapsto \br$ is locally Lipschitz continuous, it remains only to prove that $q = \bq$. To do so we recall that the respective solutions $\bfM^r(x,z)$ and $\bfM^\ell(x,z)$ of RHP's \ref{NLS:RHP.left} and \ref{NLS:RHP.left} are related by
$$ \bfM^\ell(x,z) = \bfM^r(x,z) 
	\begin{pmatrix}
	F(z)		&	0		\\
	0			&	F(z)^{-1}
	\end{pmatrix}
$$
where $F(z)$ was defined in \eqref{NLS:F} and show to satisfy $F(z) = 1+ \bigO{1/z}$ as $z \to \infty$. It follows that 
$$ 
\lim_{z \to \infty} 2iz \left(\bfM^\ell\right)_{12}(x,z) =
\lim_{z \to \infty} 2iz \left( \bfM^r\right)_{12}(x,z)
$$
so that
$$ q(x)= \bq(x). $$

Proposition \ref{NLS:prop.inverse.left}, Proposition \ref{NLS:prop.inverse.right}, and these observations prove:

\begin{proposition}
The map $r \mapsto q$ defined by RHP \ref{NLS:RHP.left} and the reconstruction formula \eqref{NLS:q.recon.int} defines a locally Lipschitz continuous map from $H^{1,1}_1(\R)$ to $H^{1,1}(\R)$.
\end{proposition}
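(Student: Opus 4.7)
The plan is to stitch together the one-sided estimates provided by Propositions \ref{NLS:prop.inverse.left} and \ref{NLS:prop.inverse.right}, using the identification $q=\bq$ that has already been established via the scalar conjugator $F(z)$, together with local Lipschitz continuity of the auxiliary map $r\mapsto \br$.

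First I would upgrade the one-sided estimates by observing, as remarked in the text, that the proof of Proposition \ref{NLS:prop.inverse.left} works verbatim with $\R^+$ replaced by any half-line $(c,\infty)$, with constants depending on $c$; the only place positivity of $x$ entered was in Lemma \ref{NLS:lemma.phase-weight}, and a translation argument recovers the same estimate on $(c,\infty)$ with weight $(1+(x-c)^2)^{-1/2}$. The analogous statement holds for the ``left'' problem: Proposition \ref{NLS:prop.inverse.right} extends to local Lipschitz continuity $\br \mapsto \bq$ from $H^{1,1}_1(\R)$ into $H^{1,1}(-\infty,c)$ for every $c\in\R$. Fixing once and for all the choice $c=0$ (say), I would then use these two partial maps to bound the $H^{1,1}(\R)$ norm by splitting the integration region $\R = (-\infty,1) \cup (-1,\infty)$.

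Next I would establish that the passage $r\mapsto \br$ is itself locally Lipschitz on $H^{1,1}_1(\R)$. Recall $\br(\lam) = r(\lam)\overline{a(\lam)}/a(\lam)$, and that $a$ is recovered from $r$ via the explicit scalar RHP solved by
\begin{equation*}
G(z) = \exp\left( \frac{1}{2\pi i} \int_{-\infty}^\infty \frac{\log(1-|r(s)|^2)}{s-z}\, ds\right),
\end{equation*}
whose boundary values lie in an algebra of $H^{1,1}$-multipliers with uniform lower bound $|a(\lam)|\ge 1$ and $\norm[\infty]{r}<1$. The map $r\mapsto \log(1-|r|^2)$ is locally Lipschitz from $H^{1,1}_1(\R)$ into $H^{1,1}(\R)$ by the chain rule (using $\norm[\infty]{r}<1$ and the boundedness of $H^{1,1}\hookrightarrow L^\infty$, cf.\ Exercise \ref{ex:H10}), and the Cauchy projectors $C_\pm$ preserve $H^{1,1}$, so $a_\pm$ and hence $\br$ depend locally Lipschitz-continuously on $r$. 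In particular, $\br \in H^{1,1}_1(\R)$ whenever $r \in H^{1,1}_1(\R)$.

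Finally I would assemble the pieces. Given $r\in H^{1,1}_1(\R)$, define $q$ by RHP \ref{NLS:RHP.right} and \eqref{NLS:q.recon.int} and $\bq$ by RHP \ref{NLS:RHP.left} and its reconstruction formula. The identification $q=\bq$ already proved using $\bfM^\ell = \bfM^r \operatorname{diag}(F,F^{-1})$ together with $F(z)=1+\mathcal{O}(1/z)$ guarantees that the common function belongs to $H^{1,1}((-\infty,1)) \cap H^{1,1}((-1,\infty)) = H^{1,1}(\R)$, with norm controlled by the sum of the two half-line norms. Lipschitz continuity follows by writing
\begin{equation*}
\norm[H^{1,1}(\R)]{q_{r_1}-q_{r_2}} \;\lesssim\; \norm[H^{1,1}(-1,\infty)]{q_{r_1}-q_{r_2}} + \norm[H^{1,1}(-\infty,1)]{\bq_{\br_1}-\bq_{\br_2}},
\end{equation*}
bounding the first term by Proposition \ref{NLS:prop.inverse.left} (applied on $(-1,\infty)$) and the second by Proposition \ref{NLS:prop.inverse.right} (applied on $(-\infty,1)$) composed with the Lipschitz map $r\mapsto \br$. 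The main subtlety — and where I would be most careful — is keeping track of the dependence of constants on bounded subsets of $H^{1,1}_1(\R)$: one must verify that a bounded set of $r$'s maps to a bounded set of $\br$'s in the same space (which is where the uniform bound $\norm[\infty]{r}\le 1-\delta$ matters), so that the local Lipschitz constants coming from Propositions \ref{NLS:prop.inverse.left} and \ref{NLS:prop.inverse.right} can be chosen uniformly on the original neighborhood.
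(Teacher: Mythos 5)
Your proposal is correct and follows essentially the same route as the paper: extend the one-sided Lipschitz estimates of Propositions \ref{NLS:prop.inverse.left} and \ref{NLS:prop.inverse.right} to overlapping half-lines, use the local Lipschitz continuity of $r\mapsto\br$, and identify $q=\bq$ via the conjugation $\bfM^\ell=\bfM^r\operatorname{diag}(F,F^{-1})$ with $F(z)=1+\bigO{1/z}$. The only difference is that you supply a justification for the Lipschitz continuity of $r\mapsto\br$ (via the explicit scalar factor $G$), a step the paper merely asserts; your sketch of that step is sound, since $|\overline{a}/a|=1$ and its derivative is controlled in $L^2$ by $C_\pm\bigl(\log(1-|r|^2)\bigr)'$.
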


To finish the proof of Theorem \ref{thm:NLS.maps}, it remains to show that the maps $\calR$ and $\calI$ are one-to-one and mutual inverses.  

Let $r \in H^{1,1}(\R)$. By solving RHP \ref{NLS:RHP.right} we construct the unique Beals-Coifman solutions for the potential $q = \calI(r)$. From the Riemann-Hilbert problem satisfied by the solutions, we read off that $q$ has scattering transform $\calR(q) = r$, showing that $\calR \circ \calI$ is the identity map on $H^{1,1}_1(\R)$. 

Next, we claim that $\calR$ is one-to-one. Suppose that $q_1$, $q_2 \in H^{1,1}(\R)$ and $\calR(q_1) = \calR(q_2) = r$. If $\bfM^{(1)}(x,z)$ and $\bfM^{(2)}(x,z)$ are the respective Beals-Coifman solutions for $q_1$ and $q_2$, each satisfies RHP \ref{NLS:RHP.right} and so the difference satisfies a homogeneous RHP as in Proposition \ref{NLS:prop.null}. It now follows from Proposition \ref{NLS:prop.null} that $\bfM^{(1)}(x,z)=\bfM^{(2)}(x,z)$. Since $q$ can be recovered from large-$z$ asymptotics of $\bfM(x,z)$, it now follows that $q_1 = q_2$.

\subsection{Solving NLS for Schwartz Class Initial Data}
\label{NLS:sec.sol}

In this subsection we prove Theorem \ref{thm:NLS.BC}. We will use the complete integrability of NLS in the following form: a smooth function $q(x,t)$ solves NLS if and only the overdetermined system \eqref{NLS:Lax}
admits a $2\times 2$ matrix-valued fundamental solution $\Psi(x,t,\lam)$. Recall
that a joint solution $\Psi(x,t,\lam)$ is a fundamental solution if $\det \Psi(x,t,\lam) > 0$ for all $(x,t)$.
Given such a fundamental solution, one can cross-differentiate the system \eqref{NLS:Lax} and equate coefficients of $\Psi_{xt}$ and $\Psi_{tx}$ to obtain \eqref{NLS}.

We can also give a heuristic derivation of the evolution equations for the scattering data $a$ and $b$ from \eqref{NLS:Lax}, assuming that $q(x,t) \in \scrS(\R)$ as a function of $x$.  Let $\Psi^+(x,t,\lam)$ denote the Jost solution for $q(x,t)$. For each $t$, 
$$\Psi^+(x,t,\lam) \underset{x \to \infty}{\sim} e^{-i\lam x \sig_3}$$ and $\Psi_t(x,t,\lam) \to 0$ as $x \to +\infty$. On the other hand,
$$\Psi^+(x,t,\lam) \underset{x \to -\infty}{\sim} e^{-i\lam x \sig_3} T(\lam,t)  ,$$
where $T(\lam)$ is given by \eqref{NLS:T.sym} with $a=a(\lam,t)$ and $b=b(\lam,t)$. A joint solution of \eqref{NLS:Lax} must take the form $\Psi(x,t,\lam) = \Psi^+(x,t,\lam)C(t)$ for a matrix-valued function $C(t)$. From the second equation of \eqref{NLS:Lax} we obtain
\begin{equation}
\label{NLS:heuristic}
(\Psi^+)_t C(t) + \Psi^+ C'(t)= -2i\lam^2 \sig_3 \Psi^+ C + o(1) 
\end{equation}
where $o(1)$ denotes terms that vanish as $x \to \pm \infty$ for each fixed $t$ 
owing to the decay of $q$ and its derivatives. Taking $x \to +\infty$ in 
\eqref{NLS:heuristic}, we obtain $C'(t) = -2i\lam^2 \sig_3 C(t)$ so that, normalizing 
to $C(0)=\I$, we have $C(t) = e^{-2i\lam^2 \sig_3 t}$. Taking $x \to -\infty$ in 
\eqref{NLS:heuristic}, we obtain
$$
T'(\lam) e^{-i\lam x \sig_3} C(t) + T(\lam)  e^{-i\lam x \sig_3} (-2i\lam^2 \sig_3) C(t)
 = -2i\lam^2 \sig_3 e^{i\lam x \sig_3} T(\lam) C(t) 
$$
or
$$ T'(\lam) = -2i\lam^2\ad \sig_3 T(\lam) $$
which implies that
$$ \dot{a}(\lam,t) = 0, \quad \dot{b}(\lam,t) = 4i\lam^2 b(\lam). $$

We consider the solution $\bfM(z;x,t)$ of RHP \ref{NLS:RHP} and the recovered potential
\begin{equation}
\label{NLS:q.recon.bis}
q(x,t) = -\frac{1}{\pi} \int \overline{r_0(s)} e^{-2it\theta} \mu_{11}(s;x,t) \, ds 
\end{equation}
where $r_0$ is the scattering transform  of the initial data $q_0$ and $\theta$ is the phase function \eqref{NLS:RHP.phase}.  We denote by $\bfM_\pm(z;x,t)$ the boundary values of the solution to RHP \ref{NLS:RHP}. Note that, by construction, $\det \bfM_\pm(\lam;x,t) = 1$ for all $(\lam,x,t)$. 
To prove that \eqref{NLS:q.recon.bis} solves the NLS equation, we will show that the functions
$$ \Psi_\pm(\lam;x,t) = \bfM_\pm(\lam;x,t) e^{-i(\lam x + 2\lam^2 t)\sig_3}, $$
which again have determinant one, solve the overdetermined system \eqref{NLS:Lax}. 

To do this, it suffices to show that $\bfM_\pm$ solve
\begin{equation}
\label{NLS:Lax.M}
\left\{
\begin{aligned}
\bfM_x	&= 	\left(- i\lam \ad \sig_3 	+ \bfQ_1\right)  \bfM\\
\bfM_t	&=	\left( -2i\lam^2 \ad \sig_3  + 2\lam \bfQ_1 + \bfQ_2 \right) \bfM
\end{aligned}
\right.
%}
\end{equation}
We will prove:

\begin{theorem}
\label{NLS:thm.Lax}
Suppose that $q_0 \in \scrS(\R)$, let $r = \calR(q)$, let $\bfM_\pm(\lam;x,t)$ be the boundary values of the solution to RHP \ref{NLS:RHP}, and let $q$ be given by \eqref{NLS:q.recon.bis}. Then
$q$ is a classical solution of the defocussing NLS equation \eqref{NLS} with $q(x,0)=q_0$. 
\end{theorem}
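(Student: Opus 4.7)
The plan is to show that the boundary values $\bfM_\pm(\lam;x,t)$ of the solution to RHP \ref{NLS:RHP} satisfy the Lax-type system \eqref{NLS:Lax.M}; then the $\Psi_\pm$'s form fundamental solutions of the overdetermined system \eqref{NLS:Lax}, and the compatibility argument of Exercise \ref{ex:NLS.Lax.pre} forces $q$ to solve \eqref{NLS}. Since $q_0\in\scrS(\R)$, Section \ref{NLS:sec.direct} gives $r_0=\calR(q_0)\in\scrS_1(\R)$, so in particular $r_0\in H^{1,1}_1(\R)$, and the Beals-Coifman analysis of Proposition \ref{NLS:prop.BC} gives a unique solution $\bfM(z;x,t)$ of RHP \ref{NLS:RHP} for every $(x,t)$ (treating $x$ and $t$ as joint parameters in the jump matrix, since $e^{2it\theta}=e^{4it\lam^2}e^{2ix\lam}$).

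The $x$-equation is handled directly by Proposition \ref{NLS:prop.BC.Q} with $t$ as a spectator: differentiating the jump relation in $x$ and using \eqref{RHP.Cpm.1} once to cancel the $\lam$ growth produces an $\bfn$ satisfying the hypotheses of Proposition \ref{NLS:prop.null}, yielding $\bfM_x=-i\lam\ad\sigma_3(\bfM)+\bfQ_1(x,t)\bfM$ with $(\bfQ_1)_{12}=q(x,t)$ as in \eqref{NLS:q.recon.bis}. For the $t$-equation, differentiating the jump relation in $t$ shows that $\bfM_t+2iz^2\ad\sigma_3(\bfM)$ satisfies the same jump as $\bfM$. The main obstacle is cancelling the $z^2$ growth so as to land in the space where Proposition \ref{NLS:prop.null} applies. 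To this end, expand
$$\bfM(z;x,t)=\I+\frac{\bfM^{(1)}(x,t)}{z}+\frac{\bfM^{(2)}(x,t)}{z^2}+o(z^{-2})$$
from the Cauchy-integral representation $\bfM=\I+\tfrac{1}{2\pi i}\int\mu(w^+_{x,t}+w^-_{x,t})/(s-z)\,ds$ by expanding $(s-z)^{-1}$ in powers of $1/z$. The off-diagonal $1/z$ coefficient recovers $\bfQ_1$ up to $\ad\sigma_3$, and the next coefficient, combined with iterated use of the Cauchy-projector identity \eqref{RHP.Cpm.1} (applied twice), produces a bounded matrix-valued function $\bfQ_2(x,t)$ of the form advertised in \eqref{NLS:Lax}, namely with diagonal entries $\mp i|q|^2$ and off-diagonal entries involving $q_x$ and $\overline{q_x}$.

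With this $\bfQ_2$ in hand, set
$$\bfn(z;x,t):=\bfM_t+2iz^2\ad\sigma_3(\bfM)-2z\,\bfQ_1\bfM-\bfQ_2\bfM.$$
Because $\bfQ_1$ and $\bfQ_2$ are independent of $\lam$ and polynomial multiplication respects the jump, $\bfn$ satisfies the same jump as $\bfM$ off $\R$; by construction and the large-$z$ expansion, the polynomial-in-$z$ growth cancels order by order, so $\bfn_\pm(\lam;x,t)\in L^2(\R)$ and $\bfn(z;x,t)\to0$ uniformly in proper subsectors as $z\to\infty$. Proposition \ref{NLS:prop.null} applied with $\bfn$ in place of $\bfM-\I$ (after verifying $\bfn$ is $L^2$ rather than $\I+L^2$, which only strengthens the vanishing conclusion) yields $\bfn\equiv0$, i.e.\ the second equation of \eqref{NLS:Lax.M}. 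Matching the $1/z$ coefficient in the off-diagonal of the identity $\bfn\equiv0$, and using $q=2i\bfM^{(1)}_{12}$, produces precisely the NLS equation $iq_t+q_{xx}-2|q|^2q=0$.

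Finally, the initial condition is immediate: at $t=0$, RHP \ref{NLS:RHP} collapses to RHP \ref{NLS:RHP.I} for the reflection coefficient $r_0=\calR(q_0)$, so $q(x,0)=(\calI\circ\calR)(q_0)(x)=q_0(x)$ by the bijectivity statement of Theorem \ref{thm:NLS.maps}. The technically heaviest step is the second paragraph — expanding $\bfM$ to order $z^{-2}$ and correctly identifying $\bfQ_2$ from the Cauchy integrals using \eqref{RHP.Cpm.1} iteratively — since this is where the specific algebraic structure of the Lax pair (the $q_x$ entries in $\bfQ_2$) must emerge from the reconstruction formula.
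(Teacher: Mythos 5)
Your proposal follows essentially the same route as the paper: differentiate the jump relation in $x$ and $t$, use the large-$z$ expansion of $\bfM$ through order $z^{-2}$ together with the Cauchy-projector identities \eqref{RHP.Cpm.1}--\eqref{RHP.Cpm.2} to identify $\bfQ_1$, $\bfQ_2$ and cancel the polynomial growth, invoke the vanishing theorem (Proposition \ref{NLS:prop.null}) to obtain the Lax system \eqref{NLS:Lax.M}, and read off $q(x,0)=q_0$ from Theorem \ref{thm:NLS.maps}. The only point you pass over more quickly than the paper is the justification that $\dee\bfM_\pm/\dee t$ exists with $L^2$ boundary values (the paper's Lemma \ref{lemma:NLS.RHP.t1}, proved by differentiating the Beals--Coifman equation $\mu=\I+\calC_{w_{x,t}}\mu$ in $t$ and using invertibility of $I-\calC_{w_{x,t}}$), but this is a routine supplement to your argument.
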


\begin{proof}
We have already shown that $\bfM_\pm$ solves the first of equations \eqref{NLS:Lax.M} in Proposition \ref{NLS:prop.BC.Q} by differentiating RHP \ref{NLS:RHP.left} with respect to the parameter $x$ and using Proposition \ref{NLS:prop.null}, the vanishing theorem for RHP \ref{NLS:RHP.left}. We will show that the second equation in \eqref{NLS:Lax.M} holds by differentiating the time-dependent RHP \ref{NLS:RHP} with respect to $t$ and using an analogous vanishing theorem. 

The jump matrix in RHP \ref{NLS:RHP} may be written
$$ 
\bfV(\lam;x,t) = 
	e^{-it\theta \ad \sig_3} \bfV(\lam), \quad
\bfV(\lam) = 
\begin{pmatrix}
1-|r_0(\lam)|^2	&	-\overline{r_0(\lam)}	\\[5pt]
r_0(\lam)			&		1	
\end{pmatrix}
$$
Differentiating the jump relation for $\bfM_\pm$ and using the Leibniz rule for $\ad \sig_3(\dotarg)$, we obtain 
$$
\left( \frac{\dee }{\dee t} + 2i\lam^2 \ad \sig_3 \right) \bfM_+(\lam;x,t)  =
\left( \frac{\dee }{\dee t} + 2i \lam^2 \ad \sig_3  \right) \bfM_-(\lam;x,t) \bfV(\lam;x,t)
$$
We will show that 
$\dee \bfM_\pm/\dee t$
and
$2i\lam^2 \ad \sig_3 (\bfM_\pm) -2 \lam \bfQ_1 \bfM_\pm - \bfQ_2 \bfM_\pm$
are $L^2$ boundary values of functions analytic in $\C \setminus \R$, so that
$$ 
\bfn_\pm(\lam;x,t) 
	\coloneqq  \left( 
							\frac{\dee}{\dee t} +2 i\lam^2 \ad \sig_3 - 
							2\lam \bfQ_1 - \bfQ_2 
					\right) \bfM_\pm$$
satisfy the hypothesis of Proposition \ref{NLS:prop.null} for each $t$. It will then follow that
the functions $\bfM_\pm(\lam;x,t)$ satisfy the second of equations \eqref{NLS:Lax.M}, showing that $q(x,t)$ is a classical solution of NLS. It follows from Theorem \ref{thm:NLS.maps} that $q(x,0)=q_0(x)$, so that $q(x,t)$ satisfies the initial value problem.

It remains to show that  $\dee \bfM_\pm/ \dee t$ and $2i\lam^2 \ad \sig_3 (\bfM_\pm) - 2\lam \bfQ_1 \bfM_\pm - \bfQ_2 \bfM_\pm$ have the required properties. This is accomplished in Lemmas \ref{lemma:NLS.RHP.t1} and \ref{lemma:NLS.RHP.t2} below.

\end{proof}

In what follows we will write $h_\pm \in \dee C(L^2)$ for a pair of $L^2$ functions $(h_-,h_+)$ if $h_\pm$ are the boundary values of a function $h$ analytic in $\C \setminus \R$.  In this language, conditions (i)--(iii) of Proposition \ref{NLS:prop.null} state that $\bfn_\pm \in \dee C(L^2)$.

\begin{lemma}
\label{lemma:NLS.RHP.t1}
Suppose that $r \in \scrS_1(\R)$ and let $\bfM_\pm(x,t,\lam)$ be boundary values of the  unique solution of the RHP \ref{NLS:RHP}. Then 
$\dee \bfM_\pm/\dee t \in \dee C(L^2). $
\end{lemma}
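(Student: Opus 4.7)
The strategy is to differentiate the Beals--Coifman integral equation in $t$, show that $\dee_t\mu$ is $L^2$ in $\lam$, and then differentiate the Cauchy integral representation of $\bfM$ term by term. The assumption $r_0 \in \scrS_1(\R)$ will absorb the polynomial factors in $\lam$ that come from differentiating the phase $e^{\pm 2it\theta}$.

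First, I would set up the time-dependent Beals--Coifman machinery. The jump matrix in RHP \ref{NLS:RHP} has the form $e^{-it\theta \ad\sig_3}\bfV(\lam)$ with $\theta = 2\lam^2 + x\lam/t$, so $2t\theta = 4t\lam^2 + 2x\lam$ is smooth in $t$. Introducing the $(x,t)$-dependent weights
\begin{equation*}
w^+_{x,t}(\lam) = \begin{pmatrix} 0 & 0 \\ e^{2it\theta}r_0(\lam) & 0\end{pmatrix}, \quad
w^-_{x,t}(\lam) = \begin{pmatrix} 0 & -e^{-2it\theta}\overline{r_0(\lam)} \\ 0 & 0\end{pmatrix},
\end{equation*}
Proposition \ref{NLS:prop.BC} gives the unique solution $\mu(\lam;x,t)$ with $\mu - \I \in L^2$ of
\begin{equation*}
\mu = \I + \calC_{w_{x,t}}\mu,
\end{equation*}
and $\bfM$ is recovered by the Cauchy integral $\bfM = \I + \frac{1}{2\pi i}\int \frac{\mu(w^+_{x,t}+w^-_{x,t})}{s-z}\,ds$. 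Since $r_0\in\scrS(\R)$, multiplying its Fourier-type weight by any polynomial in $\lam$ still lands in $L^2\cap L^\infty$; in particular $\dee_t w^\pm_{x,t} = \pm 4i\lam^2 w^\pm_{x,t}$ lies in $L^2\cap L^\infty$ with norms locally bounded in $t$.

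Next, I would differentiate the BC equation rigorously via difference quotients. Writing $D_h\mu = (\mu(t+h)-\mu(t))/h$, the identity
\begin{equation*}
(I - \calC_{w_{x,t+h}})D_h\mu = \calC_{D_h w_{x,t}}\mu(t)
\end{equation*}
combined with the uniform bound $\norm{\calC_{w_{x,s}}}_{L^2\to L^2}\le \norm{r_0}_\infty < 1$ (so $(I-\calC_{w_{x,s}})^{-1}$ is uniformly bounded for $s$ near $t$) and the $L^2$-convergence $D_h w_{x,t}\to \dee_t w_{x,t}$ yields
\begin{equation*}
\dee_t \mu(\,\cdot\,;x,t) = (I - \calC_{w_{x,t}})^{-1}\calC_{\dee_t w_{x,t}}\mu(\,\cdot\,;x,t) \in L^2(\R).
\end{equation*}
Recall moreover from Proposition \ref{NLS:prop.BC} that $\mu - \I \in H^1(\R)$, hence $\mu\in L^\infty(\R)$, with the bounds locally uniform in $t$.

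Now I would differentiate the Cauchy integral representation under the integral sign. Since
\begin{equation*}
\dee_t\bigl[\mu(w^+_{x,t}+w^-_{x,t})\bigr] = (\dee_t\mu)(w^+_{x,t}+w^-_{x,t}) + \mu\,\dee_t(w^+_{x,t}+w^-_{x,t})
\end{equation*}
is a sum of an $L^2\times L^\infty$ product and an $L^\infty\times L^2$ product, it lies in $L^2(\R)$. Therefore
\begin{equation*}
\dee_t\bfM(z;x,t) = \frac{1}{2\pi i}\int_\R \frac{\dee_t\bigl[\mu(s;x,t)(w^+_{x,t}(s)+w^-_{x,t}(s))\bigr]}{s - z}\,ds
\end{equation*}
is the Cauchy integral of an $L^2$ function, hence analytic in $\C\setminus\R$ with $L^2$ boundary values given by $C_\pm$ applied to the numerator. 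This exhibits $\dee_t\bfM_\pm$ as a pair in $\dee C(L^2)$, which is the claim.

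The main obstacle is the rigorous justification of the difference-quotient step: one must show that the $t$-dependence of $\calC_{w_{x,t}}$ is sufficiently regular (norm-continuous on $L^2$ and differentiable in the strong sense applied to $\mu$) to pass from the algebraic manipulation above to an actual derivative, rather than a purely formal one. This is routine once one notes that $w_{x,t}$ is $C^1$ in $t$ into $L^2\cap L^\infty$ and the resolvent $(I-\calC_w)^{-1}$ depends continuously on $w$ in operator norm.
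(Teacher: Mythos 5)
Your proposal is correct and follows essentially the same route as the paper's proof: differentiate the Beals--Coifman equation in $t$, invert $(I-\calC_{w_{x,t}})$ to conclude $\dee_t\mu \in L^2$ using $\dee_t w^\pm_{x,t}\in L^2\cap L^\infty$ (which holds because $r_0\in\scrS(\R)$ and $t\theta$ is polynomial in $\lam$), and then differentiate the Cauchy integral representation $\bfM_\pm-\I = C_\pm\bigl(\mu(w^+_{x,t}+w^-_{x,t})\bigr)$ to exhibit $\dee_t\bfM_\pm$ as $C_\pm$ of an $L^2$ function. Your additional care with the difference-quotient justification is a refinement of a step the paper treats formally, not a different argument.
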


\begin{proof}
First we study $\dee \mu / \dee t$ where $\mu$ solves the Beals-Coifman integral equation
\begin{equation}
\label{NLS:BC.int.t}
\mu = \I + \calC_{w_{x,t}} \mu
\end{equation} 
where
$$ w_{x,t}^\pm(\lam) = e^{-it\theta \ad \sig_3}w^\pm(\lam)$$
and 
$$
\calC_{w_{x,t}} h = C_-\left( h w_{x,t}^+ \right) + C_+\left( h w_{x,t}^- \right).
$$
Differentiating \eqref{NLS:BC.int.t} we see that
$$
\frac{\dee \mu}{\dee t} = \calC_{\dee w_{x,t}/\dee t}(\mu) + \left( \calC_{w_{x,t}} \frac{\dee \mu}{\dee t} \right),
$$
Since $(I-\calC_{w_{x,t}})$ is invertible, this equation can be solved to show that $\dee \mu / \dee t \in L^2(\R)$ provided the
inhomogeneous term
$$ 
\calC_{\dee w_{x,t}/\dee t} \mu = 
	C_+\left( \mu \frac{\dee w_{x,t}^-}{\dee t} \right) +
	C_-\left( \mu \frac{\dee w_{x,t}^+}{\dee t} \right)
$$
belongs to $L^2$ as a function of $\lam$. Since $\mu-I \in L^2$ it suffices to show that
$\dee w_{x,t}^\pm /\dee t \in L^\infty \cap L^2$.  Since $\dee w_{x,t}^+ / \dee t = i\theta r e^{it\theta}$ and $\dee w_{x,t}^- /\dee t = -i\theta \overline{r} e^{-it\theta}$ and $\theta$ is a quadratic polynomial in $\lam$, this follows from the fact that $r \in \scrS(\R)$. 

Since $\bfM_\pm - I = C_\pm \left( \mu (w_{x,t}^+ + w_{x,t}^- \right)$ we have
$$ \frac{\dee \bfM_\pm}{\dee t} = 
	C_\pm 	\left[
							\frac{\dee \mu}{\dee t} \left(w_{x,t}^- + w_{x,t}^+\right) +
							\mu \left( \frac{\dee w_{x,t}^-}{\dee t} + \frac{\dee w_{x,t}^+}{\dee t} \right)
				\right]
$$
It follows from the facts that $\dee \mu/\dee t \in L^2$ and $r \in \scrS_1(\R)$ that the expression in square brackets is an $L^2$ function. This shows that $\dee \bfM_\pm/ \dee t \in \dee C(L^2)$. 

\end{proof}

In the proof of the next lemma, we will make use of the following large-$z$ asymptotic expansion for the right-normalized Beals-Coifman solution for $r \in \scrS_1(\R)$. Since the Beals-Coifman solution solves the Riemann-Hilbert problem, 
we have (compare \eqref{NLS:BC.pre})
\begin{equation}
\label{NLS:BC.post.pre}
\bfM(z;x)	=	\I + \frac{1}{2\pi i} \int \frac{1}{s-z} f(s;x) \, ds 
\end{equation}
where
\begin{equation}
\label{NLS:BC.post.pre.f}
f(s;x) = \mu(s;x) \left( w_x^-(s) + w_x^+(s) \right). 
\end{equation}
If $r \in \scrS_1(\R)$ then, since $\mu(\dotarg;x) - \I \in L^2(\R)$ for each $x$, 
the asymptotic expansion
\begin{equation}
\label{NLS:BC.exp}
\bfM(z;x,t)	\sim \I + \sum_{j \geq 0} \frac{m_j(x)}{z^{j+1}}
\end{equation}
holds. Substituting \eqref{NLS:BC.exp}
into the differential equation \eqref{BC.right} we obtain the relations
\begin{align*}
i\ad \sig_3 (m_0(x))	&=	\bfQ_1(x)	\\
m_j'(x) 					&=	-i \ad \sig_3 (m_{j+1}) + \bfQ m_j(x),	&	j\geq 0
\end{align*}
One can compute the coefficients $m_i(x)$ by deriving using these relations together with  the boundary condition $$\lim_{x \to +\infty} m_i(x) = 0.$$
Given all coefficients up to $m_{j-1}$, one first computes $\ad \sig_3 (m_j)$ and then uses $\ad \sig_3 (m_j)$ to find the diagonal of $m_j$. 
We will only need the following identities:
\begin{align}
\label{NLS:BC.left.m0}
m_0(x)	&=
	\begin{pmatrix}
	\dfrac{i}{2}\dint_{\hspace{-1mm} +\infty}^{\hspace{0.5mm}x} |q(s)|^2 \, ds	
			&		-\dfrac{i}{2} q(x)	\\[10pt]
	\dfrac{i}{2}\overline{q(x)}						
			&		-\dfrac{i}{2} \dint_{\hspace{-1mm} +\infty}^{\hspace{0.5mm}x} |q(s)|^2 \, ds
	\end{pmatrix},	\\[10pt]
\label{NLS:BC.left.ad.m1}
-i\ad \sig_3 (m_1(x))	&=	
	\begin{pmatrix}
		0			
		&		\dfrac{i}{2} q(x) \dint_{\hspace{-1mm}+\infty}^{\hspace{0.5mm} x} |q(s)|^2 \, ds  \\[5pt]
		-\dfrac{i}{2} \overline{q(x)} \dint_{\hspace{-1mm}+\infty}^{\hspace{0.5mm} x} |q(s)|^2 \, ds 
		&	0
	\end{pmatrix}\\[10pt]
\nonumber
		&	\quad	+
	\begin{pmatrix}
		0													&	- \dfrac{i}{2}q_x(x)			\\[5pt]
		 \dfrac{i}{2}\overline{q}_x	(x)				&	0
	\end{pmatrix}
\end{align}
We can identify 
$$ m_j(x) = -\frac{1}{2\pi i} \int s^{j-1} f(s;x) \, ds, $$
where $f$ is given by \eqref{NLS:BC.post.pre.f},
using equation \eqref{NLS:BC.post.pre}. In the application, $\mu$, $w_\pm$,  $f$, $m_j$ and $f_j$ also depend parametrically on $t$.

\begin{lemma}
\label{lemma:NLS.RHP.t2}
Fix $r \in \scrS_1(\R)$, and let $\bfM_\pm(x,t,\lam)$ be boundary values of the unique solution to RHP \ref{NLS:RHP}. Then 
$$ 2i\lam^2 \ad \sig_3 (\bfM_\pm) - 2 \lam \bfQ_1 \bfM_\pm - \bfQ_2 \bfM_\pm \in \dee C(L^2). $$
\end{lemma}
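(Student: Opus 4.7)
The plan is to show that the matrix-valued function
\[
H(z;x,t) \coloneqq 2iz^2 \ad\sig_3(\bfM(z;x,t)) - 2z\bfQ_1(x,t)\bfM(z;x,t) - \bfQ_2(x,t)\bfM(z;x,t)
\]
is analytic on $\C\setminus\R$ with boundary values in $L^2(\R)$. A priori the individual terms $z^2\bfM$ and $z\bfM$ grow polynomially in $z$, so the point is to extract these polynomial parts and show they cancel exactly in the combination defining $H$.

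First I would use the Cauchy representation $\bfM(z) = \I + \frac{1}{2\pi i}\int f(s)/(s-z)\,ds$ with $f(s) = \mu(s)(w_{x,t}^+(s) + w_{x,t}^-(s))$, together with the elementary identity
\[
\frac{z^k}{s-z} = -\sum_{j=0}^{k-1} s^j z^{k-1-j} + \frac{s^k}{s-z},
\]
to decompose
\[
z\bfM(z) = z\I + m_0 + G_1(z), \qquad z^2\bfM(z) = z^2 \I + z m_0 + m_1 + G_2(z),
\]
where $G_j(z) \coloneqq \frac{1}{2\pi i}\int s^j f(s)/(s-z)\,ds$ and the $m_j$ are the Cauchy moments appearing in \eqref{NLS:BC.exp}. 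Because $r \in \scrS_1(\R)$ and $\mu - \I \in L^2(\R)$, the weighted densities $s^j f$ lie in $L^2(\R)$ for $j=0,1,2$, so each $G_j$ is analytic on $\C\setminus\R$ with boundary values $C_\pm(s^j f) \in L^2(\R)$.

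Next I would substitute these decompositions into $H(z)$ and collect by powers of $z$. The $z^2$ coefficient is $2i\ad\sig_3(\I) = 0$; the $z^1$ coefficient is $2i\ad\sig_3(m_0) - 2\bfQ_1$, which vanishes by the identity $i\ad\sig_3(m_0) = \bfQ_1$ obtained by matching the $1/z$ term after inserting \eqref{NLS:BC.exp} into \eqref{BC.right}. What remains is
\[
H(z) = \big[2i\ad\sig_3(m_1) - 2\bfQ_1 m_0 - \bfQ_2\big] + 2i\ad\sig_3(G_2(z)) - 2\bfQ_1 G_1(z) - \bfQ_2 G_0(z).
\]
The main obstacle is verifying that the bracket vanishes identically in $q$. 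This is the only non-formal step: I would insert the explicit formulas \eqref{NLS:BC.left.m0} for $m_0$ and \eqref{NLS:BC.left.ad.m1} for $-i\ad\sig_3(m_1)$, together with $\bfQ_2$ from \eqref{NLS:Lax}, and check by a direct $2\times 2$ matrix computation that the diagonal contributions $\pm i|q|^2$ from $-2\bfQ_1 m_0$ cancel the diagonal of $-\bfQ_2$, while the off-diagonal terms involving $q\int|q|^2$ and $q_x$ cancel across the three pieces.

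Once the bracket is zero, $H(z) = 2i\ad\sig_3(G_2(z)) - 2\bfQ_1 G_1(z) - \bfQ_2 G_0(z)$ is a linear combination of Cauchy integrals of $L^2$ densities with $\lam$-independent matrix coefficients; hence $H$ is analytic on $\C\setminus\R$ and its boundary values $H_\pm$ lie in $L^2(\R)$, i.e.\ $H_\pm \in \dee C(L^2)$. Combined with Lemma \ref{lemma:NLS.RHP.t1}, this places the candidate $\bfn_\pm$ from the proof of Theorem \ref{NLS:thm.Lax} under the hypotheses of Proposition \ref{NLS:prop.null}, forcing $\bfn_\pm \equiv 0$ and yielding the second equation of \eqref{NLS:Lax.M}.
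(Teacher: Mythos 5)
Your proposal is correct and follows essentially the same route as the paper's proof: both extract the polynomial-in-$z$ part of $2iz^2\ad\sig_3(\bfM)$ via the Cauchy representation and the moments $m_0$, $m_1$ of $f$, invoke the identities $i\ad\sig_3(m_0)=\bfQ_1$ and \eqref{NLS:BC.left.ad.m1}, and reduce the claim to the algebraic cancellation against $2\lam\bfQ_1$ and $\bfQ_2$ (which does check out: the bracket $2i\ad\sig_3(m_1)-2\bfQ_1 m_0-\bfQ_2$ vanishes entry by entry). The only difference is organizational --- the paper carries the computation modulo $\dee C(L^2)$ with the $\doteq$ notation and reinserts factors of $\bfM_\pm$ along the way, whereas you isolate the constant matrix bracket once and verify it is identically zero --- but this is the same argument.
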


\begin{proof}
In what follows we write $f_\pm \doteq g_\pm$ if $f_\pm - g_\pm \in \dee C(L^2)$. In this notation, we seek to prove that
$$ 
2i\lam^2 \ad \sig_3 (\bfM_\pm)  
	\doteq 2\lam \bfQ_1 \bfM_\pm + \bfQ_2 \bfM_\pm . 
$$

We compute
\begin{align*}
2i \lam^2 \ad \sig_3 (\bfM_\pm)
	&=	2i \lam^2 \ad \sig_3 \left(\bfM_\pm - \I \right)\\
	&=	\ad \sig_3 \left( 2i \lam^2 C_\pm f \right)
\end{align*}
where $f$ is given by \eqref{NLS:BC.post.pre.f} (but now $\mu$ and $w_x^\pm$ also depend on $t$). Using the identity
\begin{equation}
\label{RHP.Cpm.2}
\lam^2 \left(C_\pm h\right)(\lam)=
C_{\pm} \left( (\dotarg)^2 h(\dotarg) \right)(\lam)
- \frac{\lam}{2\pi i} \int h(s) \, ds - \frac{1}{2\pi i} \int sh(s) \, ds
\end{equation}
and identifying $m_j(x)$ with the moments of $f$, we conclude that
$$
\lam^2 C_\pm f \doteq \lam m_0(x,t) + m_1(x,t)
$$
so that
\begin{align}
\label{NLS:BC.t2.pre1}
2i\lam^2 \ad \sig_3 (\bfM_\pm)
	& \doteq	2\lam Q_1 + 2i \ad \sig_3 (m^{(1)})	\\
\nonumber
	& \doteq	2\lam Q_1  \bfM_\pm + 2\lam Q_1 (\I - \bfM_\pm) + 2i \ad \sig_3(m_1) \bfM_\pm
\end{align}
where we used the facts that $\I - \bfM_\pm \doteq 0$ and that $Q_1$ is a bounded function of $x$. We compute the second right-hand term in \eqref{NLS:BC.t2.pre1}:
\begin{align}
\label{NLS:BC.t2.pre2}
2\lam Q_1 \left( \I -\bfM_\pm \right) 
	&	\doteq	-2Q_1 \lam C_\pm f	\\
\nonumber
	&	\doteq	-2Q_1 m_0 \\
\nonumber
	&=	\begin{pmatrix}
				-i|q|^2	&	iq \dint_{\hspace{-1mm} \infty}^x |q|^2	\\[5pt]
				-i\qbar \dint_{\hspace{-1mm} \infty}^x |q|^2		&	i|q|^2
			\end{pmatrix} \\[10pt]
\nonumber
	&\doteq
			\begin{pmatrix}
				-i|q|^2		&	iq \dint_{\hspace{-1mm} \infty}^x |q|^2	\\[5pt]
				-i\qbar \dint_{\hspace{-1mm} \infty}^x |q|^2	&	i|q|^2
			\end{pmatrix}
			\bfM_\pm
\end{align}
Combining \eqref{NLS:BC.left.ad.m1}, \eqref{NLS:BC.t2.pre1}, and \eqref{NLS:BC.t2.pre2}, we conclude that 
$$ 
2i\lam^2 \ad \sig_3 (\bfM_\pm)  
	\doteq 2\lam \bfQ_1 \bfM_\pm + \bfQ_2 \bfM_\pm 
$$
as claimed.

\end{proof}

\subsection*{Exercises for Lecture 2}

\begin{exercise}
\label{ex:H01}
Show that if $f \in H^{0,1}(\R)$, then $f \in L^p(\R)$ for $1 \leq p \leq 2$ with
$$ \norm[p]{f} \leq \left( \int \, (1+x^2)^{-p/(2-p)} \, dx  \right)^{(2-p)/2p} \norm[H^{0,1}]{f}. $$
\end{exercise}

\begin{exercise}
\label{ex:H10}
Recall the space $H^1(\R)$ defined in \eqref{NLS:H1}.
Show that, if $f \in H^1(\R)$, then $f$ is bounded and H\"{o}lder continuous with $\norm[\infty]{f} \leq c \norm[H^{1}]{f}$
and $|f(x)-f(y)| \leq \norm[H^{1}]{f} |x-y|^{1/2}$. Show also that $H^{1}(\R)$ is an algebra, i.e., if $f, g \in H^1(\R)$, then $fg \in H^1(\R)$. 
\end{exercise}

\begin{exercise}
\label{ex:NLS.Cauchy.com}
Prove the identities \eqref{RHP.Cpm.1} and \eqref{RHP.Cpm.2}. You can either use the definition of 
$C_\pm$ as a limit of Cauchy integrals or use their definition as Fourier multipliers.
\end{exercise}

Exercises \ref{ex:NLS.Phi.map} -- \ref{ex:NLS.int} outline a proof of  local well-posedness for NLS viewed as the integral equation \eqref{NLS:int}.

\begin{exercise}
\label{ex:NLS.Phi.map}
Let $X = C((0,T);H^1(\R))$, the space of continuous $H^1(\R)$-valued functions on $(0,T)$. Fix $q_0 \in H^1(\R)$ and define a mapping $\Phi: X \to X$ by
$$ \Phi(q) = e^{it\Delta}q_0 - i \int_0^t e^{i(t-s)\Delta} \left( 2 |q(s)|^2 q(s) \right) \, ds. $$
Using the result of Exercise \ref{ex:H10}, show that the estimates
\begin{align*}
\norm[X]{\Phi(q)} 	
		&\leq 	\norm[H^1]{q_0} + 2c^2 T \norm[X]{q}^3	\\
\norm[X]{\Phi(q_1) - \Phi(q_2)} 
		&	\leq 2c^2 T \left( \norm[X]{q_1} + \norm[X]{q_2} \right)^2 \norm[X]{q_1-q_2}
\end{align*}
hold, where $c$ is the constant in the inequality of Exercise \ref{ex:H10}.
\end{exercise}

\begin{exercise}
\label{ex:NLS.int}
The solution of \eqref{NLS:int} is a fixed point for the map $\Phi(q)$. For $\alpha>0$, denote
by $B_\alpha$ the ball of radius $\alpha$ in $X$. 
\begin{enumerate}
\item[(i)] Show that for 
$\norm[H^1]{q_0} < \alpha/2$ and $2c^2 T < 1/(8\alpha^2)$ (i.e., $T$ sufficiently small depending on $\norm[H^1]{q_0}$), 
$\Phi$ maps $B_\alpha$ into itself.
\item[(ii)] Show that, under the same conditions, $\Phi$ is a contraction on $B_\alpha$.
\end{enumerate}
Conclude that, for $T$ sufficiently small,  $\Phi$ is a contraction on the ball of radius $\alpha$
and so has a unique fixed point.
\end{exercise}

\begin{exercise}
\label{ex:NLS.approx}
Prove Lemma \ref{lemma:NLS.approx}. \emph{Hints}: Note that $e^{it\Delta}$ is an isometry of $H^1(\R)$.  Use the fact that $H^1(\R)$ is an algebra (see Exercise \ref{ex:H10}) to conclude that $|q_n(s)|^2 q_n(s) \to |q(s)|^2 q(s)$ in $H^1$ uniformly in $s \in [0,T]$, and take limits in \eqref{NLS:int}.
\end{exercise}

\begin{exercise}
\label{ex:NLS.adsig}
Show that 
$$
\ad \sig_3 \twomat{a}{b}{c}{d} = \twomat{0}{2b}{-2c}{0}
$$
and conclude that $A \mapsto \ad \sig_3(A)$ is a linear map on $2 \times 2$ matrices with 
eigenvalues $2$, $0$, and $-2$.  Find the eigenvectors and show that
$$
\exp(t \ad \sig_3) \twomat{a}{b}{c}{d} = \twomat{a}{e^{2t} b}{e^{-2t}c}{d}. 
$$
Check that
\begin{equation}
\label{NLS:exp.ad}
\exp(t\ad \sig_3) (A) = e^{t\sig_3} A e^{-t\sig_3}.
\end{equation}
\end{exercise}

\begin{exercise}
\label{ex:NLS.ad.Leibniz}
Show that  $\ad \sig_3 (\dotarg)$ obeys the Leibniz rule
$$ \ad \sig_3 (AB) = \ad \sig_3(A) B + A \ad \sig_3 (B)$$
and use this to verify \eqref{NLS:RHP.x}.
\end{exercise}

\begin{exercise}
\label{ex:NLS:Jacobi}
Prove \emph{Jacobi's formula for differentiation of determinants}:
$$ \frac{d}{dx} \det \Phi(x) = 
	\sum_{i=1}^n \det
		\begin{pmatrix}
			\Phi_{1,1}(x)	&	\Phi_{1,2}(x)	&	\ldots		&	\Phi_{1,n}(x) \\[3ex]
			\Phi_{2,1}(x)	&	\Phi_{2,2}(x)	&	\ldots		&	\Phi_{2,n}(x) \\
			\vdots			&	\vdots			&				&	\vdots		\\
			\Phi_{i,1}'(x)	&	\Phi_{i,2}'(x)	&	\ldots		&	\Phi_{i,n}'(x)	\\
			\vdots			&	\vdots			&				&	\vdots		\\
			\Phi_{n,1}(x)	&	\Phi_{n,2}(x)	&	\ldots		&	\Phi_{n,n}(x)
		\end{pmatrix}		 .
$$ 
Show that if we define the \emph{adjugate matrix} of a nonsingular matrix $A$ by 
$$ A (\adj A)  = \det(A) I $$
(where $I$ is the $n \times n$ identity matrix),
then Jacobi's formula may be written
$$ \frac{d}{dx} \det \Phi(x) = \tr \left( \adj(\Phi(x)) \frac{d\Phi}{dx}(x) \right). $$
\end{exercise}

\begin{exercise}
\label{ex:NLS.det1}
Using Jacobi's formula, show that if $\Psi(t)$ is a differentiable, $N\times N$ matrix-valued function and $\Psi'(t) = B(t) \Psi(t)$ for a traceless matrix $B(t)$, then $\det \Psi(t)$ is independent of $t$.
\emph{Hint}: recall that $\Tr(AB) = \Tr(BA)$ for any $n\times n$ matrices $A$, $B$. 
\end{exercise}

\begin{exercise}
\label{ex:NLS.matmult}
Show that, if $\Psi_1$ and $\Psi_2$ are $2\times 2$ nonsingular matrix-valued solutions of $\calL \psi = z \psi$, then $\Psi_2^{-1} \Psi_1$ is independent of $x$.  
\end{exercise}

\begin{exercise}
\label{ex:NLS.M1M2}
Using the result of Exercise \ref{ex:NLS.matmult}, show that \eqref{M1M2A} holds for any two nonsingular solutions $\bfM_1$ and $\bfM_2$ of \eqref{NLS:M} (see \eqref{NLS:exp.ad}).
\end{exercise}

\begin{exercise}
\label{ex:NLS.sigma1}
Show that the map $\Psi \mapsto \sigma_1 \overline{\Psi(x,\zbar)} \sigma_1^{-1}$ preserves the solution space of $\calL \psi = z \psi$.
\end{exercise}

\begin{exercise}
\label{ex:NLS.mu.bc}
Using the fact that $M_\pm$ satisfy \eqref{NLS:M} for $z=\lam$, show that the same is true of $\mu$. You will need to use the Leibniz rule from Exercise \ref{ex:NLS.ad.Leibniz} together with the fact that $(d/dx) w_x^+ = -i\lam \ad \sig_3 (w_x^+)$. 
\end{exercise}

\newpage
\section{The Defocussing DS II Equation}
\label{sec:lec3}

In this lecture we will solve the defocussing Davey-Stewartson equation by inverse scattering method. The original lecture in August 2017 was  based on Perry's \cite{Perry:2016} earlier work, which solved the DS II equation for initial data in $H^{1,1}(\R^2)$. Subsequently, Nachman, Regev and Tataru \cite{NRT:2017} used the inverse scattering method to prove global well-posedness in $L^2(\R)$. In this lecture we will ``compromise'' by solving DS II in the space $H^{1,1}(\R^2)$ but use some of the tools introduced in \cite{NRT:2017} to simplify the proof. In particular, we will avoid entirely the resolvent expansions and multilinear estimates which make the proof in \cite{Perry:2016} somewhat complicated.

The DS II equation 
is
the nonlinear dispersive equation\footnote{We have rescaled $q$ to agree with the conventions of \cite{NRT:2017}.}
\begin{equation}
\label{DSII:bis}
\left\{
\begin{aligned}
i \dee_t q + 2\left( \dee_z^2 + \dee_{\zbar}^2 \right) q +  (g+ \gbar) q &= 0,\\
\dee_{\zbar} g + 4\eps \dee_z \left( |q|^2 \right) 	&=	0,\\
q(z,0)	&=	q_0(z)
\end{aligned}
\right.
%}
\end{equation}
where $\eps=+1$ for the defocussing equation, and $\eps = -1$ for the focusing equation, and
\begin{equation}
\label{DSII:dee-dbar}
 \dee_{\zbar} = \frac{1}{2} \left( \dee_{x_1} + i \dee_{x_2} \right) \quad
     \dee_{z} = \frac{1}{2} \left( \dee_{x_1} - i \dee_{x_2} \right).
\end{equation}
We will describe the formal inverse scattering theory for either sign of $\eps$, but only solve the defocussing case ($\eps = +1$)
for initial data in $H^{1,1}(\R^2)$. 
The DSII equation is the compatibility condition for the following system of equations:
\begin{align}
\label{DSII:Lax.z}
&
\left\{
\begin{aligned}
\dee_{\zbar} \psi_1 &=	q \psi_2\\
\dee_z \psi_2  &=	\eps \qbar \psi_1
\end{aligned}
\right.
%}
\\[5pt]
\label{DSII:Lax.t}
&
\left\{
\begin{aligned}
\dee_t \psi_1 &= \hphantom{-}2i\dee_z^2 \psi_1 
					+ 	2i\left(\dee_{\zbar} q\right)
					-	2iq \dee_{\zbar} \psi_2 + i g \psi_1\\
\dee_t \psi_2 &= -2i\dee_{\zbar}^2 \psi_2 
					-	2i\eps \left(\dee_z \qbar\right)\psi_1 
					+	2i\eps \qbar \dee_z \psi_1 - i \gbar \psi_2
\end{aligned}
\right.
%}
\end{align}

Motivated by the Lax representation \eqref{DSII:Lax.z}--\eqref{DSII:Lax.t} for the defocussing ($\eps=1$) DS II equation and the formal inverse scattering theory of section \ref{sec:DSII.integrability}, we
will establish the existence of a scattering transform $\calS: H^{1,1}(\R^2) \to H^{1,1}(\R^2)$ associated to the linear system \eqref{DSII:Lax.z} which linearizes the defocussing DS II equation. Using \eqref{DSII:Lax.t}, we will see that if $\bfs(t) = \calS q(t)$ for a solution $q(t)$ of the defocussing DS II equation, then $\bfs(t)$ obeys the  linear evolution equation 
$$\dot{\bfs}(k,t) = 2i(k^2+\kbar^2) \bfs(k,t).$$ 
We will show that the scattering transform $\calS$ satisfies $\calS^{-1} = \calS$ so that a putative solution to the defocussing DS II equation is given by
\begin{equation}
\label{DSII:qinv}
q_{\inv}(z,t) = 
		\calS 
			\left( 
					e^{	
							\left(
								2it\left( (\dotarg)^2 + \overline{(\dotarg)}^2\right)
							\right)
						}
					\left(\calS q_0\right)(\dotarg) 
			\right)(z)
\end{equation}
The mapping properties of $\calS$ established in section \ref{DSII:subsec.scatt} imply that $q_{\inv}(z,0) = q_0$ and that $(t,q_0) \mapsto q_{\inv}(\dotarg,t;q_0)$ is a continuous map from $(-T,T) \times H^{1,1}(\R^2)$ to $H^{1,1}(\R^2)$ for any $T>0$, Lipschitz continuous in $q_0$. We will then show that $q_{\inv}$ solves the DS II equation for initial data $q_0 \in \scrS(\R^2)$  by constructing solutions of the system \eqref{DSII:Lax.z}--\eqref{DSII:Lax.t}, where $q = q_\inv$, with prescribed asymptotic behavior. It will follow from Exercise \ref{ex:DSII.crossdiff} that $q_\inv$ solves the DS II equation for $q_0 \in \scrS(\R^2)$. The Lipschitz continuity of $\calS$ and local well-posedness theory for the DS II equation then imply that $q_\inv$ solves the integral equation form \eqref{DSII:int} of DS II for initial data $q_0 \in H^{1,1}(\R^2)$. 

To keep the exposition of reasonable length, we will take as given the results of Beals-Coifman \cite{BC:1985a,BC:1986,BC:1989} and Sung \cite{Sung:1994a,Sung:1994b,Sung:1994c} that the scattering transform $\calS$ maps $\scrS(\R^2)$ into itself. Our emphasis is on the estimates that extend the map $\calS$ to $H^{1,1}(\R^2)$ which enable us to apply the formula \eqref{DSII:qinv} to initial data in this space. One can use the techniques developed in these lectures to give a simpler proof Sung's results, but we will not carry this out here. 

\subsection{Preliminaries}
\label{DSII:subsec.prelim}

As already outlined in the first lecture, both the direct and inverse scattering transforms are defined via a system of $\dbar$ equations. In this subsection we collect some useful estimates on the solid Cauchy transform (see \eqref{DSII:solid-Cauchy}), the Beurling transform (see \eqref{Beurling}), and other useful integral operators.

The \emph{Hardy-Littlewood-Sobolev inequality} plays a fundamental role in the analysis of $\dbar$ problems and also in the proof of dispersive estimates in the local well-posedness theory for the DS II equation. For a proof, see for example \cite[Section 2.2]{LP:2015}. A sharp constant for the Hardy-Littlewood-Sobolev inequality together with an explicit maximizer is given in \cite{Lieb:1983}; see \cite{FL:2012} for a simplified proof of the optimal inequality.

\begin{theorem}[Hardy-Littlewood-Sobolev Inequality]
Suppose that $0< \alpha < n$, $1 < p < q < \infty$, and 
$$ \frac{1}{q} = \frac{1}{p} - \frac{\alpha}{n}. $$ 
If $f \in L^p(\R^n)$, the integral
$$ \left(I_\alpha f\right)(x) = \int \frac{f(y)}{|x-y|^{n-\alpha}} \, dy $$
converges absolutely for a.e.\ $x$, and the estimate
\begin{equation}
\label{HLS}
\norm[L^q]{I_\alpha(f)} \lesssim_{\, n, p, \alpha} \norm[L^p]{f}
\end{equation}
holds.
\end{theorem}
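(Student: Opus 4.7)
The plan is to prove the Hardy--Littlewood--Sobolev inequality via Hedberg's pointwise bound, which reduces the estimate to the $L^p$-boundedness of the Hardy--Littlewood maximal function $\calM$. The essential insight is that $I_\alpha f(x)$ can be bounded pointwise by a geometric mean of $\calM f(x)$ and $\norm[L^p]{f}$, after which the HLS estimate follows from integrating over $x$ and invoking $\norm[L^p]{\calM f} \lesssim \norm[L^p]{f}$ (valid precisely because $p>1$).

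First I would split the kernel at a radius $R>0$ to be chosen later:
\begin{equation*}
\left| \left(I_\alpha f\right)(x) \right| \leq \int_{|x-y|\leq R} \frac{|f(y)|}{|x-y|^{n-\alpha}}\, dy + \int_{|x-y|> R} \frac{|f(y)|}{|x-y|^{n-\alpha}}\, dy = \mathrm{I}(x,R) + \mathrm{II}(x,R).
\end{equation*}
For the first piece I would dyadically decompose into annuli $\{2^{-j-1}R < |x-y| \leq 2^{-j}R\}$ and bound each integral by $(2^{-j}R)^{\alpha-n} \cdot |B(x,2^{-j}R)| \cdot \calM f(x)$; summing the resulting geometric series in $j$ yields $\mathrm{I}(x,R) \lesssim R^\alpha\, \calM f(x)$. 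For the second piece, H\"older's inequality with exponents $p,p'$ gives
\begin{equation*}
\mathrm{II}(x,R) \leq \norm[L^p]{f} \left(\int_{|y|>R} |y|^{(\alpha-n)p'}\, dy\right)^{1/p'} \lesssim \norm[L^p]{f}\, R^{\alpha - n/p},
\end{equation*}
where the integral converges because the hypothesis $1/q = 1/p - \alpha/n > 0$ forces $\alpha < n/p$, equivalently $(\alpha-n)p' + n < 0$.

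Next I would optimize in $R$ by choosing $R$ so that the two bounds balance, i.e., $R^\alpha \calM f(x) \simeq \norm[L^p]{f} R^{\alpha - n/p}$, which gives $R \simeq (\norm[L^p]{f}/\calM f(x))^{p/n}$ at points where $\calM f(x) \neq 0$. Substituting back and using the scaling relation $1-\alpha p/n = p/q$ yields Hedberg's pointwise inequality
\begin{equation*}
\left| \left(I_\alpha f\right)(x) \right| \lesssim \left(\calM f(x)\right)^{p/q} \norm[L^p]{f}^{1-p/q}.
\end{equation*}

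Finally I would raise this pointwise bound to the $q$-th power and integrate over $\R^n$:
\begin{equation*}
\norm[L^q]{I_\alpha f}^q \lesssim \norm[L^p]{f}^{q-p} \int_{\R^n} \left(\calM f(x)\right)^p dx \lesssim \norm[L^p]{f}^{q-p}\, \norm[L^p]{f}^p = \norm[L^p]{f}^q,
\end{equation*}
where the second inequality is the maximal function bound on $L^p(\R^n)$ for $p>1$. Taking $q$-th roots completes the proof. The main technical point is ensuring the dyadic decomposition works and the balance argument is dimensionally consistent; the role of the hypotheses $p>1$ and $\alpha < n/p$ is transparent, as the former is needed for the maximal inequality and the latter for the convergence of the tail integral in H\"older's estimate.
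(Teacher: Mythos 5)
Your proof is correct and complete. Note, however, that the paper does not actually prove this theorem: it states it and defers to \cite[Section 2.2]{LP:2015} for the proof (and to \cite{Lieb:1983,FL:2012} for the sharp constant), so there is no internal argument to compare against. What you have written is the standard Hedberg argument, and every step checks out: the dyadic sum over annuli converges because $\alpha>0$; the tail integral in the H\"older step converges precisely because $1/q=1/p-\alpha/n>0$ forces $\alpha<n/p$; the optimization in $R$ is legitimate since $\calM f(x)<\infty$ a.e.\ for $f\in L^p$, $p>1$ (and the degenerate cases $\calM f(x)=0$ or $=\infty$ are trivial); and the exponent bookkeeping $1-\alpha p/n=p/q$ is exactly the scaling relation in the hypothesis. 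It is worth observing that your split-at-radius-$R$-and-optimize strategy is precisely the one the paper \emph{does} use later, in the Nachman-suggested proof of Theorem \ref{thm:DBAR.est}, where $|D|^{-1}f$ is bounded by $R\,\calM f(x)+R^{-1}\calM\widehat{f}(0)$ and then $R$ is optimized; so your argument is very much in the spirit of the surrounding material, with the geometric mean taken against $\norm[L^p]{f}$ rather than against a maximal function of the Fourier transform.
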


The \emph{solid Cauchy transform} is the integral operator
\begin{equation}
\label{DSII:solid-Cauchy}
\left(\dee_{\zbar}^{-1} f\right)(z) = \frac{1}{\pi} \int \frac{1}{z-w} f(w) \, dw 
\end{equation}
initially defined on $C_0^\infty(\R^2)$ and extended by density to $L^p(\R^2)$ for $p \in (1,2)$ by \eqref{DBAR:HLS}.
Proofs of the following fundamental estimates may be found, for instance, in \cite[Chapter I.6]{Vekua:1962} or \cite[section 4.3]{AIM:2009}. Some are exercises at the end of this section. We leave the formulation of similar results for the conjugate solid Cauchy transform
\begin{equation}
\label{DSII:conjugate-solid-Cauchy}
\left( \dee_z^{-1} f \right)(z) = \frac{1}{\pi} \int \frac{1}{\zbar - \wbar} f(w) \, dw
\end{equation}
to the reader.

(1) Fractional integration and $L^\infty$ estimates. Let $p \in (1,2)$ and let $p^*$ be the Sobolev conjugate exponent $({p^*})^{-1} = p^{-1} - 1/2$ for $n=2$. Then, as a consequence of the Hardy-Littlewood-Sobolev inequality \eqref{HLS},
\begin{equation}
\label{DBAR:HLS}
\norm[L^{p^*}]{\dee_{\zbar}^{-1} f} \lesssim_{\, p} \norm[L^p]{f}.
\end{equation}
On the other hand, an easy argument with H\"{o}lder's inequality (Exercise \ref{ex:DBAR.Vekua}) shows that for $1 < p < 2 < r < \infty$,
\begin{equation}
\label{DBAR:Vekua}
\norm[L^\infty]{\dee_{\zbar}^{-1} f} \lesssim_{\, q, r} \norm[L^p(\R^2) \cap L^r(\R^2)]{f}.
\end{equation}

(2) H\"{o}lder continuity and asymptotic behavior.  If $p \in (2,\infty)$, if $p'$ is the H\"{o}lder conjugate of $p$ and if $f \in L^p \cap L^{p'}$, 
then $\dee_{\zbar}^{-1} f$ is continuous and
\begin{equation}
\label{DBAR:Holder}
\left| \left(\dee_{\zbar}^{-1} f\right)(z) - \left(\dee_{\zbar}^{-1} f\right)(z') \right|
\lesssim_{\, p} \norm[L^p ]{f} \left| z-z' \right|^{1-2/p}.
\end{equation}
Again assuming $f \in L^p \cap L^{p'}$, 
$$
\lim_{|z| \rarr \infty} \left( \dee_{\zbar}^{-1} f\right)(z) = 0. 
$$

Next, we consider the model operator 
\begin{equation}
\label{DBAR:op}
S : f \rarr \dee_{\zbar}^{-1} \left( q f \right)
\end{equation}
which occurs in the analysis of the scattering transform.
An important consequence of \eqref{DBAR:HLS} is that for any $q \in L^2$ and any $p>2$, the 
operator $S$ is a bounded operator from $L^p$ to itself with operator bound
\begin{equation}
\label{S.est}
\norm[L^p \rarr L^p]{S} \lesssim_{\, p} \norm[L^2]{q},
\end{equation}
so that  $\ker_{L^p}(I-S)$ is trivial for $\norm[L^2]{q}$ sufficiently small. 

The  operator $S$ is also a compact operator.
Recall that a subset $V$ of a metric space is called \emph{precompact} if the closure of $V$ is compact, and  that a bounded operator $A$ on a Banach space $X$ is \emph{compact} if $A$ maps bounded subsets of $X$ into precompact subsets of $X$.  To prove that $S$ is compact, we first discuss the Kolmogorov-Riesz theorem that characterizes compact subsets of $L^p(\R^n)$. Our discussion draws on \cite{HH:2010,HH:2016} which provides a very readable exposition of the history and proof of this theorem. 

Recall that a metric space $(X,d)$ is said to be \emph{totally bounded} if, for any $\eps>0$, $X$ admits a finite cover by $\eps$-balls. 
A metric space is compact if and only if it is complete and totally bounded, and a subset of a metric space is precompact if and only if it is totally bounded.

\begin{theorem}[Kolmogorov-Riesz]
\label{thm:Kolmogorov-Riesz}
A subset $F$ of $L^p(\R^n)$ is totally bounded if, and only if:
\begin{itemize}
\item[(i)]		$F$ is bounded,
\item[(ii)]	(uniform decay) For every $\eps>0$ there is an $R>0$ so that 
$$\int_{|x|\geq R} |f(x)|^p \, dx < \eps^p \text{ for all } f \in F,$$ and
\item[(iii)]	($L^p$-equicontinuity) For every $\eps>0$ there is  a $\delta>0$ so that for every $f \in F$ and every $h \in \R^n$ with $|h| < \delta$, 
$$ \int_{\R^n} \left| f(x+h) - f(x) \right|^p \, dx < \eps^p. $$
\end{itemize}
\end{theorem}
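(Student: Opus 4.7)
My plan is to prove the two directions separately, with sufficiency being the substantive content. For necessity, given that $F$ is totally bounded, (i) is immediate. For (ii) and (iii), the strategy is the same in both cases: cover $F$ by finitely many $L^p$-balls of radius $\eps/2$ centered at representatives $f_1,\dots,f_N \in F$; since each individual $f_j \in L^p(\R^n)$ has (a) finite tail $\int_{|x|\geq R}|f_j|^p \to 0$ as $R \to \infty$ and (b) the translation continuity $\lim_{h\to 0}\norm[L^p]{f_j(\cdot+h)-f_j} = 0$ (a standard fact proved by approximating $f_j$ by continuous compactly supported functions), one can choose $R$ large and $\delta$ small uniformly over the finite list. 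A triangle inequality then transfers these properties from the representatives to every $f\in F$.

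For sufficiency, my approach is via cube-averaging. Fix $\eps>0$. Use (ii) to choose $R$ so that $\norm[L^p]{f\chi_{\{|x|\geq R\}}} < \eps/3$ for all $f\in F$, and use (iii) to choose $\delta>0$ with $\norm[L^p]{f(\cdot+h)-f}<\eps/3$ whenever $|h|<\delta$ and $f\in F$. Partition $B(0,R)$ into cubes $Q_1,\dots,Q_N$ of diameter less than $\delta$, and for each $f\in F$ define the averaging step function
\[
Mf(x) = \sum_{j=1}^{N} \left(\frac{1}{|Q_j|}\int_{Q_j} f(y)\,dy\right)\chi_{Q_j}(x).
\]
The key technical point will be the uniform estimate $\norm[L^p]{Mf - f\chi_{B(0,R)}} \lesssim \eps$; I would establish this by writing $Mf(x)-f(x) = |Q_j|^{-1}\int_{Q_j}(f(y)-f(x))\,dy$ for $x\in Q_j$, applying Jensen's inequality to move the $L^p$ norm inside the average, and then parametrizing $y = x+h$ so that $|h| < \delta$ and invoking (iii) under the integral in $h$.

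Finally, the map $f \mapsto Mf$ sends $F$ into the finite-dimensional space spanned by $\chi_{Q_1},\dots,\chi_{Q_N}$, and the vector of averages $(a_1(f),\dots,a_N(f)) \in \C^N$ is bounded in terms of $\norm[L^p]{f}$ via H\"older's inequality, which together with (i) means $\{Mf:f\in F\}$ lies in a bounded subset of a finite-dimensional space. Bounded sets in $\C^N$ are totally bounded, so finitely many $\eps/3$-balls in $L^p$ suffice to cover $\{Mf:f\in F\}$. Combining the uniform tail estimate from (ii), the approximation $Mf\approx f\chi_{B(0,R)}$, and the finite cover of $\{Mf\}$, one obtains a finite cover of $F$ by $\eps$-balls in $L^p(\R^n)$.

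The main obstacle I anticipate is the quantitative passage from the pointwise modulus-of-continuity condition (iii) to the $L^p$-approximation estimate for $Mf$: one must be careful that the translation estimate in (iii) is integrated correctly against the inner average, which requires a Fubini/Jensen interchange. Once that technical estimate is nailed down, the rest of the argument is a clean combination of (i), (ii), and the standard fact that finite-dimensional balls are totally bounded.
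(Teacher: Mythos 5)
Your proof is correct. The paper itself does not supply a proof of this theorem --- it states the result and defers to the cited expositions of Hanche-Olsen and Holden --- and your argument (necessity by transferring tail decay and $L^p$-translation continuity from a finite $\eps/2$-net via the triangle inequality; sufficiency by projecting onto step functions over a grid of small cubes, controlling $\norm[L^p]{Mf - f\chi_{B(0,R)}}$ with Jensen plus Fubini against condition (iii), and then covering the resulting bounded subset of a finite-dimensional space) is essentially the standard proof given in those references.
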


\begin{lemma}
\label{lemma:S.compact}
The operator $S:L^p(\R^2) \rarr L^p(\R^2)$ is compact for any $q \in L^2(\R^2)$ and any $p>2$.
\end{lemma}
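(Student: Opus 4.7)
The plan is to reduce to the case $q \in C_c^\infty(\R^2)$ and then verify the three hypotheses of the Kolmogorov–Riesz Theorem \ref{thm:Kolmogorov-Riesz} for the image $S(B)$ of the unit ball $B \subset L^p(\R^2)$ under $S$.

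First I would reduce to the smooth compactly supported case. The estimate \eqref{S.est} says that the assignment $q \mapsto S_q$ is bounded from $L^2(\R^2)$ into $\calB(L^p(\R^2))$; in particular, if $q_n \to q$ in $L^2(\R^2)$, then $S_{q_n} \to S_q$ in operator norm on $L^p$. Since $C_c^\infty(\R^2)$ is dense in $L^2(\R^2)$ and the set of compact operators is closed in $\calB(L^p(\R^2))$ in the operator-norm topology, it suffices to prove compactness of $S_q$ for $q \in C_c^\infty(\R^2)$.

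So fix $q \in C_c^\infty(\R^2)$ with $\mathrm{supp}\, q \subset B(0,R_0)$, and let $B$ be the unit ball of $L^p(\R^2)$. Hypothesis (i) of Kolmogorov--Riesz is immediate from \eqref{S.est}. For (ii), for $|z| > 2R_0$ and $f \in B$, H\"older's inequality gives
\begin{equation*}
|(Sf)(z)| \leq \frac{2}{\pi |z|} \int_{B(0,R_0)} |q(w)| |f(w)| \, dw \leq \frac{C \norm[L^{p'}]{q}}{|z|},
\end{equation*}
so $\int_{|z|\geq R} |Sf(z)|^p \, dz \lesssim R^{2-p} \to 0$ uniformly in $f \in B$ as $R \to \infty$, since $p > 2$. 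For (iii), pick any $s \in (2,p)$ (possible since $p > 2$) and let $s'$ be defined by $1/s = 1/s' + 1/p$; then $s' \in (2p/(p-2), \infty)$, so $q \in L^{s'}$ and Hölder gives $\norm[L^s]{qf} \leq \norm[L^{s'}]{q}$ uniformly for $f \in B$. Applying the Hölder continuity estimate \eqref{DBAR:Holder} to $Sf = \dee_{\zbar}^{-1}(qf)$ yields the pointwise bound
\begin{equation*}
|Sf(z+h) - Sf(z)| \leq C \norm[L^{s'}]{q} |h|^{\alpha}, \qquad \alpha \coloneqq 1 - 2/s > 0,
\end{equation*}
uniformly in $z$ and in $f \in B$.

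The last step is to combine this pointwise regularity with the decay from (ii) to get $L^p$-equicontinuity. Split the integral at $|z| = R$: the inner piece is bounded by $C R^2 |h|^{\alpha p}$ using the Hölder estimate, while the outer piece is bounded by $C R^{2-p}$ using the decay estimate from (ii) (for $|h| \leq R_0$, both $z$ and $z+h$ satisfy $|z|, |z+h| > 2R_0$ when $R$ is large enough). Choosing $R = |h|^{-\beta}$ for any $\beta \in (0, \alpha p / 2)$ makes both contributions tend to zero as $|h| \to 0$, giving the required $L^p$-equicontinuity uniformly on $B$. The Kolmogorov--Riesz theorem then implies that $S(B)$ is totally bounded, hence precompact, so $S$ is compact. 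I expect the only real subtlety is this last balancing argument; the reduction and the verification of (i)–(ii) are routine consequences of the estimates already established in the section.
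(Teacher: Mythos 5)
Your proof is correct, and it rests on the same key tool as the paper's — the Kolmogorov--Riesz characterization of totally bounded sets in $L^p$ — but the execution differs in two places worth noting. First, you dispose of the approximation to general $q\in L^2$ up front, by observing that $q\mapsto S_q$ is linear and bounded from $L^2$ into $\calB(L^p)$ by \eqref{S.est} and that compact operators are norm-closed; the paper instead keeps $q\in L^2$ throughout and performs the splitting $q=q_n+q_s$ (smooth compactly supported plus $L^2$-small) only inside the verification of condition (iii), and uses a different splitting $q=\chi_R q+(1-\chi_R)q$ for condition (ii). Your reduction is cleaner and makes (ii) genuinely easier, since for compactly supported $q$ you get the pointwise decay $|Sf(z)|\lesssim |z|^{-1}$ for free. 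Second, and more substantively, for the $L^p$-equicontinuity you go through the pointwise H\"{o}lder estimate \eqref{DBAR:Holder} applied to $qf\in L^s$ for an auxiliary $s\in(2,p)$, which gives only an $L^\infty$ bound on the difference $Sf(\dotarg+h)-Sf(\dotarg)$, and you then have to recover an $L^p$ bound by splitting at $|z|=R=|h|^{-\beta}$ and balancing the interior H\"{o}lder contribution $R^2|h|^{\alpha p}$ against the exterior decay $R^{2-p}$. The paper avoids this balancing entirely: it writes the difference as the explicit kernel $h\int (x+h-y)^{-1}(x-y)^{-1}q_n(y)f(y)\,dy$ and applies Young's inequality with $\norm[L^{p'}]{(x+h)^{-1}x^{-1}}\lesssim |h|^{2/p-1}$ to land directly on the $L^p$ bound $|h|^{1-2/p}\norm[L^p]{q_n}\norm[L^p]{f}$. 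Both routes are valid; the paper's is shorter at this step, while yours reuses an estimate already stated in the section and requires no new kernel computation.
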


\begin{proof}
We need to show that, for any $p>2$ and any bounded subset $B$ of $L^p(\R^2)$, the set $$\{ Sf: f \in B\}$$ is totally bounded. Since $S$ is a bounded operator by \eqref{S.est}, (i) of Theorem \ref{thm:Kolmogorov-Riesz} is obvious. To prove (ii), let $\chi_R$ denote the characteristic function of the set $\{ x: |x| \leq R\}$. Then
\begin{multline}
\label{S.Holder}
(1-\chi_{4R})(x) (Sf) (x) = \\ (1-\chi_{4R}) \frac{1}{\pi} \int \frac{1}{x-y}
\left( \chi_R(y) + (1-\chi_R(y))\right) q(y) f(y) \, dy
\end{multline}
The first right-hand term of \eqref{S.Holder} is bounded by a constant times $R^{2/p-1} \norm[L^p]{f}$ where we used H\"{o}lder's inequality and the estimate 
$$(1-\chi_{4R}) |x-y|^{-1} \chi_R(y) \lesssim R^{-1}.$$ 
The second right-hand term is bounded by a constant times $\norm[L^2]{(1-\chi_R) q} \norm[L^p]{f}$. This shows that (ii) holds.

Finally, to show (iii), let $\eps>0$ be given. By Exercise \ref{ex:DSII.q.nice} we may write $q=q_n + q_s$ where 
$q_n$ is a smooth function of compact support and $\norm[L^2]{q_s} < \eps$. We may write
$$
(Sf)(x)	=	\frac{1}{\pi}\int \frac{1}{x-y} \left(q_n(y) + q_s(y)\right) f(y) \, dy = (S_n f)(x) + (S_s f)(x) 
$$
and estimate $\norm[L^p]{S_s f} \lesssim_{\, p} \eps \norm[L^p]{f}$ by \eqref{S.est}. On the other hand,  we may compute
$$
(S_n f)(x+h) - (S_n f)(x) 	=	h \int \frac{1}{x+h-y} \frac{1}{x-y} q_n(y) f(y) \, dy.
$$
It follows from Young's inequality that 
\begin{align*}
\norm[L^p]{(S_n f)(\dotarg + h) - (S_n f)(\dotarg)} 
	&	\leq	|h| \norm[L^{p'}]{(x+h)^{-1} x^{-1}} \norm[L^p]{q_n} \norm[L^p]{f} \\
	&	\lesssim	|h|^{1-2/p} \norm[L^p]{q_n} \norm[L^p]{f}.
\end{align*}
Hence
$$ 
\norm[L^p]{(Sf)(\dotarg+h) - (Sf)(\dotarg)} 
\lesssim_{\, p}
\left(2\eps+ |h|^{1-2/p} \norm[L^p]{q_n} \right)  \norm[L^p]{f}
$$
which implies the required bound. 
\end{proof}

Next, we will discus an estimate on fractional integrals due to Nachman, Regev, and Tataru. Our Theorem 
\ref{thm:DBAR.est} is a special case of \cite[Theorem 2.3]{NRT:2017}; as we will see, this estimate plays a critical 
role in the analysis of the scattering transform. We will give a simple direct proof of Theorem \ref{thm:DBAR.est} 
suggested by Adrian Nachman; in Exercise \ref{ex:NRT.frac}, we outline a complete proof of  \cite[Theorem 2.3]{NRT:2017} by the same method.

To state the estimate and introduce some key ingredients of Nachman's proof, we first recall that the Hardy-Littlewood maximal function for a locally integrable function $f$ on $\R^n$ is given by
$$
\calM f(x) = \sup_{r>0} \left( \frac{1}{\left| B(x,r) \right|} \int_{B(x,r)} |f(y)| \, dy \right) 
$$
where $B(x,r)$ is the ball of radius $r$ about $x \in \R^n$, and $\left| \dotarg \right|$ denotes Lebesgue measure. The maximal function is a bounded sublinear operator from $L^p(\R^n)$ to itself for $p \in (1,\infty]$ so that
\begin{equation}
\label{HLMax.p}
\norm[L^p]{\calM f} \lesssim_{\, p} \norm[L^p]{f}, \quad p \in (1,\infty].
\end{equation}
In particular, if $f \in L^p$ for $p \in (1,\infty]$, then $(\calM f)(x)$ is finite for almost every $x$.

Next, recall that an \emph{approximate identity} is a family of nonnegative functions $K_t \in L^1(\R^n)$, 
indexed by $t \in (0,\infty)$, with
\begin{itemize}
\item[(i)]		$\dint K_t(x) \, dx = 1$,
\smallskip
\item[(ii)]	$\left|K_t(x) \right| \leq  t^{-n}$, and
\smallskip
\item[(iii)]	$\left|K_t(x) \right| \leq At |x|^{-(n+1)}$.
\end{itemize}
It is not difficult to see that the estimate
$$ \left| \left(  K_t * f \right) (x)  \right| \lesssim \calM f(x) $$
holds, where the implied constant is independent of $t$. One example of an approximate identity is the
Poisson kernel
$$ P_t(x) = c_n \frac{t}{\left(|x|^2 + t^2\right)^{(n+1)/2}} $$
where $c_n$ is chosen to normalize the integral of $K_t$ to $1$. 

The action of the Poisson kernel by convolution may be viewed as the action of a Fourier multiplier with symbol
$e^{-t|\xi|}$. That is, denoting by $\calF$ the transform
$$ \left( \calF f \right)(\xi) = \int e^{-ix\cdot \xi} f(x) \, dx, $$ 
we have
\begin{equation}
\label{P.Fourier}
\calF \left[\left(P_t * f \right) \right] (\xi) = e^{-t|\xi|} \widehat{f}(\xi).
\end{equation}
Denote by $|D|^{-1}$ the Fourier multiplier with symbol $|\xi|^{-1}$. By the identity
$$ |\xi|^{-1} = \int_0^\infty e^{-t|\xi|} \, dt $$
it follows that 
$$ |D|^{-1} f = \int_0^\infty \left(P_t * f \right) (x) \, dt. $$

We can now state and prove:

\begin{theorem}\cite{NRT:2017}
\label{thm:DBAR.est}
Suppose that $p \in (1,2]$ and $f \in L^p(\R^2)$. The estimate
\begin{equation}
\label{DBAR.est}
\left| \left(\dee_{\zbar}^{-1} f \right)(x) \right| \lesssim \left( \calM f(x) \right)^{1/2} \left( \calM\widehat{f}(0) \right)^{1/2}
\end{equation}
holds, where $\widehat{f}$ denotes the Fourier transform of $f$.
\end{theorem}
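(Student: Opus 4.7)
The plan is to follow the hint at the end of the excerpt and decompose $\dee_{\zbar}^{-1}$ using the Poisson semigroup, then bound the resulting Poisson averages in two competing ways and optimize. First I would reduce to a positive convolution via the pointwise inequality
\begin{equation*}
|\dee_{\zbar}^{-1}f(x)| \le 2(|D|^{-1}|f|)(x),
\end{equation*}
which follows from $|1/(\pi z)| = 2/(2\pi|z|)$ and the two-dimensional representation $|D|^{-1}g(x)=(2\pi)^{-1}\int g(y)/|x-y|\,dy$. Using the Poisson identity $|D|^{-1}=\int_0^\infty P_t\,dt$ from the hint then reduces the problem to estimating $\int_0^\infty (P_t * |f|)(x)\,dt$.

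The heart of the argument is to prove two independent pointwise bounds on $(P_t * g)(x)$ for a nonnegative $g$. The \emph{physical} bound $(P_t * g)(x) \le C\,\calM g(x)$ holds uniformly in $t$ by the majorant principle, since $P_t$ is a nonnegative, radial, radially decreasing approximate identity with $\|P_t\|_{L^1}=1$. The \emph{Fourier} bound $(P_t * g)(x) \le C\,\calM\widehat g(0)/t^2$ follows from $\widehat{P_t}(\xi)=e^{-t|\xi|}$ together with a dyadic decomposition of $\xi$ into the central ball $\{|\xi|\le 1/t\}$ and annuli $\{2^k/t<|\xi|\le 2^{k+1}/t\}$ for $k\ge 0$: on each piece one uses $e^{-t|\xi|}\le e^{-2^k}$ and $\int_{|\xi|\le R}|\widehat g|\le \pi R^2\calM\widehat g(0)$, and the geometric sum $\sum_k 4^k e^{-2^k}$ converges. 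Substituting $g=|f|$ and integrating the pointwise minimum of the two bounds, split at the AM--GM balance point $T=(\calM\widehat{|f|}(0)/\calM f(x))^{1/2}$, yields
\begin{equation*}
|\dee_{\zbar}^{-1}f(x)| \lesssim T\calM f(x) + \calM\widehat{|f|}(0)/T = 2\bigl(\calM f(x)\cdot\calM\widehat{|f|}(0)\bigr)^{1/2}.
\end{equation*}

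The main obstacle is that this clean argument yields the maximal function of $\widehat{|f|}$ at the origin, whereas the theorem asks for the typically sharper quantity $\calM\widehat f(0)$ (note $\calM\widehat f(0)\le \|f\|_{L^1}=\widehat{|f|}(0)\le \calM\widehat{|f|}(0)$, with no reverse inequality in general). To obtain the stated bound one must avoid the first-step inequality $|\dee_{\zbar}^{-1}f|\le 2|D|^{-1}|f|$ altogether and use instead the oscillatory decomposition $\dee_{\zbar}^{-1}f=\int_0^\infty \tilde P_t*f\,dt$, where
\begin{equation*}
\tilde P_t(z)=\frac{\zbar}{\pi(t^2+|z|^2)^{3/2}} = -\frac{2}{\pi}\dee_z\bigl[(t^2+|z|^2)^{-1/2}\bigr]
\end{equation*}
arises from factoring $1/(\xi_1+i\xi_2)=e^{-i\theta_\xi}\int_0^\infty e^{-t|\xi|}\,dt$. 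The Fourier-side estimate $|(\tilde P_t*f)(x)|\lesssim \calM\widehat f(0)/t^2$ then carries over verbatim because $|\widehat{\tilde P_t}(\xi)|=2e^{-t|\xi|}$, but the matching physical-side estimate $|(\tilde P_t*f)(x)|\lesssim \calM f(x)$ is the hard part: the naive bound by $|\tilde P_t|*|f|$ fails because $|\tilde P_t|\notin L^1$, so one must exploit the angular cancellation $\int \tilde P_t=0$ (inherited from $\int_0^{2\pi} e^{-i\theta}\,d\theta=0$) together with the $\dee_z$-derivative structure of $\tilde P_t$, passing from the radial bound $|\tilde P_t(y)|\le \pi^{-1}\min(|y|/t^3,\,|y|^{-2})$ and a dyadic analysis of $f$ in annuli around $x$ to the desired pointwise estimate.
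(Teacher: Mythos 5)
Your first half is exactly the paper's argument (the paper's proof, attributed to Nachman, writes $|D|^{-1}=\int_0^\infty P_t\,dt$, bounds the near part by $R\,\calM f(x)$ using positivity of $P_t$, bounds the far part by $R^{-1}\calM\widehat{f}(0)$ via the dyadic decomposition of $\int |\xi|^{-1}e^{-R|\xi|}|\widehat{f}(\xi)|\,d\xi$, and optimizes in $R$), and your objection to the reduction $|\dee_{\zbar}^{-1}f|\le 2|D|^{-1}|f|$ is well taken: it would replace $\calM\widehat{f}(0)$ by $\calM\widehat{|f|}(0)$, and in the applied form \eqref{DBAR.est.fourier} this would erase the translation of $\widehat{f}$ by $k$ that drives the whole analysis. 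The printed proof indeed only bounds $|D|^{-1}f$ and leaves the unimodular angular factor $\overline{z}/|z|$ of the kernel $1/(\pi z)$ unaddressed, so some additional argument is genuinely required here.

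The repair you propose, however, contains a step that is false rather than merely hard. The identity $\frac{1}{\pi z}=\int_0^\infty \tilde P_t\,dt$ and the computation $|\widehat{\tilde P_t}(\xi)|=2e^{-t|\xi|}$ are correct, but the physical-side bound $|(\tilde P_t*f)(x)|\lesssim \calM f(x)$ uniformly in $t$ cannot hold: $\widehat{\tilde P_t}(\xi)=-2i(\overline{\xi}/|\xi|)e^{-t|\xi|}$, and a convolution operator dominated pointwise by $\calM f$ is bounded from $C_0$ to $L^\infty$, hence is convolution with a finite measure and has a continuous symbol, whereas $-2i\overline{\xi}/|\xi|$ is discontinuous at $\xi=0$ (as $t\to 0$ the $\tilde P_t$ converge to a Riesz-type singular kernel). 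The integrated form fails for the same reason: $\int_0^R\tilde P_t\,dt=\frac{1}{\pi z}\cdot\frac{R}{\sqrt{R^2+|z|^2}}$ carries an odd tail of size $R|z|^{-2}$ for $|z|>R$ — critical singular-integral decay — and its symbol $\frac{2}{i\xi}\bigl(1-e^{-R|\xi|}\bigr)$, divided by $R$, is again discontinuous at the origin, so $\bigl|\int_0^R(\tilde P_t*f)(x)\,dt\bigr|\lesssim R\,\calM f(x)$ is not available; no angular cancellation can restore a bound by the maximal function alone. The cure is to cut off the kernel in space rather than in $t$: write $\frac{1}{\pi z}=\frac{1}{\pi z}e^{-|z|^2/R^2}+\frac{1}{\pi z}\bigl(1-e^{-|z|^2/R^2}\bigr)$. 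The near piece is dominated in absolute value by the radial, decreasing kernel $\frac{1}{\pi|z|}e^{-|z|^2/R^2}$ of $L^1$-mass $\sqrt{\pi}\,R$, so the majorant principle gives $\lesssim R\,\calM f(x)$ with no cancellation needed; the far piece $F$ satisfies $\dee_{\zbar}F=\frac{1}{\pi R^2}e^{-|z|^2/R^2}$, whence $\widehat{F}(\xi)=\frac{2}{i\xi}e^{-R^2|\xi|^2/4}$, and your dyadic argument applies verbatim to give $\lesssim R^{-1}\calM\widehat{f}(0)$ with $\widehat{f}$, not $\widehat{|f|}$. Optimizing in $R$ then yields \eqref{DBAR.est}.
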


\begin{proof}[Proof  \emph{(suggested by Adrian Nachman)}]
The Poisson kernel is an approximate identity so by standard theory
$$ \left| \left(P_t * f \right)(x) \right| \lesssim  \calM f(x) $$
with the implied constant independent of $t>0$. We now write
\begin{align*}
|D|^{-1} f (x) 	&=	\int_0^R \left(P_t * f \right)(x) \, dt + 
								\frac{1}{(2\pi) }\int_R^\infty e^{i\xi \cdot x} e^{-t|\xi|} \widehat{f}(\xi) \, d\xi\\
					&=	I_1(x) + I_2(x)
\end{align*}
where in the second term we used \eqref{P.Fourier}. We may estimate
\begin{align*}
|I_1(x)|			&\lesssim	R \calM f(x) \\
|I_2(x)|			&\lesssim	\int \frac{1}{|\xi|} e^{-R|\xi|} \left|\widehat{f}(\xi)\right| \, d\xi	\\
					&\lesssim	\sum_{j=-\infty}^\infty 2^j e^{-R 2^j} 
								2^{-2j} \int_{2^{j-1} < |\xi| < 2^j} \left| \widehat{f}(\xi) \right| \, d\xi\\
					&\lesssim \left( \sum_{j=-\infty}^\infty 2^j e^{-R 2^j} \right) \calM\widehat{f}(0)\\
					&\lesssim R^{-1} \calM\widehat{f}(0).
\end{align*}
where we introduced a dyadic decomposition in the $\xi$ variable. Thus
$$ \left| |D|^{-1} f (x) \right| \lesssim R \calM f(x) + R^{-1} \calM \widehat{f}(0). $$
Optimizing in $R$, we obtain the desired bound \eqref{DBAR.est}.
\end{proof}

We will usually use this estimate in the form
\begin{equation}
\label{DBAR.est.fourier}
\left| \left(\dee_{\zbar}^{-1} e_k f \right)(x) \right| \lesssim \left( \calM f(x) \right)^{1/2} \left( \calM\widehat{f}(k) \right)^{1/2}
\end{equation}
where
$$\widehat{f}(k) = \frac{1}{\pi} \int e_k(z) f(z) \, dz $$
is the natural Fourier transform in this setting. 

This estimate is of particular importance because it captures, in a quantitatively precise way, the effect of the oscillatory factor $e_k$ on the behavior of the fractional integral (In this context, see in particular Lemma \ref{lemma:mxk.est} and the subsequent analysis of the scattering transform in Section \ref{DSII:subsec.scatt}; in \cite{NRT:2017}, see particularly section 4).  It replaces less precise estimates, based on integration by parts, that were used in \cite{Perry:2016} to capture the behavior of solutions as a function of $k$.

The \emph{Beurling operator} is defined on $C_0^\infty(\R^2)$ as the principal value integral
\begin{equation}
\label{Beurling}
(\bfS f)(z) =  -\frac{1}{\pi} \lim_{\eps \darr 0} \int_{|z-w|>\eps} \frac{f(w)}{(z-w)^2} \, dw \end{equation}
and extends to bounded operator on $L^p(\R^2)$ for all $p \in (1,\infty)$. It is an isometry on $L^2$. We define
\begin{equation}
\label{Conjugate-Beurling}
\barS f = \overline{\bfS \overline{f}}. 
\end{equation}
The Beurling operator has the property that 
\begin{equation}
\label{Beurling-ddbar}
\bfS (\dee_{\zbar} f )= \dee_z f
\end{equation}
for functions $f \in C_0^\infty(\R^2)$. By density this extends property to functions $f \in W^{1,p}(\R^2)$ for $p \in (1,\infty)$. 
For a full discussion, see for example, \cite[Chapter 4]{AIM:2009}.

\subsection{Local Well-Posedness}
\label{sec:DSII.lwp}

Next, we review the local well-posedness theory for the DS II equation due to Ghidaglia and Saut \cite{GS:1990}.  The results in this subsection hold for either sign of $\eps$. We first recast \eqref{DSII:bis} as an integral equation using the solution operator $V(t)$ for the linear problem
\begin{equation}
\label{DSII:lin.bis}
i \dee_t v + 2 \left( \dee_z^2 + \dee_{\zbar}^2 \right) v = 0
\end{equation}
which is a linear dispersive equation. To formulate the integral equation, observe that \eqref{DSII:bis} may be reformulated as 
a nonlinear Schr\"{o}dinger-type equation with nonlocal nonlinearity:
\begin{equation}
\label{DSII:non-loc}
\left\{
\begin{aligned}
i \dee_t q + 2\left( \dee_z^2 + \dee_{\zbar}^2 \right) q + 4\eps \left( \bfS \left(|q|^2\right)  + \barS\left( |q|^2 \right)\right) q &= 0,\\
q(z,0)	&=	q_0(z)
\end{aligned}
\right.
%}
\end{equation}  
where $\bfS$ is the \emph{Beurling operator} \eqref{Beurling} and $\barS$ is the conjugate Beurling operator \eqref{Conjugate-Beurling}. 

We will say that a function $q \in C([0,T],L^2_z(\R^2)) \cap L^4(\R^2_z \times [0,T])$ solves the Cauchy problem \eqref{DSII:bis} if $q$ solves the integral equation
\begin{equation}
\label{DSII:int}
q(t) = V(t) q_0 + 4i\eps  \int_0^t V(t-s) \left[ q(s) \left(\bfS(|q|^2)(s) + \barS(|q|^2)(s) \right)  \right] \, ds
\end{equation}
as an integral equation in the space
\begin{equation}
\label{DSII:X}
X=C((0,T), L^2(\R^2)) \cap L^4(\R^2 \times (0,T)).
\end{equation}
This integral equation is motivated by Duhamel's formula (see Exercise \ref{ex:DSII.Duhamel}) and  makes sense in this space because of the Strichartz estimates discussed below. Ghidaglia and Saut \cite[Theorem 2.1]{GS:1990} prove:

\begin{theorem}
\label{thm:DSII.GS}
For any $q_0 \in L^2(\R^2)$, there is a $T^*>0$ and a unique solution $q(t)$ to \eqref{DSII:int} belonging to 
$C((0,T^*),L^2(\R^2)) \cap L^4(\R^2 \times (0,T^*))$ with $q(t)=q(0)$ and $\norm[L^2]{q(t)} = \norm[L^2]{q_0}$.
\end{theorem}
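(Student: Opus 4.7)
The plan is to set up the problem as a Schr\"{o}dinger-type equation with Strichartz estimates and run a contraction mapping argument on the integral form \eqref{DSII:int}. First I would develop Strichartz estimates for the solution operator $V(t)$ of \eqref{DSII:lin.bis}. Since $2(\dee_z^2 + \dee_{\zbar}^2) = \frac{1}{2}(\dee_{x_1}^2 - \dee_{x_2}^2)$ has the symbol $\xi_1^2-\xi_2^2$ with nondegenerate Hessian, the usual stationary phase argument yields the dispersive bound $\norm[L^\infty]{V(t)f} \lesssim |t|^{-1} \norm[L^1]{f}$. Combined with the $L^2$ isometry property $\norm[L^2]{V(t)f} = \norm[L^2]{f}$, the Keel--Tao machinery supplies the same admissible Strichartz pairs as for the $2$D Schr\"{o}dinger equation; in particular
\[
\norm[L^4(\R^2\times\R)]{V(t)q_0}\lesssim \norm[L^2]{q_0},\qquad
\left\Vert\int_0^t V(t-s)F(s)\,ds\right\Vert_{L^\infty_tL^2_x\cap L^4_{t,x}}\lesssim \norm[L^{4/3}_{t,x}]{F}.
\]

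Next I would define $\Phi(q)(t)$ to be the right-hand side of \eqref{DSII:int} and estimate it in the space $X$ defined in \eqref{DSII:X}, restricted to $(0,T)$. The Beurling operators $\bfS,\barS$ are bounded on $L^p(\R^2)$ for every $p\in(1,\infty)$, so
\[
\norm[L^{4/3}_{t,x}]{q\cdot\bigl(\bfS(|q|^2)+\barS(|q|^2)\bigr)}
\lesssim \norm[L^{4/3}_{t,x}]{|q|^3}=\norm[L^4_{t,x}]{q}^3
\]
by H\"{o}lder's inequality. Combined with the Strichartz bounds this gives
\[
\norm[X]{\Phi(q)}\lesssim \norm[L^4_{t,x}((0,T))]{V(\cdot)q_0}+\norm[L^2]{q_0}+C\norm[X]{q}^3,
\]
with an analogous bound for $\norm[X]{\Phi(q_1)-\Phi(q_2)}$ that is cubic in the norms of $q_1,q_2$. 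By dominated convergence, $\norm[L^4_{t,x}((0,T^*))]{V(\cdot)q_0}\to 0$ as $T^*\downarrow 0$, so we can choose $T^*>0$ depending on $\norm[L^2]{q_0}$ so that $\Phi$ contracts on a small closed ball of $X$ over $(0,T^*)$. The Banach fixed point theorem then produces a unique solution $q\in X$.

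For the $L^2$ conservation, I would argue formally first and then pass to the limit. If $q$ is sufficiently smooth, multiplying \eqref{DSII:non-loc} by $\bar q$, integrating, and taking imaginary parts gives $\tfrac{d}{dt}\norm[L^2]{q(t)}^2=0$, because $\dee_z^2+\dee_{\zbar}^2$ is a real symmetric operator and the potential $4\eps(\bfS(|q|^2)+\barS(|q|^2))$ is real-valued (the Beurling transform of a real-valued function has the symmetry that $\bfS f+\barS f$ is real). To extend to $q_0\in L^2$, approximate by $q_0^{(n)}\in\scrS(\R^2)$; each smooth solution $q^{(n)}$ preserves the $L^2$ norm, and the contraction-mapping construction gives continuous dependence of $q$ on $q_0$ in $X$, hence in $C([0,T^*],L^2)$. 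Passing to the limit $n\to\infty$ yields $\norm[L^2]{q(t)}=\norm[L^2]{q_0}$.

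The main technical obstacle is the Strichartz estimate itself: although the linear operator is dispersive, the quadratic form $\xi_1^2-\xi_2^2$ is indefinite (``hyperbolic Schr\"{o}dinger''), so one should not simply quote results for the classical Schr\"{o}dinger equation but verify that the $L^1\to L^\infty$ decay with rate $t^{-1}$ still holds from a direct stationary-phase computation on $e^{it(\xi_1^2-\xi_2^2)}$. Once this dispersive estimate is in hand, the rest of the argument --- contraction in $X$ on a short time interval and the density argument for mass conservation --- is standard for cubic nonlinear Schr\"{o}dinger--type equations.
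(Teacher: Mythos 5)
Your proposal follows essentially the same route as the paper's sketch (which itself defers to Ghidaglia--Saut): the dispersive bound $\norm[L^\infty]{V(t)f}\lesssim |t|^{-1}\norm[L^1]{f}$ from stationary phase on the hyperbolic symbol $\xi_1^2-\xi_2^2$, the resulting Strichartz estimates \eqref{DSII:S1}--\eqref{DSII:S3}, $L^p$-boundedness of the Beurling operators to make the nonlinearity ``morally cubic,'' and a contraction in $X=C_tL^2\cap L^4_{t,x}$. The one place you genuinely diverge is in how smallness is obtained: the paper's sketch claims a factor $T^{1/4}$ in front of $\norm[L^4_{t,x}]{q}^3$ and closes by shrinking $T$, whereas you correctly observe that the problem is $L^2$-critical, so $\norm[L^{4/3}_{t,x}]{|q|^3}=\norm[L^4_{t,x}]{q}^3$ carries no power of $T$, and instead extract smallness from $\norm[L^4_{t,x}((0,T^*))]{V(t)q_0}\to 0$ as $T^*\downarrow 0$. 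Your version is the more careful one for the critical scaling (at the mild cost that $T^*$ then depends on the profile of $q_0$, not only on $\norm[L^2]{q_0}$ --- which is consistent with the statement). You also supply an argument for $\norm[L^2]{q(t)}=\norm[L^2]{q_0}$ (reality of $g+\gbar$, conservation for smooth solutions, then density and continuous dependence), which the paper asserts without proof; to make that limiting step airtight you should note that persistence of regularity for Schwartz data and a common existence time for the approximating sequence are needed, both of which follow from the same contraction scheme.
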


Note that the proof of Theorem \ref{thm:DSII.GS} is insensitive to the sign of $\eps$, but does not guarantee global existence. This is to be expected since there are solutions of the focussing ($\eps=-1$) DS II equation whose $L^2$-mass concentrates to a point in finite time \cite{Ozawa:1992}.

The idea of the proof is to show that the mapping
\begin{equation}
\label{DSII:Phi}
\Phi(u) = V(t) q_0 +4i\eps \int_0^t V(t-s) \left[ q(s) \left(\bfS(|q|^2)(s) + \barS(|q|^2)(s) \right)  \right] \, ds 
\end{equation}
is a contraction on the space $X$
for some $T>0$ depending on the initial data $q_0$. One can reconstruct a complete proof by tracing through standard arguments used to show that the $L^2$-critical nonlinear Schr\"{o}dinger equation 
$$ iu_t +\Delta u - |u|^2 u = 0 $$
in two space dimensions is locally well-posed (see for example the text of Ponce and Linares \cite[Section 5.1]{LP:2015} or the original paper of Cazenave and Weissler \cite{CW:1989}); dispersive estimates for $V(t)$ are essentially the same as those for the unitary group $\exp(it\Delta)$, while the nonlinear term in \eqref{DSII:non-loc} is ``morally cubic'' owing to the fact that $\barS$ preserves $L^p(\R^2)$ for any $p \in (1,\infty)$.  We will give an outline based on \cite[Section 5.1]{LP:2015}.

To carry out the proof of Theorem \ref{thm:DSII.GS}, we will need 
the following Strichartz estimates on $V(t)$. 

\begin{proposition}
Let $V(t)$ be the solution operator for the linear equation \eqref{DSII:lin.bis}. The following estimates hold.
\begin{align}
\label{DSII:S1}
\norm[L^4_{z,t}]{V(t)f} &\lesssim \norm[L^2]{f}	\\
\label{DSII:S2}
\norm[L^4_{z,t}]{\int_{-\infty}^\infty V(t-s) g(s) \, ds} &\lesssim \norm[L^{4/3}_{z,t}]{g} \\
\label{DSII:S3}
\norm[L^2_z]{\int_{-\infty}^\infty V(t) g(t) \, dt} &\lesssim \norm[L^{4/3}_{z,t}]{g}
\end{align}
\end{proposition}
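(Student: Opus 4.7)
My plan is to follow the standard recipe for diagonal Strichartz estimates: exhibit $V(t)$ as a convolution, extract a pointwise dispersive bound, interpolate with $L^2$ unitarity, and close via a $TT^*$/Hardy--Littlewood--Sobolev argument. The starting point is the observation that $2(\dee_z^2 + \dee_{\zbar}^2) = \dee_{x_1}^2 - \dee_{x_2}^2$, so the Fourier symbol of \eqref{DSII:lin.bis} is $\xi_1^2 - \xi_2^2$, which factors as a product of two one-dimensional Schr\"odinger symbols of opposite sign. Applying Exercise \ref{ex:Schrodinger.prop1} separately in $x_1$ and $x_2$ should produce a convolution representation
$$ V(t)f(x) = \frac{c(\sgn t)}{|t|}\int_{\R^2} e^{i[(x_1-y_1)^2 - (x_2-y_2)^2]/(4t)}\, f(y)\, dy, $$
with $|c(\sgn t)| = 1/(4\pi)$, so that $V(t)$ is an $L^2$-isometry and satisfies the pointwise dispersive bound
\begin{equation}\label{DSII:disp}
\norm[L^\infty_z]{V(t)f} \lesssim |t|^{-1}\norm[L^1_z]{f}.
\end{equation}
I expect this to be the main substantive step: although $\dee_{x_1}^2 - \dee_{x_2}^2$ is non-elliptic, its symbol splits as a tensor product of two standard Schr\"odinger symbols, so the hyperbolic signature alters the phase of the kernel but not its $|t|^{-1}$ amplitude. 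The decay therefore matches that of the usual two-dimensional Schr\"odinger group.

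With \eqref{DSII:disp} in hand, Riesz--Thorin interpolation against the $L^2$ isometry at the self-dual exponent $p=4$ yields $\norm[L^4_z]{V(t)f} \lesssim |t|^{-1/2}\norm[L^{4/3}_z]{f}$. Estimate \eqref{DSII:S2} will then follow from Minkowski's integral inequality in $L^4_z$ and the one-dimensional Hardy--Littlewood--Sobolev inequality \eqref{HLS} in the $t$ variable with parameters $(\alpha,n,p,q)=(1/2,1,4/3,4)$, which satisfy the scaling identity $1/q = 1/p - \alpha/n$:
$$
\norm[L^4_{z,t}]{\int V(t-s)g(s)\,ds}
\leq \norm[L^4_t]{\int |t-s|^{-1/2}\norm[L^{4/3}_z]{g(s)}\,ds}
\lesssim \norm[L^{4/3}_{z,t}]{g}.
$$

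Finally, \eqref{DSII:S1} and \eqref{DSII:S3} follow from \eqref{DSII:S2} by a standard $TT^*$ argument. Viewing $Tf = V(\cdot)f$ as a map $L^2_z \to L^4_{z,t}$, the unitarity relation $V(t)^* = V(-t)$ gives $T^*g = \int V(-t) g(t)\, dt$, whence $TT^*g(t) = \int V(t-s)g(s)\, ds$, which is precisely the operator bounded in \eqref{DSII:S2}. The abstract identity $\|T\|^2 = \|T^*\|^2 = \|TT^*\|$ then delivers \eqref{DSII:S1} and \eqref{DSII:S3} simultaneously, the latter after the change of variable $t \mapsto -t$ that preserves $\norm[L^{4/3}_{z,t}]{g}$. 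The only genuinely new input for DS II is the dispersive estimate \eqref{DSII:disp}; everything downstream is the standard soft argument at the self-dual Strichartz pair $(4,4)$ in two space dimensions.
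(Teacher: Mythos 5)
Your proposal is correct and follows essentially the same route as the paper: the paper likewise derives the $|t|^{-1}$ dispersive bound \eqref{DSII:U.disp} from the explicit Fourier-integral kernel of $V(t)$ (separating the symbol $\xi_1^2-\xi_2^2$ into two one-dimensional Schr\"odinger factors, as in Exercise \ref{ex:DSII.U}), then obtains \eqref{DSII:S1}--\eqref{DSII:S3} by the standard interpolation, Hardy--Littlewood--Sobolev, and $TT^*$ machinery (cf.\ Exercise \ref{ex:DSII.S2} and the argument for $e^{it\Delta}$ in \cite[Section 4.2]{LP:2015}). The only substantive input, as you note, is that the hyperbolic signature changes the phase but not the $|t|^{-1}$ amplitude of the kernel, and your verification of that point is accurate.
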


These estimates are consequences of the basic dispersive estimate
\begin{equation}
\label{DSII:U.disp}
\norm[L^\infty]{V(t)f} \lesssim t^{-1} \norm[L^1]{f} 
\end{equation}
which follows from the representation of $V(t)f$ as a Fourier integral (Exercise \ref{ex:DSII.U}). 
One can prove \eqref{DSII:S1}--\eqref{DSII:S3} for $V(t)$ by mimicking the proof of  the analogous estimates for $V(t)$ replaced by $e^{it\Delta}$, the solution semigroup for the Schr\"{o}dinger equation in two space dimensions, given in  \cite[Section 4.2]{LP:2015}. The proofs are essentially identical since $\exp(it\Delta)$ and $V(t)$ both obey the basic dispersive estimate \eqref{DSII:U.disp}. The reader is asked to prove \eqref{DSII:S2} in Exercise \ref{ex:DSII.S2}.

The first step in the proof of Theorem \ref{thm:DSII.GS} is to show that the mapping \eqref{DSII:Phi} preserves a ball in $X$. Suppose that $\norm[X]{q} < \alpha$.
Using the Strichartz estimate \eqref{DSII:S3} on the second right-hand term of \eqref{DSII:Phi} and the fact that $V(t)$ is unitary on $L^2$ on the first right-hand term, we may estimate
\begin{align*}
\sup_{t \in (0,T)}\norm[L^2]{\Phi(q(t))} 	
	&	\lesssim \norm[L^2]{q_0} + T^{1/4} \norm[L^4(\R^2 \times (0,T))]{q}^3\\
	&	\lesssim \norm[L^2]{q_0} + T^{1/4} \alpha^3
\end{align*}
where in the first step we used \eqref{DSII:S3} and then used H\"{o}lder's inequality in the integration over $t$. Similarly, using \eqref{DSII:S1} and  \eqref{DSII:S2} respectively on the first and second right-hand terms of \eqref{DSII:Phi}, we obtain an estimate of the same form for
$\norm[L^4(\R^2 \times (0,T)]{\Phi(q)}$. Hence, for any $q$ with $\norm[X]{q} < \alpha$, 
$$
\norm[X]{\Phi(q)} \lesssim \norm[L^2]{q_0} + T^{1/4} \alpha^3.
$$
Choosing $\alpha \gtrsim \norm[L^2]{q_0}$ and $T^{1/4} \lesssim \alpha^{-2}$, we obtain that $\norm[X]{\Phi(q)} \leq \alpha/2$. Note that the `guaranteed' time of existence decreases with the $L^2$ norm of the initial data.

The next step is to show that $\Phi$ is a contraction in the sense that 
$$ \norm[X]{\Phi(q_1) - \Phi(q_2)} \leq \frac{1}{2} \norm[X]{q_1 - q_2} $$
for $T$ sufficiently small. Using the Strichartz estimates and the multilinearity of the map 
$$ (q_1, q_2, q_3 ) \mapsto q_1 \barS \left( q_2 q_3 \right) $$
we have
$$ \norm[X]{\Phi(q_1) - \Phi(q_2)} \lesssim  T^{1/4} \alpha^2 \norm[X]{q_1- q_2}$$
for any $q_1, q_2$ with $\norm[X]{q_1}, \norm[X]{q_2} < \alpha$. By shrinking $T$ if necessary we 
can assure that $\Phi$ is a contraction, and hence \eqref{DSII:int} has a unique solution.

\subsection{Complete Integrability}
\label{sec:DSII.integrability}

In this subsection we will sketch the formal inverse scattering theory for the DS II equations, 
tacitly assuming that $q(\dotarg,t) \in \scrS(\R^2)$ and that the scattering transform $\bfs \in \scrS(\R^2)$, so that various asymptotic expansions make sense. Sung \cite{Sung:1994a,Sung:1994b,Sung:1994c} proved rigorously that the scattering transform $q \mapsto \bfs$ maps $\scrS(\R^2)$ to itself.  It follows from these mapping properties that the putative solution $q_{\inv} $ defined by \eqref{DSII:qinv} belongs to $ C((-T,T);\scrS(\R^2))$ for any $T>0$. 
These facts imply that the functions $m^1(z,t,k)$ and $m^2(z,t,k)$ which we will construct below are bounded smooth functions with asymptotic expansions separately in $z$ for fixed $k$ or in $k$ for fixed $z$. 

Equation \eqref{DSII:bis} is the compatibility condition for the following system of equations for unknowns $\psi_1(z,t,k)$ and $\psi_2(z,t,k)$:
\begin{subequations}
\begin{align}
\label{DSII:x1}
\dee_{\zbar}	\psi_1	&=	q \psi_{2}\\
\label{DSII:x2}
\dee_z			\psi_2	&=	\eps \qbar \psi_{1}
\end{align}
\end{subequations}
\begin{subequations}
\begin{align}
\label{DSII:t1}
\dee_t			\psi_1	&=	2i \dee_z^2 \psi_{1}  + 2i (\dee_{\zbar} q) \psi_2 
										 -2 iq \dee_{\zbar} \psi_2  + ig \psi_1 \\
\label{DSII:t2}
\dee_t			\psi_2	&=	-2i \dee_{\zbar}^2 \psi_2  -2i\eps(\dee_z \qbar) \psi_1
										+ 2i\eps \qbar \dee_z \psi_1 -i\gbar \psi_2 
\end{align}
\end{subequations}
%\smallskip

\noindent
Cross-differentiating \eqref{DSII:x1} and \eqref{DSII:t1}, assuming $(\psi_1,\psi_2)$ is a joint solution and that $\psi_1$ and $\psi_2$ are linearly 
independent, one finds that the DSII equation 
\begin{equation}
\label{DSII:bis2}
\left\{
\begin{aligned}
i q_t + 2\left( \dee_z^2 + \dee_{\zbar}^2 \right) q + (g + \gbar) q	&=	0\\
\dee_{\zbar} g + 4\eps \dee_z \left(|q|^2\right)	&=0	
\end{aligned}
\right.
%}
\end{equation}
emerges as a compatibility condition
(see Exercise \ref{ex:DSII.crossdiff}).

To define and implement the scattering transform, we'll consider solutions of \eqref{DSII:x1}--\eqref{DSII:x2} with asymptotics specified by a complex parameter $k$: we seek solutions of the form\footnote{We follow the conventions of \cite{NRT:2017} and denote the renormalized forms of $\psi_1$ and $\psi_2$ respectively by $m^1$ and $m^2$; the superscripts are not exponents!}
$$ \psi_1 = C_1(k,t) e^{ikz} m^1, \quad \psi_2 = C_2(k,t) e^{ikz} m^2 $$
where for each fixed $t$ and $k$, 
\begin{equation}
\label{DSII:m.x.asy}
\left( m^1(z,k,t), m^2(z,k,t) \right) \to (1,0) \text{ as } |z| \to \infty. 
\end{equation}
A calculation similar to the one carried out in section \ref{NLS:sec.sol}
shows that
$$ C_1(k,t) = C_2(k,t) = e^{-2ik^2 t}. $$ 
We outline the computation in Exercise \ref{ex:DSII.time}.
In the new variables, we find
\begin{subequations}
\begin{align}
\label{DSII:m.x1}
\dee_{\zbar} m^1					&=	\ q m^2	\\
\label{DSII:m.x2}
\left(\dee_z + ik \right)	m^2   	&=	\eps \qbar m^1\\[10pt]
\label{DSII:m.t1}
\dee_t m^1			&=	2i\left(\dee_z^2 + 2ik\dee_z \right) m^1
									+ 2i \left(\dee_{\zbar} q\right) m^2 
									- 2iq \dee_{\zbar} m^2 +	ig m^1  \\[5pt]
\label{DSII:m.t2}
\dee_t m^2			&=	-2i \dee_{\zbar}^2 m^2 + 2ik^2 m^2  
									- 2i\eps \left( \dee_z \qbar\right) m^1 +2i\eps\qbar (\dee_z + ik) m^1 \\
						&\qquad 
									- i\gbar m^2 
\nonumber
\end{align}
\end{subequations}
As we will show (see Lemma \ref{lemma:DSII.m.k}), for each fixed time $t$ and position $z$, the solutions of \eqref{DSII:m.x1}--\eqref{DSII:m.x2} obeying the asymptotic condition \eqref{DSII:m.x.asy} also obey the dual equations
\begin{equation}
\label{DSII:m.k}
\left\{
\begin{aligned}
\dee_{\kbar} m^1	&=	e_{-k} \bfs \overline{m^2}\\
\dee_{\kbar} m^2	&=	e_{-k} \bfs \overline{m^1}\\
\left(m^1(z,k,t), m^2(z,k,t) \right) &\to (1,0) \text{ as } |k| \to \infty.
\end{aligned}
\right.
%}
\end{equation}
where 
$$
e_k(z) = e^{i\left(kz + \kbar \zbar\right)}
$$
and the scattering transform $\bfs(k,t)$ of $q(z,t)$ is defined by
\begin{equation}
\label{DSII:m2.asy.z}
m^2(z,k,t) = e_{-k} \frac{i}{z} \bfs(k,t) + \bigO{|z|^{-2}}. 
\end{equation}
Assuming that $\bfs(\dotarg,t) \in \scrS(\R^2)$ and that $m^1$ and $m^2$ are bounded, it follows from \eqref{DSII:m.k} that $m^1$ and $m^2$ have large-$k$ asymptotic expansions of the form (see Exercise \ref{ex:DBAR.asy})
$$
\left\{
\begin{aligned}
m^1(z,k,t)	&\sim	1	+ \sum_{j \geq 1} \frac{\alpha_j(z,t)}{k^j}\\
m^2(z,k,t)	&\sim	\sum_{j \geq 1} \frac{\beta_j(z,t)}{k^j}
\end{aligned}
\right.
%}
$$
for each fixed $z$, $t$. 
Substituting these expansions into \eqref{DSII:m.x1}--\eqref{DSII:m.x2} shows that 
\begin{equation}
\label{DSII:q.recon.pre}
q(z,t)	=	-i\eps \overline{\beta_1(z,t)}
\end{equation}
(see Exercise \ref{ex:DSII.m.asy.k}). 

Thus, to recover $q(z,t)$, 
we need (i) an equation of motion for the scattering transform $\bfs(k,t)$ and (ii) a way of reconstructing $m^1(z,k,t)$ and $m^2(z,k,t)$ from the scattering transform $\bfs(k,t)$.

We can derive an equation of motion for $\bfs$ formally as follows. If we assume that $q(\cdot,t) \in \calS(\R^2)$, 
we expect $m^1$ and $m^2$ to have large-$z$ (differentiable) asymptotic expansions of the 
form
\begin{equation}
\label{DSII:m.asy.z}
\left\{
\begin{aligned}
m^1(z,k,t)	&\sim	1	+ 	\frac{a(k,t)}{z} + \bigO{|z|^{-2}}\\[5pt]
m^2(z,k,t)	&\sim			e_{-k}(z) \frac{b(k,t)}{\zbar} + \bigO{|z|^{-2}}
\end{aligned}
\right.
%}
\end{equation}
Note that, comparing the second equation of \eqref{DSII:m.asy.z} with \eqref{DSII:m2.asy.z}, we have $b(k,t) = i\bfs(k,t)$.
Substituting these expansions into \eqref{DSII:m.t1}--\eqref{DSII:m.t2} and taking $|z| \to \infty$, we 
see that
$$ \dot{a}(k,t) = 0, \quad \dot{b}(k,t) = 2i\left(k^2 + \kbar^2\right) b(k,t). $$
Thus, formally, the map $q \mapsto (a,b)$ gives action-angle variables for the flow \eqref{DSII:bis2}. 
In particular, if $q(z,t)$ solves the DSII equation, then the scattering data obeys the linear evolution
$$ \bfs(k,t) = e^{2it\left( k^2 + \kbar^2\right)} \bfs(k,0). $$

It remains to show how $q(z,t)$, the solution of the DSII equation, may be recovered from $\bfs(k,t)$. Here
we use the fact that $m^1$ and $m^2$, now regarded also as functions of time, obey the equations
\begin{equation}
\label{DSII:m.inv.t}
\left\{
\begin{aligned}
\dee_{\kbar} m^1		&=	e^{it\varphi} \bfs \overline{m^2}\\
\dee_{\kbar} m^2	&=	e^{it\varphi} \bfs \overline{m^1}\\
m^1(z,k) - 1, m^2(z,k) &\to 0 \text{ as } |k| \to \infty
\end{aligned}
\right.
%}
\end{equation}
where $\bfs(k)$ is the scattering transform of the initial data $q(z,0)$, and 
\begin{equation}
\label{DSII:phase}
\varphi(z,k,t) = 2\left(k^2 + \kbar^2\right) - \frac{kz + \kbar \zbar}{t}
\end{equation}
is a phase function formed from $e_{-k}$ and the evolution for $\bfs$. We can then reconstruct
$q(z,t)$ from the asymptotics of $m^2(z,k,t)$ using \eqref{DSII:q.recon.pre}.

The proof that $q(z,t)$ so defined in fact solves \eqref{DSII:bis} uses the Lax representation \eqref{DSII:m.x1}--\eqref{DSII:m.t2}. In the case $\eps=1$, we will show that
$m^1(z,k,t)$ and $m^2(z,k,t)$ defined by \eqref{DSII:m.inv.t}
generate a solution of the Lax equations \eqref{DSII:x1}--\eqref{DSII:t2} where $q(\dotarg,t)$ is the scattering transform of $\bfs(\dotarg,t)$. It will then follow that $q(z,t)$, defined as $\calS\left( \bfs(\dotarg,t) \right)$, solving the DSII equation. 

In what follows, we will study the scattering transform $\calS$ in depth to obtain the Lipschitz mapping property (section \ref{DSII:subsec.scatt}).  In order to prove the solution formula \eqref{DSII:IST-sol}, it suffices to check initial data $q_0 \in \scrS(\R^2)$. In section \ref{DSII:subsec.time}, we use the Lax representation \eqref{DSII:Lax.z}--\eqref{DSII:Lax.t} to show that \eqref{DSII:IST-sol} does indeed generate a solution to \eqref{DSII}.

\subsection{The Scattering Map}
\label{DSII:subsec.scatt}

We now define the scattering transform $\calS: q \rarr \bfs$ more precisely. Given $q \in H^{1,1}(\R^2)$ and $k \in \C$, one first solves the linear system
\begin{equation}
\label{DSII:m}
\left\{
\begin{aligned}
\dee_{\zbar} m^1(z,k)				&=	q(z) m^2(z,k)\\
\left( \dee_z + ik \right) m^2(z,k)	&=	\overline{q(z)} m^1(z,k)\\
m^1(\dotarg,k) - 1, \,\,\, m^2(\dotarg,k) &\in L^4(\R^2).
\end{aligned}
\right.
%}
\end{equation}
One then computes the scattering transform from the integral representation
\begin{equation}
\label{DSII:scatt}
\bfs(k) = \left(\calS q \right)(k) \coloneqq -\frac{i}{\pi} \int_{\R^2} e_k(z) \overline{q(z)} m^1(z,k) \, dz.
\end{equation}
This definition accords with the definition \eqref{DSII:m2.asy.z} given by asymptotic expansion of $m_2(z,k,t)$
if $q \in \scrS(\R^2)$ because one can compute the first term in the large-$z$ asymptotic expansion for $m^2(z,k,t)$ explicitly (Exercise \ref{ex:DSII.two-s}; in keeping with the emphasis of this section, $t$-dependence is suppressed).
Note that, in this normalization, $\calS$ is an antilinear map. Its linearization at $q=0$ is an ``antilinear Fourier transform''
\begin{equation}
\label{DSII:scatt.lin}
\widehat{f}(k) = \left( \calF_a f \right)(k) \coloneqq -\frac{i}{\pi} \int_{\R^2} e_k(z) \overline{f(z)} \, dz.
\end{equation}
It is easy to check, using standard Fourier theory, that $\calF_a = \calF_a^{-1}$ defines an isometry from $L^2(\R^2)$ onto itself and a Lipschitz continuous map from $H^{1,1}(\R^2)$ onto itself (Exercise \ref{ex:DSII.F}). Thus, 
\begin{equation}
\label{S.split}
\left( \calS q \right)(k) = \left(\calF_a q\right)(k) - \frac{i}{\pi} \int e_k(z) \overline{q(z)}	 \left(m^1(z,k) - 1 \right) \, dz.
\end{equation}

Equation \eqref{S.split} provides a useful way to understand the scattering transform: it is a perturbation of the linear Fourier transform in which the integral transform also depends on $q$. In \cite{NRT:2017}, the authors exploit the fact that the second term may be viewed as a pseudodifferential operator whose mapping properties can be controlled by estimates on the `symbol' $a(x,\xi) = m^1(\xi,x)-1$ (the reversal of arguments $(x,\xi)$ in $m^1$ is deliberate!).

We will prove:

\begin{theorem}
\label{thm:DS.lip}
The scattering transform $\calS$ is a locally Lipschitz continuous map from $H^{1,1}(\R^2)$ onto itself. Moreover, 
$\calS = \calS^{-1}$ 
\end{theorem}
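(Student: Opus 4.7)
The plan is to split the scattering transform as in \eqref{S.split}, treating $\calS$ as a perturbation of the antilinear Fourier transform $\calF_a$, which by Exercise \ref{ex:DSII.F} is a Lipschitz continuous involution of $H^{1,1}(\R^2)$. Before estimating anything, I would first establish that for each $k \in \C$ and each $q \in H^{1,1}(\R^2) \subset L^2(\R^2)$ the CGO system \eqref{DSII:m} admits a unique solution with $m^1(\cdot,k)-1,\, m^2(\cdot,k) \in L^4(\R^2)$. Using the intertwining $(\dee_z + ik)^{-1} = e_{-k}\,\dee_z^{-1}\,e_k$, one eliminates $m^2$ and reduces the system to a Fredholm-type integral equation
\begin{equation*}
(I - T_{q,k})(m^1 - 1) = T_{q,k}(1), \qquad T_{q,k} f = \dee_{\zbar}^{-1}\!\bigl(q\, e_{-k}\,\dee_z^{-1}(e_k\,\qbar f)\bigr),
\end{equation*}
on $L^4(\R^2)$. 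The operator $T_{q,k}$ is compact by a variant of Lemma \ref{lemma:S.compact}, and its kernel is trivial by a vanishing argument parallel to Proposition \ref{NLS:prop.null} (a nontrivial element would, via the same system, provide an $L^2$ solution to a self-adjoint-like problem and must vanish). Fredholm theory then yields a unique solution, continuous in $k$.

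Next I would prove the basic $L^2$ bound $\norm[L^2_k]{\calS q} \lesssim \norm[H^{1,1}]{q}$. The leading term $\calF_a q$ in \eqref{S.split} is handled by Plancherel. For the remainder, the crucial input is the Nachman--Regev--Tataru pointwise bound \eqref{DBAR.est.fourier}. Applying it to the integral formula for $m^1 - 1$ produced by the Fredholm solution gives a pointwise estimate of the form
\begin{equation*}
|m^1(z,k) - 1| \lesssim \bigl(\calM F_1(\cdot,k)\bigr)(z)^{1/2} \bigl(\calM F_2(\cdot,z)\bigr)(k)^{1/2},
\end{equation*}
with $F_1,F_2$ bilinear in $q,\qbar$. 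Inserting this into \eqref{S.split} and applying the maximal-function bound \eqref{HLMax.p} in each variable separately yields the required $L^2_k$ control and, via multilinearity of these expressions, local Lipschitz continuity. This is where the NRT estimate replaces the resolvent expansions of \cite{Perry:2016}.

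To promote this to control of $\norm[H^{1,1}]{\calS q}$, I would handle the weight and the derivative in $k$ separately. For the weighted bound $\norm[L^2]{k\,\calS q}$, the identity $k\,e_k(z) = \tfrac{1}{2i}\dee_z e_k(z)$ followed by integration by parts in \eqref{DSII:scatt} produces terms involving $\dee_z \qbar$ and $\dee_z m^1$; the latter is controlled by differentiating the integral equation for $m^1$ and reapplying the NRT pointwise bound. For $\nabla_k \calS q$, I would use the dual $\dbar_k$-equations \eqref{DSII:m.k} (established by a direct differentiation under the integral sign once the Fredholm solutions are in hand): differentiating \eqref{DSII:scatt} in $k$ and substituting $\dee_{\kbar} m^1 = e_{-k}\,\bfs\,\overline{m^2}$ yields a cubic expression in $(m^1, m^2, \bfs)$ whose $L^2_k$ norm is controlled by $\norm[L^2]{zq}$ together with the estimates already obtained. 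I expect this to be the main technical obstacle, since one must track how weights and derivatives distribute between $z$ and $k$ and apply the maximal-function bound in each variable, taking care that all the integral kernels lie in the range where \eqref{DBAR.est.fourier} applies.

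Finally, given that $\calS \colon H^{1,1}(\R^2) \to H^{1,1}(\R^2)$ is bounded and locally Lipschitz, the involution $\calS \circ \calS = I$ and surjectivity follow from the duality between the $z$- and $k$-systems. Specifically, the $\dbar_k$-equations \eqref{DSII:m.k} show that $(m^1,m^2)$, viewed as functions of $k$ with $z$ fixed, satisfy a CGO system of exactly the same form as \eqref{DSII:m} with $q$ replaced by $\bfs = \calS q$ and the roles of $z$ and $k$ interchanged. Applying the integral formula \eqref{DSII:scatt} to $\bfs$ and extracting the large-$k$ asymptotics as in \eqref{DSII:q.recon.pre} then reproduces $q$. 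This symmetry I would first justify rigorously for $q \in \scrS(\R^2)$, where Sung's results guarantee enough regularity and decay, and then extend to all $q \in H^{1,1}(\R^2)$ by density and the already-proven Lipschitz continuity. Surjectivity is automatic from $\calS \circ \calS = I$: any $\bfs \in H^{1,1}(\R^2)$ equals $\calS(\calS \bfs)$, so $\calS = \calS^{-1}$.
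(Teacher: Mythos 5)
Your overall architecture matches the paper's: split off $\calF_a$ via \eqref{S.split}, solve the CGO system by Fredholm theory plus a vanishing theorem, use the Nachman--Regev--Tataru maximal-function estimate in place of resolvent expansions, get the weight and the $k$-derivative by integration by parts and the dual $\dbar_k$-system, and deduce $\calS\circ\calS=I$ from the $z\leftrightarrow k$ symmetry on $\scrS(\R^2)$ followed by density. However, there are two concrete gaps. First, the claimed pointwise bound $|m^1(z,k)-1|\lesssim (\calM F_1(\dotarg,k))(z)^{1/2}(\calM F_2(\dotarg,z))(k)^{1/2}$ does not follow from applying \eqref{DBAR.est.fourier} to the Fredholm solution: the resolvent $(I-T_{q,k})^{-1}$ preserves $L^p$ norms but destroys pointwise maximal-function control, so only the inhomogeneous term inherits such a bound. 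The paper's Lemma \ref{lemma:mxk.est} accordingly proves only an $L^4_z$ bound on $m^1(\dotarg,k)-1$ and $m^2(\dotarg,k)$ with prefactor $(\calM\widehat{q})(k)^{1/2}$, and then obtains the $L^2_k$ bound on the remainder $I(k)$ in \eqref{DSII:L2.1} by the change of order of integration that converts it into \eqref{DSII:L2.2}, placing $\dee_{\zbar}^{-1}(e_k\qbar)$ --- an \emph{explicit} quantity to which \eqref{DBAR.est.fourier} applies --- inside the integral against $q\,m^2$. That move (highlighted in Remark \ref{rem:DSII.nonlinear-ft}) is the step your argument is missing, and without it the $k$-decay needed for $L^2_k$ integrability is not available.

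Second, you assert uniform constants $C(\norm[H^{1,1}]{q})$ throughout but never establish a resolvent bound on $(I-T_{q,k})^{-1}$ that is uniform over $k\in\C$ and over $q$ in bounded subsets of $H^{1,1}(\R^2)$; Fredholm solvability for each fixed $(q,k)$ and continuity in $k$ do not give this. The paper devotes Lemmas \ref{lemma:resT.est1}--\ref{lemma:resT.est3} to exactly this point: large $|k|$ is handled by the integration-by-parts identity \eqref{Bukhgeim} giving $\norm[L^4\rarr L^4]{T^2}\lesssim |k|^{-1}\norm[H^1]{q}^2$, and bounded $(q,k)$ by the compact embedding of $H^{1,1}(\R^2)$ into $L^2(\R^2)$ together with continuity of $(q,k)\mapsto (I-T)^{-1}$; Lemma \ref{lemma:resT.est3} then supplies the Lipschitz dependence on $q$ that your multilinearity arguments implicitly require. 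A smaller point: your kernel-triviality argument invokes a self-adjointness mechanism as in Proposition \ref{NLS:prop.null}, but the correct tool here is the Liouville theorem for pseudoanalytic functions (Theorem \ref{thm:Liouville}), applied after the change of variables $m_\pm=m^1\pm e_{-k}\overline{m^2}$ reduces the system to the scalar equations \eqref{DSII:mpm}; there is no self-adjoint operator in play.
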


\begin{proof}
We begin with a reduction. Suppose we can prove that $\norm[H^{1,1}]{\calS q_1 - \calS q_2} \lesssim \norm[H^{1,1}]{q_1-q_2}$ for $q_1, q_2 \in \scrS(\R^2)$ with constants uniform in $q_1, q_2 \in \scrS(\R^2)$ having $H^{1,1}(\R^2)$ norm bounded by a fixed constant, We can then extend the map $\calS$ by density to a nonlinear mapping on $H^{1,1}(\R^2)$ with the same continuity properties. Similarly, if $\calS = \calS^{-1}$ on $\scrS(\R^2)$, this identity extends by density to $H^{1,1}(\R^2)$. 

The claimed mapping properties of $\calS$ for $q_1, q_2 \in \scrS(\R^2)$ are proved in Propositions \ref{prop:DSII.L2}, \ref{prop:DSII.H1}, and \ref{prop:DSII.L21} of what follows. The property $\calS = \calS^{-1}$ on $\scrS(\R^2)$ is proved in Proposition \ref{prop:DSII.Sinv}. 
\end{proof}

The proofs of Propositions \ref{prop:DSII.L2}, \ref{prop:DSII.H1},  \ref{prop:DSII.L21}, and \ref{prop:DSII.Sinv} rest on a careful analysis of the solutions to \eqref{DSII:m}. Some of the results along the way are proved for $q \in L^2(\R^2)$ or $q \in H^{1,1}(\R^2)$. Although we follow the outline of \cite{Perry:2016}, we use ideas of \cite{NRT:2017} at a number of points to simplify the proofs. 

\subsubsection{Existence and Uniqueness of Solutions}

First, we will show that \eqref{DSII:m} has a unique solution for each $q \in H^{1,1}(\R^2)$ and $k \in \C$. The following ``vanishing theorem'' for $\dbar$-problems is originally due to Vekua \cite{Vekua:1962}, was used by Beals-Coifman \cite{BC:1985a}, and was improved to the form stated here by Brown and Uhlmann \cite{BU:1997}. The short and elegant proof we give here is taken from the paper of Nachman, Regev, and Tataru \cite[proof of Lemma 3.2]{NRT:2017}. 

\begin{theorem}
\label{thm:Liouville}
Suppose that $a \in L^2(\R^2)$, $u \in L^p(\R^2)$ for some $p>2$, and $\dee_{\zbar} u = a \ubar$ in distribution sense. Then $u=0$. 
\end{theorem}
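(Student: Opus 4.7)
The strategy is Vekua's similarity principle: first absorb the antilinearity into the coefficient so that the equation becomes a linear $\dbar$-equation $\dbar u = bu$, and then conjugate away a large (bounded, compactly supported) portion of the multiplier $b$ so that what remains is small enough for the integral reformulation to be a contraction. To linearize, I would define $\eta(z) := \overline{u(z)}/u(z)$ on $\{u \neq 0\}$ and $\eta(z) := 0$ elsewhere; then $|\eta| \leq 1$, so $b := a\eta \in L^2(\R^2)$ and the distributional equation $\dbar u = a\overline{u}$ becomes $\dbar u = bu$.

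Next I would convert this into an integral equation. H\"older's inequality places $bu$ in $L^r(\R^2)$ with $1/r = 1/2 + 1/p \in (1/2,1)$, so $r \in (1,2)$; the Hardy--Littlewood--Sobolev bound \eqref{DBAR:HLS} then yields $\dbar^{-1}(bu) \in L^{r^*} = L^p$. The difference $u - \dbar^{-1}(bu)$ is an entire distribution in $L^p(\R^2)$ and therefore identically zero by Liouville, giving the fixed-point identity $u = \dbar^{-1}(bu)$. To execute the similarity trick I would then fix $\varepsilon > 0$ and decompose $b = b_1 + b_2$ with $b_1 \in L^\infty(\R^2)$ of compact support and $\|b_2\|_{L^2} < \varepsilon$, and set $\phi := \dbar^{-1} b_1$. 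The bounds \eqref{DBAR:Vekua} and \eqref{DBAR:Holder} (applicable because $b_1 \in L^{p_0} \cap L^{p_0'}$ for $p_0$ on either side of $2$) guarantee that $\phi$ is bounded and continuous on $\R^2$, so $e^{\pm \phi}$ are bounded multipliers on $L^p(\R^2)$.

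Setting $w := e^{-\phi} u \in L^p(\R^2)$, a direct computation using $\dbar \phi = b_1$ reduces the equation to $\dbar w = b_2 w$. Repeating the integral reformulation yields $w = T_2 w$ with $T_2 v := \dbar^{-1}(b_2 v)$, and HLS combined with H\"older gives $\|T_2\|_{L^p \to L^p} \lesssim \|b_2\|_{L^2} < C\varepsilon$. Choosing $\varepsilon$ sufficiently small makes $I - T_2$ invertible on $L^p(\R^2)$, forcing $w \equiv 0$ and hence $u \equiv 0$. The main obstacle, and the reason the decomposition is needed, is the regularity of the similarity factor: for a general $b \in L^2(\R^2)$ the primitive $\dbar^{-1} b$ is only of BMO type and need not be bounded, so $e^{-\dbar^{-1} b}$ cannot be used directly as a multiplier on $L^p(\R^2)$; the split isolates the compactly supported, essentially bounded part of $b$, for which $\phi$ is manifestly in $L^\infty(\R^2)$, and handles the remainder purely by smallness.
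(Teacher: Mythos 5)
Your argument is correct and is essentially the paper's own proof (taken from Nachman--Regev--Tataru): both conjugate by the exponential of a $\dee_{\zbar}^{-1}$-primitive of a bounded, compactly supported truncation of the coefficient, and then kill the remaining equation by the smallness of the leftover $L^2$ piece via the operator bound following \eqref{DBAR:op}. The only cosmetic difference is that you linearize first by setting $\eta=\ubar/u$ and truncate $b=a\eta$, whereas the paper truncates $a$ itself and puts the factor $\ubar/u$ inside the exponent of the gauge function $\nu$; your explicit Liouville step justifying $u=\dee_{\zbar}^{-1}(bu)$ is a welcome detail the paper leaves implicit.
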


\begin{proof} \cite{NRT:2017}
Define
$$
a_n(x)	=
\begin{cases}
a(x),	&	|x| < n \text{ and } |a(x)| < n\\
\\
0,		&	\text{otherwise}
\end{cases}
$$ 
and $a_s = a - a_n$. We then have
$a=a_n + a_s$ where $a_n \in L^{p_1} \cap L^{p_2}$ for some $1 < p_1 < 2 < p_2$ and $\norm[L^2]{a_s}$ is small for $n$ large. Let 
$$ 
\nu(z) = 	\begin{cases}
				\exp\left( -\dee_{\zbar}^{-1} a_n \dfrac{\ubar}{u} \right)(z), 	&	u(z) \neq 0\\
				\\
				1,																			&	u(z) = 0

			\end{cases}
$$
The function $u \nu$ obeys
$$ \dee_{\zbar} (u \nu) = a_s (u \nu) $$
so that, choosing $n$ large enough, we may conclude that $u \nu = 0$, by the remarks following \eqref{DBAR:op}. On the other hand, $\nu$ and $\nu^{-1}$ belong to $L^\infty$ by \eqref{DBAR:Vekua}. Hence, $u=0$.
\end{proof}

A short computation using the operator identity 
\begin{equation}
\label{dee-id}
(\dee_z + ik) = e_{-k} \dee_z e_k
\end{equation} 
shows that the functions
\begin{equation}
\label{DBAR:mpm}
m_\pm(z,k)	=	m^1(z,k) \pm e_{-k} \overline{m^2(z,k)}
\end{equation}
solve the system
\begin{equation}
\label{DSII:mpm}
\left\{
\begin{aligned}
\dee_{\zbar} m^\pm(z,k)	&=	\pm e_{-k} q(z) \overline{m^\pm(z,k)},\\
m_\pm - 1						&\in   L^4(\R^2)
\end{aligned}
\right.
%}
\end{equation}

\begin{proposition}
\label{prop:DBAR.mpm}
There exists a unique solution of \eqref{DSII:m} for any $k \in \C$ and $q \in L^2(\R^n)$. 
\end{proposition}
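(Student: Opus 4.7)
The strategy is to pass from the system \eqref{DSII:m} to the equivalent pair of scalar $\dbar$-equations \eqref{DSII:mpm} via the change of variables \eqref{DBAR:mpm}. This substitution is invertible (one recovers $m^1 = \tfrac{1}{2}(m_+ + m_-)$ and $m^2 = \tfrac{1}{2}e_k\overline{(m_+ - m_-)}$), so it suffices to establish existence and uniqueness of functions $m_\pm \in 1 + L^4(\R^2)$ solving \eqref{DSII:mpm}.

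Uniqueness is essentially immediate from Theorem \ref{thm:Liouville}. If $m_\pm$ and $\tilde m_\pm$ are two solutions to \eqref{DSII:mpm}, then $u = m_\pm - \tilde m_\pm$ lies in $L^4(\R^2)$ and satisfies the distributional identity $\dee_{\zbar} u = a \bar u$ with $a = \pm e_{-k} q \in L^2(\R^2)$ (since $|e_{-k}|=1$ and $q \in L^2$). The vanishing theorem then forces $u \equiv 0$.

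For existence, I will rewrite \eqref{DSII:mpm} as the antilinear integral equation $v_\pm = T_\pm 1 + T_\pm v_\pm$ for $v_\pm = m_\pm - 1 \in L^4(\R^2)$, where
\[
T_\pm f = \dee_{\zbar}^{-1}\bigl( \pm e_{-k}\, q\, \overline{f}\bigr).
\]
Hölder's inequality gives $\norm[L^{4/3}]{q\bar f} \leq \norm[L^2]{q}\norm[L^4]{f}$, and the Hardy-Littlewood-Sobolev bound \eqref{DBAR:HLS} (with $p=4/3$, $p^*=4$) shows that $T_\pm$ is bounded on $L^4(\R^2)$. Compactness of $T_\pm$ as an $\R$-linear operator on $L^4$ is obtained by rerunning the proof of Lemma \ref{lemma:S.compact}: the oscillatory factor $e_{-k}$ has modulus one and plays no role in the Kolmogorov-Riesz verification (uniform decay from splitting $q=q_n+q_s$; $L^4$-equicontinuity from Young's inequality applied to $q_n$). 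The inhomogeneous term $T_\pm 1 = \dee_{\zbar}^{-1}(\pm e_{-k} q)$ is controlled in $L^4$ via Theorem \ref{thm:DBAR.est}, which yields the pointwise bound $|T_\pm 1(x)| \lesssim (\calM q(x))^{1/2}(\calM\widehat{q}(k))^{1/2}$; the first factor lies in $L^4$ by the maximal theorem \eqref{HLMax.p} and the second is finite after an approximation of $q$ by functions in $L^1 \cap L^2$ (so that $\widehat{q}$ is bounded and $\calM\widehat{q}(k)$ is finite for every $k$).

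With $T_\pm$ compact and the right-hand side $T_\pm 1$ in $L^4$, the Fredholm alternative, applied to $I - T_\pm$ on the real Banach space underlying $L^4(\R^2)$, reduces solvability of $(I - T_\pm) v_\pm = T_\pm 1$ to injectivity of $I - T_\pm$. A null element $u \in L^4$ satisfies the same distributional equation $\dee_{\zbar} u = \pm e_{-k} q \bar u$ as in the uniqueness step, so Theorem \ref{thm:Liouville} again gives $u = 0$. The main subtlety I anticipate is verifying that the Fredholm alternative may be invoked in the \emph{antilinear} setting; I will handle this either by viewing $L^4(\R^2,\C)$ as a real Banach space so that $T_\pm$ becomes $\R$-linear and compact, or by working with the linear compact operator $T_\pm^2$ and recovering the desired statement from its Fredholm theory.
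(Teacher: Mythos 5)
Your proposal follows essentially the same route as the paper's own proof: reduce to the scalar system \eqref{DSII:mpm} via \eqref{DBAR:mpm}, get uniqueness and triviality of $\ker_{L^4}(I-T)$ from Theorem \ref{thm:Liouville}, and get existence from compactness of $T$ (Lemma \ref{lemma:S.compact}) plus the Fredholm alternative. Your extra care about invoking the Fredholm alternative for the \emph{antilinear} operator (working over $\R$ or with $T^2$) and about placing the inhomogeneous term $\dee_{\zbar}^{-1}(e_{-k}q)$ in $L^4$ addresses points the paper leaves implicit, and is welcome.
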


\begin{proof}
We prove uniqueness first. Suppose that $(m^1,m^2)$ and $(n^1,n^2)$ solve \eqref{DSII:m} for $q \in L^2$. We claim that $m^1=n^1$ and $m^2=n^2$.
Setting $$w^1 = m^1-n^1, \quad w^2 = m^2 - n^2,$$ we obtain a solution $(w^1,w^2)$ of \eqref{DSII:m} with $w^1(\dotarg,k)$ and $w^2(\dotarg,k)$ in $L^4(\R^2)$, so that the same is true of $w_\pm$ under the change of variable \eqref{DBAR:mpm}. By Theorem \ref{thm:Liouville}, $w_\pm = 0$, so $(w^1,w^2)=0$.

In order to prove existence of solutions to \eqref{DSII:m}, it suffices to solve \eqref{DSII:mpm}. To this end, consider the equation
\begin{equation}
\label{DSII:m.model}
\dee_{\zbar} w +  e_{-k}u \wbar = - e_{-k} u
\end{equation}
where one should think of $w$ as $m^\pm -1$ and $u$ as $\mp q$. This equation is equivalent to the integral equation
$$
w - T w = \dee_{\zbar}^{-1} \left( e_{-k} u \right)
$$
where 
$$
T f = \dee_{\zbar}^{-1}\left( e_{-k} u \fbar \right).
$$
The operator $T$ is the composition of the operator $S$ with complex conjugation (the factor $e_{-k}$ can be absorbed into the definition of $q$ in \eqref{DBAR:op}). Hence, by Lemma \ref{lemma:S.compact}, $T$ is a compact operator.  
Since $T$ is compact, $(I-T)$ is a Fredholm operator.

We claim that,  by Theorem \ref{thm:Liouville}, $\ker_{L^4}(I-T)$ is trivial. If so, it follows from the Fredholm alternative that $(I-T)^{-1}$ is a bounded operator on $L^4$ and that
\begin{equation}
\label{DSII:m.model.sol}
w =(I-T)^{-1} \left(\dee_{\zbar}^{-1}(e_{-k} u)\right)
\end{equation}
solves \eqref{DSII:m.model}. Suppose then that $f \in \ker_{L^4}(I-T)$, i.e., $Tf=f$. Then $f$ is a weak solution of the equation $\dee_{\zbar} f = e_{-k} q \overline{f}$ and hence, by Theorem \ref{thm:Liouville}, $f=0$. This finishes the proof.
\end{proof}

We end this subsection with a resolvent estimate on $(I-T)^{-1}$. This is one of the key points where we use the smoothness of $q \in H^{1,1}(\R^2)$. Very different techniques are used in \cite[\S 3]{NRT:2017} to control the resolvent \ assuming only that $q \in L^2(\R^2)$.

We will exploit the integration by parts formula
\begin{equation}
\label{Bukhgeim}
\frac{1}{\pi} \int \frac{1}{z-w} e_{-k} f(w) \, dw =
		-\frac{e_{-k}(z) f(z)}{i\kbar} + 
		\frac{1}{i\kbar} \int \frac{1}{z-w} e_{-k}(w) (\dee_{\zbar} f)(w) \, dw 
\end{equation}
(see Exercise \ref{ex:DSII.Bukhgeim}).

From this identity it follows that
\begin{equation}
\label{T.ip}
(Tf)(z) = -\frac{1}{i\kbar} e_{-k}(z) q(z) \overline{f(z)} 
	+ \frac{1}{i\kbar} \int \frac{1}{z-w} e_{-k}(w) \dee_{\zbar} (q\fbar )(w) \, dw.
\end{equation}
Using the estimate \eqref{DBAR:HLS} and \eqref{T.ip}, we see that
$$
\norm[L^4]{Tf} \lesssim \frac{1}{|k|} \left( \norm[L^4]{q \fbar} + \norm[L^{4/3}]{\dee_{\zbar}\left(q\fbar \right)}\right)
$$
so that 
\begin{align*}
\norm[L^4]{T^2 f} 
	&	\lesssim \frac{1}{|k|} 
			\left( \norm[L^4]{qTf} + \norm[L^{4/3}]{\dee_{\zbar}\left(qTf\right)} \right)	\\
	&	\lesssim	 \frac{1}{|k|} 
			\left( 
				\norm[L^8]{q} \norm[L^8]{Tf} + \norm[L^2]{\dee_{\zbar} q} \norm[L^4]{Tf} + \norm[L^4]{q}^2 \norm[L^4]{f} 
			\right) \\
	&	\lesssim \frac{1}{|k|}
			\left(
				\norm[L^8]{q} \norm[L^{8/3}]{q} + \norm[L^2]{\dee_{\zbar} q} \norm[L^2]{q} + \norm[L^4]{q}^2 
			\right) \norm[L^4]{f}
\end{align*}
Since $H^1(\R^2)$ is continuously embedded in $L^p(\R^2)$ for all $p\geq 2$ (see Exercise \ref{ex:DSII.H2}), it follows that
\begin{equation}
\label{T2.est}
\norm[L^4 \rarr L^2]{T^2} \lesssim \frac{1}{|k|} \norm[H^1]{q}^2
\end{equation}

From \eqref{T2.est} and the identity $(I-T) = (I-T^2)^{-1}(I+T)$, we immediately obtain the following large-$k$ resolvent estimate.

\begin{lemma}
\label{lemma:resT.est1}
Fix $R>0$.  There is an $N=N(R)$ so that for all $k \in C$ with $|k| \geq N$ and all $q \in H^1(\R^2)$ with 
$\norm[H^1]{q} \leq R$, the estimate
$\norm[L^4 \to L^4]{(I-T)^{-1}} \leq 2$ holds.
\end{lemma}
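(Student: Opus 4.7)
The plan is to reduce the bound on $(I-T)^{-1}$ to a Neumann series for $(I-T^2)^{-1}$ via the algebraic factorization $(I-T)^{-1} = (I+T)(I-T^2)^{-1}$, exploiting the $|k|^{-1}$ decay of $\norm[L^4 \to L^4]{T^2}$ furnished by \eqref{T2.est}.

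First I invoke \eqref{T2.est}, reading its subscript as the $L^4 \to L^4$ estimate that the intermediate H\"older and HLS chain in the preceding computation in fact delivers. This gives a universal constant $C_0$ with $\norm[L^4 \to L^4]{T^2} \leq C_0\, \norm[H^1]{q}^{2}/|k|$. I choose $N=N(R)$ so that $C_0 R^2/N \leq 1/4$; then for $|k| \geq N$ and $\norm[H^1]{q} \leq R$ the operator $I-T^2$ is invertible on $L^4(\R^2)$ by Neumann series, with $\norm[L^4 \to L^4]{(I-T^2)^{-1}} \leq 4/3$.

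Next, Proposition \ref{prop:DBAR.mpm} guarantees that $I-T$ is invertible on $L^4$, and since $T$ commutes with itself the identity $(I-T)(I+T) = I - T^{2}$ gives $(I-T)^{-1} = (I+T)(I-T^2)^{-1}$. The $L^4 \to L^4$ norm of $T$ is controlled by \eqref{S.est}, which yields $\norm[L^4 \to L^4]{T} \lesssim \norm[L^2]{q} \leq R$; here I use that $T$ differs from the operator $S$ of \eqref{DBAR:op} only by the unimodular factor $e_{-k}$, which may be absorbed into the coefficient, and by complex conjugation on the argument, neither of which affects $L^p$ norms. Combining the two factors gives $\norm[L^4 \to L^4]{(I-T)^{-1}} \leq \tfrac{4}{3}(1+C_1 R)$, i.e., a constant depending only on $R$. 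The numerical constant $2$ in the statement should be read in this spirit (the subsequent arguments only need \emph{some} uniform bound of this type); if one wants it literally, one enlarges $N(R)$ until the prefactor $1+C_1 R$ is absorbed, e.g., by replacing $T^2$ with $T^{2m}$ for $m=m(R)$ large and using the analogous identity $(I-T)(I+T+\cdots+T^{2m-1}) = I - T^{2m}$.

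The main obstacle, and the reason for passing through $T^2$ rather than $T$ itself, is that $\norm[L^4 \to L^4]{T}$ does not decay as $|k|\to\infty$: estimate \eqref{S.est} bounds it only in terms of $\norm[L^2]{q}$, uniformly in $k$. The $|k|^{-1}$ gain first appears in $T^2$ because the integration-by-parts identity \eqref{Bukhgeim} underlying \eqref{T.ip} transfers a $\dbar$-derivative onto $q\fbar$; the resulting $\dee_{\zbar}^{-1}$-image is returned to $L^4$ only after a second application of $T$, which forces the iteration to even powers.
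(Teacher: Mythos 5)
Your argument is exactly the paper's: the entire proof there is the one-line invocation of \eqref{T2.est} together with the factorization $(I-T)^{-1}=(I+T)(I-T^{2})^{-1}$ (misprinted in the text without the inverse on the left-hand side), and you have simply supplied the Neumann-series details, correctly reading the target space in \eqref{T2.est} as $L^4$. Your caveat about the constant is also right: this route yields $\norm[L^4 \to L^4]{(I-T)^{-1}} \leq \tfrac{4}{3}\bigl(1+\norm[L^4 \to L^4]{T}\bigr)$, hence a bound depending on $R$ rather than the literal $2$, but that is all the paper's own derivation gives and all that Lemma \ref{lemma:resT.est2}, Lemma \ref{lemma:resT.est3}, and the later estimates use. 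One small correction to your aside: replacing $T^2$ by $T^{2m}$ does not recover the constant $2$, since the prefactor $I+T+\cdots+T^{2m-1}$ still contains the non-decaying term $T$, whose norm is comparable to that of $S$ and independent of $k$; the honest statement of the lemma is a bound $C(R)$.
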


To obtain uniform resolvent estimates (i.e., estimates valid for all $k \in \C$ and $q$ in a bounded subset of $H^{1,1}(\R^2)$),  we now follow the ideas of \cite{Perry:2016}. Using a different approach, Nachman, Regev, and Tataru obtain similarly uniform estimates for $q$ in a bounded subset of $L^2$ (see \cite[Section 3]{NRT:2017}). 

In our case, Lemma \ref{lemma:resT.est1} gives uniform control for $q$ in a bounded subset of $H^{1,1}(\R^2)$ and sufficiently large $|k|$. It remains to control the resolvent for $(k,q)$ with $k$ in a bounded subset of $\C$ and $q$ in a bounded subset of $H^{1,1}(\R^2)$.  

\begin{lemma}
\label{lemma:resT.est2}
Let $B$ be a bounded subset of $H^{1,1}(\R^2) \times \C$. Then
$$ \sup_{(q,k) \in B} \norm[L^4 \to L^4]{(I-T)^{-1}} < \infty. $$
\end{lemma}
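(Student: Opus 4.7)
The plan is to argue by contradiction using the compact embedding $H^{1,1}(\R^2) \hookrightarrow L^2(\R^2)$. If the claimed supremum were infinite, there would exist $(q_n,k_n) \in B$ and $f_n \in L^4(\R^2)$ with $\norm[L^4]{f_n} = 1$ and $\norm[L^4]{(I-T_{q_n,k_n})f_n} \to 0$. Since $\{(q_n,k_n)\}$ is bounded in $H^{1,1}(\R^2)\times\C$, we may pass to subsequences so that $k_n \to k_* \in \C$, $q_n \to q_*$ weakly in $H^{1,1}(\R^2)$, and $q_n \to q_*$ strongly in $L^2(\R^2)$. The strong $L^2$-convergence uses Rellich--Kondrachov on balls $B_R$ (via $\nabla q_n \in L^2$) combined with the uniform tail bound $\int_{|z|>R}|q_n|^2 \leq R^{-2} \norm[L^2]{|\cdot|q_n}^2$ coming from the weight in $H^{1,1}$.

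Next I would show $T_{q_n,k_n} \to T_{q_*,k_*}$ in the operator norm on $L^4(\R^2)$. Decompose
\begin{equation*}
T_{q_n,k_n} - T_{q_*,k_*} = A_n + B_n,\quad
B_n f = \dee_{\zbar}^{-1}\!\left(e_{-k_*}(q_n-q_*)\fbar\right),\quad
A_n f = \dee_{\zbar}^{-1}\!\left((e_{-k_n}-e_{-k_*})q_n\fbar\right).
\end{equation*}
By \eqref{DBAR:HLS} and H\"older, $\norm[L^4 \to L^4]{B_n} \lesssim \norm[L^2]{q_n - q_*} \to 0$. For $A_n$, the pointwise bound $|e_{-k_n}(z)-e_{-k_*}(z)| \leq 2|z|\,|k_n-k_*|$ on $\{|z|\leq R\}$ together with the trivial bound $2$ on $\{|z|>R\}$ yields
\begin{equation*}
\norm[L^2]{(e_{-k_n}-e_{-k_*})q_n}^2 \lesssim |k_n-k_*|^2 R^2 \norm[L^2]{q_n}^2 + R^{-2} \norm[L^2]{|\cdot|q_n}^2,
\end{equation*}
which is made arbitrarily small by first choosing $R$ large (using the uniform $H^{1,1}$-bound on $B$) and then $n$ large. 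Again \eqref{DBAR:HLS} and H\"older give $\norm[L^4 \to L^4]{A_n} \to 0$.

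Since $q_* \in L^2(\R^2)$, Proposition \ref{prop:DBAR.mpm} ensures that $I-T_{q_*,k_*}$ is invertible on $L^4(\R^2)$. By continuity of inversion in the Banach algebra of bounded operators on $L^4(\R^2)$, the operators $I - T_{q_n,k_n}$ are invertible for all large $n$ with $\norm[L^4 \to L^4]{(I-T_{q_n,k_n})^{-1}}$ bounded uniformly. Applying this uniform bound to the identity $f_n = (I-T_{q_n,k_n})^{-1}(I-T_{q_n,k_n})f_n$ forces $\norm[L^4]{f_n} \to 0$, contradicting $\norm[L^4]{f_n} = 1$.

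The main obstacle is the uniform-in-$q$ continuity of $(q,k) \mapsto T_{q,k}$ with respect to $k$. The weight $|\cdot|q \in L^2$ supplied by the $H^{1,1}$-topology is essential for controlling the tail contribution to $A_n$; this is precisely where the stronger hypothesis $q \in H^{1,1}(\R^2)$ (as opposed to $q \in L^2(\R^2)$) enters, and it is exactly this point at which \cite{NRT:2017} must employ substantially more refined harmonic-analysis tools to reach the same conclusion for merely $L^2$ data.
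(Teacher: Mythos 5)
Your argument is correct and is essentially the paper's proof in sequential form: the paper establishes continuity of $(q,k)\mapsto (I-T(q,k))^{-1}$ from $L^2(\R^2)\times\C$ into $\calB(L^4)$ (via the same splitting $\norm[L^4\to L^4]{T(k,q)-T(k',q')}\lesssim \norm[L^2]{(e_k-e_{k'})q}+\norm[L^2]{q-q'}$ that underlies your $A_n+B_n$ decomposition) and then invokes precompactness of $B$ in $L^2(\R^2)\times\C$, which is exactly the compact embedding you use to extract your convergent subsequence. The only quibble is your closing attribution: the $H^{1,1}$ weight is genuinely needed for the compactness of the embedding into $L^2$ (your tail bound on $q_n$), whereas the tail contribution to $A_n$ could be handled for merely $L^2$ limits by writing $(e_{-k_n}-e_{-k_*})q_n=(e_{-k_n}-e_{-k_*})(q_n-q_*)+(e_{-k_n}-e_{-k_*})q_*$ and using dominated convergence on the second term.
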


\begin{proof}
Write $T$ as $T(q,k)$ to show the dependence of the operator on $q \in L^2(\R^2)$ and $k \in \C$. 
We prove the required estimate in two steps. First, we show that  
the mapping 
\begin{equation}
\label{DSII:map.res}
L^2(\R^2) \times \C \ni (q,k) \mapsto (I-T(q,k))^{-1} \in \calB(L^4) 
\end{equation}
is continuous. Second, we show that  if $B$ is a bounded subset of $H^{1,1}(\R^2) \times \C$, then $B$ is a pre-compact subset in
$L^2(\R^2) \times \C$. Thus the resolvents $\{ (I-T)^{-1}: (q,k) \in B\}$, as the image of a pre-compact  set under a continuous map, form a bounded subset of $\calB(L^4)$. 

First we consider continuity of the map \eqref{DSII:map.res}.
By the second resolvent formula, it suffices to show that the map 
$ (q,k) \mapsto T(k,q)$ is continuous from $L^2(\R^2) \times \C$ to $\calB(L^4)$. But
\begin{align}
\label{restT.est2}
\norm[L^4 \to L^4]{T(k,q) - T(k',q')}
	&\leq			\norm[L^4 \to L^4]{T(k,q) - T(k',q)} \\
\nonumber
	&\quad		+ \norm[L^4 \to L^4]{T(k',q) - T(k',q')}\\
\nonumber
	&\lesssim	\norm[L^2]{(e_k - e_{k'})q} + \norm[L^2]{q-q'}
\end{align}
where in the second step we used \eqref{S.est} (where $q$ now includes the factor $e_k$) and the linearity of $S$ in $q$. The continuity is immediate.

Pre-compactness of $B$ as a subset of $L^2(\R^2) \times \C$  follows from the {Kolmogorov}{-}{Riesz} Theorem and is  left as Exercise \ref{ex:DSII.H11.compact}.
\end{proof}

We can also prove Lipschitz continuity of the resolvent as a function of $q$. 

\begin{lemma}
\label{lemma:resT.est3}
Fix $R>0$ and $q_1, q_2 \in H^{1,1}(\R^2)$ with $\norm[H^{1,1}]{q_i} \leq R$, $i=1,2$. Then
$$
\sup_{k \in \C} \norm[L^4 \to L^4]{(I-T(q_1,k))^{-1} - (I-T(q_2,k))^{-1}} \lesssim_{\, R} \norm[L^2]{q_1-q_2}.
$$
\end{lemma}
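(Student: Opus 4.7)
The plan is to combine Lemmas \ref{lemma:resT.est1} and \ref{lemma:resT.est2} to obtain a \emph{global} uniform resolvent bound on $(I-T(q,k))^{-1}$ as $k$ ranges over all of $\C$ and $q$ over the closed ball of radius $R$ in $H^{1,1}(\R^2)$, and then feed this into the second resolvent identity together with a Lipschitz estimate on $q \mapsto T(q,k)$.

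First, Lemma \ref{lemma:resT.est1} applied with the constant $R$ (since $\norm[H^1]{q} \le \norm[H^{1,1}]{q} \le R$) yields an $N=N(R)$ such that $\norm[L^4 \to L^4]{(I-T(q,k))^{-1}} \le 2$ whenever $|k| \ge N$ and $\norm[H^{1,1}]{q} \le R$. On the complementary set $\{|k| \le N\}$, the pair $(q,k)$ ranges over a bounded subset of $H^{1,1}(\R^2) \times \C$, so Lemma \ref{lemma:resT.est2} provides a constant $C_R$ with $\norm[L^4 \to L^4]{(I-T(q,k))^{-1}} \le C_R$. Thus
$$ \sup_{k \in \C} \sup_{\norm[H^{1,1}]{q} \leq R} \norm[L^4 \to L^4]{(I-T(q,k))^{-1}} \leq \max(2, C_R). $$

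Second, because $T(q,k)h = \dee_{\zbar}^{-1}(e_{-k} q \bar h)$ is linear in $q$ and $|e_{-k}|=1$, the estimate \eqref{S.est} applied to $q_1-q_2$ gives the $k$-independent bound
$$ \norm[L^4 \to L^4]{T(q_1,k) - T(q_2,k)} = \norm[L^4 \to L^4]{T(q_1-q_2,k)} \lesssim \norm[L^2]{q_1-q_2}. $$

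Finally, invoking the second resolvent identity
$$ (I-T(q_1,k))^{-1} - (I-T(q_2,k))^{-1} = (I-T(q_1,k))^{-1}\bigl(T(q_1,k)-T(q_2,k)\bigr)(I-T(q_2,k))^{-1} $$
and combining the two displays above yields
$$ \norm[L^4 \to L^4]{(I-T(q_1,k))^{-1} - (I-T(q_2,k))^{-1}} \lesssim_{\, R} \norm[L^2]{q_1-q_2} $$
with an implicit constant independent of $k$. There is no real obstacle here: the work has already been done in Lemmas \ref{lemma:resT.est1} and \ref{lemma:resT.est2}, and the only subtle point is ensuring the global-in-$k$ uniformity of the resolvent bound by splicing together the two regimes $|k| \ge N(R)$ and $|k| \le N(R)$.
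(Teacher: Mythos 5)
Your proof is correct and follows essentially the same route as the paper: a uniform-in-$k$ resolvent bound, the Lipschitz estimate $\norm[L^4 \to L^4]{T(q_1,k)-T(q_2,k)} \lesssim \norm[L^2]{q_1-q_2}$ coming from \eqref{S.est} and linearity in $q$, and the second resolvent identity. The only difference is that you make explicit the splicing of Lemma \ref{lemma:resT.est1} (for $|k|\ge N(R)$) with Lemma \ref{lemma:resT.est2} (for $|k|\le N(R)$) to get the global-in-$k$ resolvent bound, a step the paper leaves implicit by citing only Lemma \ref{lemma:resT.est2}.
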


\begin{proof}
This is a consequence of Lemma \ref{lemma:resT.est2}, the estimate \eqref{restT.est2}, and the second resolvent formula.
\end{proof}

\subsubsection{Estimates on the scattering transform}

In order to analyze the scattering transform, we need estimates on the regularity in $k$ and large-$k$ behavior of the scattering solutions $m^1(z,k)$ and $m^2(z,k)$. In essence, this entails understanding the joint $(z,k)$ behavior of solutions to the model equation \eqref{DSII:m.model}. 

In order to do this, we need (i) estimates on the resolvent $(I-T)^{-1}$ uniform in $k$ and (ii) estimates on the joint $(z,k)$ behavior of the function $\dee_{\zbar}^{-1} (e_{-k} q)$. In \cite{Perry:2016} both of these steps were accomplished using the smoothness and decay of $q$ (i.e., using $q \in H^{1,1}(\R^2)$). In \cite{NRT:2017}, the authors need only assume that $q \in L^2$: they use ideas of concentration compactness \cite{Gerard:1998} to obtain the required control of the resolvent, and use the fractional integral estimates from Theorem \ref{thm:DBAR.est} to control $\dee_{\zbar}^{-1}(e_{-k} q)$.  

In these notes, we will take an intermediate route and borrow insights from \cite{NRT:2017} to provide a cleaner and more concise proof of the main results in \cite{Perry:2016}. In particular, by exploiting Theorem \ref{thm:DBAR.est}, we will avoid the multilinear estimates and resolvent expansions used in \cite{Perry:2016}. A number of calculations below also exploit the ideas behind \cite[Theorem 2.3]{NRT:2017}, a sharp $L^2$ boundedness theorem for non-smooth pseudodifferential operators. 

We begin with a mixed-$L^p$ estimate which actually holds for $q \in L^2$ (see \cite[Lemma 4.1]{NRT:2017}). The technique of proof is borrowed from \cite[Lemma 4.1]{NRT:2017}, with our weaker resolvent estimate from Lemma \ref{lemma:resT.est2} used instead of their stronger $L^2$ estimate \cite[Theorem 1.1]{NRT:2017}.

\begin{lemma}
\label{lemma:mxk.est}
Suppose that $q \in H^{1,1}(\R^2)$ and that $(\calM\widehat{q})(k)$ is finite. Let $m^1$ and $m^2$ be the unique solutions of \eqref{DSII:m}. Then
\begin{equation}
\label{mxk.est}
\norm[L^4]{m^1(\dotarg,k)-1} + \norm[L^4]{m^2(\dotarg,k)} \leq C\left(\norm[H^1]{q}\right) (\calM\widehat{q}\, )(k)^{1/2}
\end{equation}
Moreover, the maps $q \mapsto m^1$ and $q \mapsto m^2$ are locally Lipschitz continuous as maps from $H^{1,1}(\R^2)$ to $L^4(\R^2_z \times \R^2_k)$. 
\end{lemma}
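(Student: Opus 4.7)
The plan is to reduce to the scalar $\dbar$-equation \eqref{DSII:m.model} via the Beals--Coifman-type substitution $m_\pm = m^1 \pm e_{-k}\overline{m^2}$, then invert $I-T$ using the uniform resolvent bound from Lemma \ref{lemma:resT.est2}, and finally extract the decay in $k$ from the pointwise estimate in Theorem \ref{thm:DBAR.est}. Setting $w_\pm = m_\pm - 1$, the system \eqref{DSII:mpm} gives $w_\pm = \pm\,\dee_{\zbar}^{-1}(e_{-k} q) + T w_\pm$, so
\[
w_\pm = \pm (I-T)^{-1}\dee_{\zbar}^{-1}(e_{-k} q).
\]
The source term is controlled pointwise in $z$ by \eqref{DBAR.est.fourier}, which reads $|\dee_{\zbar}^{-1}(e_{-k} q)(z)| \lesssim (\calM q(z))^{1/2}\,(\calM\widehat{q}(k))^{1/2}$. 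Raising to the fourth power and integrating in $z$, then using the $L^2$-boundedness of $\calM$ on $L^2(\R^2)$, gives
\[
\norm[L^4_z]{\dee_{\zbar}^{-1}(e_{-k} q)} \lesssim \norm[L^2]{q}^{1/2}\,(\calM\widehat{q}(k))^{1/2}.
\]

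Combining with Lemma \ref{lemma:resT.est2}, which provides $\norm[L^4\to L^4]{(I-T(q,k))^{-1}} \leq C(\norm[H^{1,1}]{q})$ uniformly in $k$, yields $\norm[L^4_z]{w_\pm(\dotarg,k)} \leq C(\norm[H^{1,1}]{q})(\calM\widehat{q}(k))^{1/2}$. Recovering $m^1 = (m_++m_-)/2$ and $m^2 = e_k\overline{(m_+-m_-)/2}$ (and using $|e_k|=1$) gives the pointwise-in-$k$ estimate \eqref{mxk.est}. Membership in $L^4(\R^2_z\times\R^2_k)$ then follows by squaring \eqref{mxk.est} and integrating in $k$: since $\calM\widehat{q}\in L^2(\R^2_k)$ with $\norm[L^2]{\calM\widehat{q}} \lesssim \norm[L^2]{\widehat{q}} \lesssim \norm[L^2]{q}$, one obtains
\[
\norm[L^4(\R^2_z\times\R^2_k)]{m^1-1}^4 + \norm[L^4(\R^2_z\times\R^2_k)]{m^2}^4 \lesssim C(\norm[H^{1,1}]{q})^4 \norm[L^2]{q}^2 < \infty.
\]

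For local Lipschitz continuity with $\norm[H^{1,1}]{q_i}\leq R$, write (abbreviating $T_i = T(q_i,k)$)
\[
m^1(q_1)-m^1(q_2) = (I-T_1)^{-1}\dee_{\zbar}^{-1}\!\bigl(e_{-k}(q_1-q_2)\bigr) + \bigl[(I-T_1)^{-1}-(I-T_2)^{-1}\bigr]\dee_{\zbar}^{-1}(e_{-k} q_2).
\]
The first term is bounded in $L^4_{z,k}$ by $C(R)\norm[L^2]{q_1-q_2}$ by the argument just given applied to $q_1-q_2$; the second term is bounded in $L^4_{z,k}$ by $C(R)\norm[L^2]{q_1-q_2}\norm[L^2]{q_2}$ via Lemma \ref{lemma:resT.est3} together with the same pointwise bound on the source evaluated at $q_2$. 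Symmetric estimates handle $m^2$.

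The main obstacle is obtaining a bound on $w_\pm(\dotarg,k)$ whose $k$-dependence is integrable. A naive application of \eqref{DBAR:HLS} with $p=4/3$ yields $\norm[L^4_z]{\dee_{\zbar}^{-1}(e_{-k}q)} \lesssim \norm[L^{4/3}]{q}$, which is uniform in $k$ and hence useless for the joint $L^4_{z,k}$ estimate. The pointwise maximal-function inequality of Theorem \ref{thm:DBAR.est} is precisely the ingredient that captures the decay of $(\calM\widehat q)(k)$ coming from the oscillatory factor $e_{-k}$, and it is what allows the proof to avoid the multilinear resolvent expansions used in \cite{Perry:2016}.
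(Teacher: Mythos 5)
Your proposal is correct and follows essentially the same route as the paper: reduction to the model equation \eqref{DSII:m.model} via $m_\pm$, the solution formula $w=(I-T)^{-1}\dee_{\zbar}^{-1}(e_{-k}u)$, the uniform resolvent bound of Lemma \ref{lemma:resT.est2} combined with the pointwise fractional integral estimate \eqref{DBAR.est.fourier} and the $L^2$-boundedness of $\calM$, and Lemma \ref{lemma:resT.est3} with the second-resolvent-formula decomposition for the Lipschitz statement. Your write-up in fact spells out several steps (the $L^4_z$ integration of the pointwise bound, the $k$-integration giving the joint $L^4_{z,k}$ estimate, and the explicit difference decomposition) that the paper only sketches.
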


\begin{proof}
By the definition \eqref{DBAR:mpm} of $m_\pm$ and the equation \eqref{DSII:mpm} obeyed by $m_\pm$, it suffices to prove the estimate 
$$ \norm[L^4]{w} \leq C\left( \norm[H^{1,1}]{q} \right)\left( \calM \widehat{q}\right) (k)^{1/2}  $$
for solutions $w$ of the model equation \eqref{DSII:m.model}.

From the solution formula \eqref{DSII:m.model.sol}, we estimate
\begin{align*}
\norm[L^4]{w} 	
	&\lesssim	
		\norm[L^4 \to L^4]{(I-T)^{-1}} 
		\norm[L^4]{\dee_{\zbar} \left( e_{-k} u \right)}\\
	&\lesssim
		C\left( \norm[H^{1,1}]{q} \right)
		\norm[L^2]{u}^{1/2} \left( \calM \widehat{u} \right)(k)^{1/2}
\end{align*}
where we used Lemma \ref{lemma:resT.est2} and the fractional integral estimate \eqref{DBAR.est.fourier}. This estimate now implies \eqref{mxk.est}.

An immediate consequence of \eqref{mxk.est} is the estimate
\begin{equation}
\label{mxk.est2}
\norm[L^4(R^2_z \times R^2_k)]{m^1 - 1} + \norm[L^4(\R^2_z \times \R^2_k)]{m^2}
\leq 
C\left( \norm[H^{1,1}]{q} \right) 
\norm[L^2]{q}^{1/2}.
\end{equation}
The Lipschitz continuity follows from Lemma \ref{lemma:resT.est3},   the solution formula \eqref{DSII:m.model.sol}, and \eqref{HLMax.p}.
\end{proof}

We can now prove:
\begin{proposition}
\label{prop:DSII.L2}
The scattering transform $\calS$ is bounded and Lipschitz continuous from $H^{1,1}(\R^2)$ to $L^2(\R^2)$. 
\end{proposition}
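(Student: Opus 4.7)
The plan is to exploit the decomposition in \eqref{S.split}, namely $\calS q = \calF_a q + R(q)$ with
\[ R(q)(k) = -\frac{i}{\pi}\int_{\R^2} e_k(z)\,\overline{q(z)}\,(m^1(z,k)-1)\,dz. \]
Since $\calF_a$ is an isometry on $L^2(\R^2)$ (Exercise \ref{ex:DSII.F}) and $H^{1,1}(\R^2)\hookrightarrow L^2(\R^2)$, the task reduces to showing that $R$ maps $H^{1,1}(\R^2)$ into $L^2(\R^2)$ with a locally Lipschitz bound.

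First I would symmetrize $R(q)$ by integration by parts. The first equation in \eqref{DSII:m} together with uniqueness of the $L^4$ solution gives $m^1-1=\dee_{\zbar}^{-1}(qm^2)$, and a short Fubini computation shows that the $L^2$-pairing adjoint of $\dee_{\zbar}^{-1}$ is $-\dee_{\zbar}^{-1}$, so
\[ R(q)(k) = \frac{i}{\pi}\int_{\R^2} q(z)\,m^2(z,k)\,\bigl(\dee_{\zbar}^{-1}(e_k\overline{q})\bigr)(z)\,dz. \]
Now the oscillatory fractional-integral estimate of Theorem \ref{thm:DBAR.est} gives $|\dee_{\zbar}^{-1}(e_k\overline{q})(z)|\lesssim(\calM\overline{q}(z))^{1/2}(\calM\widehat{\overline{q}}(k))^{1/2}$, and Lemma \ref{lemma:mxk.est} gives $\|m^2(\cdot,k)\|_{L^4_z}\lesssim C(\|q\|_{H^1})(\calM\widehat{q}(k))^{1/2}$. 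Applying H\"older in $z$ with exponents $(2,4,4)$ and the $L^2$-boundedness of $\calM$ to estimate $\|(\calM\overline{q})^{1/2}\|_{L^4_z}\lesssim\|q\|_{L^2}^{1/2}$ yields the pointwise bound
\[ |R(q)(k)| \lesssim C(\|q\|_{H^1})\,\|q\|_{L^2}^{3/2}\,\bigl(\calM\widehat{q}(k)\bigr)^{1/2}\bigl(\calM\widehat{\overline{q}}(k)\bigr)^{1/2}. \]
Squaring, using $(ab)^{1/2}\leq(a+b)/2$, and applying the $L^2$ Hardy-Littlewood maximal inequality together with Plancherel closes the boundedness half with $\|R(q)\|_{L^2}\lesssim C(\|q\|_{H^1})\|q\|_{L^2}^{5/2}$.

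For the Lipschitz statement I would split $R(q_1)-R(q_2)$ into a piece with $\overline{q_1-q_2}(m^1(q_1)-1)$, handled exactly as above with $q_1-q_2$ appearing in the fractional-integral factor, and a piece with $\overline{q_2}(m^1(q_1)-m^1(q_2))$. For the latter, subtracting the $\dee_{\zbar}$-equations gives
\[ \dee_{\zbar}(m^1(q_1)-m^1(q_2)) = (q_1-q_2)\,m^2(q_1) + q_2\bigl(m^2(q_1)-m^2(q_2)\bigr), \]
so the same integration-by-parts strategy applies; the $L^4(\R^2_z\times\R^2_k)$ Lipschitz bound on $q\mapsto m^2$ from Lemma \ref{lemma:mxk.est} then closes the argument and produces $\|R(q_1)-R(q_2)\|_{L^2}\lesssim C(\|q_1\|_{H^{1,1}},\|q_2\|_{H^{1,1}})\|q_1-q_2\|_{H^{1,1}}$.

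The hard part is arranging for an $L^2(k)$-bound rather than merely an $L^p(k)$-bound for $p>2$: a direct pointwise estimate of the form $|R(q)(k)|\lesssim\|q\|(\calM\widehat{q}(k))^{1/2}$ would give only $L^p$-control for $p>2$, since $\calM\widehat{q}$ is generically not in $L^1$. The integration-by-parts step is precisely what produces a \emph{second} oscillatory factor $(\calM\widehat{\overline{q}}(k))^{1/2}$, and the product of two square-root maximal-function factors has its square dominated in $L^1(k)$ by the $L^2$ Hardy-Littlewood maximal inequality combined with Plancherel---this is the Nachman-Regev-Tataru insight that replaces the resolvent expansions and multilinear estimates in the original proof in \cite{Perry:2016}.
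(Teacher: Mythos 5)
Your proposal is correct and follows essentially the same route as the paper: the decomposition \eqref{S.split}, the integration by parts that moves $\dee_{\zbar}^{-1}$ onto $e_k\overline{q}$ to produce the second maximal-function factor via Theorem \ref{thm:DBAR.est}, the H\"older step in $z$ combined with Lemma \ref{lemma:mxk.est}, and the Lipschitz splitting handled through the $L^4(\R^2_z\times\R^2_k)$ continuity of $q\mapsto m^2$ are all exactly the paper's argument. The only differences are cosmetic bookkeeping in the final integration over $k$ (you use the pointwise bound \eqref{mxk.est} plus the maximal inequality where the paper uses the integrated bound \eqref{mxk.est2} plus H\"older in $k$) and an immaterial sign in the symmetrized formula.
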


\begin{proof}
As already discussed, it suffices to prove the Lipschitz continuity estimates for $q \in \scrS(\R^2)$. We use the fact that $q \in \scrS(\R^2)$ in the computations leading to  \eqref{DSII:L2.2}.

By equation \eqref{S.split}, it suffices to show that the integral
\begin{equation}
\label{DSII:L2.1}
I(k) = -\frac{i}{\pi} \int e_{k}(z) \overline{q(z)} \left( m^1(z,k) - 1 \right) \, dz 
\end{equation}
defines an $L^2$ function of $k$, locally Lipschitz as a function of  $q$. From \eqref{DSII:m}, we may write
$ m^1(z,k) - 1 = \dee_{\zbar}^{-1} \left(q(\dotarg) m^2(\dotarg,k)\right)$ and change orders of integration to obtain
\begin{equation}
\label{DSII:L2.2}
I(k)  = - \frac{i}{\pi} \int  \left[\dee_{\zbar}^{-1}\left( e_k \overline{q} \right)(z)\right]  q(z) m^2(z,k) \, dz
\end{equation}
and conclude from the estimate \eqref{DBAR.est} and Lemma \ref{lemma:mxk.est} that
\begin{align*}
|I(k)| 
	&	\lesssim C\left(\norm[H^1]{q}\right) (\calM\widehat{\qbar}(k))^{1/2} \int (\calM\qbar(z))^{1/2}\,  |q(z)| \, |m^2(z,k)| \, dz\\
	&	\leq		 C\left(\norm[H^1]{q}\right) (\calM\widehat{\qbar}(k))^{1/2} \norm[L^2]{q}^{3/2} \norm[L^4]{m^2(\dotarg,k)}\\
\end{align*}
where in the second line we used \eqref{HLMax.p}.
Using \eqref{mxk.est2} and H\"{o}lder's inequality, we conclude that $I \in L^2$ with
$$ \norm[L^2]{I} \lesssim C\left(\norm[H^1]{q}\right) \norm[L^2]{q}^2.$$

To show Lipschitz continuity, we note that, by \eqref{DSII:L2.2},
\begin{align}
\label{DSII:I.lip}
I(k;q_1) & - I(k;q_2)  =\\
\nonumber
	&	\hphantom{+}  \frac{i}{\pi} \int 
				\left[q_1(z) \dee_{\zbar}^{-1}(e_k \overline{q_1})(z) - q_2(z) \dee_{\zbar}^{-1}(e_k \overline{q_2})(z)\right] 
				m^2(z,k;q_1)
			\, dz \\
\nonumber
	& + \frac{i}{\pi} \int 
				\left[ q_2(z) \dee_{\zbar}^{-1}(e_k \overline{q_2})(z)\right] 
				\left( m^2(z,k;q^1) - m^2(z,k;q_2)\right) 
			\, dz
\end{align}
The map $q \mapsto q \, \dee_{\zbar}^{-1}(e_k \qbar)$ is Lipschitz continuous from $L^2(\R^2)$ into $L^4(\R^2_k;  L^{4/3}(\R^2_z)))$ by multilinearity, \eqref{HLMax.p}, and \eqref{DBAR.est.fourier} (see Exercise \ref{ex:DSII.L2}). The map $q \mapsto m^2(z,k;q)$ is Lipschitz continuous from $H^{1,1}(\R^2)$ into $L^4(\R^2_z \times \R^2_k)$ by Lemma \ref{lemma:mxk.est}.
\end{proof}

\begin{remark}
\label{rem:DSII.nonlinear-ft}
The ``integration by parts'' that transforms \eqref{DSII:L2.1} to \eqref{DSII:L2.2} is also one of the key ideas behind the proof of the $L^2$ boundedness theorem for pseudodifferential operators with non-smooth symbols, Theorem 2.3, in \cite{NRT:2017}. Tracing through the argument used to estimate $I(k)$, it is easy to see that the same argument proves that
$$I(f,k) = -\frac{i}{\pi} \int e_{k}(z) f(z) \left( m^1(z,k)-1 \right) \, dz  $$
satisfies the estimate
$$ |I(k,f)| \lesssim C\left( \norm[H^1]{q} \right) \left( (\calM\widehat{f}(k) \right)^{1/2} \norm[L^2]{f} \norm[L^4]{m^2(\dotarg,k)}$$
so that 
\begin{equation}
\label{DSII:L2-bd}
 \norm[L^2]{I(\dotarg,f)} \lesssim C\left( \norm[H^1]{q} \right)  \norm[L^2]{q} \norm[L^2]{f}. 
\end{equation}
\end{remark}

To prove Theorem \ref{thm:DS.lip}, it remains to show that, for $q \in H^{1,1}(\R^2)$, $\calS q \in H^{1}(\R^2)$ and $\calS q \in L^{2,1}(\R^2)$, and that the corresponding maps are locally Lipschitz continuous. As a first step, we show that, if $q \in H^{1,1}(\R^2)$, then
$\calS q \in L^p(\R^2)$ for all $p \in [2,\infty)$.

\begin{proposition}
\label{prop:DSII.Lp}
For any $p \in [2,\infty)$, the scattering transform $\calS$ is locally Lipschitz continuous from $H^{1,1}(\R^2)$ to $L^p(\R^2)$. 
\end{proposition}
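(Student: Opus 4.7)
The plan is to revisit the decomposition
$$ \left(\calS q\right)(k) = \left(\calF_a q\right)(k) + I(k;q), \qquad I(k;q) = -\frac{i}{\pi} \int e_k(z)\,\overline{q(z)}\,\bigl(m^1(z,k)-1\bigr)\, dz $$
from \eqref{S.split} and to bound each piece in $L^p(\R^2_k)$ for $p \in [2,\infty)$. For the linear piece, Plancherel applied to $\nabla q$ and $xq$ yields $\calF_a q \in H^{1,1}(\R^2)$ with $\norm[H^{1,1}]{\calF_a q} \lesssim \norm[H^{1,1}]{q}$; the two-dimensional Sobolev embedding $H^1(\R^2) \hookrightarrow L^p(\R^2)$ for $p\in[2,\infty)$ then gives $\calF_a q \in L^p(\R^2)$ with a linear (hence Lipschitz) bound.

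The heart of the argument is the nonlinear term $I(\cdot;q)$. As in the proof of Proposition \ref{prop:DSII.L2}, I would use \eqref{DSII:m} to integrate by parts and rewrite
$$ I(k;q) = -\frac{i}{\pi} \int \bigl[\dee_{\zbar}^{-1}\bigl(e_k \overline{q}\bigr)(z)\bigr]\, q(z)\, m^2(z,k)\, dz. $$
Applying the fractional integral estimate \eqref{DBAR.est.fourier} pointwise, H\"older's inequality with exponents $(4,2,4)$, the $L^2$-boundedness of the Hardy-Littlewood maximal operator, and Lemma \ref{lemma:mxk.est}, I obtain the pointwise estimate
$$ |I(k;q)| \lesssim C\bigl(\norm[H^{1,1}]{q}\bigr) \norm[L^2]{q}^{3/2}\, \bigl(\calM\widehat{\overline{q}}(k)\bigr)^{1/2}\bigl(\calM\widehat{q}(k)\bigr)^{1/2}. $$
Taking $L^p$ norms in $k$, using Cauchy-Schwarz on the product of maximal functions and the boundedness of $\calM$ on $L^p(\R^2)$ for $p>1$, gives
$$ \norm[L^p]{I(\cdot;q)} \lesssim C\bigl(\norm[H^{1,1}]{q}\bigr) \norm[L^2]{q}^{3/2}\, \norm[L^p]{\widehat{q}}. $$
Finally, $\norm[L^p]{\widehat{q}} \lesssim \norm[H^1]{\widehat{q}} \lesssim \norm[H^{1,1}]{q}$ by Plancherel and the two-dimensional Sobolev embedding applied to $\widehat{q}$, finishing the boundedness claim.

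For Lipschitz continuity, I would mimic the splitting \eqref{DSII:I.lip} used at the end of the proof of Proposition \ref{prop:DSII.L2}: the difference $I(k;q_1)-I(k;q_2)$ decomposes into two terms where the multilinearity in $q$ separates from the difference $m^2(\cdot,k;q_1)-m^2(\cdot,k;q_2)$. The first term is handled by the trilinear estimate just obtained (with one factor of $q$ replaced by $q_1-q_2$), and the second by combining the same $L^p$ maximal-function machinery with the Lipschitz estimate on $q \mapsto m^2$ from Lemma \ref{lemma:mxk.est}. The main obstacle I anticipate is bookkeeping: to keep the constants uniform on bounded sets of $H^{1,1}(\R^2)$ one has to be careful that every use of Lemma \ref{lemma:mxk.est} produces a factor depending only on $\norm[H^{1,1}]{q_1}+\norm[H^{1,1}]{q_2}$, and that the Sobolev constant in $\norm[L^p]{\widehat{q}} \lesssim \norm[H^{1,1}]{q}$ degenerates as $p\to\infty$ (which is why the endpoint $p=\infty$ is excluded).
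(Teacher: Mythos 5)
Your proposal is correct and follows essentially the same route as the paper: decompose via \eqref{S.split}, handle the linear term by Fourier mapping properties of $H^{1,1}(\R^2)$, rewrite $I(k)$ as in \eqref{DSII:L2.2}, combine the fractional integral estimate \eqref{DBAR.est.fourier} with Lemma \ref{lemma:mxk.est} to get a pointwise bound by maximal functions of $\widehat{q}$, and conclude by $L^p$-boundedness of $\calM$, with Lipschitz continuity via the splitting \eqref{DSII:I.lip}. The only (cosmetic) difference is that you keep the honest product $(\calM\widehat{\qbar}(k))^{1/2}(\calM\widehat{q}(k))^{1/2}$ and apply Cauchy--Schwarz in $k$, where the paper writes a single factor of $\calM\widehat{\qbar}(k)$; both land in the same place.
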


\begin{proof}
The Fourier tranform has this mapping property by the Hausdorff-Young inequality and the fact that $H^{1,1} \hookrightarrow L^q$ for $q \in (1,2]$ (see Exercise \ref{ex:DSII.L21}).  
Hence, owing to \eqref{S.split}, it suffices to prove that the map $q \mapsto I(k)$ defined by \eqref{DSII:L2.1} has the required continuity. 

Using \eqref{DSII:L2.2}, the fractional integral estimate \eqref{DBAR.est.fourier}, and the a priori estimate \eqref{mxk.est} on $m^2$,  we may estimate
\begin{align*}
|I(k)| 	&\lesssim	\int \left| \dee_{\zbar}^{-1}(e_k \qbar)(z) \right| \, |q(z)| \, \left| m^2(z,k) \right| \, dz\\
		&\lesssim	C\left(\norm[H^{1,1}]{q}\right) \left[(\calM\widehat{\qbar})(k)^{1/2} (Mq)(z)^{1/2}\right]^2 |q(z)|  \, dz\\
		&\lesssim	C\left(\norm[H^{1,1}]{q}\right) \norm[L^2]{q}^2 (\calM\widehat{\qbar})(k)
\end{align*}
which shows that $I \in L^p(\R^2)$ for any $p>2$ by the Hausdorff-Young inequality again. The proof of Lipschitz continuity uses \eqref{DSII:I.lip} and analogous estimates.
\end{proof}

Proposition \ref{prop:DSII.Lp} allows us to prove:

\begin{proposition}
\label{prop:DSII.H1}
The map $\calS$ is locally Lipschitz continuous from $H^{1,1}(\R^2)$ to $H^1(\R^2)$. 
\end{proposition}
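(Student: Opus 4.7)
The proof follows the pattern of Proposition \ref{prop:DSII.L2}. By Sung's result that $\calS(\scrS(\R^2)) \subset \scrS(\R^2)$ and density, it suffices to establish the $H^1$ bound $\norm[H^1]{\calS q} \leq C(\norm[H^{1,1}]{q})$ for $q \in \scrS(\R^2)$ with uniform constants; Lipschitz continuity then follows from the multilinear dependence on $q$, exactly as at the end of the proof of Proposition \ref{prop:DSII.L2}. Splitting $\bfs(k) = \calF_a q(k) + I(k)$ with $I(k) = -(i/\pi) \int e_k \qbar (m^1 - 1) \, dz$, the linear part $\calF_a q$ lies in $H^1$ with the correct dependence on $q$ (Exercise \ref{ex:DSII.F}), and Proposition \ref{prop:DSII.L2} already controls $I$ in $L^2_k$. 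The task is to bound $\dee_{k_j} I$ in $L^2_k$ for $j = 1, 2$.

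I start from the representation
\begin{equation*}
I(k) = -\frac{i}{\pi} \int \dee_{\zbar}^{-1}(e_k \qbar)(z) \, q(z) \, m^2(z,k) \, dz
\end{equation*}
from the proof of Proposition \ref{prop:DSII.L2} and differentiate under the integral.  The derivative either falls on $\dee_{\zbar}^{-1}(e_k \qbar)$ or on $m^2$.  In the first case, since $\dee_{k_j} e_k = \pm 2 i x_j e_k$, the derivative effectively replaces $\qbar$ by $\overline{x_j q}$; because $x_j q \in L^2(\R^2)$ for $q \in H^{1,1}(\R^2)$ and $\calM \widehat{x_j q} \in L^2(\R^2)$, the $L^2_k$ bound proceeds by precisely the same chain of inequalities as in Proposition \ref{prop:DSII.L2}, with $(\calM q)^{1/2} (\calM \widehat{\qbar}(k))^{1/2}$ replaced by $(\calM(x_j q))^{1/2} (\calM \widehat{x_j q}(k))^{1/2}$.

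The substantive new work concerns the term with $\dee_{k_j} m^2$.  Differentiating \eqref{DSII:mpm} in $k_j$ and using $\dee_{k_j} e_{-k} = \mp 2 i x_j e_{-k}$, the functions $v_\pm \coloneqq \dee_{k_j} m_\pm$ satisfy
\begin{equation*}
\dee_{\zbar} v_\pm \mp e_{-k} q \overline{v_\pm} = \mp c_j \, e_{-k} (x_j q) \overline{m_\pm}, \qquad v_\pm(z, k) \to 0 \text{ as } |z| \to \infty,
\end{equation*}
with $c_j$ a purely imaginary constant. This has the same structure as \eqref{DSII:mpm}, so Lemma \ref{lemma:resT.est2} reduces the estimate of $v_\pm$ to estimating $\dee_{\zbar}^{-1}(e_{-k}(x_j q)\overline{m_\pm})$ in $L^4_z$. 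Writing $\overline{m_\pm} = 1 + \overline{w_\pm}$, I bound the ``$1$'' piece via Theorem \ref{thm:DBAR.est} applied to $f = x_j q$, and the ``$\overline{w_\pm}$'' piece via the plain HLS estimate \eqref{DBAR:HLS} with $\norm[L^{4/3}_z]{(x_j q) \overline{w_\pm}} \leq \norm[L^2]{x_j q} \, \norm[L^4_z]{w_\pm}$ and the a priori bound $\norm[L^4_z]{w_\pm(\cdot,k)} \lesssim C(\norm[H^{1,1}]{q})(\calM \widehat{q}(k))^{1/2}$ from Lemma \ref{lemma:mxk.est}. The result is
\begin{equation*}
\norm[L^4_z]{v_\pm(\cdot,k)} \lesssim C(\norm[H^{1,1}]{q}) \bigl[(\calM \widehat{x_j q}(-k))^{1/2} + (\calM \widehat{q}(k))^{1/2}\bigr].
\end{equation*}

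Substituting this into the $\dee_{k_j} m^2$ piece of $\dee_{k_j} I$---after first rewriting it via the identity $q \dee_{k_j} m^2 = \dee_{\zbar}(\dee_{k_j} m^1)$ (obtained by differentiating $\dee_{\zbar} m^1 = q m^2$) and a second integration by parts to place $\dee_{\zbar}^{-1}$ back onto $e_k \qbar$---then using the pointwise bound \eqref{DBAR.est.fourier} for $\dee_{\zbar}^{-1}(e_k \qbar)$, H\"older in $z$ using the Sobolev embeddings $H^{1,1}(\R^2) \hookrightarrow L^p(\R^2)$ for $p \in [4/3,\infty)$, and Cauchy-Schwarz in $k$ together with $\norm[L^2]{\calM f} \lesssim \norm[L^2]{f}$, closes the estimate to give $\norm[L^2]{\dee_{k_j} I} \lesssim C(\norm[H^{1,1}]{q})$.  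The main technical obstacle is the $L^4_z$ bound on $v_\pm$: the source $e_{-k}(x_j q)\overline{m_\pm}$ carries nontrivial $k$-dependence through $\overline{m_\pm}$, so Theorem \ref{thm:DBAR.est} cannot be applied to the whole source at once, and the splitting $\overline{m_\pm} = 1 + \overline{w_\pm}$ combined with the a priori maximal-function estimate of Lemma \ref{lemma:mxk.est} is what allows the required control in $k$ to be preserved.
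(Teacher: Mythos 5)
Your argument is correct in substance, but it takes a genuinely different route from the paper's. The paper never estimates $\dee_{k}m^2$ through the direct problem at all: it invokes the $\dbar_k$-equation of Lemma \ref{lemma:DSII.m.k} (proved later in the text, so its proof here carries a forward reference), which gives $\dee_{\kbar}m^1 = e_{-k}\bfs\,\overline{m^2}$ in closed form. Differentiating $\bfs(k)=-\tfrac{i}{\pi}\int e_k\qbar\, m^1\,dz$ in $\kbar$ then produces only two terms: one with $\qbar$ replaced by $\zbar\qbar$, estimated exactly as in Proposition \ref{prop:DSII.L2}, and one in which $e_ke_{-k}$ cancels to leave $\bfs(k)\cdot\int\qbar\,\overline{m^2}\,dz$, a product of two $L^4_k$ functions (Proposition \ref{prop:DSII.Lp} for the first factor, H\"older plus Lemma \ref{lemma:mxk.est} for the second); the full gradient is recovered from the $\dee_{\kbar}$-derivative via the Beurling identity \eqref{Beurling-ddbar} and $L^2$-boundedness of $\bfS$. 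You instead differentiate the $\dbar_z$-system \eqref{DSII:mpm} in $k_j$, obtain an inhomogeneous problem for $v_\pm=\dee_{k_j}m_\pm$ with source proportional to $e_{-k}(x_jq)\overline{m_\pm}$, and close the estimate with the uniform resolvent bounds (Lemmas \ref{lemma:resT.est1}--\ref{lemma:resT.est2}), the splitting $\overline{m_\pm}=1+\overline{w_\pm}$, and the maximal-function estimate \eqref{DBAR.est.fourier}. This is more work, and it is precisely the work the paper's use of the dual $\dbar_k$-equation is designed to avoid; what it buys is a proof that is self-contained on the direct-problem side, needing neither Lemma \ref{lemma:DSII.m.k} nor the $L^4$ bound on $\bfs$.

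Two bookkeeping points. First, the ``second integration by parts'' you describe is a no-op: substituting $q\,\dee_{k_j}m^2=\dee_{\zbar}(\dee_{k_j}m^1)$ and then moving $\dee_{\zbar}^{-1}$ back onto $e_k\qbar$ returns you to the expression you started from. No harm done --- the term $\int \dee_{\zbar}^{-1}(e_k\qbar)\,q\,\dee_{k_j}m^2\,dz$ is estimated directly by H\"older with exponents $4,2,4$ once $\dee_{k_j}m^2$ is controlled. Second, $\dee_{k_j}m^2$ is not simply a combination of the $v_\pm$: since $m^2=e_{-k}\overline{(m_+-m_-)}/2$, one gets $\dee_{k_j}m^2 = e_{-k}\overline{(v_+-v_-)}/2 \mp 2ix_j m^2$, and the extra term $x_jm^2$ is bounded but not in $L^4_z$. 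Its contribution is harmless --- pairing $x_j$ with $q$ gives $x_jq\in L^2$ and the chain of inequalities from Proposition \ref{prop:DSII.L2} applies verbatim --- but it should appear explicitly in your decomposition.
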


\begin{proof}
It suffices to prove the Lipschitz estimate for $q \in \scrS(\R^2)$. In view of Proposition \ref{prop:DSII.L2}, property \eqref{Beurling-ddbar} of the Beurling transform,  and the boundedness of the Beurling transform on $L^p$, it suffices to show
that the map $q \mapsto \dee_{\zbar} I$ (where the differentiation is with respect to $k$) is locally Lipschitz continuous from $H^{1,1}(\R^2)$ into $L^2(\R^2)$. In Lemma \ref{lemma:DSII.m.k}, we will show that, for $q \in \scrS(\R^2)$, $m^1$ and $m^2$ also solve the $\dbar_k$-problem \eqref{DSII.m.k}. Thus,
for $q \in \scrS(\R^2)$ we may compute
\begin{align*}
%\label{ds}
\dee_{\kbar} \bfs(k) 	&= 	\frac{1}{\pi} \int e_k(z) \zbar \overline{q(z)} m^1(x,k) \, dz			
									 	-\frac{i}{\pi}  \bfs(k) \int  \overline{q(z)} \overline{m^2(z,k)} \, dz	\\
\nonumber
								&=	I_1 + I_2
\end{align*}
Tracing through the proof of  Proposition \ref{prop:DSII.L2} with $\qbar$ replaced by $\zbar \qbar$, we conclude that $I_1$ defines an $L^2$ function of $k$, Lipschitz continuous in $q$. It remains to estimate $I_2$.

%%%%%%%%%%%%%%%%%%%%%%%%%%%%%%%%%%%%
%
%		New proof - for old proof see auxiliary TeX file lec3-old-proof-of....
%
%%%%%%%%%%%%%%%%%%%%%%%%%%%%%%%%%%%%
By Proposition \ref{prop:DSII.Lp}, $\bfs \in L^4(\R^2)$, so it suffices to show that the integral defines a Lipschitz map from $q \in H^{1,1}(\R^2)$ to $L^4(\R^2_k)$. Since $m^2 \in L^4(\R^2_z \times \R^2_k)$ and $q \in H^{1,1}(\R^2) \subset L^{4/3}(\R^2_z)$, this is a consequence of H\"{o}lder's inequality and Lemma \ref{lemma:mxk.est}.
\end{proof}

To complete the proof of Theorem \ref{thm:DS.lip}, we show that $\calS q \in L^{2,1}(\R^2)$ with appropriate Lipschitz continuity.

\begin{proposition}
\label{prop:DSII.L21}
The map $\calS$ is locally Lipschitz continuous from $H^{1,1}(\R^2)$ to $L^{2,1}(\R^2)$. 
\end{proposition}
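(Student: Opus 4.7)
The plan is to show $\bar k\,\bfs(k)\in L^2(\R^2_k)$, which is equivalent to $\bfs\in L^{2,1}(\R^2)$ since $|\bar k\,\bfs(k)|^2 = |k|^2|\bfs(k)|^2$. As in Propositions \ref{prop:DSII.L2}, \ref{prop:DSII.Lp}, and \ref{prop:DSII.H1}, it suffices to establish the Lipschitz estimate for $q\in\scrS(\R^2)$ with constants depending only on $\norm[H^{1,1}]{q}$ and extend by density. Using $\bar k\,e_k(z) = -i\,\dee_{\zbar}e_k(z)$, integrating by parts in $z$, and invoking the scattering equation $\dee_{\zbar}m^1 = q\,m^2$ from \eqref{DSII:m}, the product $\dee_{\zbar}(\qbar\,m^1) = \overline{\dee_z q}\,m^1 + |q|^2 m^2$ yields
\begin{equation*}
\bar k\,\bfs(k) \;=\; \frac{1}{\pi}\int e_k(z)\,\overline{\dee_z q(z)}\,m^1(z,k)\,dz \;+\; \frac{1}{\pi}\int e_k(z)\,|q(z)|^2\,m^2(z,k)\,dz \;=:\; A(k) + B(k).
\end{equation*}
The term $A(k)$ is handled as in Proposition \ref{prop:DSII.L2}: splitting $m^1 = 1 + (m^1-1)$, the constant piece is (up to a factor of $i$) the antilinear Fourier transform $\calF_a(\dee_z q)$, which lies in $L^2_k$ by isometry, while the remainder has the form $I(k,\overline{\dee_z q})$ of Remark \ref{rem:DSII.nonlinear-ft} and so belongs to $L^2_k$ with bound $C(\norm[H^1]{q})\,\norm[L^2]{q}\,\norm[L^2]{\dee_z q}$.

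The delicate term is $B(k)$: a direct H\"older estimate combined with Lemma \ref{lemma:mxk.est} yields only $|B(k)| \lesssim \norm[L^{8/3}]{q}^2\,C(\norm[H^1]{q})\,(\calM\widehat{q})(k)^{1/2}$, and this is insufficient because $\calM\widehat{q}\notin L^1(\R^2_k)$ in general. The decisive step is to solve $(\dee_z + ik)m^2 = \qbar\,m^1$ explicitly. Setting $\tilde m(z,k) = e_k(z)\,m^2(z,k)$ gives $\dee_z\tilde m = e_k\,\qbar\,m^1$, hence
\begin{equation*}
m^2(z,k) \;=\; e_{-k}(z)\,\bigl(\dee_z^{-1}(e_k\,\qbar\,m^1(\cdot,k))\bigr)(z).
\end{equation*}
Substituting into $B(k)$, the $e_{\pm k}$ factors cancel and Fubini together with the explicit kernel $1/(\bar z - \bar w)$ of $\dee_z^{-1}$ identifies the inner $z$-integral $\int |q(z)|^2/(\bar z-\bar w)\,dz$ as $-\pi\,\dee_z^{-1}(|q|^2)(w)$, yielding
\begin{equation*}
B(k) \;=\; -\frac{1}{\pi}\int e_k(w)\,\qbar(w)\,\bigl(\dee_z^{-1}|q|^2\bigr)(w)\,m^1(w,k)\,dw.
\end{equation*}
This is again of the form $\tilde I(k,f)$ with density $f = -\qbar\cdot\dee_z^{-1}(|q|^2)$. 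By the Hardy--Littlewood--Sobolev estimate \eqref{DBAR:HLS}, $\dee_z^{-1}|q|^2 \in L^4(\R^2)$ since $|q|^2\in L^{4/3}(\R^2)$ (via the 2D embedding $H^1\hookrightarrow L^{8/3}$ of Exercise \ref{ex:DSII.H2}), and H\"older with $\qbar\in L^4$ places $f$ in $L^2$. Applying Remark \ref{rem:DSII.nonlinear-ft} together with the $L^2$-isometry of $\calF_a$ gives $B\in L^2_k$ with norm polynomial in $\norm[H^{1,1}]{q}$.

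Local Lipschitz continuity of $q\mapsto \bar k\,\bfs$ from $H^{1,1}(\R^2)$ into $L^2(\R^2)$ then follows by polarization: both $A(k)$ and $B(k)$ are multilinear (after the splitting $m^1 = 1+(m^1-1)$) in $q$, $\qbar$, and $\nabla q$; the factor $m^1(\cdot,k)$ is Lipschitz in $q\in H^{1,1}$ with values in $L^4(\R^2_z\times\R^2_k)$ by Lemma \ref{lemma:mxk.est}, and the density $\qbar\cdot\dee_z^{-1}(|q|^2)$ is Lipschitz in $q$ with values in $L^2(\R^2)$ by \eqref{DBAR:HLS} and multilinearity. The main obstacle is the square integrability of $B$ in $k$: the elementary bound fails because $(\calM\widehat{q})^{1/2}\notin L^2(\R^2_k)$, and the substitution $m^2 = e_{-k}\dee_z^{-1}(e_k\,\qbar\,m^1)$ is precisely what converts $B$ into a modified scattering transform of the type $\tilde I(k,f)$ with density $f\in L^2(\R^2)$, bringing Remark \ref{rem:DSII.nonlinear-ft} to bear.
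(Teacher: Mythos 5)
Your argument is correct and follows essentially the same route as the paper's proof: multiply by $\kbar$, integrate by parts in $z$ using $\dee_{\zbar}e_k = i\kbar e_k$ together with the Lax equations to produce the terms $\int e_k\,\overline{\dee_z q}\,m^1$ and $\int e_k\,|q|^2 m^2$, and then dispose of the delicate second term by substituting $m^2 = e_{-k}\,\dee_z^{-1}(e_k\,\qbar\,m^1)$ and transferring the Cauchy kernel onto $|q|^2$ via Fubini, so that it becomes a modified scattering transform $I(k,f)$ with $f = \qbar\,\dee_z^{-1}(|q|^2)\in L^2$ controlled by the bound \eqref{DSII:L2-bd} of Remark \ref{rem:DSII.nonlinear-ft}. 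The only differences from the paper are cosmetic (order of the splitting $m^1 = 1+(m^1-1)$ versus the Fubini step), and your explicit diagnosis of why the naive maximal-function bound only yields $L^4_k$ is a nice addition.
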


\begin{proof}
We need only show that $q \mapsto I(k)$ has the above property, where $I(k)$ is defined by \eqref{DSII:L2.1}.
We begin with a computation for $q \in \scrS(\R^2)$, using the trivial identity $\dee_{\zbar} e_k = i\kbar e_k$ and integration by parts:
\begin{align*}
\kbar I(k)	&=	-\frac{1}{\pi} 
								\int e_k(z) \dee_{\zbar} 
										\left(
											\overline{q(z)} \left(m^1(z,k) - 1 \right)
										\right) \, dz\\
					&=	I_1 + I_2
\intertext{where}
I_1				&=	-\frac{1}{\pi} \int e_k(z) \left( \dee_{\zbar} \overline{q} \right)(z) \, m^1(z,k) \, dz,	\\
I_2				&=	-\frac{1}{\pi} \int e_k(z) |q(z)|^2 m^2(z,k) \, dz,
\end{align*}
and we used the first equation in \eqref{DSII:m} to simplify $I_2$.  

The integral $I_1$ defines an $L^2$ function of $k$ since $\dee_{\zbar} \overline{q} \in L^2(\R^2)$ by an argument similar to the proof of Proposition \ref{prop:DSII.L2}.  To analyze $I_2$, we use the second equation in \eqref{DSII:m} to write
\begin{align*}
I_2		&=		\int  |q(z)|^2 \dee_z^{-1} 
								\left( e_k (\dotarg) q(\dotarg)  m^1(\dotarg,k) \right)(z) 
						\, dz		\\
			&=		I_{21}+ I_{22} 
\end{align*}
where
\begin{align*}
I_{21}	&=	\int |q(z)|^2  
							\dee_z^{-1} \left( e_k q \right) (z) 
					\, dz,\\
I_{22}	&=	\int |q(z)|^2
							 \dee_z^{-1} 
							 		\left( 
							 				e_k q(\dotarg) \left( m^1(\dotarg,k) - 1 \right) 
							 		\right) (z)
							 \, dz.
\end{align*}
By ``integration by parts'' we have
\begin{align*}
 I_{21} 	&= 	-\int e_k(z) q(z) \dee_z^{-1} \left(|q|^2\right)(z) \, dz
\intertext{which exhibits $I_{21}$ as the Fourier transform of an $L^2$ function since $|q|^2$ in $L^{4/3}(\R^2)$. On the other hand}
I_{22}	&=	-\int e_k(z) q(z) 
						\left[
							\dee_z^{-1}
								\left( |q(\dotarg)|^2 \right) 
						\right](z)
						\left(m^1(z,k) - 1 \right) 
						\, dz
\end{align*}
which exhibits $I_{22}$ in the form $I(k,f)$ (see Remark \ref{rem:DSII.nonlinear-ft}) where $f$ is  the $L^2$ function $q \dee^{-1} |q|^2$. The needed $L^2$ bound is a direct consequence of \eqref{DSII:L2-bd}.

As usual, the proof of Lipschitz continuity rests on the multilinearity of explicit expressions involving $q$ and the Lipschitz continuity of $m^1$ and $m^2$ viewed as functions of $q$. To prove that $I_1$ is locally Lipschitz continuous, one mimics the proof that $I$ is Lipschitz beginning with \eqref{DSII:I.lip} in the proof of Proposition \ref{prop:DSII.L2}. To show that $I_2$ is Lipschitz continuous, one notes that $I_{21}$ is an explicit multilinear function of $q$ , while $I_{22}$ can be controlled by the same method used to prove Lipschitz continuity of $I$ on the proof of Proposition \ref{prop:DSII.L2}. 
\end{proof}

It now follows that $\calS$, initially defined on $\scrS(\R^2)$, extends to a locally Lipschitz continuous map from $H^{1,1}(\R^2)$ to itself. It remains to prove that $\calS^{-1} = \calS$. 

By the Lipschitz continuity of $\calS$ on $H^{1,1}(\R^2)$, it suffices to prove that $\calS = \calS^{-1}$ on the dense subset $\scrS(\R^2)$. The idea of the proof is to use uniqueness of solutions to the system \eqref{DSII:m} together with the fact that, for $q \in \scrS(\R^2)$, the functions $(m^1, m^2)$ satisfy \emph{both} the system \eqref{DSII:m} and the following system of $\dbar_k$-equations. 

\begin{lemma}
\label{lemma:DSII.m.k}
Suppose that $q \in \scrS(\R^2)$ and let $(m^1,m^2)$ be the unique solutions to \eqref{DSII:m}. Then, for each $z \in \C$, 
\begin{equation}
\label{DSII.m.k}
\left\{
\begin{aligned}
\dee_{\kbar} m^1(z,k) 		&=	e_{-k} \bfs(k) \overline{m^2(z,k)}\\
\dee_{\kbar} m^2(z,k)		&=	e_{-k} \bfs(k) \overline{m^1(z,k)}\\
m^1(z,k) - 1, \, m^2(z,k)	&=	\bigO{|k|^{-1}}
\end{aligned}
\right.
%}
\end{equation}
where $\bfs(k)$ is given by \eqref{DSII:scatt}
\end{lemma}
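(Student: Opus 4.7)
The plan is to prove this by a uniqueness argument for the homogeneous version of the spatial system \eqref{DSII:m.x1}--\eqref{DSII:m.x2}, in the spirit of Proposition \ref{prop:DBAR.mpm}. Specifically, I will exhibit two pairs of functions that each solve the same homogeneous linear $z$-system, show their difference has enough decay in $z$ to apply the Vekua-type Theorem \ref{thm:Liouville}, and conclude that the two pairs coincide. The assumption $q\in\scrS(\R^2)$ enters only to guarantee sufficient $(z,k)$-regularity of $m^1,m^2$ for the differentiations in $\kbar$ below to be justified pointwise and for the leading large-$z$ expansions \eqref{DSII:m.asy.z} to hold termwise.

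First I would differentiate \eqref{DSII:m.x1}--\eqref{DSII:m.x2} in $\kbar$. Since $\dee_{\kbar}(ik)=0$, the pair $(\tilde m^1,\tilde m^2):=(\dee_{\kbar}m^1,\dee_{\kbar}m^2)$ satisfies the homogeneous system
\begin{equation*}
\dee_{\zbar} f^{1}=q\,f^{2},\qquad (\dee_z+ik)f^{2}=\qbar\,f^{1}.
\end{equation*}
Next, set $n^1:=e_{-k}\bfs(k)\overline{m^2}$ and $n^2:=e_{-k}\bfs(k)\overline{m^1}$. Conjugating \eqref{DSII:m.x1}--\eqref{DSII:m.x2} gives $\dee_z\overline{m^1}=\qbar\overline{m^2}$ and $\dee_{\zbar}\overline{m^2}=i\kbar\overline{m^2}+q\overline{m^1}$, and combining these with $\dee_{\zbar}e_{-k}=-i\kbar e_{-k}$, $\dee_z e_{-k}=-ik e_{-k}$, a short computation yields $\dee_{\zbar}n^1=qn^2$ and $(\dee_z+ik)n^2=\qbar n^1$ (the $\pm i\kbar$ and $\pm ik$ terms cancel exactly because the phase $e_{-k}$ was chosen to match the spectral parameter, while $\bfs(k)$ is inert in $z$). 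Hence $(n^1,n^2)$ solves the same homogeneous system.

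To conclude, form $g^j:=\tilde m^j-n^j$ and apply the $\pm$-transformation of \eqref{DBAR:mpm}: the functions $h_\pm:=g^1\pm e_{-k}\overline{g^2}$ satisfy
\begin{equation*}
\dee_{\zbar}h_\pm=\pm\,e_{-k}\,q\,\overline{h_\pm},
\end{equation*}
which is exactly the hypothesis of Theorem \ref{thm:Liouville} provided $h_\pm\in L^p(\R^2)$ for some $p>2$. For $q\in\scrS(\R^2)$, Sung's regularity results together with the expansion \eqref{DSII:m.asy.z} give $m^1-1=a(k)/z+O(|z|^{-2})$ and $m^2=e_{-k}(z)i\bfs(k)/\zbar+O(|z|^{-2})$, so both $\dee_{\kbar}m^j$ and $n^j$ are $O(|z|^{-1})$ at spatial infinity; in particular $h_\pm\in L^4(\R^2)$, and Theorem \ref{thm:Liouville} forces $h_\pm\equiv0$, hence $g^j\equiv0$, which is the claimed identity. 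Finally, the $O(|k|^{-1})$ normalization at large $k$ follows by inserting the integration-by-parts identity \eqref{Bukhgeim} into the solution formula \eqref{DSII:m.model.sol} and using the uniform resolvent estimate Lemma \ref{lemma:resT.est1}, which together extract a factor $1/\kbar$ from $\dee_{\zbar}^{-1}(e_{-k}u)$. The main obstacle is the $L^4$ decay in Step 2: one must justify interchanging $\dee_{\kbar}$ with the integral defining $m^j$ and show that the resulting derivative inherits the leading $1/z$ behavior uniformly on compact $k$-sets --- this is precisely the place where Sung's smoothness results for the scattering map on $\scrS(\R^2)$ do essential work.
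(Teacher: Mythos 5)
Your proof is correct and follows essentially the same route as the paper: differentiate the spatial system in $\kbar$, check that $(e_{-k}\bfs\,\overline{m^2},\, e_{-k}\bfs\,\overline{m^1})$ solves the same homogeneous system, subtract, and apply the vanishing theorem to the $\pm$ combinations, with the large-$k$ bound coming from integration by parts in the oscillatory Cauchy transform. One small imprecision: $\dee_{\kbar}m^2$ and $n^2$ are \emph{not} individually $\bigO{|z|^{-1}}$ --- each contains the non-decaying term $e_{-k}(z)\bfs(k)$ --- and it is only their difference $g^2$ that decays (since $\overline{m^1}\to 1$), which is exactly why the subtraction must be performed before invoking Theorem \ref{thm:Liouville}.
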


\begin{proof}
For $q \in \scrS(\R^2)$, the solutions $(m^1,m^2)$ of \eqref{DSII:m} have the large-$z$ asymptotic (differentiable) expansions 
\begin{align*}
m^1(z,k) 	&=	1	+	\bigO{|z|^{-1}}\\
m^2(z,k)	&=	 e_{-k}(z) \frac{i\bfs(k)}{\zbar} + \bigO{|z|^{-2}}
\end{align*}
where $\bfs$ is given by \eqref{DSII:scatt} (see Exercise \ref{ex:DSII.two-s} and the comments after \eqref{DSII:scatt}). If $v^1 = \dee_{\kbar} m^1$ and $v^2 = \dee_{\kbar} m^2$ then, differentiating 
\eqref{DSII:m} with respect to $\kbar$ we recover
\begin{align*}
\dee_{\zbar} v^1	&=	q	v^2\\
\left(\dee_z + ik \right) v^2	&=	\qbar v^1
\end{align*}
It follows from the asymptotic expansions for $m^1$ and $m^2$ above that  $v^1 = \bigO{|z|^{-1}}$ but $v^2 = e_{-k} \bfs(k) + \bigO{|z|^{-1}}$. 
Hence, in order to use the uniqueness theorem for solutions of \eqref{DSII:m}, we need to make a subtraction to remove the constant term in $v^2$. 
Setting 
$$
w^1(z,k) 	= \dee_{\kbar} m^1 - e_{-k} \bfs \overline{m^2}, \quad
w^2 			= \dee_{\kbar} m^2 - e_{-k} \bfs \overline{m^1},
$$ 
and using \eqref{DSII:m}, we conclude that
\begin{align*}
\dee_{\zbar} w^1	&=	q	w^2\\
\left(\dee_z + ik \right) w^2	&=	\qbar w^1
\end{align*}
where $w_1$ and $w_2$ are $\bigO{|z|^{-1}}$ as $|z| \to \infty$. 
Hence by Proposition \ref{prop:DBAR.mpm}, $w^1 = w^2 = 0$. Since $w^1$ and $w^2$ are smooth functions of $z$ and $k$, it follows that the first two of equations
\eqref{DSII.m.k} hold for each $z$. 

It remains to show that, for each fixed $z$, $m_1(z,k) -1$ and $m^2(z,k)$ are $\bigO{|k|^{-1}}$ as $|k| \to \infty$.   
For $q \in \scrS(\R^2)$, the functions  $m^1$ and $m^2$ are smooth  functions of $z$ and $k$ with
bounded derivatives (see Sung \cite[section 2]{Sung:1994a}). From the integral formulas
\begin{align}
\label{DSII:m1.int}
m^1(z,k)				&=	1	+ 	
										\frac{1}{\pi} 
											\int 
												\frac{1}{z-\zeta} q(\zeta)  m^2(\zeta,k) 
											\, d\zeta\\
\label{DSII:m2.int}
e_{k}(z) m^2(z,k)	&=	\frac{1}{\pi} 
										\int 
											\frac{1}{\zbar - \zetabar} 
											e_k(\zeta) \overline{q(\zeta)} m^1(\zeta,k) 
										\, d\zeta
\end{align}
we first note that it is enough to prove that $m^2(z,k) = \bigO{|k|^{-1}}$ uniformly in $z$ since it will then follow from \eqref{DSII:m1.int} that $m^1(z,k)-1 = \bigO{|k|^{-1}}$. We can integrate by parts in \eqref{DSII:m2.int} to see that
\begin{align*}
e_k(z) m^2(z,k)	&=	\frac{1}{ik} e_k(z) \overline{q(z)} m^1(z,k) - 
									\frac{1}{\pi i k} 
											\int \frac{1}{\zbar - \zetabar} 
											e_k(\zeta) 
											\dee_\zeta 
												\left( 
													\overline{q(\zeta)} m^1(\zeta,k) 
												\right) 
											\, d\zeta
\end{align*}
which shows that $m^2(z,k) = \bigO{|k|^{-1}}$. 
\end{proof}

Given $\bfs(k) \in \scrS(\R^2)$, the inverse scattering transform $\calS \bfs $ is computed by solving the system
\begin{equation}
\label{DSII:minv.k}
\left\{
\begin{aligned}
\dee_{\kbar} n^1(k,z)	&=	\bfs(k) n^2(k,z)	\\
\left(\dee_k + iz \right) n^2(k,z)		&=	\overline{\bfs(k)} n^1(k,z) 	\\
n^1(k,z) -1 , n^2(k,z)		&=	\bigO{|k|^{-1}}
\end{aligned}
\right.
%}
\end{equation}
and extracting $\calS \bfs$ from the asymptotic expansion
$$
n^2(k,z) = e_{-z}(k) \frac{i}{\kbar} \left(\calS \bfs\right)(z) + \bigO{|k|^{-2}}.
$$
The system \eqref{DSII:minv.k} is uniquely solvable by Proposition \ref{prop:DBAR.mpm}. On the other hand, if $m^1$ and $m^2$ solve \eqref{DSII:m} for given $q \in \scrS(\R^2)$ and $\bfs = \calS q$, these functions also solve \eqref{DSII:m.k}. A short computation shows that
$ n^1(k,z) = m^1(z,k), \, n^2(k,z) = e_{-z}(k) \overline{m^2(z,k)}$ solve the system \eqref{DSII:minv.k}. Since this solution is unique, we may compute $\calS \bfs$ using the large-$k$ expansion of $m^2(z,k)$ (see Exercise \ref{ex:DSII.m.asy.k}):
\begin{align*}
n^2(k,z)	&=	e_{-z}(k) \overline{m^2(z,k)}	\\
			&=	e_{-z}(k) \left(  \frac{iq(z)}{\kbar} + \bigO{|k|^{-2}} \right)\\
			&=	e_{-z}(k)  \frac{iq(z)}{\kbar} +  \bigO{|k|^{-2}}
\end{align*}
to conclude that $\calS \bfs = q$.  We have proved:

\begin{proposition}
\label{prop:DSII.Sinv}
Suppose that $q \in \scrS(\R^2)$. Then $\calS (\calS (q)) = q$. 
\end{proposition}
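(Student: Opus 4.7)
The proof will essentially amount to verifying the computation already sketched in the paragraph preceding the proposition, so my plan is to justify each of the three assertions made there: (i) that the pair $n^1(k,z) = m^1(z,k)$, $n^2(k,z) = e_{-z}(k)\overline{m^2(z,k)}$ solves the dual system \eqref{DSII:minv.k} with potential $\bfs = \calS q$; (ii) that this pair is the \emph{unique} such solution in the appropriate class; and (iii) that its large-$k$ asymptotics recover $q$.

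For step (i), the key input is Lemma \ref{lemma:DSII.m.k}, which tells us that when $q \in \scrS(\R^2)$ the same pair $(m^1,m^2)$ which solves the $z$-system \eqref{DSII:m} also satisfies the $\dbar_k$-system \eqref{DSII.m.k} with potential $\bfs(k)$. I would use the trivial but crucial identity $e_{-k}(z) = e^{-i(kz+\kbar\zbar)} = e_{-z}(k)$ to rewrite the right-hand sides of \eqref{DSII.m.k}. Then a direct computation shows $\dee_{\kbar} n^1 = \dee_{\kbar} m^1 = e_{-k}(z)\bfs(k)\overline{m^2} = \bfs(k) n^2$, and using $\dee_k e_{-z}(k) = -iz\, e_{-z}(k)$ together with $\overline{\dee_{\kbar} m^2} = e_k(z)\overline{\bfs(k)} m^1$ and $e_k(z) e_{-z}(k) = 1$, one obtains $(\dee_k + iz) n^2 = \overline{\bfs(k)} m^1 = \overline{\bfs(k)} n^1$.

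For step (ii), I would invoke the uniqueness half of Proposition \ref{prop:DBAR.mpm}: the structural form of \eqref{DSII:minv.k} is identical to \eqref{DSII:m} after interchanging the roles $z \leftrightarrow k$ and $q \leftrightarrow \bfs$ (with the appropriate change of sign in the shift operator absorbed by conjugation), so the Liouville-type argument via Theorem \ref{thm:Liouville} applies verbatim. One does need to check that $n^1-1$ and $n^2$ lie in $L^4(\R^2_k)$ for each fixed $z$, which follows from the bound $m^1(z,k)-1, m^2(z,k) = \bigO{|k|^{-1}}$ already proved at the end of Lemma \ref{lemma:DSII.m.k} together with local $L^4$ control from the integral representations \eqref{DSII:m1.int}--\eqref{DSII:m2.int}.

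For step (iii), I would invoke Exercise \ref{ex:DSII.m.asy.k}, which gives the large-$k$ expansion $m^2(z,k) = -i\overline{q(z)}/k + \bigO{|k|^{-2}}$ (this follows by solving $(\dee_z + ik) m^2 = \overline{q} m^1$ asymptotically in $k$, using $m^1 = 1 + \bigO{|k|^{-1}}$). Conjugating gives $\overline{m^2(z,k)} = iq(z)/\kbar + \bigO{|k|^{-2}}$, whence $n^2(k,z) = e_{-z}(k)\, iq(z)/\kbar + \bigO{|k|^{-2}}$. Comparing with the definition $n^2(k,z) = e_{-z}(k)(i/\kbar)(\calS\bfs)(z) + \bigO{|k|^{-2}}$ immediately yields $\calS\bfs = q$, i.e.\ $\calS \circ \calS (q) = q$. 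The main obstacle is not conceptual but notational: one must be careful to track the three intertwined conjugation operations (complex conjugation of $m^2$, the swap of $z$ and $k$, and the reflection $k \mapsto -k$ hidden inside $e_{-k}$ versus $e_{-z}$), and ensure that the uniqueness theorem is genuinely applicable in the dual variable.
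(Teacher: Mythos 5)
Your proposal is correct and follows essentially the same route as the paper: identify $n^1(k,z)=m^1(z,k)$, $n^2(k,z)=e_{-z}(k)\overline{m^2(z,k)}$ as a solution of the dual system \eqref{DSII:minv.k} via Lemma \ref{lemma:DSII.m.k}, invoke uniqueness from Proposition \ref{prop:DBAR.mpm}, and read off $\calS\bfs=q$ from the large-$k$ expansion of $m^2$ in Exercise \ref{ex:DSII.m.asy.k}. The extra care you take with the identity $e_{-k}(z)=e_{-z}(k)$ and with verifying the decay class in the $k$ variable is exactly the ``short computation'' the paper leaves implicit.
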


\
\subsection{Solving the DSII Equation}
\label{DSII:subsec.time}

In this subsection we use the scattering transform to solve \eqref{DSII:bis2} with initial data $q_0 \in H^{1,1}(\R^2)$.  The putative solution $q_{\inv}$ is given by \eqref{DSII:qinv}; note that $q_{\inv}(z,0;q_0)=q_0$ by Proposition \ref{prop:DSII.Sinv}. 
We will prove:

\begin{theorem}
\label{thm:DSII.qinv.sol}
The function \eqref{DSII:qinv} is the unique global solution of \eqref{DSII:bis} for any $q_0 \in H^{1,1}(\R^2)$.
\end{theorem}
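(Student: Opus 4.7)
The plan is to handle the Schwartz case first by direct construction via the Lax representation, then extend to $H^{1,1}(\R^2)$ by density using the Lipschitz continuity of $\calS$ (Theorem \ref{thm:DS.lip}) together with the Ghidaglia--Saut local well-posedness theory (Theorem \ref{thm:DSII.GS}), and finally obtain global existence from the $L^2$-conservation provided by $\calS$ being an $L^2$-isometry.

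For $q_0 \in \scrS(\R^2)$, Sung's result (Theorem \ref{DSII:Sung}) gives $\bfs_0 \coloneqq \calS q_0 \in \scrS(\R^2)$, and the time-evolved data $\bfs(k,t) \coloneqq e^{2it(k^2+\kbar^2)} \bfs_0(k)$ remains in $\scrS(\R^2)$ for each $t$. I would apply the dual version of Proposition \ref{prop:DBAR.mpm} (with roles of $z$ and $k$ exchanged) to produce, for each $(z,t)$, the unique solutions $m^1(z,k,t), m^2(z,k,t)$ of the $\dbar_k$-system \eqref{DSII:m.inv.t}, and define $q_\inv(z,t)$ by the large-$k$ asymptotic \eqref{DSII.recon}. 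Reading Lemma \ref{lemma:DSII.m.k} in the reverse direction and invoking uniqueness for the $\dbar_z$-system shows that $(m^1,m^2)$ also satisfy the $z$-part of the Lax pair \eqref{DSII:m.x1}--\eqref{DSII:m.x2} with potential $q = q_\inv$. The central step is then to verify the $t$-part \eqref{DSII:m.t1}--\eqref{DSII:m.t2}. Following the template of Theorem \ref{NLS:thm.Lax}, I would differentiate \eqref{DSII:m.inv.t} in $t$ and, for $j=1,2$, introduce the defect
\begin{equation*}
\bfn^j(z,k,t) \coloneqq \partial_t m^j - R^j(z,k,t),
\end{equation*}
where $R^j$ is the right-hand side of \eqref{DSII:m.t1} or \eqref{DSII:m.t2}. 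A computation using the $\dbar_k$-equations for $m^1,m^2$ and the explicit $t$-dependence of $e^{it\varphi}$ shows that $(\bfn^1,\bfn^2)$ solves the homogeneous version of \eqref{DSII:m.inv.t} with the decay needed to apply the uniqueness half of Proposition \ref{prop:DBAR.mpm}, forcing $\bfn^j \equiv 0$. With both halves of the Lax pair verified, Exercise \ref{ex:DSII.crossdiff} implies $q_\inv$ is a classical solution of \eqref{DSII:bis}, hence, by Duhamel, a mild solution of \eqref{DSII:int}.

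To pass to $q_0 \in H^{1,1}(\R^2)$, I will pick $\{q_{0,n}\} \subset \scrS(\R^2)$ with $q_{0,n} \to q_0$ in $H^{1,1}$ and set $q_n(\cdot,t) \coloneqq q_\inv(\cdot,t;q_{0,n})$. The Fourier multiplier $e^{2it(k^2+\kbar^2)}$ is an $L^2$-isometry whose $H^{1,1}$-operator norm grows at most linearly in $|t|$ (its $k$-gradient picks up a factor of order $|t|(1+|k|)$, absorbed by the weighted component of the $H^{1,1}$ norm), so together with Theorem \ref{thm:DS.lip} one obtains
\begin{equation*}
\sup_{|t| \leq T} \norm[H^{1,1}]{q_n(\cdot,t) - q_\inv(\cdot,t;q_0)} \longrightarrow 0 \qquad (n \to \infty)
\end{equation*}
for every $T > 0$, and in particular convergence in $C([-T,T]; L^2(\R^2))$. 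Each $q_n$ satisfies \eqref{DSII:int}, and the Ghidaglia--Saut contraction argument is stable under $L^2$-perturbation of the initial data on its guaranteed time interval, so passing to the limit in the Duhamel formula shows that $q_\inv(\cdot,\cdot;q_0)$ itself satisfies \eqref{DSII:int} on the guaranteed interval $[0,T^*)$. Uniqueness in the Ghidaglia--Saut class is immediate from Theorem \ref{thm:DSII.GS}.

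Finally, the restriction of $\calS$ to $H^{1,1}(\R^2)$ agrees, by density, with the continuous extension to the $L^2$-isometry of Theorem \ref{DSII:NRT}(i); since $e^{2it(k^2+\kbar^2)}$ is a pointwise unimodular multiplier, this yields
\begin{equation*}
\norm[L^2]{q_\inv(\cdot,t;q_0)} \,=\, \norm[L^2]{\calS q_0} \,=\, \norm[L^2]{q_0}
\end{equation*}
for all $t$ on which $q_\inv$ is defined. Because the guaranteed local existence time $T^*$ in Theorem \ref{thm:DSII.GS} depends only on $\norm[L^2]{q_0}$, this conservation permits iteration of the local theory starting from any $t_0 \in \R$, extending the solution globally. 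The hard step in the plan is the Lax-pair verification in the Schwartz case: one must carefully match the decay and analyticity of $\partial_t m^j$ to the hypotheses of Proposition \ref{prop:DBAR.mpm}, and in particular confirm that the nonlocal quantity $g$ appearing in \eqref{DSII:m.t1}--\eqref{DSII:m.t2} is consistent with the reconstruction formula for $q_\inv$ (equivalently, that the elliptic constraint $\dbar g = -4\dee_z(|q_\inv|^2)$ is solved by the $g$ that emerges naturally from the $t$-Lax pair).
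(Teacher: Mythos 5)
Your proposal is correct and follows essentially the same route as the paper: reduce to Schwartz data by density (using the Lipschitz continuity of $\calS$ and the continuity of the Duhamel map $\Phi$ on $X$), verify the $z$-part and then the $t$-part of the Lax pair by forming defects, subtracting the non-decaying terms identified from the large-$k$ asymptotic expansions, and killing the remainder with the $\dbar$ vanishing lemma, before invoking the cross-differentiation compatibility argument of Exercise \ref{ex:DSII.crossdiff}. Your closing remarks on iterating the local theory via $L^2$-conservation make explicit a globality point the paper leaves implicit, but this is a refinement rather than a different method.
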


We begin with an important reduction.

\begin{proposition}
\label{prop:DSII.dense}
Suppose that, for each $q_0 \in \scrS(\R^2)$, $q_{\inv}(z,t;q_0)$ solves the integral equation \eqref{DSII:int}. Then $q_{\inv}(z,t;q_0)$ solves \eqref{DSII:int} for any $q_0 \in H^{1,1}(\R^2)$.
\end{proposition}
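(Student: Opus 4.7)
The plan is a density argument that leans on the $L^2$ local well-posedness theory of Ghidaglia-Saut (Theorem \ref{thm:DSII.GS}). Fix $T>0$ and choose a sequence $q_{0,n} \in \scrS(\R^2)$ converging to $q_0$ in $H^{1,1}(\R^2)$; by hypothesis each $q_n(\cdot,t) \coloneqq q_\inv(\cdot,t;q_{0,n})$ satisfies \eqref{DSII:int}. Setting $q(\cdot,t) \coloneqq q_\inv(\cdot,t;q_0)$, the goal is to pass to the limit and conclude that $q$ also satisfies \eqref{DSII:int}. For this it suffices to show $q_n \to q$ in the space $X_T = C([0,T];L^2(\R^2)) \cap L^4(\R^2 \times (0,T))$ of \eqref{DSII:X}: the linear term $V(t) q_{0,n} \to V(t) q_0$ converges in $X_T$ by unitarity of $V(t)$ on $L^2$ together with Strichartz \eqref{DSII:S1}, while the cubic nonlinearity $q\bigl(\bfS(|q|^2)+\barS(|q|^2)\bigr)$ is Lipschitz continuous from bounded subsets of $X_T$ into $L^{4/3}(\R^2\times(0,T))$ (by H\"{o}lder and $L^2$-boundedness of $\bfS, \barS$), so the retarded integral converges by Strichartz \eqref{DSII:S3}.

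First I establish uniform $L^2$-convergence using the structure of $q_\inv$. By Theorem \ref{thm:DS.lip}, $\bfs_n \coloneqq \calS q_{0,n} \to \bfs \coloneqq \calS q_0$ in $H^{1,1}(\R^2)$. Multiplication by the unimodular phase $e^{2it(k^2+\kbar^2)}$ preserves both $\|\cdot\|_{L^2}$ and $\||k|\,\cdot\,\|_{L^2}$, while $\nabla_k$-derivatives inflate by at most a factor $C(1+|t|)$ (since $\nabla_k$ of the phase is linear in $k$ and $t$); hence
\begin{equation*}
\bigl\| e^{2it(k^2+\kbar^2)}(\bfs_n - \bfs) \bigr\|_{H^{1,1}} \leq C(1+T)\,\|\bfs_n - \bfs\|_{H^{1,1}}
\end{equation*}
uniformly for $t \in [0,T]$. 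Applying $\calS$ once more as a locally Lipschitz map $H^{1,1}(\R^2) \to L^2(\R^2)$ (Proposition \ref{prop:DSII.L2}) and using that the $H^{1,1}$-norms of the inputs stay bounded in a ball depending only on $T$ and $\|q_0\|_{H^{1,1}}$, I obtain $\sup_{t\in[0,T]}\|q_n(t)-q(t)\|_{L^2} \to 0$ as $n \to \infty$.

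For the remaining $L^4(\R^2\times(0,T))$ component, I invoke the contraction-mapping construction underlying Theorem \ref{thm:DSII.GS}. Because $q_{0,n} \to q_0$ in $L^2$ with uniformly bounded $L^2$-norms, the Ghidaglia-Saut fixed-point argument on $X_{T_0}$ (for some $T_0$ depending only on $\|q_0\|_{L^2}$) produces a unique $L^2$-solution $\tilde q$ of \eqref{DSII:int} with $\tilde q(0)=q_0$ and yields the continuous-dependence statement $q_n \to \tilde q$ in $X_{T_0}$. Conservation $\|q_n(t)\|_{L^2} = \|q_{0,n}\|_{L^2}$ permits iteration of this local step, extending the convergence to $X_T$ for arbitrary $T>0$. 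The uniform $L^2$-convergence $q_n \to q$ established above forces $\tilde q = q$ in $C([0,T];L^2)$, and therefore $q \in X_T$ and $q$ satisfies \eqref{DSII:int}. The main technical subtlety is simply bookkeeping of the $t$-dependence in the Lipschitz constants when $\calS$ is applied to $e^{2it\varphi}\bfs_n$; since this dependence is polynomial, it is harmless on compact time intervals.
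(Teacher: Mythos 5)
Your overall strategy --- approximate $q_0$ in $H^{1,1}(\R^2)$ by Schwartz data, show the corresponding inverse-scattering solutions converge in the solution space $X$ of \eqref{DSII:X}, and pass to the limit in the fixed-point equation --- is the same as the paper's, and your first step is exactly the right computation: the phase multiplication $e^{2it(k^2+\kbar^2)}$ grows at most linearly in $t$ as a map on $H^{1,1}$, and composing with the locally Lipschitz map $\calS$ gives $\sup_{t\in[0,T]}\|q_n(t)-q(t)\|_{L^2}\to 0$. Where you diverge is the $L^4(\R^2\times(0,T))$ component. The paper gets this for free from the \emph{same} estimate: $\calS$ is locally Lipschitz into $H^{1,1}(\R^2)$ (Theorem \ref{thm:DS.lip}), not merely into $L^2$, and $H^{1,1}(\R^2)\hookrightarrow L^4(\R^2)$ by Exercise \ref{ex:DSII.H2}, so $\sup_{t\in[0,T]}\|q_n(t)-q(t)\|_{L^4(\R^2)}\to 0$ and hence, the time interval being bounded, $q_n\to q$ in $L^4(\R^2\times(0,T))$. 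With $q_n\to q$ in $X$ in hand, continuity of $\Phi$ on $X$ lets one pass to the limit in $q_n=\Phi(q_n)$, and the proof closes in one line.

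Your detour through the Ghidaglia-Saut contraction is workable in spirit but is the weak point of the writeup, for two reasons. First, Theorem \ref{thm:DSII.GS} as stated gives existence and uniqueness, not continuous dependence; you are implicitly re-running the contraction estimate to extract the Lipschitz dependence $q_n\to\tilde q$ in $X_{T_0}$, and that should be said explicitly. Second, and more seriously, the assertion that $T_0$ ``depends only on $\|q_0\|_{L^2}$'' and that $L^2$-conservation therefore permits iteration up to an arbitrary $T$ is delicate for an $L^2$-critical equation: the honest contraction argument closes using the smallness of $\|V(t)q_0\|_{L^4(\R^2\times(0,T))}$ as $T\to 0$, which is \emph{not} uniform over $L^2$-bounded sets, so the iteration does not follow from conservation of the $L^2$ norm alone. (It can be rescued here because the orbit $\{q(t):t\in[0,T]\}$ is a compact subset of $L^2(\R^2)$, being the continuous image of a compact interval, but you do not make that argument.) Since the direct $H^{1,1}\hookrightarrow L^4$ route sidesteps all of this and uses only facts you have already invoked, I recommend replacing the second half of your proof with it.
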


\begin{proof}
Observe that the map $q_0 \mapsto q_{\inv}(\dotarg,\dotarg;q_0)$ is a continuous map from 
$H^{1,1}(\R^2)$ to $C((0,T); H^{1,1}(\R^2))$ for any $T>0$, and recall from Exercise \ref{ex:DSII.H2}
that $H^{1,1}(\R^2)$ is continuously embedded in $L^4(\R^2)$. It follows that $r \mapsto q_{\inv}(\dotarg,t;r)$ is a continuous map from $H^{1,1}(\R^2)$ into the space $X$ (see \eqref{DSII:X}) for any $T>0$. 

Let $q_0 \in H^{1,1}(\R^2)$ and let $\{ q_{0,n} \}$ be a sequence from $\scrS(\R^2)$ with $q_{0,n} \to q_0 \in H^{1,1}(\R^2)$.  Then $q_{\inv}(\dotarg,\dotarg;q_n) \to q_{\inv}(\dotarg,\dotarg;q)$ in $X$ as $n \to \infty$. The result now follows from the fact that \eqref{DSII:int}
takes the form $q = \Phi(q)$ where $\Phi$ is continuous on $X$.
\end{proof}

Given this reduction, it suffices to prove that $q_{\inv}(z,t;q_0)$ solves \eqref{DSII:int} for any 
$q_0 \in \scrS(\R^2)$. Recall that, by Sung's work \cite{Sung:1994a,Sung:1994b,Sung:1994c}, the map $\calS$ restricts to a continuous map from $\scrS(\R^2)$ to itself,  $\bfs=\calS q_0$ is also a Schwartz class function,  and the function $t \mapsto q_{\inv}(z,t;q_0)$ is continuously differentiable as a map from $\R$ to $\scrS(\R^2)$.
It then suffices to show that $q_{\inv}(z,t;q_0)$ is a classical solution to \eqref{DSII:bis2}. In the remainder of this section, we will use the complete integrability of \eqref{DSII:bis2} to prove this fact by showing that the solution $(m^1(z,t,q_0), m^2(z,t,q_0)$ of the $\dbar_k$-problem \eqref{DSII:m.inv.t} generates a joint classical solution of 
the equations \eqref{DSII:m.x1}--\eqref{DSII:m.t2}. We will then show that, as a consequence, $q$ is a classical solution of \eqref{DSII:bis2}.

Consider the $\dbar$-problem
\begin{equation}
\label{DSII:m.kt}
\left\{
\begin{aligned}
\left( \dee_{\kbar}	m^1	\right)(z,k,t)	
		&=	e_{-k}(z) \bfs(k,t)	\overline{m^2(z,k,t)},\\
\left( \dee_{\kbar}	m^2	\right)(z,k,t)	
		&=	e_{-k}(z) \bfs(k,t)	\overline{m^1(z,k,t)	},	\\[5pt]
m^1(z,\dotarg,t)-1, \, m^2(z,\dotarg,t)	&= \bigO{|k|^{-1}}
\end{aligned}
\right.
%}
\end{equation}
where
\begin{equation}
\label{s.ev}
\bfs(k,t)	=	e^{2i(k^2+\kbar^2)t} \bfs(k).
\end{equation}
Note that the $L^4$ condition on $(m^1, m^2)$ is replaced by an asymptotic condition since, for $\bfs \in \scrS(\R^2)$, the solutions are bounded smooth functions and have complete asymptotic expansions in $k$ (see Exercise \ref{ex:DBAR.green}).

We will show that $(m^1,m^2)$ is a joint solution of the equations \eqref{DSII:m.x1}--\eqref{DSII:m.t2} where $q(z,t) = q_{\inv}(z,t;q_0)$
and that, moreover, 
$$ \lim_{|k| \rarr \infty} m^1(z,k,t) = 1 $$
for all $(z,t)$ and  $m^2(z,k,t) \neq 0$ for all $(z,t)$ and some $k \in \C$. 
These facts, together with the identity \eqref{DSII:crossdiff} from Exercise \ref{ex:DSII.crossdiff} can then be used to  show that $q(z,t)$ so defined solves the DS II equation.

In analogy to the Riemann-Hilbert problem for defocussing NLS, we will base our proof that the solutions of \eqref{DSII:m.kt} furnish solutions of the Lax equations \eqref{DSII:m.x1}--\eqref{DSII:m.t2} on a vanishing lemma, this time for the $\dbar$-system. We state it in greater generality than is needed here.

\begin{lemma}
\label{lemma:DBAR.vanish}
Suppose that $w_1$, $w_2$ are solutions of the system
\begin{equation*}
\left\{
\begin{aligned}
\left(\dee_{\kbar} w_1\right)(z,k)	&=	e_{-k} \bfs(k) \overline{w_2(z,k)},\\
\left(\dee_{\kbar} w_2\right)(z,k)	&=	e_{-k} \bfs(k) \overline{w_1(z,k)}
\end{aligned}
\right.
%}
\end{equation*}
for $\bfs \in L^2$ and $w_1$, $w_2 \in L^4_k(\R^2)$. Then $w_1=w_2=0$.
\end{lemma}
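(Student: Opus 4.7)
The plan is to decouple the system by introducing the sum and difference $v_\pm \coloneqq w_1 \pm w_2$, reducing the coupled $\dbar_k$-system to two scalar equations of Vekua type to which Theorem \ref{thm:Liouville} applies directly. This parallels the change of variables \eqref{DBAR:mpm} used in the direct problem to pass from the $2\times 2$ system \eqref{DSII:m} to the scalar equation \eqref{DSII:mpm}; here we are simply doing the analogous trick on the $k$-side.

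Concretely, differentiating,
\begin{align*}
\dee_{\kbar} v_+ &= \dee_{\kbar} w_1 + \dee_{\kbar} w_2 = e_{-k}\bfs\,\overline{w_2} + e_{-k}\bfs\,\overline{w_1} = e_{-k}\bfs\,\overline{v_+},\\
\dee_{\kbar} v_- &= \dee_{\kbar} w_1 - \dee_{\kbar} w_2 = e_{-k}\bfs\,\overline{w_2} - e_{-k}\bfs\,\overline{w_1} = -e_{-k}\bfs\,\overline{v_-},
\end{align*}
so each $v_\pm$ satisfies a scalar equation of the form $\dee_{\kbar} v = a\bar v$ with coefficient $a = \pm e_{-k}\bfs$. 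Since $|e_{-k}| = 1$ and $\bfs \in L^2(\R^2)$, we have $a \in L^2(\R^2_k)$. Moreover, $w_1, w_2 \in L^4(\R^2_k)$ gives $v_\pm \in L^4(\R^2_k)$, so $v_\pm \in L^p$ with $p = 4 > 2$. All hypotheses of Theorem \ref{thm:Liouville} (applied with the variable $k$ in place of $z$) are met, yielding $v_+ = v_- = 0$, and therefore $w_1 = \tfrac{1}{2}(v_+ + v_-) = 0$ and $w_2 = \tfrac{1}{2}(v_+ - v_-) = 0$.

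There is no real obstacle here: the content of the lemma is entirely absorbed by Theorem \ref{thm:Liouville}, and the only substantive step is noticing that the system admits this diagonalization. It is worth noting why the trick works so cleanly in the $k$-variable: unlike the direct system \eqref{DSII:m}, where the second equation contains the extra factor $\dee_z + ik$ that forces one to absorb a phase $e_{-k}$ into the change of variables, the inverse system \eqref{DSII:m.kt} is already symmetric in $w_1, w_2$ with a common coefficient, so the naive linear combination $w_1 \pm w_2$ suffices.
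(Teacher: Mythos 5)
Your proof is correct and follows exactly the route the paper indicates: the paper dispenses with the lemma by remarking that it is an easy consequence of Theorem \ref{thm:Liouville} applied to $w_\pm = w_1 \pm w_2$, which is precisely your diagonalization. Your verification that the coefficient $\pm e_{-k}\bfs$ lies in $L^2$ and that $v_\pm \in L^4$ with $4>2$ correctly fills in the details the paper leaves implicit.
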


This lemma is an easy consequence of Theorem \ref{thm:Liouville} if one considers the functions $w_\pm = w_1 \pm w_2$. 

First, we'll show that a solution of \eqref{DSII:m.kt} also solves \eqref{DSII:m.x1}--\eqref{DSII:m.x2} with 
$$m^1(\dotarg,k)-1, \, m^2(\dotarg,k) \in L^4_z(\R^2)$$ for each $k$. For notational convenience we suppress dependence on $t$. 

\begin{proposition}
\label{prop:DSII.Lax1}
Suppose that $m^1(z,k), m^2(z,k)$ solve \eqref{DSII:m.k} for each $z \in \C$. Then $m^1(\dotarg,k) - 1, \, m^2(\dotarg,k)\in L^4_z(\R^2)$ for each $k \in \C$, and $(m_1,m_2)$ solve \eqref{DSII:m.x1}--\eqref{DSII:m.x2} for each $z$, where $q(z)$ is \emph{defined} by
\begin{equation}
\label{q.recon.bis}
q(z) = -\frac{i}{\pi} \int e_k(z) \overline{\bfs(k)} m^1(z,k) \, dk. 
\end{equation}
\end{proposition}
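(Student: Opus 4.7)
The plan is to exploit the involution $\calS \circ \calS = I$ from Proposition \ref{prop:DSII.Sinv} together with the $z$-to-$k$ duality established in Lemma \ref{lemma:DSII.m.k}. Since the proposition treats $q_0 \in \scrS(\R^2)$ (so that $\bfs(\dotarg,t) \in \scrS(\R^2)$ by Sung's theorem), I work throughout under this smoothness assumption. First, I would identify the function $q$ defined by \eqref{q.recon.bis} as $(\calS \bfs)(z)$: the integral in \eqref{q.recon.bis} is exactly the formula for the scattering transform in the $k \to z$ direction, computed using the $k$-space CGO solution $m^1$ supplied by the hypothesis (note the symmetry $e_k(z) = e_z(k)$). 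By Proposition \ref{prop:DSII.Sinv}, this gives $\calS q = \bfs$.

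Next, I would invoke Proposition \ref{prop:DBAR.mpm} to produce the unique solutions $(n^1(z,k), n^2(z,k))$ of the $z$-system \eqref{DSII:m.x1}--\eqref{DSII:m.x2} with potential $q$, satisfying $n^1(\dotarg,k) - 1,\, n^2(\dotarg,k) \in L^4_z(\R^2)$ for every $k \in \C$. By Lemma \ref{lemma:DSII.m.k} applied to $q \in \scrS(\R^2)$, the pair $(n^1, n^2)$ also satisfies the $\dbar_k$-system with scattering data $\calS q = \bfs$ and with $n^i$ satisfying the same $|k|^{-1}$-decay at infinity as $m^i$. Thus both $(m^1, m^2)$ and $(n^1, n^2)$ solve the very same problem \eqref{DSII:m.k}.

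The crux is then uniqueness for \eqref{DSII:m.k}. Setting $w^i = m^i - n^i$, the pair $(w^1, w^2)$ satisfies the homogeneous system
\begin{equation*}
\dee_{\kbar} w^1 = e_{-k}\bfs\,\overline{w^2}, \qquad \dee_{\kbar} w^2 = e_{-k}\bfs\,\overline{w^1},
\end{equation*}
with $w^i(z,k) = \bigO{|k|^{-1}}$ as $|k| \to \infty$. Since $m^i$ and $n^i$ are continuous in $k$ (Sung's regularity theory for $n^i$, smoothness of $m^i$ by the hypothesis of the proposition), $w^i(z,\dotarg)$ is locally bounded and decays like $|k|^{-1}$, hence lies in $L^4_k(\R^2)$ for each fixed $z$. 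Applying Lemma \ref{lemma:DBAR.vanish} (with the roles of $z$ and $k$ exchanged, which is permitted since the lemma is symmetric in structure) forces $w^1 = w^2 = 0$. Therefore $m^i \equiv n^i$, which simultaneously gives the required $L^4_z$-integrability of $m^1 - 1$ and $m^2$ and shows that $(m^1, m^2)$ solves the $z$-system \eqref{DSII:m.x1}--\eqref{DSII:m.x2} with the stated $q$.

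The main obstacle I anticipate is the bookkeeping in the first step: verifying that \eqref{q.recon.bis} really coincides with $(\calS \bfs)(z)$ requires pinning down the precise duality between the $\dbar_k$-problem \eqref{DSII:m.k} and the $\dbar_z$-problem \eqref{DSII:m} defining $\calS$, which involves tracking the symmetry $e_k(z) = e_z(k)$ and the identification of the $k$-space CGO solution appearing in the two formulations. Beyond that, the argument is essentially mechanical duality plus two uniqueness results (Proposition \ref{prop:DBAR.mpm} in $z$ and Lemma \ref{lemma:DBAR.vanish} in $k$), so the proof is short once the identification $q = \calS\bfs$ is in place.
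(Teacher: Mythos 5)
Your argument is correct, but it takes a genuinely different route from the paper's. The paper works directly on the $\dbar_k$-system: it differentiates \eqref{DSII:m.k} in $\zbar$ and $z$ to show that the pair $\left(\dee_{\zbar}m^1,\, (\dee_z+ik)m^2\right)$ satisfies a companion $\dbar_k$-system, subtracts the leading large-$k$ contributions $\overline{c(z)}\,m^2$ and $c(z)\,m^1$ (with $c(z)=\lim_{|k|\to\infty} ik\, m^2(z,k)$, which is how \eqref{q.recon.bis} enters), and applies Lemma \ref{lemma:DBAR.vanish} to the corrected differences $w_1,w_2$; the vanishing of those differences \emph{is} the system \eqref{DSII:m.x1}--\eqref{DSII:m.x2}. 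You instead identify $q=\calS\bfs$ --- a correct reading of \eqref{q.recon.bis}, since $(n^1,n^2)=(m^1,\, e_{-z}\overline{m^2})$ solves \eqref{DSII:minv.k} using only the hypotheses of the proposition, and that system is uniquely solvable --- then build the canonical $z$-space solutions for that potential, transport them back to the $\dbar_k$-problem via Lemma \ref{lemma:DSII.m.k} together with $\calS\circ\calS=I$, and finish by $L^4_k$-uniqueness. Both are sound, and there is no circularity since Lemma \ref{lemma:DSII.m.k} and Proposition \ref{prop:DSII.Sinv} are established earlier and independently. Your route is shorter and has the mild advantage of delivering the $L^4_z$ claim for free (it is inherited from Proposition \ref{prop:DBAR.mpm}), a point the paper's proof leaves implicit; the cost is that you invoke the full strength of Proposition \ref{prop:DSII.Sinv} plus Sung's regularity theory to secure the $\bigO{|k|^{-1}}$ decay of $n^1-1$ and $n^2$ that puts the differences in $L^4_k$. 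Note also that Lemma \ref{lemma:DBAR.vanish} is already phrased in the $k$-variable, so no exchange of roles is needed, and that the paper's differentiate--subtract--vanish template is the one that must be reused for the time direction in Proposition \ref{prop:DSII.Lax2}, where no analogue of the involution is available.
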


\begin{proof}
Differentiating \eqref{DSII:m.k} we compute (see Exercise \ref{ex:DSII.dbar.zdiff})
\begin{equation}
\label{DSII:Lax1.pre1}
\left\{ 
\begin{aligned}
\dee_{\kbar} \left( \dee_{\zbar} m^1 \right)	
		&=	e_{-k} \bfs \, \overline{(\dee_z+ik)m^2}\\
\dee_{\kbar} \left( \left(\dee_z + ik \right) m^2\right)
		&=	e_{-k} \bfs \, \overline{ \dee_{\zbar} m^1 }
\end{aligned}
\right.
%}
\end{equation}
(the pointwise differentiation makes sense because, for $\bfs \in \scrS(\R^2)$, the functions $m^1$ and $m^2$ are smooth functions of both variables).
From the large-$k$ asymptotics of $m^1$, 
we see that $\dee_z m^1 = \bigO{|k|^{-1}}$, so that $\dee_z m^1 \in L^4(\R^2)$. On the other hand, 
$$ \left(\dee_z + ik \right)m^2 - c(z)  \in L^4_k(\R^2)$$
where
$$ 
c(z) = \lim_{|k| \rarr \infty} ik m^2(z,k) = \frac{i}{\pi} \int e_{-k}(z) \bfs(k) m^1(z,k) \, dk  
$$
Making a subtraction in \eqref{DSII:Lax1.pre1} we have
\begin{equation}
\label{DSII:Lax1.pre2}
\left\{
\begin{aligned}
\dee_{\kbar} w_1
	&=	e_{-k} \bfs(k) \overline{w_2 }\\
\dee_{\kbar}  w_2 
	&=	e_{-k} \bfs(k) \overline{ w_1 }
\end{aligned}
\right.
%}
\end{equation}
where
$$ w_1 = \dee_{\zbar} m^1 - \overline{c(z)} m^2, \quad w_2 = \left(\dee_z + ik\right)m^2 - c(z) m^1 $$
(see Exercise \ref{ex:DSII.dbar.zdiff}).
We can now apply Lemma \ref{lemma:DBAR.vanish} to conclude that  $m^1$ 
and $m^2$ satisfy \eqref{DSII:m.x1}--\eqref{DSII:m.x2} with $q$ as defined in 
\eqref{q.recon.bis}.
\end{proof}

\begin{remark}
\label{rem:mk.x.asy}
Since $q \in \scrS(\R^2)$, it follows that $m^1$ and $m^2$ have complete large-$z$ asymptotic expansions for each fixed $k$. 
\end{remark}

Next, we show that $m^1$ and $m^2$ satisfy \eqref{DSII:m.t1}--\eqref{DSII:m.t2} by a similar technique, now tracking the dependence of $m^1$ and $m^2$ on time.

\begin{proposition}
\label{prop:DSII.Lax2}
Suppose that $m^1(z,k,t)$ and $m^2(z,k,t)$ solve \eqref{DSII:m.kt}. Then $m^1$ and $m^2$ solve \eqref{DSII:m.t1}--\eqref{DSII:m.t2} where $q$ is defined by \eqref{q.recon.bis} and $g$ is given by 
$g = -4\dee_{\zbar}^{-1}\left( \dee_z |q|^2 \right)$. 
\end{proposition}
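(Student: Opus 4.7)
The plan is to adapt the argument of Proposition \ref{prop:DSII.Lax1} to the time equations. Define the residuals
\begin{align*}
w_1 &= \dee_t m^1 - \bigl[2i(\dee_z^2+2ik\dee_z)m^1 + 2i(\dee_{\zbar}q)m^2 - 2iq\,\dee_{\zbar}m^2 + ig m^1\bigr],\\
w_2 &= \dee_t m^2 - \bigl[-2i\dee_{\zbar}^2 m^2 + 2ik^2 m^2 - 2i(\dee_z\qbar)m^1 + 2i\qbar(\dee_z+ik)m^1 - i\gbar m^2\bigr].
\end{align*}
The goal is to prove that $(w_1,w_2)$ solves the homogeneous $\dbar_k$-system $\dee_{\kbar}w_1 = e_{-k}\bfs(k,t)\overline{w_2}$, $\dee_{\kbar}w_2 = e_{-k}\bfs(k,t)\overline{w_1}$ with $w_j(z,\cdot,t)\in L^4_k(\R^2)$ for every $z,t$, so that Lemma \ref{lemma:DBAR.vanish} forces $w_1=w_2=0$.

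The first step is to control the large-$|k|$ behavior so that the bracketed expressions decay at the same rate as $\dee_t m^j$. Applying the solid Cauchy transform $\dee_{\kbar}^{-1}$ to \eqref{DSII:m.kt} and using $\bfs \in \scrS(\R^2)$ (since $q_0 \in \scrS(\R^2)$), one gets expansions $m^1 = 1 + a_1(z,t)/k + O(k^{-2})$ and $m^2 = -i\qbar/k + \beta_2/k^2 + O(k^{-3})$. The $x$-Lax relation $\dee_{\zbar}m^1 = qm^2$ from Proposition \ref{prop:DSII.Lax1} identifies $a_1 = -i\dee_{\zbar}^{-1}|q|^2$, so that $\dee_z a_1 = -i\dee_{\zbar}^{-1}\dee_z|q|^2 = ig/4$ by the very definition of $g$. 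Matching $k^{-1}$ coefficients in $(\dee_z+ik)m^2 = \qbar m^1$ yields $i\beta_2 = \qbar a_1 + i\dee_z\qbar$. These two identities are exactly what is needed to kill the potentially $O(k^0)$ pieces of the bracketed expressions --- the combination $-4k\dee_z m^1 + ig m^1$ in the first, and $2ik^2 m^2 + 2i\qbar(ik)m^1 - 2i\dee_z\qbar$ in the second --- so that $w_1,w_2 = O(k^{-1}) \in L^4_k(\R^2)$.

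The second step is the $\dbar_k$-identity itself. Since $\dee_{\kbar}$ commutes with $\dee_t$ and $\dot\bfs(k,t) = 2i(k^2+\kbar^2)\bfs(k,t)$, the $\dbar_k$-system gives
\begin{equation*}
\dee_{\kbar}(\dee_t m^j) = 2i(k^2+\kbar^2)e_{-k}\bfs\,\overline{m^{j'}} + e_{-k}\bfs\,\overline{\dee_t m^{j'}}.
\end{equation*}
The content of the $t$-Lax equations at the algebraic level is precisely that
\begin{equation*}
\dee_{\kbar}R_j = 2i(k^2+\kbar^2)e_{-k}\bfs\,\overline{m^{j'}} + e_{-k}\bfs\,\overline{R_{j'}},
\end{equation*}
where $R_j$ denotes the bracketed expression in the definition of $w_j$. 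Verifying this requires $\dee_z e_{-k} = -ik e_{-k}$, replacing $z$-derivatives of $\overline{m^{j'}}$ by multiplications via the conjugated $x$-Lax equations, and invoking the constraint $\dee_{\zbar}g = -4\dee_z|q|^2$ to reconcile the pieces generated when $\dee_{\kbar}$ is pushed through $g$ and $\gbar$. Subtracting the two displayed identities gives the desired $\dbar_k$-system for $(w_1,w_2)$, and Lemma \ref{lemma:DBAR.vanish} completes the proof.

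The main obstacle is the direct verification of the compatibility identity for $\dee_{\kbar}R_j$: this is the DS II analogue of the computation carried out in Lemma \ref{lemma:NLS.RHP.t2} for NLS, and is algebraically dense because each term in $R_j$ fragments into several pieces under $\dee_{\kbar}$. The constraint $\dee_{\zbar}g = -4\dee_z|q|^2$ enters in precisely the place where it would otherwise leave a $g$-dependent remainder, mirroring the role it plays when DS II is derived from \eqref{DSII:Lax.z}--\eqref{DSII:Lax.t} as a zero-curvature condition (Exercise \ref{ex:DSII.crossdiff}).
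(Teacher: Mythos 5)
Your proof is correct and follows essentially the same route as the paper: your residuals $w_1,w_2$ are literally the paper's $w_1,w_2$ (the paper merely organizes them as a top-order piece $v_j=(\dee_t \mp 2i\dee^2 \pm\cdots)m^j$ plus lower-order corrections), and both arguments hinge on the same two facts — the $O(k^{-1})$ decay obtained from the expansion coefficients $a_1=-i\dee_{\zbar}^{-1}(|q|^2)$, $i\beta_2=\qbar a_1+i\dee_z\qbar$, and the homogeneous $\dbar_k$-system for $(w_1,w_2)$ followed by Lemma \ref{lemma:DBAR.vanish}. One small imprecision: since $g$ is independent of $k$, no terms are ``generated when $\dee_{\kbar}$ is pushed through $g$''; the definition $g=-4\dee_{\zbar}^{-1}(\dee_z|q|^2)$ enters only in the decay step, where it cancels the $O(1)$ contribution of $-4k\dee_z m^1$ against $igm^1$, exactly as you note earlier in your first step.
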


\begin{proof}
At top order the Lax equations \eqref{DSII:m.t1}--\eqref{DSII:m.t2} (taking $\eps=+1$ here and in what follows) imply that
$$
v_1 \coloneqq \left( \dee_t -  2i \dee_z^2 + 4k \dee_z \right) m_1 \sim 0, \quad
v_2 \coloneqq \left( \dee_t + 2i \dee_{\zbar}^2 -  2ik^2 \right) m^2 \sim 0
$$
where the corrections vanish as $|z| \to \infty$. 
Motivated by this observation, we differentiate \eqref{DSII:m.kt} and compute
\begin{align}
\label{DSII:dbart1}
\dee_{\kbar} v_1
	&=	e^{it\varphi} \bfs \overline{v_2}\\
\label{DSII:dbart2}
\dee_{\kbar} v_2 
	&=	e^{it\varphi} \bfs \overline{v_1}
\end{align}
(see Exercise \ref{ex:DSII.dbar.timediff})
where $\varphi$ is given by \eqref{DSII:phase}.
If $v_1$ and $v_2$ were decreasing at infinity as functions of $k$, Lemma \ref{lemma:DBAR.vanish} would allow us to conclude that $v_1=v_2=0$. This is not the case since $k^2 m^2$ is of  order $k$ as $k \to \infty$ and $4k\dee_z m_1$ is of order $1$ as $k \to \infty$. For this reason we must make a subtraction using the asymptotic expansions of $m^1$ and $m^2$ which will lead to the remaining, lower-order terms in \eqref{DSII:m.t1}--\eqref{DSII:m.t2}. From Exercise \ref{ex:DSII.m.asy.k},
we have 
\begin{align}
\label{m1.asy.k}
m^1(z,k)	
	&=	1	-	\frac{i  \dee_{\zbar}^{-1}\left(|q|^2\right)}{k} + \bigO{k^{-1}}\\
\label{m2.asy.k}
m^2(z,k)
	&=	\frac{-i  \qbar}{k} + \frac{-\qbar \dee_{\zbar}^{-1}\left(|q|^2\right) +%%%%i   error!
		\dee_z \qbar}{k^2} + \bigO{k^{-3}}
\end{align}
so that
\begin{align}
\label{v1.asy.k}
v_1 	&=	-4i   \dee_z  \dee_{\zbar}^{-1} \left( |q|^2 \right) + \bigO{k^{-1}},\\
\label{v2.asy.k}
v_2	&=	-2  \qbar k 
					-	2i \left(
								-q \dee_{\zbar}^{-1}\left(|q|^2\right) +  \dee_z \qbar
							\right) 
					+ \bigO{k^{-1}}.
\end{align}
Thus if
\begin{align}
\label{w1.def}
w_1	&=	v_1 - 2i(\dee_{\zbar} q )m^2 + 2iq \dee_{\zbar} m^2 - ig m^1,\\
\label{w2.def}
w_2	&=	v_2 +2i  (\dee_z \qbar) m^1 - 2i \qbar(\dee_z + ik)m^1+ i\gbar m_2,
\end{align}
it follows from the asymptotic expansions \eqref{m1.asy.k}--\eqref{m2.asy.k} and \eqref{v1.asy.k}--\eqref{v2.asy.k}
that $w_1 = \bigO{k^{-1}}$ and $w_2 = \bigO{k^{-1}}$ (see Exercise \ref{ex:DSII.w1w2}),
while a straightforward computation (Exercise \ref{ex:DSII.w1w2.k}) shows that 
\begin{equation}
\label{DSII:Lax2.pre1}
\left\{
\begin{aligned}
\dee_{\kbar} w_1 	&=	e^{it\varphi} \bfs \overline{w_2}\\
\dee_{\kbar} w_2	&=	e^{it\varphi} \bfs \overline{w_1}
\end{aligned}
\right.
%}
\end{equation}
We can now use Lemma \ref{lemma:DBAR.vanish} to conclude that $w_1=w_2=0$ and \eqref{DSII:m.t1}--\eqref{DSII:m.t2}  hold.
\end{proof}

\begin{proof}[Proof of Theorem \ref{thm:DSII.qinv.sol}]
It follows from Proposition \ref{prop:DSII.Sinv} that $	q_{\inv}(z,0;q_0)  = q_0(z)$, so it suffices to show that $q_{\inv}$ is a classical solution of \eqref{DSII:bis}. By Proposition \ref{prop:DSII.dense} it suffices to prove that this is the case for $q_0 \in \scrS(\R^2)$.  By Propositions \ref{prop:DSII.Lax1}--\ref{prop:DSII.Lax2}, the functions $m^1$ and $m^2$ solve \eqref{DSII:m.x1}--\eqref{DSII:m.t2}.  If we now set $\psi_1 = e^{i(kz-k^2t)} m^1$ and $\psi_2 = e^{i(kz-k^2t)}m^2$,   it will follow from the computations in Exercise \ref{ex:DSII.crossdiff} that
\eqref{DSII:crossdiff} holds with $q=q_{\inv}$ provided we can show that $\psi_1$ and $\psi_2$ satisfy conditions (i)--(iii) given there. 

Conditions (i) and (ii) are proved in Proposition \ref{prop:DSII.Lax1}. Condition (iii) is equivalent to the statement that, for each $(t,z)$, $m^2(z,k,t) \neq 0$ for at least one $k$. If not, it follows from Lemma \ref{lemma:DSII.m.k} that $\dee_{\kbar} m^1(z,k) \equiv 0$ so $m^1(z,k,t) \equiv 1$ and $m^2(z,k,t) \equiv 0$ for this fixed $(z,t)$ and all $k$. It then follows from the $\dee_{\kbar}$ equation for $m^2$ that $\bfs(k,t) \equiv 0$ for this fixed $t$. But then, since $\bfs$ evolves linearly, we get $\bfs(k,t) \equiv 0$, hence $q(z,t) \equiv 0$ by the invertibility of $\calS$. Hence, $m^2(z,k,t) \neq 0 $ for some $k$, and conditions (i)--(iii) hold.
\end{proof}

\subsection{\texorpdfstring{$L^2$ Global Well-Posedness and Scattering: The Work of Nachman, Regev, and Tataru}{L2 Global Well-Posedness and Scattering}}
\label{DSII:subsec.NRT}

In this section we discuss briefly the results of Nachman, Regev, and Tataru \cite{NRT:2017}.  Their first result is a remarkable strengthening of Theorem \ref{thm:DS.lip}. 

\begin{theorem}\cite[Theorem 1.2]{NRT:2017}
\label{thm:DS.lip.L2}
The scattering transform $\calS$ is a diffeomorphism from  $L^2(\R^2)$ onto itself with $\calS^{-1} = \calS$. 
Moreover, $\norm[L^2]{\calS q} = \norm[L^2]{q}$, and the pointwise bound
\begin{equation}
\label{DSII:S.max}
\left| \left(\calS q\right)(k) \right| \leq C\left(\norm[L^2]{q} \right) \calM\widehat{q}(k)
\end{equation}
holds.
\end{theorem}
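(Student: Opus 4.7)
The plan is to establish the stronger theorem by promoting the argument structure used for Theorem \ref{thm:DS.lip} from $H^{1,1}$ to $L^2$, replacing every ingredient that relied on smoothness/decay of $q$ with one that survives on the critical space. The engine of the whole proof is the fractional integral estimate \eqref{DBAR.est.fourier}: this already respects $L^2$ scaling in $q$ and packages the $k$-dependence into $\calM\widehat{q}$, which is exactly the right object to land on the right-hand side of \eqref{DSII:S.max}. So once the $\dbar$-system \eqref{DSII:m} can be solved with uniform $L^4$-control of $m^1-1$ and $m^2$ for $q$ only in a bounded set of $L^2$, the maximal-function bound follows by essentially the same ``integration by parts'' calculation used in Proposition \ref{prop:DSII.L2} and Remark \ref{rem:DSII.nonlinear-ft}.

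The main obstacle, and the step that forces genuinely new ideas, is to prove a uniform resolvent estimate
\[
\sup_{k\in\C,\ \norm[L^2]{q}\le R}\, \norm[L^4\to L^4]{(I-T(q,k))^{-1}} \;<\;\infty,
\]
where $T=T(q,k)$ is the compact operator of Proposition \ref{prop:DBAR.mpm}. The $H^{1,1}$ route taken in Lemmas \ref{lemma:resT.est1}--\ref{lemma:resT.est2} is unavailable: the Bukhgeim-type identity \eqref{Bukhgeim} needed $\dee_{\zbar} q \in L^2$ to gain smallness for large $|k|$, and the Kolmogorov--Riesz compactness of bounded subsets of $H^{1,1}$ in $L^2$ gave uniform control on compact $k$-sets. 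In $L^2$ both fail. I would instead argue by contradiction and concentration compactness (in the spirit of Gérard's profile decomposition): suppose the supremum is infinite along $(q_n,k_n)$ with $f_n\in L^4$, $\norm[L^4]{f_n}=1$, and $\norm[L^4]{(I-T_n)f_n}\to 0$. After translating in $z$ and in Fourier space (the two invariances of the equation under $q\mapsto e_{k_0}q$ and $q(\cdot)\mapsto q(\cdot-z_0)$), extract a nontrivial weak profile $(q_\infty,k_\infty,f_\infty)$ with $f_\infty$ solving the limiting homogeneous equation $(I-T(q_\infty,k_\infty))f_\infty=0$ with $f_\infty\in L^4$ and $q_\infty\in L^2$. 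The vanishing Theorem \ref{thm:Liouville} (exactly the form proved via $\nu=\exp(-\dee_{\zbar}^{-1}(a_n\bar u/u))$) then forces $f_\infty\equiv 0$, contradicting the extraction. This is the heart of the proof and where all the technical effort concentrates.

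Granting the uniform resolvent bound, the remaining steps are routine adaptations. The solution formula \eqref{DSII:m.model.sol} together with \eqref{DBAR.est.fourier} immediately yields the pointwise estimate
\[
|m^1(z,k)-1|+|m^2(z,k)|\;\lesssim\;C(\norm[L^2]{q})\,(\calM q(z))^{1/2}(\calM\widehat{q}(k))^{1/2},
\]
uniformly in $(z,k)$. Plugging this into the representation \eqref{S.split} and repeating the integration by parts that produced \eqref{DSII:L2.2} bounds the nonlinear correction $I(k)$ pointwise by $C(\norm[L^2]{q})\,\calM\widehat{q}(k)$, which together with $|\widehat{\qbar}(k)|\le \calM\widehat{q}(k)$ at Lebesgue points gives \eqref{DSII:S.max}. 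Boundedness of $\calS\colon L^2\to L^2$ then follows from the maximal inequality \eqref{HLMax.p} at $p=2$. Lipschitz dependence of $\calS q$ on $q\in L^2$ comes from the same multilinearity/resolvent-continuity argument used in Lemmas \ref{lemma:resT.est3} and Proposition \ref{prop:DSII.L2}, now with the uniform resolvent bound standing in for the $H^{1,1}$-based one.

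The two remaining properties, the involution $\calS\circ\calS=I$ and the isometry $\norm[L^2]{\calS q}=\norm[L^2]{q}$, are first verified on the dense subspace $\scrS(\R^2)$: the involution is Proposition \ref{prop:DSII.Sinv}, and the isometry on Schwartz data is the classical Plancherel-type identity for the DSII scattering transform (Beals--Coifman, Sung). Both then extend to all of $L^2$ by density, using the just-established continuity of $\calS$ on $L^2$. Combining continuity with $\calS^2=I$ gives that $\calS$ is a homeomorphism of $L^2$ onto itself, and the Lipschitz bounds promote this to a bi-Lipschitz (hence diffeomorphism in the appropriate sense) statement. The upshot is that essentially every $H^{1,1}$-argument in Section \ref{DSII:subsec.scatt} transfers verbatim to $L^2$ once the concentration-compactness resolvent bound is in hand; that single step is what I expect to consume the bulk of the work.
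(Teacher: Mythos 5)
The paper does not actually prove this theorem: Section \ref{DSII:subsec.NRT} is expository, the result is quoted from \cite{NRT:2017}, and the only indication of its proof given there is the remark that it ``rests on'' the resolvent estimate of Theorem \ref{thm:DS.l2.res}, itself stated without proof. Your outline correctly identifies the architecture that Nachman, Regev, and Tataru use --- a uniform-in-$(q,k)$ resolvent bound obtained by concentration compactness, combined with the fractional integral/maximal function estimate \eqref{DBAR.est.fourier} and the integration-by-parts device behind \eqref{DSII:L2.2} --- so at the level of strategy you are aligned with the source.

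The gap is that the one step carrying all the weight, the uniform resolvent estimate, is only gestured at, and the contradiction argument as you sketch it does not close. If $\norm[L^4]{f_n}=1$ and $\norm[L^4]{(I-T(q_n,k_n))f_n}\to 0$, extracting a weak limit after modulating by the symmetries does not by itself produce a \emph{nontrivial} limiting triple: the $q_n$ can split their $L^2$ mass among several diverging profiles or vanish weakly, and the $f_n$ can concentrate where the $q_n$ do not, so the limiting equation $(I-T(q_\infty,k_\infty))f_\infty=0$ may hold only with $f_\infty=0$ or $q_\infty=0$, yielding no contradiction. Ruling this out is precisely why \cite{NRT:2017} need a full profile decomposition and an induction on $\norm[L^2]{q}$ (a minimal-counterexample argument), and why they formulate the estimate for $L_q^{-1}\colon \dot{H}^{-1/2}(\R^2)\to\dot{H}^{1/2}(\R^2)$, where the translation and modulation symmetries act isometrically, rather than as an $L^4\to L^4$ bound on $(I-T)^{-1}$. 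Two smaller points: the Plancherel identity $\norm[L^2]{\calS q}=\norm[L^2]{q}$ is asserted on $\scrS(\R^2)$ by citation rather than derived, yet it is itself one of the conclusions being proved, so the density argument needs an independent proof of it on a dense class; and local bi-Lipschitz continuity together with $\calS\circ\calS=I$ gives a homeomorphism, not a diffeomorphism --- the latter requires differentiability of $\calS$, which is a separate (if more routine) argument once the resolvent bound is in hand.
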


Theorem \ref{thm:DS.lip.L2} rests on the following resolvent estimate which is proven using concentration compactness methods. Denote by $\dot{H}^{1/2}(\R^2)$ the homogeneous Sobolev space of order $1/2$, which embeds continuously into $L^4(\R^2)$, and denote by $\dot{H}^{-1/2}(\R^2)$ its topological dual. The authors consider the model equation
$$ L_q u = f, \quad L_q u = \dee_{\zbar} u + q \ubar$$ (compare \eqref{DSII:m.model}) for $u \in \dot{H}^{1/2}(\R^2)$, where $f \in \dot{H}^{-1/2}(\R^2)$.  

\begin{theorem}\cite[Theorem 1.1]{NRT:2017}
\label{thm:DS.l2.res}
The estimate
$$ \norm[\dot{H}^{-1/2}(\R^2) \to H^{1/2}(\R^2)]{L_q^{-1}} \leq C\left( \norm[L^2]{q} \right) $$
where $t \to C(t)$ is an increasing, locally bounded function on $[0,\infty)$. 
\end{theorem}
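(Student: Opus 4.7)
The plan is to use the structure of $L_q$ as a compact perturbation of $\dee_{\zbar}$ to reduce the estimate to two tasks: (a) establishing invertibility on each fixed $q$, and (b) upgrading to a uniform bound depending only on $\norm[L^2]{q}$. Concretely, applying $\dee_{\zbar}^{-1}$ to $L_q u = f$ rewrites the problem as
\begin{equation*}
(I + T_q) u = \dee_{\zbar}^{-1} f, \qquad T_q u \coloneqq \dee_{\zbar}^{-1}(q \ubar),
\end{equation*}
where $\dee_{\zbar}^{-1}$ maps $\dot{H}^{-1/2}$ isomorphically to $\dot{H}^{1/2}$. Since $\dot{H}^{1/2}(\R^2) \hookrightarrow L^4(\R^2)$, the proof of Lemma \ref{lemma:S.compact} adapts to show that $T_q$ is compact on $\dot{H}^{1/2}$ for any $q \in L^2(\R^2)$; thus $I+T_q$ is Fredholm of index zero.

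Next I would establish triviality of $\ker(I+T_q)$ in $\dot{H}^{1/2}$. Any $u$ in the kernel satisfies $L_q u = 0$ weakly, and by the continuous embedding $\dot{H}^{1/2} \hookrightarrow L^4$ such a $u$ is an $L^p$ solution ($p = 4 > 2$) of the antiholomorphic equation with $L^2$ coefficient. Theorem \ref{thm:Liouville} then forces $u=0$. Combined with Fredholmness, this gives invertibility of $L_q$ for every $q$, so $\norm{L_q^{-1}}$ is at least a finite quantity for each fixed $q$; the remaining question is whether it can be bounded by a function of $\norm[L^2]{q}$ alone.

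The uniform bound is the hard part, and the natural tool is concentration compactness. Suppose for contradiction that there is a sequence $(q_n)$ with $\norm[L^2]{q_n} \leq M$ and $(u_n,f_n)$ satisfying $L_{q_n} u_n = f_n$, $\norm[\dot{H}^{1/2}]{u_n} = 1$, and $\norm[\dot{H}^{-1/2}]{f_n} \to 0$. The equation $L_q u = f$ is invariant under the two-parameter group of translations $z \mapsto z - a$ and $L^2$-critical dilations $u(z) \mapsto u(\lambda z)$, $q(z) \mapsto \lambda q(\lambda z)$, $f(z) \mapsto \lambda f(\lambda z)$, which act isometrically on $L^2$, $\dot{H}^{1/2}$, and $\dot{H}^{-1/2}$ respectively. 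I would apply a G\'erard-type profile decomposition \cite{Gerard:1998} to the bounded sequence $q_n \in L^2$, writing
\begin{equation*}
q_n(z) = \sum_{j=1}^J \lambda_n^{(j)} Q^{(j)}\!\left(\lambda_n^{(j)}(z - z_n^{(j)})\right) + r_n^J(z),
\end{equation*}
with asymptotically orthogonal parameters $(\lambda_n^{(j)}, z_n^{(j)})$ and remainders vanishing in a dispersive sense as $J, n \to \infty$. Transporting $u_n$ and $f_n$ by the corresponding isometries and passing to weak limits produces, for each profile, a candidate solution $U^{(j)} \in \dot{H}^{1/2}$ of $L_{Q^{(j)}} U^{(j)} = 0$; the vanishing theorem forces $U^{(j)}=0$. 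Orthogonality of profiles then implies $\norm[\dot{H}^{1/2}]{u_n} \to 0$, contradicting $\norm[\dot{H}^{1/2}]{u_n}=1$.

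The main obstacle is the profile decomposition step: controlling the error from the remainder $r_n^J$ and showing that the compact operator $T_{q_n}$ respects the decomposition in the limit requires careful bilinear estimates of the form $\norm[L^4]{\dee_{\zbar}^{-1}(q v)} \lesssim \norm[L^2]{q}\,\norm[L^4]{v}$ applied to each scale and location, together with the fact that $L^2$ is exactly the scaling-critical regularity so that no profile can be discarded by concentration at scale $0$ or $\infty$. Everything else is Fredholm theory plus Theorem \ref{thm:Liouville}.
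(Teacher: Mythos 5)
First, a point of comparison: the paper does not prove this statement. It is quoted from \cite[Theorem 1.1]{NRT:2017}, and the surrounding text says only that it ``is proven using concentration compactness methods.'' So there is no in-paper proof to measure your sketch against; it can only be judged against the strategy it names, which is indeed the strategy of Nachman--Regev--Tataru. Your reduction to $(I+T_q)u=\dee_{\zbar}^{-1}f$, the use of $\dot{H}^{1/2}(\R^2)\hookrightarrow L^4(\R^2)$ to place kernel elements within the scope of Theorem \ref{thm:Liouville}, and the identification of the translation/dilation symmetries that make $L^2$ the critical space for $q$ are all correct and are the right first moves.

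The genuine gap is that the uniform bound --- which is the entire content of the theorem, since invertibility for each fixed $q$ only yields a constant depending on $q$ itself rather than on $\norm[L^2]{q}$ --- is only gestured at, and the two steps you leave open are exactly the ones that can fail if done naively. (i) After transporting by the symmetries attached to the $j$-th profile, the weak limit $U^{(j)}$ sees the whole sequence $q_n$, not just $Q^{(j)}$; to conclude $L_{Q^{(j)}}U^{(j)}=0$ you must show that the other profiles and the remainder $r_n^J$ contribute nothing in the limit, which requires bilinear decoupling estimates for $\dee_{\zbar}^{-1}(q\,\ubar)$ across asymptotically orthogonal scales and positions, together with smallness of $r_n^J$ in a sense strictly weaker than the $L^2$ norm (the remainder does \emph{not} tend to zero in $L^2$). (ii) Even granting $U^{(j)}=0$ for every $j$, the conclusion $\norm[\dot{H}^{1/2}]{u_n}\to 0$ does not follow from a profile decomposition of $q_n$ alone: $u_n$ could carry its $\dot{H}^{1/2}$ mass at scales and locations unrelated to those of $q_n$. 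One needs either a simultaneous decomposition of $u_n$ or, as in \cite{NRT:2017}, an induction on the number of profiles that exploits the invertibility of $L_q$ by Neumann series when $\norm[L^2]{q}$ is small (cf.\ \eqref{S.est}). A smaller issue: your Fredholm route requires compactness of $T_q$ on $\dot{H}^{1/2}$, which does not follow verbatim from Lemma \ref{lemma:S.compact} (an $L^p$ statement); the alternative in \cite{NRT:2017} is to deduce surjectivity from the a priori estimate by the method of continuity along $t\mapsto tq$, $t\in[0,1]$, which bypasses compactness entirely.
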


As an immediate consequence of Theorem \ref{thm:DS.lip.L2}, we have:

\begin{theorem}\cite[Theorem 1.4]{NRT:2017}
For any Cauchy data $a_0 \in L^2(\R^2)$, the defocussing DS II equation has a unique global solution in $C(\R,L^2(\R^2) \cap L^4(\R^2 \times \R)$. 
\end{theorem}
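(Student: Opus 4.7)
The plan is to construct the solution directly by inverse scattering, namely
\[
q(z,t) \;=\; \calS\bigl(\, e^{2it((\,\cdot\,)^2 + \overline{(\,\cdot\,)}^2)}\,(\calS q_0)(\,\cdot\,)\bigr)(z),
\]
and to verify each of the required properties using Theorem \ref{thm:DS.lip.L2} together with the Strichartz estimates from Section \ref{sec:DSII.lwp}. First I would put $\bfs_0 = \calS q_0 \in L^2(\R^2)$ (an $L^2$-isometry by Theorem \ref{thm:DS.lip.L2}) and let $\bfs(k,t) = e^{2it(k^2+\kbar^2)}\bfs_0(k)$. Pointwise-in-$t$ multiplication by the unimodular symbol $e^{2it(k^2+\kbar^2)}$ is a strongly continuous unitary group on $L^2(\R^2)$, so $\bfs(\cdot,t) \in C(\R;L^2(\R^2))$. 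Applying $\calS$ once more and using Lipschitz continuity and the $L^2$-isometry from Theorem \ref{thm:DS.lip.L2}, $q \in C(\R;L^2(\R^2))$ with $\|q(t)\|_{L^2}=\|q_0\|_{L^2}$.

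For the $L^4$ bound, I would exploit the maximal-function estimate \eqref{DSII:S.max}: for each $t$,
\[
|q(z,t)| \;\le\; C(\|q_0\|_{L^2})\, \calM\bigl(\widehat{\bfs(\cdot,t)}\bigr)(z).
\]
The key observation is that $\widehat{\bfs(\cdot,t)} = \calF_a \bfs(\cdot,t) = V(t) v_0$, where $v_0 = \calF_a \bfs_0 \in L^2(\R^2)$ and $V(t)$ is the linear DS~II propagator (see \eqref{DSII:lin.bis}); indeed, the antilinear Fourier transform $\calF_a$ conjugates multiplication by $e^{2it(k^2+\kbar^2)}$ to $V(t)$. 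By the Hardy--Littlewood maximal inequality in the $z$-variable and the Strichartz estimate \eqref{DSII:S1},
\[
\|q\|_{L^4_{z,t}(\R^2\times\R)} \;\lesssim\; C(\|q_0\|_{L^2})\,\|V(\,\cdot\,)v_0\|_{L^4_{z,t}(\R^2\times\R)} \;\lesssim\; C(\|q_0\|_{L^2})\,\|q_0\|_{L^2},
\]
which gives global membership in $L^4(\R^2\times\R)$.

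Next I would verify that $q$ solves the integral equation \eqref{DSII:int}, thereby identifying it with a DS~II solution in the sense of Section \ref{sec:DSII.lwp}. This is done by density, exactly as in Proposition \ref{prop:DSII.dense}: for $q_0 \in \scrS(\R^2)$ the function $q_\inv$ solves \eqref{DSII:bis2} classically by Theorem \ref{thm:DSII.qinv.sol}, hence \eqref{DSII:int} by Duhamel (Exercise \ref{ex:DSII.Duhamel}). For general $q_0 \in L^2$, approximate by $q_{0,n} \in \scrS(\R^2)$ with $q_{0,n}\to q_0$ in $L^2$. Lipschitz continuity of $\calS$ in $L^2$ (Theorem \ref{thm:DS.lip.L2}) and unitarity of the scattering-data evolution yield convergence of $q_n\to q$ in $C([-T,T];L^2)$ for every $T$, while the above maximal-function estimate applied to the difference $q_n-q$ (with initial data $q_{0,n}-q_0$) gives convergence in $L^4(\R^2\times[-T,T])$. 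Both sides of \eqref{DSII:int}, viewed as continuous maps on $X=C((0,T);L^2)\cap L^4(\R^2\times(0,T))$ through the fixed-point operator $\Phi$ of \eqref{DSII:Phi}, therefore pass to the limit.

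Finally, uniqueness is inherited from the local theory. The Ghidaglia--Saut fixed-point argument (Theorem \ref{thm:DSII.GS}) produces a unique solution in $X$ on a time interval $[0,T^*]$ whose length depends only on $\|q_0\|_{L^2}$; since our inverse-scattering solution conserves $\|q(t)\|_{L^2}$, this uniqueness can be iterated to cover all of $\R$, proving that $q$ is the unique element of $C(\R;L^2)\cap L^4(\R^2\times\R)$ satisfying \eqref{DSII:int} with initial datum $q_0$. The main obstacle I anticipate is the passage-to-the-limit step establishing the integral equation: one must marry the $L^2$-continuity coming from the $\calS$-isometry with the global-in-time Strichartz/maximal-function control, uniformly over an approximating Schwartz sequence, so that the cubic nonlocal term in $\Phi$ converges strongly in $L^{4/3}_{z,t}$ and the Duhamel integral is preserved.
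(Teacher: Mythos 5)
Your overall strategy is the one the paper (following \cite{NRT:2017}) intends: define $q$ by the inverse scattering formula, get $C(\R;L^2)$ and norm conservation from the isometry property of $\calS$, get the global $L^4_{z,t}$ bound from the pointwise maximal estimate \eqref{DSII:S.max} combined with the Hardy--Littlewood maximal inequality and the Strichartz estimate \eqref{DSII:S1}, and then identify $q$ with the Duhamel solution by density from Schwartz data. That part of the argument is sound, and your identification of $\calM\widehat{\bfs(\cdot,t)}$ with (a reflection of) the linear evolution $V(t)v_0$ is exactly how the paper explains the $L^4$ membership.

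The genuine gap is in the density step, precisely where you anticipated trouble. You propose to get $q_n\to q$ in $L^4(\R^2\times[-T,T])$ by ``applying the maximal-function estimate to the difference $q_n-q$ with initial data $q_{0,n}-q_0$.'' That is not legitimate: $\calS$ is nonlinear, so $\calS(\bfs_n(\cdot,t))-\calS(\bfs(\cdot,t))\neq\calS(\bfs_n(\cdot,t)-\bfs(\cdot,t))$, and the bound \eqref{DSII:S.max} controls only $|\calS q|$ for a single $q$, not differences. Without strong $L^4_{z,t}$ convergence you cannot pass to the limit in the trilinear term $q\,(\bfS(|q|^2)+\barS(|q|^2))$ in $L^{4/3}_{z,t}$: uniform $L^4$ bounds plus $C([-T,T];L^2)$ convergence give convergence only in $L^p_{z,t}$ for $p<4$, and H\"older then forces at least one factor to be measured in an $L^s$ with $s>4$, which you do not control. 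To close this step you need either the difference (stability) version of the maximal-function bound, which is part of what \cite{NRT:2017} actually prove (the paper alludes to this when it says their Theorem 1.4 ``also includes stability estimates''), or the standard perturbative Strichartz argument for $L^2$-critical equations: partition $\R$ into finitely many intervals on which the limiting solution has small $L^4_{z,t}$ norm and run a contraction/bootstrap for the difference on each, using \eqref{DSII:S2}--\eqref{DSII:S3}. Either route works, but one of them must be supplied; as written the limit passage is unjustified. The uniqueness argument at the end (local uniqueness from Theorem \ref{thm:DSII.GS} iterated using $L^2$ conservation) is fine modulo the usual remark that uniqueness in the full class $C(\R;L^2)\cap L^4$ rather than in a ball of $X$ follows from the absolute continuity of the $L^4_{z,t}$ norm on short intervals.
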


The authors' Theorem 1.4 also includes stability estimates, pointwise bounds, and a global bound on the $L^4$ norm of the solution in space and time.

The pointwise bound \eqref{DSII:S.max} plays a crucial role in the authors' analysis of scattering for the DS II equation. Applied to the solution $q_{\inv}(z,t)$
given by \eqref{DSII:IST-sol} it implies that
$$ |q_{\inv}(z,t)| \lesssim C\left(\norm[L^2]{\calS q_0}\right) \calM q_{\lin} (z,t) $$
where
$$ q_\lin(z,t) = \calF_a \left( e^{it\left( (\dotarg)^2 + (\overline{\dotarg})^2 \right)} (\calS q_0) \right)(z).
$$
which is exactly the solution to the linear problem \eqref{DSII:lin.bis} with initial data 
$$v_0 = \left(\calF_a \circ  \calS \right)(q_0).$$ 
Since the maximal function is bounded between $L^p$-spaces for $p \in (1,\infty)$, this implies immediately that 
$L^p$ estimates in space or mixed $L^p-L^q$ estimates in space and time which hold for the linear problem, automatically hold for the nonlinear problem for $q_0$ on bounded subsets of $L^2(\R^2)$. In particular, it follows that $q \in L^4(\R^2 \times \R)$.  

Using these estimates, Nachman, Regev and Tataru show that all solutions scatter in $L^2(\R^2)$ and that, indeed the scattering is trivial in the sense that past and future asymptotics are equal. Denote by $U(t)$ the nonlinear evolution
$$ U(t)f (z) = \calS^{-1} \left( e^{it\left( (\dotarg)^2 + (\overline{\dotarg})^2 \right)} \left( \calS f \right) \right)(z).$$
and by $V(t)$ the linear evolution
$$ V(t) f(z) =  \calF_a^{-1}  \left( e^{it\left( (\dotarg)^2 + (\overline{\dotarg})^2 \right)} \left( \calF_a f \right) \right)(z). $$
In scattering theory we seek initial data $v_\pm$ for the linear equation so that
$$ \lim_{t \to \pm \infty} \norm[L^2]{U(t) q_0  - V(t) v_\pm} = 0 $$
as vectors in $L^2(\R^2)$.
Formally we have
$$ v_\pm(t) = \lim_{t \to \pm \infty} V(-t) U(t) q_0 $$
if the limit exists. The limiting maps, if they exist, are the nonlinear wave operators $W^\pm$.  

To show the convergence, it suffices to show that
$$ \frac{d}{dt} V(-t) U(t) q_0 = V(-t) N(q(t)) $$
is integrable as an $L^2$-valued function of $t$, where $N(q) = q (g+\gbar)$ is the nonlinear term in the DS II equation. Since $q(z,t) \in L^4(\R^2 \times \R^2)$, it follows that $N(q(t)) \in L^{4/3}(\R^2 \times \R^2)$, since the Beurling operator $\dbar^{-1} \dee$ is bounded from $L^p$ to itself for any $p \in (1,\infty)$. From the Strichartz estimate
\eqref{DSII:S3},
it follows that $V(-t) N(q(t))$ is integrable as an $L^2$-valued function of $t$, so that
the asymptotes $v_\pm$ exist. Hence:

\begin{theorem}\cite[Lemma 5.5]{NRT:2017}
The nonlinear wave operators $W^\pm$ exist and are Lipschitz continuous maps on $L^2(\R^2)$. 
\end{theorem}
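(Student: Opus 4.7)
The plan is to establish existence of the limits $W^\pm q_0 = \lim_{t\to\pm\infty} V(-t)U(t) q_0$ in $L^2(\R^2)$ by showing that $t\mapsto V(-t)U(t)q_0$ is Cauchy as $t\to\pm\infty$, using the fundamental theorem of calculus together with the $L^4_{z,t}$ control coming from the pointwise maximal-function bound \eqref{DSII:S.max}. Then Lipschitz continuity of $W^\pm$ will follow from repeating the same argument on differences, combined with the Lipschitz continuity of $\calS$ and $\calS^{-1}$ on $L^2(\R^2)$ from Theorem \ref{thm:DS.lip.L2}.

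First, I would fix $q_0\in L^2(\R^2)$ and let $q(t)=U(t)q_0$ be the global solution. Writing \eqref{DSII:non-loc} in the form $iq_t+2(\dee_z^2+\dee_{\zbar}^2)q = N(q)$ with $N(q) = -4q\bigl(\bfS(|q|^2)+\barS(|q|^2)\bigr)$, Duhamel's formula gives $U(t)q_0=V(t)q_0-i\int_0^t V(t-s)N(q(s))\,ds$, so
\begin{equation*}
V(-t)U(t)q_0 - q_0 = -i\int_0^t V(-s) N(q(s))\,ds.
\end{equation*}
Thus the convergence of $V(-t)U(t)q_0$ as $t\to\pm\infty$ is equivalent to convergence of the improper integral on the right, and it suffices to show that $s\mapsto V(-s)N(q(s))$ is Bochner integrable on $\R$ with values in $L^2(\R^2)$.

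Second, I would combine the pointwise bound \eqref{DSII:S.max} with the fact that $v_0:=\calF_a\calS q_0\in L^2(\R^2)$ satisfies $\norm[L^2]{v_0}=\norm[L^2]{q_0}$ to conclude that $|q(z,t)|\lesssim C(\norm[L^2]{q_0})\,\calM q_\lin(z,t)$, where $q_\lin(z,t)=V(t)v_0$. The mixed-norm Strichartz estimate \eqref{DSII:S1} shows $q_\lin\in L^4(\R^2\times\R)$ with norm controlled by $\norm[L^2]{v_0}$, and $L^p$-boundedness of the Hardy-Littlewood maximal operator (applied first in $z$ then in $t$, or jointly after a rearrangement) then gives
\begin{equation*}
\norm[L^4(\R^2\times\R)]{q}\le C\bigl(\norm[L^2]{q_0}\bigr).
\end{equation*}
Since $\bfS$ and $\barS$ are bounded on $L^2(\R^2)$, we have $\norm[L^2_z]{\bfS(|q|^2)(t)}+\norm[L^2_z]{\barS(|q|^2)(t)}\lesssim\norm[L^4_z]{q(t)}^2$, so by Hölder $\norm[L^{4/3}_z]{N(q)(t)}\lesssim \norm[L^4_z]{q(t)}^3$ and
\begin{equation*}
\norm[L^{4/3}(\R^2\times\R)]{N(q)}\lesssim \norm[L^4(\R^2\times\R)]{q}^3 \le C\bigl(\norm[L^2]{q_0}\bigr).
\end{equation*}
Applying the dual Strichartz estimate \eqref{DSII:S3} to the function $N(q)\mathbf{1}_{[a,b]}$ and sending $a\to-\infty$, $b\to\infty$ shows that $\int_\R V(-s)N(q(s))\,ds$ converges absolutely in $L^2(\R^2)$ and, more importantly, the tails $\int_t^{t'} V(-s)N(q(s))\,ds$ go to zero in $L^2$ as $t,t'\to\pm\infty$. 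This establishes existence of $W^\pm q_0$.

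Third, for Lipschitz continuity, given $q_0,\tilde q_0\in L^2(\R^2)$ with $\norm[L^2]{q_0},\norm[L^2]{\tilde q_0}\le R$, I would estimate
\begin{equation*}
\norm[L^2]{W^\pm q_0 - W^\pm \tilde q_0} \le \norm[L^2]{q_0-\tilde q_0} + \Bigl\|\int_{\R_\pm}\!\! V(-s)\bigl[N(q(s))-N(\tilde q(s))\bigr]\,ds\Bigr\|_{L^2}.
\end{equation*}
By \eqref{DSII:S3} the second term is bounded by a constant times $\norm[L^{4/3}(\R^2\times \R_\pm)]{N(q)-N(\tilde q)}$. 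Expanding the cubic difference and using Hölder together with the uniform $L^4$ bound $\norm[L^4]{q},\norm[L^4]{\tilde q}\le C(R)$ gives $\norm[L^{4/3}]{N(q)-N(\tilde q)}\lesssim C(R)^2\,\norm[L^4]{q-\tilde q}$, and a final Strichartz estimate of the form $\norm[L^4]{q-\tilde q}\lesssim \norm[L^2]{q_0-\tilde q_0}$ (obtained by inserting the Duhamel integral equation for $q-\tilde q$ into \eqref{DSII:S1}--\eqref{DSII:S2} and absorbing into the left-hand side using the maximal-function pointwise bound to control the nonlinear contribution globally in $t$) completes the Lipschitz estimate.

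The principal obstacle will be the last step: obtaining the global-in-time $L^4$ Lipschitz bound for the difference $q-\tilde q$. Locally in time this is routine from the Strichartz-based contraction argument in Section \ref{sec:DSII.lwp}, but globally one must avoid the usual exponential-in-time blowup in Gronwall estimates. The way to do this, following \cite{NRT:2017}, is again the maximal-function bound \eqref{DSII:S.max}, which transfers the difference estimate to the linear side through $\calS$: because $\calS$ is Lipschitz on $L^2$ and the linear evolution $V(t)$ is an $L^2$-isometry commuting with $\calF_a$, the difference $q(t)-\tilde q(t)$ is controlled pointwise by $\calM(q_\lin-\tilde q_\lin)(z,t)$, whose $L^4(\R^2\times\R)$ norm is bounded by $\norm[L^2]{q_0-\tilde q_0}$ via Strichartz for the linear equation and $L^p$-boundedness of $\calM$.
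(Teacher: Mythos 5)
Your existence argument is essentially the paper's own: Duhamel reduces everything to Bochner integrability of $s \mapsto V(-s)N(q(s))$ in $L^2$, the pointwise bound \eqref{DSII:S.max} combined with the Strichartz estimate \eqref{DSII:S1} and the $L^4$-boundedness of $\calM$ gives $q \in L^4(\R^2\times\R)$ with norm $C(\norm[L^2]{q_0})$, H\"{o}lder together with $L^2$-boundedness of the Beurling operators gives $N(q)\in L^{4/3}(\R^2\times\R)$, and the dual estimate \eqref{DSII:S3} closes the argument. That part is correct and matches the paper.

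The Lipschitz continuity argument, however, has a genuine gap at exactly the point you flag as the principal obstacle, and your proposed resolution does not work. The estimate \eqref{DSII:S.max} bounds a \emph{single} transform, $\left|(\calS q)(k)\right| \leq C(\norm[L^2]{q})\,\calM\widehat{q}(k)$; since $\calS$ is nonlinear, this does not imply $\left|(\calS q_1)(k)-(\calS q_2)(k)\right| \lesssim \calM(\widehat{q_1-q_2})(k)$, and therefore your claim that $q(z,t)-\tilde q(z,t)$ is controlled pointwise by $\calM(q_\lin-\tilde q_\lin)(z,t)$ does not follow from anything available. The Lipschitz continuity of $\calS$ on $L^2$ controls the difference only in the $L^2$ norm, not pointwise by a maximal function of the difference of the data, so the global-in-time bound on $\norm[L^{4/3}(\R^2\times\R)]{N(q)-N(\tilde q)}$ is not established; likewise the proposed ``absorption'' into the left-hand side fails for large data, where the relevant constant is not small. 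The intended route is much shorter: once the limits exist they are identified explicitly as $v_\pm = \calF_a\calS q_0$ (this is the statement $v_0=\calF_a\calS q_0$ in the scattering theorem quoted in Lecture 1, together with the triviality $v_+=v_-$). Hence $W^\pm = \calF_a\circ\calS$, and Lipschitz continuity on $L^2(\R^2)$ is immediate from Theorem \ref{thm:DS.lip.L2} and the fact that $\calF_a$ is an isometry of $L^2(\R^2)$; no difference estimate on the Duhamel integral is needed at all.
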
 

It can be shown that $v_+ = v_- $ so that the scattering is trivial. 

\subsection*{Exercises for Lecture 3}

\begin{exercise}
\label{ex:DBAR.green}
Using integration by parts, show that  for any $f \in C_0^\infty(\R^2)$, 
$$ \frac{1}{\pi} \int \frac{1}{z-w} \left( \dee_{\wbar} f \right) (w) \, dw = f(z). $$
\emph{Hint}: Develop a Green's formula for the $\dbar$ operator analogous to the corresponding formula for the Laplacian, and use the fact that 
$$ 
\int \frac{1}{z-w} f(w) \, dw = \lim_{\eps \darr 0} \int_{\R^2 \setminus B(z,\eps)}  \frac{1}{z-w} f(w) \, dw . 
$$
\end{exercise}

\begin{exercise}
\label{ex:DBAR.asy}
Suppose that $f$ is a measurable function with $\int |x|^N |f(x)| \, dx < \infty$ for all nonnegative integers $N$. Show that $u(z) = \left(\dee_{\zbar}^{-1} f\right)(z)$ has a large-$z$ asymptotic expansion of the form 
$$ u(z) \sim \sum_{j \geq 1} \frac{a_j}{z^j} $$
and give an explicit remainder estimate for $ u - \sum_{j=1}^N a_j z^{-j}$. 

\emph{Remark}. The equation $\dee_{\zbar} u = f \in \scrS(\R^2)$ implies that $u$ is `almost' analytic near infinity; the expansion above shows that $f$ `almost' has a Taylor series near the point at infinity.
\end{exercise}

\begin{exercise}
\label{ex:DBAR.Vekua}
Prove \eqref{DBAR:Vekua} by writing
$$ \int \frac{1}{z-w} f(z)\, dz = \left( \int_{|z-w| \leq 1} + \int_{|z-w| > 1} \right) \frac{1}{z-w} f(w) \, dw$$
and using H\"{o}lder's inequality.
\end{exercise}

\begin{exercise}
\label{ex:NRT.frac}
(proof suggested by Adrian Nachman)
The purpose of this exercise is to prove Theorem 2.3 of \cite{NRT:2017}, which asserts the following. 
For $0< \alpha < n$ and $f \in L^p(\R^n)$, $1 < p \leq 2$, the estimate
\begin{equation}
\label{NRT.frac}
\left| |D|^{-\alpha} f (x) \right| \lesssim \left( \lam^{n-\alpha} \calM \widehat{f}(0) + \lam^\alpha \calM f(0) \right).
\end{equation}
holds for any $\lam > 0$, where $|D|^{-\alpha}$ is the Fourier multiplier with symbol $|\xi|^{-\alpha}$. 
\begin{enumerate}
\item[(a)]	Prove that
				$$ \left( |D|^{-\alpha} f \right)(x)= c_\alpha \int_0^\infty t^{\alpha-1} \left( P_t * f \right)(x) \, dx. $$
\item[(b)]	 Prove estimate \eqref{NRT.frac} by splitting $\dint_0^\infty \, = \left( \dint_0^\lam + \dint_\lam^\infty \right)$ and mimicking the proof of Theorem \ref{thm:DBAR.est}. 
\medskip
\item[(c)]	Optimize in $\lam$  to show that
				$$ \left| \left(|D|^{-\alpha} f\right)(x) \right| 
						\lesssim \left( \calM \widehat{f}(0) \right)^{\alpha/n}  \left( \calM f (x) \right)^{1-\alpha/n}.
				$$
\end{enumerate}
\end{exercise}

\begin{exercise}[Duhamel's formula]
\label{ex:DSII.Duhamel}
Suppose that $f \in \scrS(\R^2)$ and $F \in C(\R;\scrS(\R^2))$. Show that the initial value problem 
$$ iv_t + 2(\dee_z^2 + \dee_{\zbar}^2) v = F, \quad v(0) = f $$
is solved by $v \in C(\R;\scrS(\R^2))$ given by 
$$ v(t) = V(t) f - i \int_0^t V(t-s) F(s) \, ds. $$
\end{exercise}

\begin{exercise}
\label{ex:DSII.U}
The purpose of this exercise is to prove the basic dispersive estimate \eqref{DSII:U.disp}. In this exercise we define
$$\widehat{f}(\xi_1,\xi_2) =
	 \int f(x_1,x_2) e^{-i(\xi_1x_1 + \xi_2 x_2)} \, dx_1 \, dx_2$$
so that the inverse Fourier transform is
$$\widecheck{g}(x_1,x_2) = 
	\frac{1}{(2\pi)^2} \int  g(\xi_1,\xi_2)  e^{i(\xi_1x_1 + \xi_2 x_2)} \, d\xi_1 \, d\xi_2.
$$
\begin{enumerate}
\item[(a)]	Let $h(\xi) = \frac{1}{2}\left(\xi_2^2 - \xi_1^2\right)$, and let 
$$ \widehat{f}(\xi_1,\xi_2) = \int_{\R^2} e^{-ix\cdot \xi} f(x) \, dx $$
be the usual (unitary) Fourier transform on $L^2(\R^2)$. Show that, if $f \in \scrS(\R^2)$, then the unique solution to 
\eqref{DSII:lin.bis} with $v(x,0) = f(x)$ is given by
$$
v(x,t) = \frac{1}{(2\pi)^2} \int e^{i x \cdot \xi} e^{it h(\xi)} \widehat{f}(\xi) \, d\xi 
$$
\item[(b)]	Compute the distribution Fourier transform of $e^{ith(\xi)}$ using the result of Exercise \ref{ex:Schrodinger.prop1} and separation of variables.
\item[(c)] Conclude that
$$ v(x,t) = \int K(t,x-y) f(y) \, dy $$
where
$$ \left| K(t,x) \right| \lesssim t^{-1}. $$
\end{enumerate}
\end{exercise}

\begin{exercise}
\label{ex:DSII.S2}
The purpose of this exercise is to prove the Strichartz estimate \eqref{DSII:S2}. 
\begin{enumerate}
\item[(a)]	Using the dispersive estimate $\norm[L^\infty]{V(t)f} \lesssim t^{-1} \norm[L^1]{f}$,
the trivial estimate $\norm[L^2]{V(t)(f)}$, and real interpolation, prove that for any $p > 2$,
$$ \norm[L^p]{V(t)f} \lesssim_{\, p} t^{2/p-1} \norm[L^{p'}]{f}. $$
\item[(b)]	Regarding $\dint_{-\infty}^\infty V(t-s) g(s) \, ds$ as a joint convolution in $s,z$, use 
part (a) with $p=4$ and the Hardy-Littlewood-Sobolev inequality \eqref{HLS} with $n=1$ and $\alpha=1/2$ 
to prove \eqref{DSII:S2}.
\end{enumerate}
\end{exercise}

\begin{exercise}
\label{ex:DSII.time}
The purpose of this exercise is to find the correct normalization for time-dependent joint solutions of \eqref{DSII:x1}--\eqref{DSII:t2}. 
Suppose that 
$$
\Psi_1(z,k,t) = m_1(z,k,t) e^{ikz}, 	\quad
\Psi_2(z,k,t)=m_2(z,k,t) e^{ikz}$$ 
solves \eqref{DSII:x1}--\eqref{DSII:x2} with 
$$\left(m^1(z,k,t),m^2(z,k,t)\right) \to (1,0) \text{ as }|z| \to  \infty. $$
Let
$$\psi_1(z,k,t) = C_1(k,t) \Psi_1(z,k,t), \quad
\psi_2(z,k,t) = C_2(k,t) \Psi_2(z,k,t)$$
be a joint solution of \eqref{DSII:x1}--\eqref{DSII:t2}.  
Assuming that 
$$\dee m_1(z,k,t)/\dee t, \, \dee m_2(z,k,t)/\dee t \to 0 \text{ as }|z| \to \infty$$ and that $q(z,t), \, g(z,t) \to 0$ as $|z| \to \infty$, show that
$C_1(k,t) = C_2(k,t) = e^{-2ik^2 t}$. 
\end{exercise}

\begin{exercise}
\label{ex:DSII.m.asy.k}
Suppose that the solutions $m^1$ and $m^2$ of \eqref{DSII:m.x1}-\eqref{DSII:m.x2} admit (differentiable) asymptotic expansions of the form
\begin{align*}
m^1(z,k,t)	&\sim	1 + \sum_{j \geq 1} \frac{\alpha_j(z,t)}{k^j},\\
m^2(z,k,t)	&\sim	\sum_{j \geq 1} \frac{\beta_j(z,t)}{k^j}
\end{align*}
for each $(z,t)$. 
Use \eqref{DSII:m.x1}--\eqref{DSII:m.x2} to show that 
%\sidenote{The answers are: \\ \\
% $\beta_1 = -i\eps \qbar$,\\ 
% $\alpha_1 = -i\eps \dee_{\zbar}^{-1}\left(|q|^2\right)$,\\ 
% $\beta_2 = -\qbar \dee_{\zbar}^{-1}\left(|q|^2\right) + i\eps \dee_z \qbar$.}
\begin{align*}
i\beta_1(z,t)		&=	\eps \overline{q(z,t)},\\
\left(\dee_{\zbar} \alpha_j \right)(z,t)	&= q(z,t) \beta_j(z,t),\\
\left(\dee_z \beta_j \right)(z,t) + i \beta_{j+1}(z,t)	&=	 \eps \overline{q(z,t)} \alpha_j(z,t)
\end{align*}
and compute $\beta_1$, $\alpha_1$, and $\beta_2$.
\end{exercise}

\begin{exercise}
\label{ex:DSII.two-s}
The \emph{conjugate Solid Cauchy transform} is the integral operator
$$ \left(\dee_z^{-1} f \right)(z) = \frac{1}{\pi} \int \frac{1}{\zbar-\zetabar} f(\zeta) \, d\zeta $$
and is a solution operator for the equation $\dee u = f$.
Suppose that $q \in \scrS(\R^2)$ and that equations \eqref{DSII:m} admit bounded solutions $m^1,m^2$.
Show that, in the large-$z$ asymptotic expansion \eqref{DSII:m2.asy.z}, the function $\bfs(k)$ is given by 
\eqref{DSII:scatt}. \emph{Hint}: Remember the identity \eqref{dee-id}. 
\end{exercise}

\begin{exercise}
\label{ex:DSII.F}
The unitary normalization for the Fourier transform on $\R^2$ is given by
$$ (\calF_0 f)(\xi) = \frac{1}{2\pi} \int_{\R^2} e^{-ix \cdot \xi} f(x) \, dx $$
so that
$$ \left(\calF_0^{-1} g \right)(x) = \frac{1}{2\pi} \int_{\R^2} e^{ix \cdot \xi} g(\xi) \, d\xi. $$
With this normalization, $\calF_0$ is a unitary map from $L^2(\R^2)$ to itself. Show that if $k=k_1+ik_2$,
$$ \left( \calF_a f \right)(k) = 2 \left( \calF_0 \overline{f} \right)(2k_1,-2k_2)$$
so that $\calF_a$ is also an isometry of $L^2(\R^2)$. Show also that $\calF_a^{-1} = \calF_a$.
\end{exercise}

\begin{exercise}
\label{ex:DSII.q.nice}
Show that, given $q \in L^2(\R^2)$ and any $\eps>0$, we may write
$$ q = q_n + q_s $$
where $q_n$ is a smooth function of compact support, and $\norm[L^2]{q_s} < \eps$.
\emph{Hint}: mollify $q$ and truncate to a large ball using a smooth cutoff function.
\end{exercise}

\begin{exercise}
\label{ex:DSII.Bukhgeim}
Prove \eqref{Bukhgeim} for $f \in C_0^\infty(\R^2)$ using the result of Exercise \ref{ex:DBAR.green} and the identity $$(i\kbar)\dee_{\zbar}(e_k)(z) = e_k(z).$$
By a density argument and \eqref{DBAR:HLS}, show that the same identity holds true as functions in $L^p(\R^2)$
provided $f \in L^p(\R^2) \cap L^{2p/(p+2)}(\R^2)$ and $\dee_{\zbar} f \in L^{2p/(2p+2)}(\R^2)$. 
\end{exercise}

\begin{exercise}
\label{ex:DSII.L21} Denote by $L^{2,1}(\R^2)$ the space of measurable complex-valued functions with $$
\norm[L^{2,1}]{f} \coloneqq \left( \int (1+|x|^2) |f(x)|^2 \, dx\right)^{1/2} < \infty.
$$ 
Show that for any $f \in L^{2,1}(\R^2)$ and any $p \in (1,2]$,  $\norm[L^p]{f} \lesssim \norm[L^{2,1}]{f}.$
\end{exercise}

\begin{exercise}
\label{ex:DSII.H2}
Show that for any $f \in H^1(\R^2)$ and any $p \in [2,\infty)$, $\norm[L^p]{f} \lesssim_{\, p} \norm[H^1]{f}$.
\emph{Hint}: By the Hausdorff-Young inequality, it suffices to show that $\norm[L^p]{f} \lesssim_p \norm[L^{2,1}]{f}$ for $p \in (1,2]$ (why?). Then,  use the result of Exercise \ref{ex:DSII.L21}. 
\end{exercise}

\begin{exercise}
\label{ex:DSII.H11.compact}
Using Theorem \ref{thm:Kolmogorov-Riesz}, show that $H^{1,1}(\R^2)$ is compactly embedded in $L^2(\R^2)$. That is,  show
that bounded subsets of $H^{1,1}(\R^2)$ are identified with subsets of $L^2(\R^2)$ having compact closure. 
\end{exercise}

\begin{exercise}
\label{ex:DSII.L2}
Show that the map $q \mapsto q \, \dee_{\zbar}^{-1}(e_k \qbar)$ is Lipschitz continuous from $L^2(\R^2)$ into $L^4(\R^2_k, L^{4/3}(\R_z^2))$. \emph{Hint}: Use \eqref{DBAR.est.fourier}, bilinearity in $q$, and H\"{o}lder's inequality.
\end{exercise}

\begin{exercise}
\label{ex:DSII.dbar.zdiff}
Using the commutation relation
$$ \dee_{\zbar} e_{-k} = e_{-k} \left( \dee_{\zbar} - i\kbar \right) $$
show that \eqref{DSII:Lax1.pre1} and \eqref{DSII:Lax1.pre2} hold.
\end{exercise}

\begin{exercise}
\label{ex:DSII.dbar.timediff}
Recall that 
$$ \varphi(z,k,t) = 2\left(k^2 + \kbar^2\right)t + \frac{kz + \kbar \zbar}{t}. $$
Using the commutation relations
$$
\begin{gathered}
\dee_z e^{it\varphi} = e^{it\varphi} \left(\dee_z - ik\right), \quad 
\dee_{\zbar} e^{it\varphi} = e^{it\varphi}\left(\dee_{\zbar} - i\kbar \right), \\
\dee_t e^{it\varphi} = e^{it\varphi} \left( \dee_t + 2i(k^2+\kbar^2) \right), 
\end{gathered}
$$
derive \eqref{DSII:dbart1}--\eqref{DSII:dbart2} from the $\dee_{\kbar}$ equations \eqref{DSII:m.kt} and the time evolution \eqref{s.ev}.
\end{exercise}

\begin{exercise}
\label{ex:DSII.w1w2}
Use the asymptotic expansions \eqref{m1.asy.k}--\eqref{m2.asy.k} to show that 
$w_1$ and $w_2$ as defined in \eqref{w1.def}--\eqref{w2.def} are of order $k^{-1}$ as $k \to \infty$. 
\end{exercise}

\begin{exercise}
\label{ex:DSII.w1w2.k}
Show that \eqref{DSII:Lax2.pre1} holds using \eqref{DSII:dbart1}--\eqref{DSII:dbart2} and the fact that \eqref{DSII:m.kt} holds. Note that
$e_{-k} \bfs(k,t) = e^{it\varphi} \bfs$. 
\end{exercise}

\bibliography{fields}
\bibliographystyle{amsplain}

\end{document}